\newtheorem{theorem}{Theorem}[section]
\newtheorem{corollary}[theorem]{Corollary}
\newtheorem{lemma}[theorem]{Lemma}
\newtheorem{proposition}[theorem]{Proposition}
\newtheorem{conjecture}[theorem]{Conjecture}
\theoremstyle{definition}
\newtheorem{definition}[theorem]{Definition}
\newcommand{\Q}{\mbox{\bf Q}}
\newcommand{\Z}{\mbox{\bf Z}}
\newcommand{\s}{\mbox{\bf s}}
\newcommand{\nin}{\mbox{$\ \not\in\ $}}
\newcommand{\sym}{\mbox{${\rm Sym}$}}
\newcommand{\aut}{\mbox{${\rm Aut}$}}
\newcommand{\Sym}{\mbox{${\rm Sym}$}}
\newcommand{\Cay}{\mbox{${\rm Cay}$}}
\newcommand{\autga}[0]{\mbox{${\rm Aut}(\Gamma)$}}
\newcommand{\anc}{\mbox{${\rm anc}$}}
\newcommand{\des}{\mbox{${\rm desc}$}}
\newcommand{\inn}{\mbox{${\rm in}$}}
\newcommand{\out}{\mbox{${\rm out}$}}
\newcommand{\supp}{\mbox{${\rm supp}\,$}}
\title[Graphs, permutations, and topological groups]{Graphs, permutations and topological groups}
\author[R\"ognvaldur G.~M\"oller]{R\"ognvaldur G.~M\"oller}
\begin{document}

\begin{abstract}
Various connections between the theory of permutation groups and the theory of topological groups are described.  These connections are applied in permutation group theory and in the structure theory of topological groups.
\end{abstract}

\begin{classification}
Primary 22D05, 20B07, 05C25; Secondary  20E06, 20E08.
\end{classification}

\begin{keywords}
Totally disconnected locally compact groups, permutation groups, graphs.
\end{keywords}

\maketitle

\section*{Introduction}

The aim of this paper is to discuss various links between  permutation groups, graphs and topological groups.  The action of a group on a set can be used to define a topology on the group, called the {\em permutation topology}.  The earliest references for this topology are the paper \cite{Maurer1955} by Maurer and the paper \cite{KarrassSolitar1956} by Karrass and Solitar.  This topology opens up the possibility of applying concepts and results  from the theory of topological groups in permutation group theory.  One can also go the other way and apply ideas from permutation group theory to problems about topological groups.  In particular, some simple constructions of graphs, that are commonly used in permutation group theory, can be applied.

In the first section we discuss the languages we use, when working with graphs and permutation groups.

In Section 2 we look at the definition of the permutation topology and
consider applications of the theory of topological groups to questions
about permutation groups.  The main result in this section is a
theorem of Schlichting from \cite{Schlichting1980}.  This is a theorem
about permutation groups, but Schlichting's proof uses notions from
functional analysis and result of Iwasawa \cite{Iwasawa1951} about
topological groups.  Here we present a proof using concepts from
permutation group theory and  Iwasawa's Theorem.

In the third and fourth sections we discuss applications of techniques and ideas from permutation groups theory and graph theory to the theory of topological groups.

In Section 3 Willis' structure theory of totally disconnected locally compact groups is in the limelight.  Willis' paper \cite{Willis1994} helped spark a new interest in totally disconnected locally compact groups.  Later work by Willis and others has shown that the concepts of the theory have many applications and are open to various interpretations.  Most of the material in this section comes from the paper \cite{Moller2002}, where the basics of Willis' theory are given a graph theoretic interpretation.

In the fourth and last section we discuss an analogue of a Cayley graph, called a rough Cayley graph, that one can construct for a compactly generated, totally disconnected, group. The rough Cayley graph is defined in Section~\ref{SDefRough}.  In that section, it is also shown that this graph is a quasi-isometry invariant of the group.   In the latter parts of Section 4, it is shown how one uses rough Cayley graphs, by developing an analogue of the theory of  ends of groups and the theory of groups with polynomial growth.

There are various other topics, that should be discussed in  a survey like this.  The study of random walks on groups and graphs is another place where graphs, permutations and topological groups meet.  The book by Woess \cite{Woess2001} is an excellent introduction to this field.  Another meeting place for graphs, permutations and topological groups is the theory of groups acting on trees.  In particular, one could mention the theory of harmonic analysis and representation theory of groups acting on trees, see the book by  Fig{\`a}-Talamanca and Nebbia \cite{Figa-TalamancaNebbia1991} and the theory of tree lattices, see the book by Bass and Lubotzky \cite{BassLubotzky2001}.
Then there is the topic of generic elements and subgroups, see the papers [7, 8] by Bhattacharjee and the paper \cite{AbertGlasner2008} by Abert and Glasner.  And then I have not even mentioned the manifold appearances of our trio of graphs, permutations and topology in model theory.  Describing all these topics would have meant a book length paper.

\section{Languages for graphs and permutation groups}

\subsection{A language for graphs}\label{SGraphs}
We will discuss both {\em undirected graphs}, or just {\em
  graphs}, and {\em directed graphs},  called in this paper {\em
  digraphs}.

Our undirected graphs are without loops and multiple edges.  Thus
one can think of a (undirected)
graph $\Gamma$ as an ordered pair $(V\Gamma, E\Gamma)$ where
$V\Gamma$ is a set and $E\Gamma$ is a set of two element subsets of
$V\Gamma$.  The
elements of $V\Gamma$ are called {\em vertices} and the elements of
$E\Gamma$ are called {\em edges}.

Vertices $\alpha$ and $\beta$ in a graph $\Gamma$ are said to be
{\em neighbours}, or {\em adjacent}, if $\{\alpha, \beta\}$ is an edge
in $\Gamma$.   The {\em valency} of a vertex is the number of its
neighbours. A graph, all  of whose vertices have finite valency,
is called {\em locally finite}.
For vertices $\alpha$ and $\beta$ of the graph, a \emph{walk} of length $n$
from $\alpha$ to $\beta$ is a sequence $\alpha=\alpha_0,\alpha_1,\ldots,\alpha_n=\beta$
of vertices, such that $\alpha_i$
and $\alpha_{i+1}$ are adjacent for $i=0, 1, \ldots, n-1$.  A walk, all
of whose vertices are distinct, is called a {\em path}.
A {\em ray} in a graph is a sequence $\alpha_0, \alpha_1, \ldots$ of
distinct vertices such that $\alpha_i$ is adjacent to $\alpha_{i+1}$
for all $i$.
 A graph
is connected if for any two vertices $\alpha$ and $\beta$ there is a
walk from $\alpha$ to $\beta$.  Let $d(\alpha,\beta)$ denote the length of
a shortest walk from a vertex $\alpha$ to a vertex $\beta$. For a connected
graph, the function $d$ is a metric on its set of vertices.
Let $A$ be a set of vertices of $\Gamma$.  The
{\em subgraph of $\Gamma$ spanned by $A$} is the graph whose vertex set is $A$,
and whose edge set is the set of all edges in $\Gamma$ whose end vertices are
both in $A$.  We say that a set of vertices $A$ is {\em
  connected}, if the subgraph spanned by $A$ is connected.  The {\em
  connected components}  (or just {\em components}) of a graph are the
maximal connected sets of vertices.

\medskip

We define a {\em digraph} $\Gamma$
to be a pair $(V\Gamma, E\Gamma)$ where
$V\Gamma$ is a set and $E\Gamma$ is a set of ordered pairs of distinct
elements of $V\Gamma$, i.e. $E\Gamma\subseteq (V\Gamma\times V\Gamma)
\setminus\{(\alpha, \alpha)\mid \alpha\in V\Gamma\}$.  An edge
$(\alpha, \beta)$ may be thought of as an \lq\lq arrow\rq\rq\ starting
in $\alpha$ and
ending in $\beta$.
The
{\em in-valency} of a vertex $\alpha$ in $\Gamma$ is the number of
edges of the type $(\beta, \alpha)$ (number of edges going \lq\lq in
to\rq\rq\ $\alpha$) and the {\em out-valency} of a vertex $\alpha$
is the number of edges $(\alpha, \beta)$ that go \lq\lq out of\rq\rq\
$\alpha$.
A digraph is said to be {\em locally finite} if every vertex has
finite in- and out-valencies.

One can \lq\lq forget\rq\rq\ the directions of edges in $\Gamma$ and define an undirected graph
$\overline{\Gamma}$ with the same vertex set and two vertices $\alpha$
and $\beta$ adjacent if and only if $(\alpha, \beta)$ or $(\beta,
\alpha)$ is an edge in the digraph $\Gamma$.

A {\em walk} in a digraph $\Gamma$ is a sequence $\alpha_0,
\ldots, \alpha_n$ such that $(\alpha_i, \alpha_{i+1})$ or
$(\alpha_{i+1}, \alpha_i)$ is an edge in $\Gamma$ for $i=0, \ldots,
n-1$.  (That is to say, $\alpha_0,
\ldots, \alpha_n$ is a walk in the undirected graph
$\overline{\Gamma}$ associated to $\Gamma$.)   An {\em arc}, more specifically an $n$-{\em arc},
is a sequence $\alpha_0,\ldots, \alpha_n$ of distinct vertices  such that
$(\alpha_i, \alpha_{i+1})$ is an edge in $\Gamma$ for $i=0, \ldots,
n-1$.  Arcs are sometimes referred to as directed paths.  The set of {\em descendants} of a vertex $\alpha$ is defined as
the set
$$\des(\alpha)=\{\beta\in V\Gamma\mid \mbox{there exists an arc
   from }\alpha\mbox{ to }\beta\}.$$
The set $\des_k(\alpha)$ is defined as the set of all vertices $\beta$
  such that the shortest arc from $\alpha$ to $\beta$ has
  length $k$.
The set of {\em ancestors} of a vertex $\alpha$, denoted
  $\anc(\alpha)$,  is defined as the set
  of all vertices $\beta$ such that $\alpha \in \des(\beta)$, i.e.\
  $\beta\in \anc(\alpha)$ if and only if there exists an arc
  from $\beta$ to $\alpha$.

\medskip

Finally, we review the definition of a \emph{Cayley graph} of a group.  Let $G$
be a group and $S$ a subset of $G$.  The (undirected) Cayley graph
$\Cay(G, S)$ of $G$ with
respect to $S$ has $G$ as the vertex set and $\{g, h\}$ is an edge if
$h=gs$ or $h=gs^{-1}$ for some $s$ in $S$.  The Cayley graph
$\Cay(G, S)$ is connected if and only if $S$ generates $G$.  The left regular
action of $G$ on itself gives a transitive action of $G$ as a group of
graph automorphisms on $\Cay(G, S)$.

It is common to define the Cayley graph of a group $G$ with respect to a subset
$S$ of $G$ as the digraph with vertex set $G$ and set of directed edges
the collection of all ordered pairs $(g,gs)$ for $g$ in $G$ and $s$ in $S$.
In these notes, however,  it is convenient to
think of Cayley graphs as being undirected.

\subsection{A language for permutation groups}\label{Sperm}
In this section $G$ is a group acting on a set $\Omega$.
The image of a point $\alpha\in \Omega$ under an
element $g\in G$ will be written $g\alpha$.
The group of all permutations of a set $\Omega$ is called the {\em
  symmetric group on} $\Omega$ and is denoted by $\Sym(\Omega)$.

The action of $G$ on $\Omega$ induces a natural homomorphisms
  $G\rightarrow \Sym(\Omega)$.
If this homomorphism is injective, then the group $G$ is said to act {\em faithfully} on $\Omega$.
A group $G$ which acts faithfully on a set $\Omega$ can be regarded as a
subgroup of $\Sym(\Omega)$, and then we say that $G$ is
 a {\em permutation group} on $\Omega$.

An action is said to be {\em
transitive}, if for every two points $\alpha, \beta\in \Omega$ there is
some element $g\in G$ such that $g\alpha=\beta$.  For a point $\alpha\in
\Omega$, the subgroup
$$G_\alpha=\{g\in G\mid g\alpha =\alpha\}$$
is called the {\em stabilizer} in $G$
of the point $\alpha$.
The {\em pointwise
stabilizer} $G_{(\Delta)}$
of a subset $\Delta$ of $\Omega$ is defined as the subgroup of all the elements in $G$ that fix
every element of $\Delta$, that is,
$$G_{(\Delta)}=\{g\in G\mid g\delta =\delta \mbox{ for every }\delta\in
\Delta\}=\bigcap_{\delta\in \Delta}G_\delta.$$
The {\em setwise stabilizer} $G_{\{\Delta\}}$ of $\Delta$ is defined as
the subgroup consisting of all elements of $G$ that leave $\Delta$
invariant, that is,
$$G_{\{\Delta\}}=\{g\in G\mid g\Delta=\Delta\}.$$
The {\em $G$-orbit}, or, simpler {\em orbit}, of a
point $\alpha$ is the set $\{g\alpha\mid g\in G\}$.

Suppose $U$ is a subgroup of $G$.  The group $G$ acts on the set
$G/U$ of right cosets of $U$, and this action is transitive.  The image of a
coset $hU$ under an element $g\in G$ is $(gh)U$.  Conversely, if $G$
acts transitively $\Omega$ and $\alpha$ is a point in
$\Omega$, then $\Omega$ can be identified with
$G/G_\alpha$.  Here \lq\lq identified\rq\rq\ means that there is a
bijective map $\theta:\Omega\rightarrow G/G_\alpha$ such that for
every $\omega\in \Omega$ and every element $g\in G$ we have
$\theta(g\omega)=g\theta(\omega)$.

The orbits of stabilizers of points in $\Omega$ are called {\em suborbits},
that is, the suborbits are sets of the form $G_\alpha\beta$ where $\alpha,
\beta\in \Omega$.  Orbits of $G$ on the set of ordered pairs of elements from
$\Omega$ are called {\it orbitals}.  When $G$ is transitive on
$\Omega$ one can, for a fixed point
$\alpha\in\Omega$, identify the suborbits of $G_\alpha$ with
the orbitals: the suborbit $G_\alpha\beta$ is identified with the
orbital $G(\alpha, \beta)$.
The
number of elements in a suborbit $G_\alpha\beta$, often called the
{\em length} of the suborbit, is given by the
index $|G_\alpha: G_\alpha\cap G_\beta|$.
The number of elements in the orbit $U\beta$ of a
subgroup $U$ is equal to
the index $|U:U\cap G_\beta|$.

Next, we define a digraph, whose vertex set is $\Omega$ and whose edge set is
the union of $G$-orbitals, and call it the \emph{directed orbital
graph of the action}.
The action of $G$ on its vertex set $\Omega$ induces an action of $G$ as a
group of automorphisms on the directed orbital graph $\Gamma$, because if
$(\alpha,\beta)$ is an edge in $\Gamma$ then $(g\alpha,g\beta)$ is
in the same orbital as $(\alpha,\beta)$ and therefore also an edge in $\Gamma$.
Similarly, we define the {\em undirected orbital graph} of a $G$-action on a
set $\Omega$ as a graph whose vertex set is $\Omega$ and whose edge set is the union of
$G$-orbits on the set of two element subsets of $\Omega$.

A {\em block of imprimitivity} for $G$ is a subset $\Delta$ of $\Omega$
such that for every $g\in G$, either $g\Delta=\Delta$ or $\Delta\cap
(g\Delta)=\emptyset$.  The existence of a non-trivial proper block of
imprimitivity $\Delta$ ({\em non-trivial} means that $|\Delta|>1$ and
{\em proper} means that $\Delta\neq \Omega$)
is equivalent to the existence of a
non-trivial proper $G$-invariant equivalence relation $\sim$ on
$\Omega$. If there is no non-trivial
proper $G$-invariant equivalence relation on $\Omega$ we say that $G$
acts primitively on $\Omega$.  In most books on permutation
groups it is shown that, if $G$ acts transitively on $\Omega$
then $G$ acts primitively on $\Omega$ if and only if $G_\alpha$ is a maximal
subgroup of $G$ for every $\alpha\in \Omega$.  Part of the proof of
this fact is to show that if $G_\alpha<H<G$ then $H\alpha$ is a
non-trivial proper block of imprimitivity.  A further useful fact
is that if $N$ is a
normal subgroup of $G$, then the orbits of $N$
on $\Omega$ are blocks of imprimitivity for $G$.

Recent books covering this material are
\cite{BMMN1997}, \cite{Cameron1999}  and \cite{DixonMortimer1996}.

\section{The permutation topology}

\subsection{Definition of the permutation topology}
Let $G$ be a group acting on a set $\Omega$.  The action
can be used to introduce a topology on $G$.  The topology of a
topological group is completely determined by a neighbourhood basis of
the identity element.  The {\em permutation topology} on $G$ is defined
by choosing as a neighbourhood basis of the identity the family of
pointwise stabilizers of finite subsets of $\Omega$, i.e.\ a neighbourhood
basis of the identity is given by the family of subgroups
\[\{G_{(\Phi)}\mid \Phi\mbox{ is a finite subset of }\Omega\}.\]
A sequence $(g_i)$ of elements in $G$ has an element
$g\in G$ as a limit if and only if for every
$\alpha\in \Omega$ there is a number $N$ (depending on $\alpha$) such
that $g_n\alpha =g\alpha$ for every $n\geq N$.
There are other ways to define the  permutation topology.
Think of $\Omega$ as having the discrete topology and elements of
$G$ as maps $\Omega\rightarrow \Omega$.
Then the permutation
topology is equal to the topology of pointwise convergence, and it is
also the same as the compact-open topology.

The basic idea is that two permutations $g$ and $h$ are \lq\lq
close\rq\rq\
to each other if they agree on \lq\lq many\rq\rq\  points.
If the set $\Omega$ is
countable, then the permutation topology can be defined by a metric, here is one way to do that.

\smallskip

{\it Enumerate
the points in
$\Omega$ as $\alpha_1, \alpha_2, \ldots$.  Take two elements $g, h\in G$.
Let $n$ be the smallest number such that $g\alpha_n\neq h\alpha_n$
or $g^{-1}\alpha_n\neq
h^{-1}\alpha_n$.  Set $d(g,h)=1/2^n$.   Then $d$ is a metric on $G$
that induces the permutation topology.}

\smallskip

From the definition of the permutation topology
we can immediately characterize open subgroups in $G$:

\smallskip
{\em A subgroup of $G$ is open if and only if it contains the pointwise
stabilizer of some finite set of points.}

\smallskip
Various properties of the action of $G$ on $\Omega$ are reflected by
properties of this topology on $G$.

\smallskip
{\em The permutation topology on $G$ is
Hausdorff if and only if the action of $G$ on $\Omega$ is faithful.
Moreover, $G$ is totally
disconnected if and only if the action is faithful.}

\bigskip
{\em Remark.}  In general we do not assume that our topological groups
are Hausdorff, but note that totally disconnected groups are always
Hausdorff.  

\smallskip

We say that a group $G$ acting on $\Omega$ is
a {\em closed} if it is image in $\sym(\Omega)$ is a closed subgroup, 
where $\sym(\Omega)$ has the permutation topology.  Closed
permutation groups can be characterized in the following way, see
\cite[Section 2.4]{Cameron1990}.

\begin{proposition}
A permutation group $G$ on a set $\Omega$ is closed if and only if $G$ is
the full automorphism group of some first order structure on $\Omega$.
\end{proposition}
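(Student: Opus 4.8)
The plan is to prove both implications by translating the topological condition into a concrete statement about finite agreement of permutations. Recall from the description of the permutation topology that a basic neighbourhood of a permutation $h\in\Sym(\Omega)$ consists of all permutations agreeing with $h$ on a prescribed finite set $\Phi\subseteq\Omega$; consequently $h$ lies in the closure $\overline{G}$ of $G$ if and only if for every finite $\Phi\subseteq\Omega$ there is some $g\in G$ with $g|_{\Phi}=h|_{\Phi}$. Thus saying that \emph{$G$ is closed} means precisely that any permutation which, on each finite set, is indistinguishable from a member of $G$ must itself belong to $G$. Both directions will exploit this reformulation.

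For the implication that automorphism groups are closed, I suppose $G=\aut(\mathcal{M})$ for a first-order structure $\mathcal{M}$ on $\Omega$; I may assume $\mathcal{M}$ is relational, since function and constant symbols can be replaced by their graphs without changing the automorphism group. I then take $h\in\overline{G}$ and check that $h$ preserves each relation $R$ of $\mathcal{M}$. Given an $n$-ary $R$ and a tuple $\bar a=(a_1,\dots,a_n)$, I choose $g\in G$ agreeing with $h$ on $\{a_1,\dots,a_n\}$, so that $g\bar a=h\bar a$; since $g\in\aut(\mathcal{M})$ preserves $R$ in both directions, I obtain $\bar a\in R\iff h\bar a\in R$ for this tuple. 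As the tuple and $R$ were arbitrary, $h$ preserves every relation, so $h\in\aut(\mathcal{M})=G$, and $G$ is closed.

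For the converse I will exhibit a canonical structure. Given a closed $G$, I let $\mathcal{M}$ be the structure on $\Omega$ whose relations are, for every $n\geq 1$, all the orbits of $G$ acting on $\Omega^{n}$, taking one $n$-ary relation symbol $R_{O}$ for each orbit $O$, interpreted as $O$ itself. Every $g\in G$ maps each orbit onto itself and hence preserves every $R_{O}$, so $G\leq\aut(\mathcal{M})$. Conversely, if $h\in\aut(\mathcal{M})$ and $\Phi=\{a_1,\dots,a_n\}$ is finite, then $h$ preserves the relation $R_{O}$ that contains the tuple $(a_1,\dots,a_n)$, so $h(a_1,\dots,a_n)$ lies in the same $G$-orbit; that is, there is $g\in G$ with $ga_i=ha_i$ for all $i$. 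Hence $h\in\overline{G}$, and closedness gives $h\in G$, so $\aut(\mathcal{M})=G$.

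The conceptual heart of the argument, and the step I expect to require the most care, is this converse direction: one must hit upon the right structure, and the ``all orbits on all finite tuples'' structure is exactly what works, because preserving its relations forces local agreement with $G$ on every finite set, which is precisely the condition that closedness converts into genuine membership. The remaining points are routine but worth flagging in the write-up: allowing a signature with infinitely many relation symbols, the reduction of functions and constants to relations used in the first implication, and the fact that an automorphism of a relational structure is simply a bijection preserving each relation in both directions, so that the single element $g$ agreeing with $h$ on a tuple already yields the full biconditional.
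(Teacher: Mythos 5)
Your proof is correct and follows the same route the paper sketches: automorphism groups are closed by a local-agreement argument, and for the converse one takes the canonical relational structure whose $n$-ary relations are the $G$-orbits on $n$-tuples, then uses closedness to upgrade local agreement to membership. You have simply filled in the details (including the reduction to relational signatures and the biconditional preservation point) that the paper leaves to the reference.
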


\noindent
A first order structure on $\Omega$ is a collection of:
\begin{itemize}
\item constants that belong to $\Omega$,
\item functions defined on $\Omega$ and taking their values in $\Omega$,
\item relations defined on $\Omega$.
\end{itemize}
It is easy to show that the
automorphism group of such a structure is closed. To prove the converse,
one uses the concept of the {\em canonical relational structure} on $\Omega$
such that for $n=1, 2, \ldots$ we get one $n$-ary relation
for each orbit of $G$ on $n$-tuples of $\Omega$. If $G$ is a closed permutation group
on $\Omega$, then $G$ is the full automorphism group
of this structure.

For those actions that are not faithful,
  we say that $G$ is closed in the permutation
topology if the image of $G$ in $\Sym(\Omega)$ under the natural
  homomorphism is closed.  This condition is
equivalent to the condition that
stabilizers of points are closed subgroups of $G$.

Compactness (note that in this work the term {\em compact} does not
include the Hausdorff condition) has a natural
interpretation in the permutation topology.
A subset of a topological space is said to be {\em
 relatively compact} if it has compact closure.
The following lemma slightly generalizes a result by Woess
for automorphism groups of locally finite connected
graphs, and the proof is the same as Woess' proof.

\begin{lemma}{\rm (\cite[Lemma 1 and Lemma 2]{Woess1992})}\label{LCompact}
Let  $G$ be a group acting transitively on a set $\Omega$ and endow
$G$ with the permutation topology.
Assume that  $G$ is closed in the permutation topology and that all suborbits are finite.

(i) The stabilizer $G_\alpha$ of a point $\alpha \in \Omega$ is
compact.

(ii) A subset $A$ of $G$ is
 relatively compact
 in $G$ if and only if the set $A\alpha$ is finite for every $\alpha$
in $\Omega$.

Furthermore, if $A$ is a subset of $G$ and $A\alpha$
is finite for some $\alpha\in\Omega$ then $A\alpha$ is finite for every
$\alpha$ in $\Omega$.
\end{lemma}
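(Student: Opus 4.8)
The plan is to realize $G$ inside a product of discrete copies of $\Omega$ and let \emph{Tychonoff's theorem} do the work, the finiteness of suborbits being exactly what confines the relevant images to products of \emph{finite} sets. For $g\in G$ write $e(g)=\big((g\beta)_{\beta\in\Omega},\,(g^{-1}\beta)_{\beta\in\Omega}\big)\in \Omega^\Omega\times\Omega^\Omega$, where $\Omega$ carries the discrete topology. Since the basic open sets of the permutation topology are the sets $\{h : h\beta=g\beta \text{ for } \beta \text{ ranging over a finite subset}\}$, the permutation topology is the initial topology induced by $e$; in particular every open subset of $G$ is the $e$-preimage of an open subset of $\Omega^\Omega\times\Omega^\Omega$. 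Carrying the inverse $g^{-1}$ as a second coordinate is what will guarantee that a pointwise limit of permutations is again a permutation, namely when the inverses converge pointwise as well.

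For part (i), note that $G_\alpha$ is the pointwise stabilizer of the singleton $\{\alpha\}$, hence an open — and therefore also closed — subgroup. For $g\in G_\alpha$ both $g\beta$ and $g^{-1}\beta$ lie in the suborbit $G_\alpha\beta$, which is finite by hypothesis; thus $e(G_\alpha)$ is contained in $K=\prod_\beta G_\alpha\beta\times\prod_\beta G_\alpha\beta$, a product of finite discrete spaces, compact by Tychonoff. It then remains to show that $e(G_\alpha)$ is closed in $K$ and to transport compactness back to $G_\alpha$. For closedness, take a point of $K$ in the closure of $e(G_\alpha)$, represented by a net $(g_i)$ in $G_\alpha$ for which $g_i\beta$ and $g_i^{-1}\beta$ are each eventually constant in $i$ for every $\beta$; the two limit families are mutually inverse maps, hence assemble into a permutation $f$ fixing $\alpha$, and $g_i\to f$ in $\Sym(\Omega)$. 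Because the image of $G$ in $\Sym(\Omega)$ is closed (our hypothesis), $f$ is induced by an element of $G_\alpha$, so the limit point lies in $e(G_\alpha)$. To transport compactness, recall that every open subset of $G_\alpha$ is an $e$-preimage, so any open cover of $G_\alpha$ pulls back an open cover of the compact set $e(G_\alpha)$; a finite subcover of the latter yields a finite subcover of $G_\alpha$. This pullback step is precisely what keeps the argument valid when the action is not faithful and $e$ fails to be injective. Hence $G_\alpha$ is compact.

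For part (ii), the forward direction is soft: fixing $\beta$, the evaluation $g\mapsto g\beta$ is continuous into the discrete space $\Omega$ (the preimage of a point is a coset of the open subgroup $G_\beta$), so if $A$ is relatively compact then $A\beta\subseteq\overline{A}\beta$ is the continuous image of a compact set inside a discrete space, hence finite. For the converse, suppose $A\alpha$ is finite. Choosing for each $\gamma\in A\alpha$ an element $a_\gamma\in A$ with $a_\gamma\alpha=\gamma$, one has $A\subseteq\bigcup_{\gamma\in A\alpha}a_\gamma G_\alpha$, a finite union of left cosets of $G_\alpha$. Each such coset is compact, being the image of the compact set $G_\alpha$ from part (i) under the homeomorphism $g\mapsto a_\gamma g$, so their union $C$ is compact; moreover $C$ is closed, since $G_\alpha$, being open, is also closed. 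Then $\overline{A}\subseteq C$ is a closed subset of a compact set, hence compact, and $A$ is relatively compact.

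The final assertion is now immediate: if $A\alpha$ is finite for a single $\alpha$, the converse direction of (ii) already shows $A$ relatively compact, and the forward direction then gives that $A\beta$ is finite for every $\beta$. (Alternatively, one checks directly that $A\beta\subseteq\bigcup_{\gamma\in A\alpha}a_\gamma(G_\alpha\beta)$, a finite union of finite suborbits.) The one genuinely delicate point in the whole argument is the closedness step in (i): one must ensure that the pointwise limit of stabilizer elements is an \emph{honest permutation} lying back in $G$ — which is why the inverses are tracked — and one must pass compactness through a possibly non-injective evaluation map, which is handled by the initial-topology description of the open sets.
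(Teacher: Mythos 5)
Your proof is correct and rests on the same mechanism as the paper's: trapping the stabilizer in a Tychonoff product of finite discrete pieces and using closedness of $G$ in $\Sym(\Omega)$ to see that the relevant set is closed there, with part~(ii) argued by the same finite-union-of-cosets covering. The only real divergence is cosmetic: where the paper first quotients by the (compact) kernel of the action to reduce to a faithful action and then embeds $G_\alpha$ into the product of the finite permutation groups induced on its orbits, you keep the possibly non-injective evaluation map and transport compactness back through the initial topology, which works equally well.
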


\begin{proof}  (i)  Let $K$ denote the kernel of the action of $G$ on
  $\Omega$.  Every open neighbourhood of the identity will contain $K$
  and thus $K$ is compact.  Because $G_\alpha$ is closed we see that
  $K\cap G_\alpha$ is compact.  In order to show that $G_\alpha$
  is compact it is thus enough to show that $G_\alpha/(K\cap
  G_\alpha)$ is compact.  The group $G_\alpha/(K\cap
  G_\alpha)$ acts faithfully on $\Omega$ and the quotient topology on 
$G_\alpha/(K\cap G_\alpha)$ is the same as the permutation topology
  induced by the action on $\Omega$. We may thus assume that
  $G_\alpha$ acts faithfully on $\Omega$.

Let $(\Omega_i)_{i\in I}$ denote the family of $G_\alpha$-orbits on
$\Omega$.  Let $H_i$ denote the permutation group induced by
$G_\alpha$ on $\Omega_i$.  Since, each $\Omega_i$ is finite, then each
group $H_i$ is finite and discrete in permutation topology.  
Set $H=\prod_{i\in I} H_i$, the full Cartesian product.  Note that
$\Omega$ is the disjoint union of the $\Omega_i$'s and that $H$ has a
natural action on $\Omega$.  The
permutation topology on $H$ induced by the action on $\Omega$ is the
same as the product topology.  Thinking of $G_\alpha$ and $H$ as
subgroups of $\sym(\Omega)$ we see that $G_\alpha$ is a closed
subgroup of the compact subgroup $H$ and is thus compact.

(ii)  Suppose that ${A}^-$, the closure of $A$ in $G$,
 is compact.  Then for a point
 $\alpha\in \Omega$ there is a finite open
 covering of ${A}^-$ by sets of the type $gG_\alpha$; that is to
 say,
 we can find $g_1,\ldots,g_n\in G$ such that
 ${A}^-\subseteq\bigcup_{i=1}^n g_iG_\alpha$.  Then $A\alpha\subseteq
 \{g_1\alpha, \ldots,g_n\alpha\}$.

Conversely, suppose that $A\alpha=\{\alpha_1,\ldots,\alpha_n\}$.
 Let $g_i$ be an element in $A$
 such that $g_i\alpha=\alpha_i$.  Then
$A\subseteq\bigcup_{i=1}^n g_iG_\alpha$.
The latter set is compact, so the closure of $A$ is compact.
\end{proof}

\bigskip

Lemma \ref{LCompact} 
implies that if $G$ is a closed transitive permutation group
on a countable set $\Omega$ such that all
suborbits are finite then $G$, with the permutation topology, is a
locally compact, totally
disconnected group.   In particular, the automorphism group of a locally finite,
transitive graph is a locally compact, totally disconnected group.

\bigskip

A subgroup $H$ in a topological group $G$ is said to be cocompact if
$G/H$ is a compact space.  This concept has also a natural
interpretation in terms of the permutation topology.

\begin{lemma}\label{Lcocompact}
{\rm (\cite[Proposition~1]{Nebbia2000}, cf.~\cite[Lemma~7.5]{Moller2002})}
Let $G$ be a group acting transitively on a set $\Omega$. Assume
that $G$ is closed in the permutation topology and all suborbits are
finite.
Then a subgroup $H$ of $G$ is
cocompact if and only if $H$ has finitely many orbits on
$\Omega$.
\end{lemma}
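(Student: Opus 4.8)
The plan is to pass from the action of $H$ on $\Omega$ to an action of the compact open subgroup $G_\alpha$ on the coset space $G/H$, and to reduce the whole statement to a clean topological fact about open subgroups acting on a compact space. First I would fix a point $\alpha\in\Omega$ and use transitivity to identify $\Omega$ with $G/G_\alpha$, the group $G$ acting by left translation. By Lemma~\ref{LCompact}(i) the stabilizer $G_\alpha$ is compact, and since it is the pointwise stabilizer of the finite set $\{\alpha\}$ it is a basic open subgroup; thus $G_\alpha$ is a \emph{compact open} subgroup of $G$. This is the only place where the hypotheses ``closed in the permutation topology'' and ``all suborbits finite'' enter, via Lemma~\ref{LCompact}.

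Next I would record the combinatorial dictionary. The $H$-orbits on $\Omega=G/G_\alpha$ are exactly the double cosets $HgG_\alpha$, so the number of $H$-orbits on $\Omega$ equals the cardinality of $H\backslash G/G_\alpha$. The map $g\mapsto g^{-1}$ sends $HgG_\alpha$ to $G_\alpha g^{-1}H$ and so gives a bijection between $H\backslash G/G_\alpha$ and $G_\alpha\backslash G/H$; the latter is precisely the set of $G_\alpha$-orbits on $G/H$. Hence \emph{$H$ has finitely many orbits on $\Omega$ if and only if $G_\alpha$ has finitely many orbits on $G/H$.}

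The heart of the argument is then the claim that \emph{$G/H$ is compact if and only if $G_\alpha$ has finitely many orbits on $G/H$.} For one direction, each $G_\alpha$-orbit on $G/H$ is the image of the compact set $G_\alpha g$ under the continuous quotient map $G\rightarrow G/H$, hence compact; so if there are finitely many orbits, then $G/H$ is a finite union of compact sets and therefore compact. For the converse I would use that the quotient map $q\colon G\rightarrow G/H$ is open (for any open $U$ the set $q^{-1}(q(U))=UH$ is open), so each $G_\alpha$-orbit $q(G_\alpha g)$ is open because $G_\alpha g$ is open. The orbits thus form a partition of $G/H$ into disjoint nonempty open sets, and compactness forces this open cover to admit a finite subcover; since the sets are pairwise disjoint, there can be only finitely many orbits.

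Combining the dictionary with the claim yields the lemma. I expect the main obstacle to be organizing the three changes of viewpoint cleanly --- orbit, then double coset, then orbit again --- and making sure the openness and compactness steps do not secretly rely on $G$ being Hausdorff (they do not, since both the openness of $q$ and the ``finite union of compact sets is compact'' step hold for arbitrary topological groups). The reliance on $G_\alpha$ being simultaneously compact \emph{and} open is exactly what makes both implications of the claim work, so verifying both properties from Lemma~\ref{LCompact} at the very start is the crucial preparatory step.
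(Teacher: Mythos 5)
Your proposal is correct and follows essentially the same route as the paper: both arguments rest on $G_\alpha$ being compact and open, the openness of the quotient map onto the coset space of $H$, and the observation that the images of the cosets of $G_\alpha$ give an open (respectively compact) cover of that coset space, with the double-coset dictionary $H\backslash G/G_\alpha \leftrightarrow G_\alpha\backslash G/H$ being only a cosmetic repackaging of the paper's direct covering argument.
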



\begin{proof}  Suppose first that $H$ is cocompact.  This means that
both the spaces of right and left cosets of $H$ in $G$ are compact.
Let
$X$ denote the set of right cosets of $H$ in $G$.  The quotient map
$\pi:G\rightarrow X$ is open.  The family of cosets $\{gG_\alpha\}_{g\in
  G}$ is an open covering of $G$ and hence
$\{\pi(gG_\alpha)\}_{g\in G}$ is an open covering of $X$.  Since $X$ is compact, there is a finite subcovering
$\pi(g_1G_\alpha),\ldots, \pi(g_nG_\alpha)$ of $X$.  Then
$G=Hg_1G_\alpha\cup\ldots\cup Hg_nG_\alpha$ and therefore
$\Omega=H(g_1\alpha)\cup\ldots\cup H(g_n\alpha)$.

Conversely, suppose that $H$ has only finitely many orbits on $\Omega$, say
there are elements $g_1, \ldots, g_n$ such that
$\Omega=H(g_1\alpha)\cup\ldots\cup H(g_n\alpha)$.  Then
$G=Hg_1G_\alpha \cup\ldots\cup Hg_nG_\alpha$ and
$X=\pi(g_1G_\alpha)\cup \ldots\cup \pi(g_nG_\alpha)$.  Each of the
sets $\pi(g_1G_\alpha)$ is compact.  Hence $X$, the set of right
cosets of $H$ in $G$, is
compact, because it is a union of finitely many compact
sets.   \end{proof}

\bigskip

  Ideas from
permutation group theory and the permutation topology can  be
applied to the study of a topological group $G$.  For an
open subgroup
$U$  we set $\Omega=G/U$, the space of left
cosets.   The stabilizers in $G$ of points in $\Omega$ are conjugates
of $U$, and thus also open subgroups of $G$.  The stabilizer of a finite set
$\Phi=\{\alpha_1, \ldots, \alpha_n\}$ of points is just the
intersection of the  open subgroups $G_{\alpha_1}, \ldots,
G_{\alpha_n}$ and is thus open in $G$.  From this we see that the
permutation topology coming from the action of $G$ on $\Omega$ is
contained in the topology on $G$.  If the topological group
$G$ is assumed to be
 a totally disconnected and
locally compact, then we can choose  $U$ to
be a compact open subgroup of $G$
(by a theorem of van Dantzig \cite{Dantzig1936}).
In particular, the stabilizers in $G$ of
points in $\Omega$ are all compact (they are conjugates of $U$).
 The second part of  Lemma~\ref{LCompact}
above also holds for the action of $G$ on $\Omega$, so a subset $A$ of
$G$ has compact closure if and only if $A\alpha$ is finite for every
$\alpha \in \Omega$.    This
implies that $|G_\alpha\beta|< \infty$ for all points
$\alpha$ and $\beta$ in $\Omega$.  This is because
$|G_\alpha\beta|=|G_\alpha:G_\alpha\cap G_\beta|$ and this index is
finite because $G_\alpha\cap G_\beta$ is an open subgroup of the
compact group $G_\alpha$.

\subsection{Suborbits and the modular function}
In this section the general assumption will be that $G$ is a closed
permutation group acting on a set $\Omega$ and that
all suborbits of the group $G$ are finite.

Under the above assumptions the group $G$ is a locally compact, totally
disconnected group.  In this section we want to interpret the modular
function on $G$ in terms of the action of $G$ on $\Omega$.
The connection between suborbits and the modular function can be seen from
the following argument due to  Schlichting \cite{Schlichting1979}, see
also \cite{Trofimov1985a}.

Let $\mu$ be a right
Haar-measure on $G$.  Define the modular function $\Delta$
so that if
$A$ is a measurable set then $\mu(gA)=\Delta(g)\mu(A)$.

\begin{lemma} \label{LModular}
{\rm (\cite[Lemma 1]{Schlichting1979}, cf. \cite[Theorem 1]{Trofimov1985a})})
Let $G$ be a closed, transitive permutation
group on a set $\Omega$.  Assume furthermore that all
suborbits of $G$ are finite.  Let $\Delta$ denote the modular function
on $G$.  If  $h$ is an element in $G$ with $h\alpha=\beta$ then
  $$\Delta(h)=\frac{|G_\beta\alpha|}{|G_\alpha\beta|}=
\frac{|G_\alpha:G_\alpha\cap h^{-1}G_\alpha h|}
{|G_\alpha:G_\alpha\cap hG_\alpha h^{-1}|}.$$
\end{lemma}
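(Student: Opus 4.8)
The plan is to compute the ratio of right Haar measures $\mu(G_\beta)/\mu(G_\alpha)$ in two different ways. Since $G$ is a closed transitive permutation group with all suborbits finite, Lemma~\ref{LCompact} tells us that $G$ is locally compact and totally disconnected and that $G_\alpha$ is compact; being a point stabilizer it is also open, so $0 < \mu(G_\alpha) < \infty$. The hypothesis $h\alpha = \beta$ gives $G_\beta = hG_\alpha h^{-1}$, so $G_\beta$ is likewise compact and open with finite positive measure, and $L = G_\alpha \cap G_\beta$ is an open, and hence finite-index, subgroup of both.

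First I would evaluate the ratio through the modular function. Taking $A = G_\alpha h^{-1}$ and combining the defining relation $\mu(hA) = \Delta(h)\mu(A)$ with the right-invariance of $\mu$ yields
\[ \mu(G_\beta) = \mu(hG_\alpha h^{-1}) = \Delta(h)\,\mu(G_\alpha h^{-1}) = \Delta(h)\,\mu(G_\alpha). \]

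Next I would compute the same ratio by counting cosets of $L$. Because $\mu$ is right-invariant, decomposing a compact open subgroup into right cosets of a finite-index subgroup shows that its measure is the index times $\mu(L)$; concretely $\mu(G_\alpha) = |G_\alpha : L|\,\mu(L)$ and $\mu(G_\beta) = |G_\beta : L|\,\mu(L)$. By the index formula for suborbit lengths recalled in Section~\ref{Sperm}, $|G_\alpha : L| = |G_\alpha \beta|$ and $|G_\beta : L| = |G_\beta \alpha|$. Dividing gives $\mu(G_\beta)/\mu(G_\alpha) = |G_\beta\alpha|/|G_\alpha\beta|$, which together with the first computation delivers the central equality $\Delta(h) = |G_\beta\alpha|/|G_\alpha\beta|$. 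The second displayed equality is then pure group theory: from $G_\beta = hG_\alpha h^{-1}$ one reads off $|G_\alpha\beta| = |G_\alpha : G_\alpha \cap hG_\alpha h^{-1}|$ directly, while conjugating the pair $(G_\beta, L)$ by $h^{-1}$ (an index-preserving isomorphism) turns $|G_\beta\alpha| = |G_\beta : L|$ into $|G_\alpha : G_\alpha \cap h^{-1}G_\alpha h|$.

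I expect the main point to watch to be keeping the conventions consistent: one must pair a \emph{right} Haar measure with decompositions into \emph{right} cosets, so that each coset genuinely has measure $\mu(L)$, and apply the modular function in the form $\mu(hA) = \Delta(h)\mu(A)$ rather than its inverse. Once these are fixed, the finiteness of every index is guaranteed by the finite-suborbit hypothesis and the remaining steps are routine.
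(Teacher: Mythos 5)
Your argument is correct and is essentially the paper's own proof: both convert the suborbit lengths $|G_\alpha\beta|$ and $|G_\beta\alpha|$ into indices over $L=G_\alpha\cap G_\beta$, turn those indices into ratios of right Haar measures, and use $\mu(hG_\alpha h^{-1})=\Delta(h)\mu(G_\alpha h^{-1})=\Delta(h)\mu(G_\alpha)$ to extract $\Delta(h)$. The only differences are cosmetic (you compute $\mu(G_\beta)/\mu(G_\alpha)$ two ways instead of writing one chain of equalities, and you spell out the conjugation giving the second displayed equality, which the paper leaves implicit).
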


\noindent
\begin{proof}
  Then, with $\mu$ denoting the right
Haar-measure on $G$,
\begin{align*}
    \mid G_\beta\alpha\mid & =  \mid G_\beta:G_\alpha\cap G_\beta\mid \\
                           & =  \mu(G_\beta)/\mu(G_\alpha\cap G_\beta) \\
                           & =  \mu(hG_\alpha h^{-1})/
                                              \mu(G_\alpha\cap G_\beta) \\
                           & =  \Delta(h)\mu(G_\alpha )/
                                              \mu(G_\alpha\cap G_\beta) \\
                           & =  \Delta(h)
                              \mid G_\alpha:G_\alpha\cap G_\beta\mid \\
                           & =  \Delta(h)\mid  G_\alpha \beta\mid.
\end{align*}
And we see that $\Delta(h)={|G_\beta \alpha|}/{|G_\alpha\beta|}$.
\end{proof}

\bigskip

{\em Remark.}  Let $G$ be a locally compact, Hausdorff group with
modular function $\Delta$.  Assume $G$ acts transitvely on a set $\Omega$
such that the stabilizers of points are compact open subgroups of $G$.
The calculation in the proof of Lemma~\ref{LModular} is also valid in
this case and thus the conclusion in  Lemma~\ref{LModular} holds also.

\smallskip

For an orbital $A=G(\alpha, \beta)$ we define the {\em paired orbital}
as the orbital $A^*=G(\beta, \alpha)$.  This pairing of orbitals also
gives us a pairing of suborbits, where the
suborbit $G_\alpha\beta$ is paired to a suborbit $G_\alpha\gamma$
where $\gamma$ is a point in $\Omega$ such that $(\alpha,\gamma)$ is
in $G(\beta, \alpha)$.  Denote by $\Gamma$ the directed orbital graph for the
orbital $G(\alpha, \beta)$.  The size of the suborbit $G_\alpha\beta$
is the out-valency of $\Gamma$ and the size of the paired suborbit
$G_\alpha\gamma$ is the in-valency of $\Gamma$.
Using Lemma~\ref{LModular}, we obtain the following proposition.

\begin{proposition}{\rm (\cite[Theorem 1]{Trofimov1985a})}
Let $G$ be a closed, transitive permutation
group on a set $\Omega$.  Assume that
all suborbits of $G$ are finite.  Then the lengths of paired suborbits
are always equal if and only if ${G}$ is unimodular.
\end{proposition}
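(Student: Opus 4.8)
The plan is to read the statement directly off the formula in Lemma~\ref{LModular}, so that the whole argument reduces to identifying the two quantities $|G_\beta\alpha|$ and $|G_\alpha\beta|$ appearing there with the lengths of a suborbit and its paired suborbit. Fix a point $\alpha\in\Omega$. For an arbitrary $h\in G$, put $\beta=h\alpha$; then $G(\alpha,\beta)$ is an orbital and, under the identification of suborbits with orbitals, $|G_\alpha\beta|$ is the length of the suborbit corresponding to this orbital. By Lemma~\ref{LModular} we have $\Delta(h)=|G_\beta\alpha|/|G_\alpha\beta|$, so it remains only to recognise the numerator $|G_\beta\alpha|$ as the length of the paired suborbit.

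First I would verify the combinatorial identity $|G_\beta\alpha|=|G_\alpha\gamma|$, where $\gamma$ is chosen so that $(\alpha,\gamma)\in G(\beta,\alpha)$, i.e.\ $G_\alpha\gamma$ is the suborbit paired to $G_\alpha\beta$. Working in the directed orbital graph $\Gamma$ of $G(\alpha,\beta)$, the in-neighbours of $\alpha$ are the points $\delta$ with $(\delta,\alpha)\in G(\alpha,\beta)$; writing such a $\delta$ as $\delta=g\alpha$ with $g\beta=\alpha$, and using transitivity to pick a single $h_0$ with $h_0\beta=\alpha$, one sees that this set equals $h_0G_\beta\alpha$, whose cardinality is $|G_\beta\alpha|$. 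Thus the in-valency of $\Gamma$ equals $|G_\beta\alpha|$; since that in-valency is by construction the length $|G_\alpha\gamma|$ of the paired suborbit, the identity follows. These are exactly the statements about in- and out-valencies recorded just before the proposition, so in the write-up I would simply invoke them.

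With this identity in hand, Lemma~\ref{LModular} reads $\Delta(h)=|G_\alpha\gamma|/|G_\alpha\beta|$, the ratio of the length of the paired suborbit to the length of the suborbit, and the two implications become immediate. If paired suborbits always have equal length, then for every $h$ the numerator and denominator coincide, so $\Delta(h)=1$ for all $h\in G$ and $G$ is unimodular. Conversely, if $G$ is unimodular then $\Delta\equiv 1$, and applying the identity to an arbitrary orbital $G(\alpha,\beta)$, choosing $h$ with $h\alpha=\beta$, gives $|G_\alpha\gamma|=|G_\alpha\beta|$, so every suborbit has the same length as its paired suborbit.

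The only real content beyond quoting Lemma~\ref{LModular} is the bookkeeping identity $|G_\beta\alpha|=|G_\alpha\gamma|$ linking the modular-function data to the pairing of suborbits; this is the step where I expect the main obstacle, and it is what makes the equivalence meaningful rather than a restatement of the lemma. Everything else is a one-line consequence of the formula for $\Delta$.
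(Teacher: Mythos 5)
Your proof is correct and follows exactly the route the paper intends: the paper gives no separate proof, deriving the proposition directly from Lemma~\ref{LModular} together with the preceding identification of $|G_\alpha\beta|$ and $|G_\beta\alpha|$ with the out- and in-valencies of the orbital digraph, i.e.\ with the lengths of a suborbit and its paired suborbit. Your verification of the bookkeeping identity $|G_\beta\alpha|=|G_\alpha\gamma|$ is the one piece of content, and it is carried out correctly.
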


Using that the modular function is a homomorphism, we obtain the following.

\begin{corollary}
Let $G$ be a closed, transitive permutation
group on a set $\Omega$.
Let $K=\ker(\Delta)$.  Then

(a) If $\alpha\in \Omega$ then $G_\alpha\leq K$.

(b) Let $G'$ be the derived group of $G$.  Then $G'\leq K$.

(c) If $g$ is an torsion element of $G$ then $g\in K$.
\end{corollary}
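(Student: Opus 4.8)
The common thread running through all three parts is that $\Delta$ is a continuous homomorphism from $G$ into the multiplicative group $\R_{>0}$ of positive real numbers, and that this target group is abelian, torsion-free, and contains no non-trivial compact subgroups. Each assertion amounts to showing that certain elements are forced into $K=\ker(\Delta)$, and the plan is to exploit one of these three features of $\R_{>0}$ in each case. Throughout I rely on the standing assumption of this subsection, that all suborbits of $G$ are finite, which by Lemma~\ref{LCompact} makes $G$ a locally compact, totally disconnected group with compact point stabilizers, so that $\Delta$ is defined and Lemma~\ref{LModular} applies.

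For part (a), I would argue directly from Lemma~\ref{LModular}. If $g\in G_\alpha$ then $g\alpha=\alpha$, so taking $h=g$ and $\beta=\alpha$ in the lemma gives $\Delta(g)=|G_\alpha\alpha|/|G_\alpha\alpha|=1$, whence $g\in K$. A more conceptual alternative is to note that $G_\alpha$ is compact by Lemma~\ref{LCompact}(i); the continuous image $\Delta(G_\alpha)$ is then a compact subgroup of $\R_{>0}$, and since $\{1\}$ is the only such subgroup we again obtain $G_\alpha\leq K$.

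For parts (b) and (c) the reasoning is purely algebraic. In (b), because $\R_{>0}$ is abelian, $\Delta$ sends every commutator to the identity, as $\Delta([g,h])=\Delta(g)\Delta(h)\Delta(g)^{-1}\Delta(h)^{-1}=1$; since $K$ is a subgroup containing all commutators it contains the derived group, so $G'\leq K$ (equivalently, $\Delta$ factors through the abelianization $G/G'$). In (c), if $g$ is a torsion element with $g^n=e$, then $\Delta(g)^n=\Delta(g^n)=1$, and as $\Delta(g)$ is a positive real number whose $n$-th power equals $1$ it must itself equal $1$, giving $g\in K$.

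There is essentially no serious obstacle here; the only point requiring care is that the modular function be well-defined and, for the cleanest form of part (a), that the point stabilizers be compact. Both rest on the standing hypothesis that all suborbits of $G$ are finite, and once this is in place parts (b) and (c) are formal consequences of $\Delta$ taking values in the abelian, torsion-free group $\R_{>0}$.
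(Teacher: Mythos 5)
Your proposal is correct and follows essentially the same route as the paper, which disposes of all three parts by observing that $G/K$ embeds in the multiplicative group of positive reals (abelian, torsion-free, with no non-trivial compact subgroups); you have simply written out the details the paper leaves to the reader. Your explicit appeal to Lemma~\ref{LModular} (or to compactness of $G_\alpha$) for part (a), and the standing hypothesis that suborbits are finite, are exactly the right ingredients.
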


\noindent
\begin{proof} Follows directly from the definitions and the fact
that $G/K$
is isomorphic to a multiplicative subgroup of the positive real numbers. \end{proof}

\begin{corollary}
Let $G$ be a closed, transitive permutation
group on a set $\Omega$.  Assume that
all suborbits of $G$ are finite.
If $G$  is also
primitive, then paired suborbits have equal length.
\end{corollary}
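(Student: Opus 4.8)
The plan is to deduce from primitivity that $G$ is unimodular, and then to quote the preceding Proposition, which asserts that the lengths of paired suborbits are equal exactly when $G$ is unimodular. Write $K=\ker(\Delta)$. Since $\Delta\colon G\to(\R_{>0},\cdot)$ is a continuous homomorphism, $K$ is a normal subgroup of $G$, and the preceding Corollary records both that $G_\alpha\leq K$ (part (a)) and that $G/K$ is isomorphic to a subgroup of the multiplicative group of positive reals. We may assume $|\Omega|\geq 2$, since otherwise $G_\alpha=G$ and the assertion is vacuous.

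Because $G$ is transitive and primitive, the point stabilizer $G_\alpha$ is a maximal subgroup of $G$; this is the characterization of primitivity recalled in Section~\ref{Sperm}. Combining maximality with the chain $G_\alpha\leq K\leq G$ yields the dichotomy $K=G_\alpha$ or $K=G$. If $K=G$, then $\Delta$ is trivial, so $G$ is unimodular and the Proposition finishes the argument. Thus everything comes down to excluding the case $K=G_\alpha$.

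Suppose, for a contradiction, that $K=G_\alpha$, so that $G_\alpha$ is normal in $G$. As $G$ is transitive, all point stabilizers are conjugate and hence all equal to $G_\alpha$; therefore $G_\alpha$ lies in the kernel of the action, which is trivial because a permutation group acts faithfully. Hence $G_\alpha=1$ and the action is regular, so $G\cong G/K$ is a nontrivial subgroup of $(\R_{>0},\cdot)$, in particular an abelian, torsion-free group. Any nontrivial torsion-free abelian group has a proper nontrivial subgroup $H$: choosing $1\neq g\in G$, either $\langle g\rangle$ is already proper, or $\langle g\rangle=G\cong\Z$ and $\langle g^2\rangle$ is proper and nontrivial. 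Now $G_\alpha=1<H<G$, so by the fact recalled in Section~\ref{Sperm} the orbit $H\alpha$ is a nontrivial proper block of imprimitivity, contradicting primitivity. Therefore $K=G_\alpha$ cannot occur, $K=G$, and the claim follows.

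The one genuinely substantive step is the exclusion of $K=G_\alpha$, and it is worth noting where the hypotheses enter: faithfulness is what collapses a normal stabilizer to the identity, and the torsion-free structure of the image of $\Delta$ is what supplies a proper nontrivial subgroup and hence a forbidden block. The remainder is a formal consequence of the maximality of $G_\alpha$ together with the already-established inclusion $G_\alpha\leq K$.
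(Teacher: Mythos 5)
Your proof is correct, and it is a close cousin of the paper's argument, though the pivot is different. The paper also sets $K=\ker(\Delta)$ and exploits normality plus primitivity, but it invokes the dichotomy that a normal subgroup of a primitive group is either trivial or transitive (its orbits being blocks of imprimitivity): when $K$ is transitive one picks $h\in K$ with $h\alpha=\beta$ and reads $|G_\beta\alpha|=|G_\alpha\beta|$ straight off the formula $\Delta(h)=|G_\beta\alpha|/|G_\alpha\beta|$, while the case $K=\{e\}$ is dismissed with the terse remark that $G$ would then be abelian and hence trivial. You instead combine the maximality of $G_\alpha$ with the inclusion $G_\alpha\leq K$ from part~(a) of the preceding corollary to get the dichotomy $K=G_\alpha$ or $K=G$; your main branch then routes through unimodularity and the Proposition rather than through the modular formula directly, and your degenerate branch is worked out in full (normal stabilizer collapses by faithfulness, regularity, torsion-freeness of the image of $\Delta$, construction of a forbidden block), which is considerably more careful than the paper's one-line dismissal. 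The two dichotomies are in fact equivalent here: since $G_\alpha\leq K$, transitivity of $K$ forces $K=KG_\alpha=G$. So the approaches buy the same thing; yours leans on the subgroup-lattice characterization of primitivity and is more self-contained at the degenerate case, while the paper's is shorter because the transitive-normal-subgroup case feeds immediately into the modular function computation.
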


\noindent
\begin{proof}  Let $K=\ker(\Delta)$.  Then, because $G$ is primitive
and $K$ is normal in $G$,
we know that either $K=\{e\}$ or $K$ is transitive.  If $K=\{e\}$
then $G$ is abelian and must in fact be trivial.
If $K$ is transitive
then the result of Lemma~\ref{LModular} implies that lengths of paired
suborbits are equal.  \end{proof}

\bigskip

For a long time it was an open question, posed by Peter M.~Neumann,
whether  one could have a group
acting primitively on a set $\Omega$ with a finite suborbit
paired to an infinite one.   Such examples were constructed by David M.~Evans in \cite{Evans2001}.
\bigskip

Consider a connected graph $\Gamma$ and assume that $G$ is a closed
subgroup of $\autga$ that acts transitively on $\Gamma$ and that all
suborbits are finite.  For convenience we
think of each undirected
edge $\{\alpha,\beta\}$
in $\Gamma$ as consisting of two directed edges $(\alpha,\beta)$ and
$(\beta,\alpha)$.  Each directed edge $e=(\alpha,\beta)$
will be labeled by a number
$$\Delta_e=\frac{|G_\beta \alpha|}{|G_\alpha \beta|}.$$
Observe that $\Delta_{(\alpha,\beta)}=\Delta_{(\beta,\alpha)}^{-1}$.
Furthermore, note that if $g$ is an element of $G$
such that $g\alpha=\beta$ and $e=(\alpha, \beta)$ then
$\Delta(g)=\Delta_e$.  Suppose $g$ is an element of $G$
and $g\alpha=\gamma$ and
that there is a vertex
$\beta$ in $X$ such that $(\alpha,\beta)$  and $(\beta,\gamma)$ are
edges in $X$.  Find elements $g_1$ and $g_2$ in $G$ such that
$g_1\alpha=\beta$ and $g_2\beta=\gamma$.
Then $g\alpha=g_2g_1\beta$ and from the
formula above for the modular function we can deduce that
$\Delta(g)=\Delta(g_2g_1)$.  We also find that
$$\Delta(g)=\Delta(g_2g_1)=\Delta(g_2)\Delta(g_1)=\Delta_{(\beta,\gamma)}
\Delta_{(\alpha,\beta)}.$$
This can be extended to directed walks
of arbitrary length, so that if $g\alpha=\beta$
then we take a directed walk from $\alpha$ to $\beta$, enumerate the edges in
the walk as $e_1, \ldots, e_k$ and then
$$\Delta(g)=\Delta_{e_1}\cdots \Delta_{e_k}.$$
Hence the labeled graph completely describes the modular function on
$G$. This idea can be found in the paper \cite{BassKulkarni1990} by Bass and Kulkarni.

\bigskip
Suppose now not only that all the suborbits of $G$ are finite
but also that there is a finite upper
bound $m$ on their length.
In that case, Lemma \ref{LModular} implies that the image of the modular
function is a bounded set.  The image of the modular function is a bounded subgroup
of the multiplicative group of positive
real numbers and is thus the trivial subgroup.  Hence $G$ is unimodular.
We have deduced the following unpublished result of Praeger.

\begin{corollary}
Let $G$ be a closed, transitive permutation
group on a set $\Omega$ such that
all suborbits of $G$ are finite.
If there is a finite bound on the length of suborbits, then paired suborbits
have equal length.
\end{corollary}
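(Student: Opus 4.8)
The plan is to show that the hypothesis forces $G$ to be unimodular, and then to invoke the earlier Proposition equating unimodularity with the equality of lengths of paired suborbits. The bridge between the numerical hypothesis and unimodularity is Lemma~\ref{LModular}, which expresses the modular function as a ratio of suborbit lengths.

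First I would fix a point $\alpha\in\Omega$, take an arbitrary element $h\in G$, and set $\beta=h\alpha$. Lemma~\ref{LModular} then gives
$$\Delta(h)=\frac{|G_\beta\alpha|}{|G_\alpha\beta|}.$$
Both the numerator and the denominator are lengths of suborbits, so each lies between $1$ and the assumed bound $m$. Consequently $1/m\le\Delta(h)\le m$ for every $h\in G$, which is to say that the image $\Delta(G)$ is a bounded subset of the multiplicative group $\R_{>0}$.

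The key step---and really the only point requiring an idea---is to observe that $\Delta(G)$ is not merely a bounded set but a subgroup of $\R_{>0}$, since $\Delta$ is a homomorphism, and that the only bounded subgroup of $(\R_{>0},\times)$ is the trivial one. Indeed, if some $x\neq 1$ lay in such a subgroup, then all of its integer powers $x^n$ would lie there as well, and these grow without bound (towards $+\infty$ if $x>1$, or towards $0$ if $x<1$), contradicting boundedness. Hence $\Delta(G)=\{1\}$ and $G$ is unimodular.

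Finally, with unimodularity in hand, I would apply the earlier Proposition, which asserts that for a closed transitive permutation group all of whose suborbits are finite, the lengths of paired suborbits are equal if and only if $G$ is unimodular. This at once yields the conclusion. I do not anticipate any serious obstacle: the argument is a short chain of implications, and the single delicate observation is the elementary fact that a bounded multiplicative subgroup of the positive reals must be trivial.
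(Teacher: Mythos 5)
Your argument is correct and is essentially the paper's own proof: the paper likewise uses Lemma~\ref{LModular} to see that the bound on suborbit lengths makes the image of the modular function a bounded subgroup of $(\R_{>0},\times)$, hence trivial, and then concludes via the proposition equating unimodularity with equality of paired suborbit lengths. No issues.
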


The next result, also due to Praeger \cite{Praeger1991},
uses the modular function to infer information about graph structure.
The {\em directed integer graph}
$\Z$ has  the set of integers as a vertex set and the edge set is the set of
all ordered pairs $(n, n+1)$.

\begin{theorem} {\rm (\cite{Praeger1991})} \label{Tepimorphism}
Let $\Gamma$ be an infinite, connected, vertex and edge transitive directed
graph with finite but unequal in- and out-valence.  Then there is a
graph epimorphism $\varphi$ from $\Gamma$ to the directed integer graph
$\Z$.  For each $i\in \Z$, the inverse image $\varphi^{-1}(i)$ is
infinite.
\end{theorem}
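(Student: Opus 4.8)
The plan is to manufacture the epimorphism $\varphi$ directly from the modular function $\Delta$ of $G=\autga$, reading it off as an integer-valued ``height'' that rises by exactly one across each directed edge. First I note that the hypotheses place us in the setting of the preceding results: writing $d^{+}$ and $d^{-}$ for the constant out- and in-valencies, the underlying graph $\overline{\Gamma}$ has valency at most $d^{+}+d^{-}$, so $\Gamma$ is locally finite; together with connectivity this makes $G$ a closed, vertex-transitive permutation group on $V\Gamma$ all of whose suborbits are finite (each $G_{\alpha_0}$-orbit lies in a finite distance-sphere about $\alpha_0$). Thus Lemma~\ref{LModular} applies. By edge-transitivity $\Gamma$ is the directed orbital graph of a single orbital, so every directed edge $e=(\alpha,\beta)$ carries the same label $\Delta_e=|G_\beta\alpha|/|G_\alpha\beta|=d^{-}/d^{+}=:r$, and the hypothesis $d^{+}\neq d^{-}$ gives $r\neq1$. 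Recall also that whenever $g\alpha=\beta$ across a single edge one has $\Delta(g)=\Delta_e=r$, and that point stabilizers lie in $\ker\Delta$.

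Next I would define $\varphi$. Fix a base vertex $\alpha_0$ and, for each $\beta\in V\Gamma$, choose $g\in G$ with $g\alpha_0=\beta$ and set $\varphi(\beta)=\log_r\Delta(g)$. This is well defined because two such choices differ by an element of $G_{\alpha_0}\subseteq\ker\Delta$, so $\Delta(g)$ depends only on $\beta$; equivalently $\chi:=\log_r\circ\,\Delta$ is a homomorphism $G\to\R$ that kills $G_{\alpha_0}$, hence descends to $\varphi$ on $V\Gamma=G/G_{\alpha_0}$ and satisfies the equivariance $\varphi(g\gamma)=\chi(g)+\varphi(\gamma)$. The key computation is the edge condition: if $(\alpha,\beta)$ is an edge, take $g$ with $g\alpha=\beta$; then $\Delta(g)=r$, so $\chi(g)=1$ and $\varphi(\beta)=\varphi(g\alpha)=\chi(g)+\varphi(\alpha)=\varphi(\alpha)+1$. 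Thus $\varphi$ raises the level by exactly $1$ across every directed edge, i.e.\ it sends an edge $(\alpha,\beta)$ to the edge $(\varphi(\alpha),\varphi(\alpha)+1)$ of $\Z$.

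From here the map is integer-valued and surjective. Since $\varphi(\alpha_0)=0$ and connectivity lets me join $\alpha_0$ to any vertex by a walk in $\overline{\Gamma}$ along which $\varphi$ changes by $\pm1$ at each step, $\varphi$ takes values in $\Z$; and because $d^{+},d^{-}\geq1$ (the graph is connected and infinite, so it has edges) every vertex has both an out-neighbour, at level $+1$, and an in-neighbour, at level $-1$, so from $\alpha_0$ one reaches every level and $\varphi(V\Gamma)=\Z$. Hence $\varphi\colon\Gamma\to\Z$ is a graph epimorphism onto the directed integer graph.

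Finally, for the infinite fibres I would combine a level-shifting automorphism with an edge count. Choosing any edge $(\alpha,\beta)$ and $g$ with $g\alpha=\beta$ gives $\chi(g)=1$, so by equivariance $\varphi(g\gamma)=\varphi(\gamma)+1$; thus $g^{i}$ maps $L_0:=\varphi^{-1}(0)$ bijectively onto $L_i:=\varphi^{-1}(i)$, and all fibres are equinumerous. Suppose a fibre were finite, of size $a\geq1$. Every directed edge runs from some $L_n$ to $L_{n+1}$; counting these from the two ends gives $d^{+}|L_n|=d^{-}|L_{n+1}|$, i.e.\ $d^{+}a=d^{-}a$, forcing $d^{+}=d^{-}$ and contradicting the hypothesis. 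Therefore every fibre is infinite. The main obstacle is the well-definedness and integrality of $\varphi$: one must correctly locate $G_{\alpha_0}$ inside $\ker\Delta$ and extract the single-edge increment $\chi(g)=1$ from $r\neq1$. Once the level structure is in place, surjectivity and the infinitude of the fibres follow from connectivity and the one-line handshake between adjacent levels.
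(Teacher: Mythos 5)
Your proposal is correct and follows essentially the same route as the paper: both read the integer level off the modular function of $\autga$ (the paper writes $\Delta(g)=q^i$ where you write $\chi=\log_r\Delta$), use edge-transitivity to see that each directed edge increments the level by exactly one, and rule out finite fibres by the same two-sided edge count $d^{+}|L_n|=d^{-}|L_{n+1}|$. The only cosmetic difference is that the paper gets equinumerous fibres by noting they are orbits of $\ker\Delta$, hence blocks of imprimitivity, whereas you translate by a level-shifting element $g$; both are valid.
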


\begin{proof}  Write $G=\autga$.  Let $q=d^-/d^+$ where $d^+$ is the
out-valence of $\Gamma$ and $d^-$ is the in-valence of $\Gamma$.
Consider an edge $(\alpha,\beta)$ in $\Gamma$.  Because $G$ acts
transitively on the edges of $\Gamma$ we can conclude that
$|G_\alpha \beta|=d^+$
and $|G_\beta \alpha|=d^-$ and hence if $g\alpha=\beta$
then $\Delta(g)=d^-/d^+$.  Using the graph $\Gamma$ to calculate the
modular function in a similar way as described above we
conclude that for every element $g\in G$ there is an integer $i$ such
that $\Delta(g)=q^i$.  Hence, if $K$ denotes the kernel of the modular
function then $G/K=\Z$.  Fix a vertex $\alpha_0$ in $\Gamma$ and define a map
$\varphi:V\Gamma\rightarrow \Z$ so that $\varphi(\beta)=i$ if there is an
element $g$ in $G$ such that $g\alpha_0=\beta$ and $\Delta(g)=q^i$.
It is clear that the choice of $g$ is immaterial.  From the way
one uses $\Gamma$ to calculate the modular function  we see that if
$(\alpha,\beta)$ is
an edge in $\Gamma$ then $\varphi(\beta)=\varphi(\alpha)+1$ which implies that
$\varphi$ is a homomorphism from $\Gamma$ to the directed integer
graph.

Assume now, seeking contradiction, that $\varphi^{-1}(i)$ is finite for some $i$.
Note that the fibers of $\varphi$ are just the orbits of the kernel of
the modular homomorphism and are thus  blocks of imprimitivity for
$G$.  Hence, all the fibers of $\varphi$ have the same cardinality,
say $k$.    The number of edges going out of $\varphi^{-1}(0)$ is 
$d^+k$ and the number of edges going into $\varphi^{-1}(1)$ is
$d^-k$.  But, both these numbers should be equal to the number of
edges going from $\varphi^{-1}(0)$ to $\varphi^{-1}(1)$ and because we
are assuming that $d^-\neq d^+$ we have a contradiction.

(A similar proof of Praegers result is in a paper by Evans
\cite{Evans1997}.)\end{proof}

\bigskip
{\em Remark.}  In the next section {\em highly arc transitive}
  digraphs are discussed.  A digraph $\Gamma$ satisfying the conditions
  in Theorem~\ref{Tepimorphism}
  need not be highly arc transitive, but it is easy to show that if
  $d^+$ and $d^-$ are coprime, then $\Gamma$ must be highly arc
  transitive.

\bigskip

The next result we discuss, is a remarkable theorem of
Schlichting \cite{Schlichting1980}.

\begin{theorem}\label{TSchlichting}{\rm (\cite{Schlichting1980})}
Let $G$ be a group acting transitively on a set $\Omega$.  Then
there is a finite bound on the sizes of suborbits of $G$ if and only
if there is a $G$-invariant equivalence relation $\sim$ on $\Omega$ with finite
classes, such that the stabilizers of points in the action of $G$ on
$\Omega/\sim$ are finite.
\end{theorem}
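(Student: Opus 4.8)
The plan is to prove the two implications separately; the backward one is elementary and the forward one carries all the weight. For the easy direction ($\Leftarrow$), suppose such a $\sim$ exists. By transitivity all classes share a common finite size $c$, and all point stabilizers of the \emph{induced} action on $\Omega/\sim$ share a common finite size $d$. Given a suborbit $G_\alpha\beta$, note that $G_\alpha$ fixes $\alpha$ and hence fixes the class $[\alpha]$, so the image of $G_\alpha$ in the permutation group induced on $\Omega/\sim$ lies in the stabilizer of the point $[\alpha]$; therefore $G_\alpha$ carries $[\beta]$ into at most $d$ distinct classes. Since each class has $c$ points, $|G_\alpha\beta|\le cd$, giving the required uniform bound.

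For the forward direction ($\Rightarrow$) I would first reduce to the case that $G$ is a closed, faithful permutation group. Replacing $G$ by its closure in $\Sym(\Omega)$ leaves the suborbits unchanged (an element of the closure fixing $\alpha$ agrees on any further point $\beta$ with some element of $G_\alpha$, so $\overline{G}_\alpha\beta=G_\alpha\beta$), and a relation together with its stabilizer structure produced for $\overline{G}$ restricts to $G$, because the $G$-action induced on $\Omega/\sim$ is a subgroup of the $\overline{G}$-action; the kernel of the action is irrelevant to suborbits and to blocks, so I may also assume faithfulness. Then Lemma~\ref{LCompact} makes $U:=G_\alpha$ a compact open subgroup and $G$ a locally compact, totally disconnected group, and the hypothesis becomes $\sup_{g\in G}[U:U\cap gUg^{-1}]=\sup_{g}|G_\alpha(g\alpha)|\le m<\infty$; that is, $U$ is commensurated by $G$ with a \emph{uniform} bound.

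The crux is to produce a compact open normal subgroup $N\trianglelefteq G$, equivalently a normal subgroup commensurable with $U$. The natural candidate is the closed normal closure $N=\overline{\langle\,gUg^{-1}:g\in G\,\rangle}$; as it contains the open subgroup $U$, proving $N$ compact amounts to proving $[N:U]<\infty$. This is exactly where the uniform bound $m$, rather than mere finiteness of suborbits, enters: it should force the conjugates of $U$ to remain inside one fixed compact set, and Iwasawa's theorem (the criterion for a locally compact group generated by a conjugation-invariant, relatively compact neighbourhood of the identity to be compact) then yields compactness of $N$. Granting $N$, I finish as follows. The $N$-orbits are the classes of a $G$-invariant equivalence relation $\sim$, since orbits of a normal subgroup are blocks of imprimitivity (as recorded at the end of Section~\ref{Sperm}); these classes $N\alpha$ are finite because $[N:N\cap U]=[N:U]<\infty$. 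The kernel $K$ of the $G$-action on $\Omega/\sim$ contains $N$ and is thus open, while each stabilizer $V=G_{\{N\alpha\}}$ is compact and open (it contains $N$, maps to $\Sym(N\alpha)$ with finite image and compact kernel $G_{(N\alpha)}\le U$); since $N\le K\le V$ with $N$ open in the compact group $V$, we get $[V:K]\le[V:N]<\infty$, so the induced action on $\Omega/\sim$ has finite point stabilizers, as required.

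The hard part will be the compactness of $N$ in the crux step. Pairwise commensurability of the conjugates $gUg^{-1}$ is automatic, but the subgroup they generate can a~priori fail to be compact, and the uniform bound must be used in an essential way to prevent this: for instance, if $G$ is primitive with infinite $G_\alpha$, the normal closure is forced up to all of $G$ and $N\alpha=\Omega$ is infinite, so the argument must be quantitative enough to exclude this configuration. Controlling the join of infinitely many uniformly commensurable conjugates is therefore the delicate point, and it is precisely the content imported from Iwasawa's theorem.
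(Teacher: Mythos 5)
Your backward implication is correct (and more complete than the paper, which leaves that direction to the reader), and your reduction to a closed, faithful permutation group, so that $U=G_\alpha$ is a compact open subgroup of a totally disconnected locally compact $G$, is sound. The gap is in the crux step of the forward direction, and it is twofold. First, the claim that the uniform bound forces the conjugates of $U$ to lie in one fixed compact set is false for $U=G_\alpha$ itself. Take $G$ to be the infinite dihedral group $\langle a,b\mid a^2=b^2=1\rangle$ (discrete) acting on $\Omega=G/\langle a\rangle$, so $U=\langle a\rangle$: every suborbit has size at most $2$, yet the conjugates $waw^{-1}$ are infinitely many distinct reflections lying in infinitely many distinct cosets of $U$, so $A=\bigcup_{g}gUg^{-1}$ has an infinite orbit $A\alpha$ and is not relatively compact by Lemma~\ref{LCompact}(ii). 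To obtain a conjugation-invariant relatively compact neighbourhood one must replace $G_\alpha$ by the pointwise stabilizer $G_{(\Phi)}$ of a finite set $\Phi$ chosen to \emph{minimize} the maximal $G_{(\Phi)}$-orbit length; proving that this choice works is the combinatorial heart of the theorem (Proposition~\ref{PIN}, the Neumann/Bergman--Lenstra argument), and your proposal omits it. Second, even granting that $G$ is in [{\em IN}], your candidate $N=\overline{\langle\, gUg^{-1}: g\in G\,\rangle}$ need not be compact under the hypotheses: in the same example the normal closure of $\langle a\rangle$ contains $(ab)^2$ and is infinite, while the correct compact normal subgroup is trivial. Iwasawa's theorem is not the statement you attribute to it --- the subgroup generated by a conjugation-invariant compact neighbourhood of the identity is in general non-compact (already in $\Z$) --- what it actually supplies, and what the paper invokes, is that an [{\em IN}] group has \emph{some} compact normal subgroup $N$ with $G/N$ in [{\em SIN}]; that $N$ is not obtained as a normal closure of $U$.

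Your concluding step is fine: once a compact open normal subgroup $N$ is in hand, the $N$-orbits are finite blocks, the kernel of the induced action on $\Omega/\sim$ is open, and the induced point stabilizers are finite, exactly as in the paper. So the overall architecture (pass to the permutation topology, produce a compact open normal subgroup, quotient by its orbits) coincides with the paper's, but the two steps carrying the real content --- establishing the [{\em IN}] property via a well-chosen finite $\Phi$, and extracting a compact normal subgroup from it via Iwasawa --- are respectively missing and incorrectly substituted in your proposal.
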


While this is a theorem about
permutation groups, Schlichting's proof utilizes various concepts
from functional analysis and a theorem of Iwasawa
\cite[Theorem~1]{Iwasawa1951}.
Later Theorem~\ref{TSchlichting} was rediscovered by
Bergman and Lenstra \cite{BergmanLenstra1989}, who gave a group
theoretical/combinatorial proof.

The theorem of Iwasawa that Schlichting uses in his proof of 
Theorem~\ref{TSchlichting} is about the
relationship between the classes [{\em IN}] and   [{\em SIN}] of
    topological groups.

\begin{definition}
A locally compact group is said to be in the class [{\em IN}] if there is
  a compact neighbourhood $K$ of the identity (i.e.~$K$ contains an open
  set containing the identity) that is invariant under
  conjugation by elements in $G$.

A locally compact group is said to  be in the class [{\em SIN}] if every
  neighbourhood of the identity contains
  a compact neighbourhood $K$ of the identity that is invariant under
  conjugation by elements in $G$.
\end{definition}

Let us start by relating these two properties to the permutation
topology.

\begin{proposition}\label{PIN}
Let $G$ be a transitive permutation group on a set $\Omega$ and
assume that all suborbits are finite.  Then $G$ with the permutation
topology is in the class [{\em IN}] if and only if there is a finite bound
  on the sizes of suborbits.
\end{proposition}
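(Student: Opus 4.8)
The plan is to prove the two implications separately, after translating both conditions into the language of the compact open subgroup $U=G_\alpha$. Under the standing hypotheses the group is locally compact and $U$ is compact and open (Lemma~\ref{LCompact}), the length of the suborbit $G_\alpha\beta$ equals the index $|G_\alpha:G_\alpha\cap G_\beta|$, a subset is relatively compact exactly when it has finite orbit on every point, and a neighbourhood of the identity is precisely a set containing some $G_{(\Phi)}$ with $\Phi$ finite. I will use these dictionary entries throughout.

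For the forward implication, suppose $G$ lies in [\emph{IN}], witnessed by a compact, conjugation-invariant neighbourhood $K$. First I would record two features of $K$. Since $K$ is a neighbourhood it contains $G_{(\Phi)}$ for some finite $\Phi$, which I may enlarge so that $\alpha\in\Phi$; then $G_{(\Phi)}\le G_\alpha$, and because every suborbit is finite the index $M=|G_\alpha:G_{(\Phi)}|$ is finite (it is the length of the $G_\alpha$-orbit of the tuple $\Phi$). Since $K$ is compact, Lemma~\ref{LCompact} makes $K\beta$ finite for every $\beta$; and since $K$ is conjugation-invariant and $G$ is transitive, $|K\beta|$ is independent of $\beta$ — writing $\beta=g\beta_0$ and using $gK=Kg$ gives $K\beta=g(K\beta_0)$. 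Call this common value $N$. Then for every $\beta$ one has $|G_{(\Phi)}\beta|\le|K\beta|=N$, and decomposing $G_\alpha$ into $M$ cosets of $G_{(\Phi)}$ yields $|G_\alpha\beta|\le MN$; thus $MN$ is a uniform bound on the lengths of all suborbits.

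For the reverse implication, suppose the suborbits have length at most $m$; equivalently $U=G_\alpha$ is \emph{uniformly commensurated}, $|U:U\cap G_\gamma|\le m$ for every $\gamma$ (and, by the pairing of suborbits, $|G_\gamma:U\cap G_\gamma|\le m$ as well). I want a single compact, conjugation-invariant neighbourhood. The strategy I would follow is to produce a compact open subgroup $W$ having only finitely many conjugates $W_1,\dots,W_k$ in $G$; then $K=W_1\cup\dots\cup W_k$ is compact (a finite union of compact sets), is a neighbourhood (it contains the open subgroup $W_1$), and is conjugation-invariant (conjugation merely permutes the finite set of conjugates), so $G$ lies in [\emph{IN}].

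The hard part, and where I expect the main obstacle, is the construction of such a $W$ from the uniform bound $m$. One is tempted to take $W=U$, or the union $\bigcup_\gamma G_\gamma$ of all point stabilisers, but neither works: $U$ typically has infinitely many conjugates, and $\bigcup_\gamma G_\gamma$ need not even be relatively compact (already for an action of the infinite dihedral group, finite-order elements with a fixed point displace a given point arbitrarily far). So $W$ cannot be read off from the point stabilisers directly; it must be extracted from the commensurability class of $U$. This is the commensurated-subgroup phenomenon — a uniformly commensurated subgroup is commensurable with an almost normal one — and establishing it by a direct combinatorial argument on the family $\{G_\gamma\}$ (in the spirit of Bergman and Lenstra) is the crux of the proposition; it is the same functional-analytic/commensuration input, organised through the classes [\emph{IN}] and [\emph{SIN}], that later powers Schlichting's proof of Theorem~\ref{TSchlichting}.
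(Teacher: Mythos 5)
Your forward implication is correct and follows essentially the paper's route, if anything more cleanly: enlarging $\Phi$ so that $\alpha\in\Phi$ and splitting $G_\alpha$ into its $M$ cosets of $G_{(\Phi)}$ gives the uniform bound $MN$ at once, where the paper instead translates each pair to a reference point and a translate $h\Phi$. That half is fine.

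The reverse implication is where the content of the proposition lies, and you have not proved it: you reduce the existence of a compact invariant neighbourhood to the existence of a compact open subgroup $W$ with only finitely many conjugates, correctly note that the naive candidates ($U$ itself, or $\bigcup_\gamma G_\gamma$) fail, and then declare the construction of $W$ to be ``the crux'' without carrying it out. That is a genuine gap, and the reduction also points the wrong way: a compact open subgroup with finitely many conjugates yields, by intersecting them, a compact open \emph{normal} subgroup, which is exactly the conclusion of Schlichting's Theorem~\ref{TSchlichting} --- the theorem that, in the paper's architecture, is \emph{deduced from} this proposition via Iwasawa's theorem. So your strategy would require proving the hard endpoint in order to reach the easy waypoint. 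The missing idea is that membership in [\emph{IN}] does not require an invariant subgroup at all; an invariant open \emph{set} with compact closure suffices, and one can be written down explicitly. Among all finite $\Phi\subseteq\Omega$ choose one minimizing $m_0=m(\Phi)$, the length of the longest $G_{(\Phi)}$-orbit (these lengths are positive integers bounded by $m$, so a minimum exists), and put $A=\bigcup_{g\in G}gG_{(\Phi)}g^{-1}=\bigcup_{g\in G}G_{(g\Phi)}$. This $A$ is open and conjugation-invariant by construction, so by Lemma~\ref{LCompact}(ii) it only remains to show that $A\alpha$ is finite for a point $\alpha$ chosen with $|G_{(\Phi)}\alpha|=m_0$. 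If $f_1\alpha,\dots,f_n\alpha$ are distinct with $f_i\in G_{(\Phi_i)}$, $\Phi_i=g_i\Phi$, set $E=\{f_1\alpha,\dots,f_n\alpha\}\cup\Phi_1\cup\dots\cup\Phi_n$; minimality forces $m(E)=m_0$, so a longest $G_{(E)}$-orbit $\Delta$ is simultaneously a $G_{(\Phi_i)}$-orbit for every $i$, and picking $\delta\in\Delta$ and $h_i\in G_{(E)}$ with $h_i\delta=f_i\delta$ produces $n$ elements $h_i^{-1}f_i$ of $G_\delta$ carrying $\alpha$ to the $n$ distinct points $f_i\alpha$. Hence $n\leq|G_\delta\alpha|\leq m$, so $|A\alpha|\leq m$ and the closure of $A$ is the required compact invariant neighbourhood. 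This minimization-plus-pigeonhole argument (the Bergman--Lenstra / P.~M.~Neumann style step you allude to) is precisely what you needed to supply.
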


\begin{proof}  First assume that $G$ is in the class [{\em IN}].
Suppose  $\alpha,\beta\in \Omega$.  We want to find a constant upper bound,
  independent of
 $\alpha$ and $\beta$, for the size of the suborbit $G_\alpha \beta$.
  Let $K$ be a compact neigh\-bourhood of the identity that is invariant
  under conjugation.  Since $K$ contains an open neigh\-bourhood of the
  identity we can find a finite set $\Phi$ such that $G_{(\Phi)}\subseteq
  K$.
Choose a point $\gamma$ in $\Omega$.
Because $K$ is compact,
  $|K\gamma|=m<\infty$.  Let $k$ be an upper bound for the indices
$|G_\delta:G_{(\Phi)}|\leq k$ with $\delta\in \Phi$.
  Find an element
  $f\in G$ such that $f\beta=\gamma$ and set $\alpha'=f\alpha$.  Whence
$|G_\alpha \beta|=|G_{\alpha'} \gamma|$.  Then we find an element $h$
  such that $\alpha'\in
  h\Phi$.  Note that $G_{(h\Phi)}=hG_{(\Phi)}h^{-1}$.  Since $K$ is invariant
  under conjugation and $G_{(\Phi)}\subseteq K$,
we conclude that  $G_{(h\Phi)}\subseteq K$.  There\-fore
  $|G_{(h\Phi)}\alpha|\leq
  |K\alpha|=m$.  We also know that $|G_{\alpha'}:G_{(h\Phi)}|\leq k$ and thus
$|G_{\alpha} \beta|=|G_{\alpha'} \gamma|\leq k|G_{(h\Phi)}\gamma|\leq km$.  Hence  $km$ 
is an upper bound for the size of suborbits of $G$.

Conversely, assume that there is an upper bound $m$ on the sizes of
suborbits.  For a finite subset $\Phi$ of $\Omega$ we let $m(\Phi)$ denote the
size of the largest orbit of $G_{(\Phi)}$.  We choose  a finite subset
$\Phi$ such that $m_0=m(\Phi)$ is as small as possible.  Define
$A=\bigcup_{g\in G} gG_{(\Phi)}g^{-1}$.  Clearly, the set $A$ is open
and invariant under conjugation by elements of $G$.
We claim that $A$ is relatively
compact.  By Lemma~\ref{LCompact}(ii), 
we only need to show that $A\alpha$ is finite for some
$\alpha\in \Omega$.  Choose $\alpha\in \Omega$ such that
$|G_{(\Phi)}\alpha|=m(\Phi)$.  For this $\alpha$ we will show that
$|A\alpha|\leq m$.  Arguing by contradiction, suppose
$|A\alpha|\geq n>m(\Phi)$.  Take $f_1, \ldots, f_n\in A$ such that the
elements $f_1\alpha, \ldots, f_n\alpha$ are all distinct.   Since $f_1, \ldots,
f_n\in A=\bigcup_{g\in G} gG_{(\Phi)}g^{-1}$, we can find elements $g_1,
\ldots, g_n\in G$ such that $f_i\in g_iG_{(\Phi)}g_i^{-1}=G_{(g_i\Phi)}$.
Write $\Phi_i=g_i\Phi$ and set
$E=\{f_1\alpha, \ldots, f_n\alpha\}\cup \Phi_1\cup\cdots\cup
\Phi_n$.  Then $m(E)=m_0$.  Let $\Delta$ denote a $G_{(E)}$-orbit of size $m_0$.  Note that $\Delta$ is also a $G_{(\Phi_i)}$-orbit.
Choose an element $\delta\in \Delta$.  There is for each
$i=1, \ldots, n$ an element $h_i\in G_{(E)}$ such that
$h_i\delta=f_i\delta$ and therefore
$h_i^{-1}f_i\in G_\delta$.  But
$h_i^{-1}f_i\alpha=f_i\alpha$ and we can conclude that $|G_\delta
\alpha|\geq n>m$
contrary to assumptions.  (The above argument is related to the proof
of Theorem~\ref{TSchlichting} by Bergmann and Lenstra
\cite{BergmanLenstra1989}, but this version is from a lecture course
given by Peter M.~Neumann  in Oxford 1988--1989.)  \end{proof}

\bigskip

\begin{proposition}\label{PSIN}
Let $G$ be a transitive permutation group on a set $\Omega$ and
assume that all suborbits are finite.  Then $G$ with the permutation
topology is in the class [{SIN}] if and only $G$ is discrete
(i.e. the stabilizers of points are finite).
\end{proposition}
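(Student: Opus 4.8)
The plan is to use that, under the standing hypotheses, Lemma~\ref{LCompact}(i) makes every point stabilizer $G_\alpha$ a compact \emph{open} subgroup, so that $G$ is locally compact and totally disconnected and each $G_\alpha$ is a genuine neighbourhood of the identity. I would also record at the outset that, since $G$ is a permutation group, its action on $\Omega$ is faithful, and hence $\bigcap_{\beta\in\Omega}G_\beta=\{e\}$.

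First I would settle the easy direction together with the parenthetical reformulation of discreteness. If the stabilizers are finite, then $G_\alpha$ is a finite open subgroup of the Hausdorff group $G$; being finite and Hausdorff it is discrete, and since it is open in $G$ the singleton $\{e\}$ is open in $G$, so $G$ is discrete. Conversely, if $G$ is discrete then $G_\alpha$ is discrete and, by Lemma~\ref{LCompact}(i), compact, hence finite. In the discrete case the set $K=\{e\}$ is a compact neighbourhood of the identity, trivially invariant under conjugation and contained in every neighbourhood of the identity; hence $G$ lies in $[\emph{SIN}]$.

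The substance of the proposition is the reverse implication, that $G\in[\emph{SIN}]$ forces $G$ to be discrete. Here I would take the compact open subgroup $G_\alpha$ as a neighbourhood of the identity and apply the $[\emph{SIN}]$ hypothesis to produce a compact, conjugation-invariant neighbourhood $K$ of the identity with $K\subseteq G_\alpha$. The decisive step is to exploit conjugation invariance together with transitivity: for any $g\in G$ one has $gKg^{-1}\subseteq gG_\alpha g^{-1}=G_{g\alpha}$, while $gKg^{-1}=K$ by invariance, so $K\subseteq G_{g\alpha}$. As $g$ runs over $G$ the point $g\alpha$ runs over all of $\Omega$ by transitivity, whence $K\subseteq\bigcap_{\beta\in\Omega}G_\beta=\{e\}$ by faithfulness. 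Thus $K=\{e\}$; but $K$, being a compact neighbourhood of the identity, contains an open set about the identity, so $\{e\}$ is open and $G$ is discrete.

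I expect the only real idea to be the one in the last paragraph: testing the single conjugation-invariant neighbourhood $K$ against every point stabilizer simultaneously. Everything else is bookkeeping, the two points worth checking being that $gG_\alpha g^{-1}=G_{g\alpha}$ and that a compact neighbourhood in the sense used here genuinely contains an open neighbourhood of the identity, so that collapsing $K$ to $\{e\}$ really yields discreteness rather than merely $e\in K$.
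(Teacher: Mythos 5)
Your proof is correct and follows essentially the same route as the paper: for the substantive direction you take the conjugation-invariant compact neighbourhood $K\subseteq G_\alpha$, use $gKg^{-1}=K\subseteq G_{g\alpha}$ together with transitivity to place $K$ in the kernel of the action, and then invoke faithfulness to get $K=\{e\}$, hence discreteness. The extra bookkeeping you supply (finite stabilizers $\Leftrightarrow$ discrete, via openness and compactness of $G_\alpha$) is consistent with the paper's standing assumptions for that section.
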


\begin{proof}  It is obvious that if $G$ is discrete then $G$ is
in [{\em SIN}].

Let us now assume that $G$ is in [{\em SIN}].  Then, if $\alpha$
denotes a point in $\Omega$,  the open subgroup $G_\alpha$ contains a compact
neighbourhood $K$ of the identity that is invariant under conjugation.
Then for $g\in G$ we see that $K=gKg^{-1}\subseteq gG_\alpha
g^{-1}=G_{g\alpha}$ and
thus $K\subseteq \bigcap_{g\in G}gG_\alpha g^{-1}=\bigcap_{g\in G}
G_{g\alpha}$.
Because $G$ is
assumed to be transitive we conclude that $K$ fixes every point of
$\Omega$.  But we are also assuming that $G$ acts faithfully on $\Omega$ so
$K=\{e\}$
and $G$ is discrete.  \end{proof}

\bigskip

The theorem of Iwasawa mentioned above says that if a locally compact
group is in the class [{\em IN}] then there is a compact normal subgroup $N$
such that $G/N$ is in the class [{\em SIN}].

\bigskip

\begin{proof}  (Theorem~\ref{TSchlichting})  Assume that there is a
finite upper bound on the sizes of suborbits of $G$.
By Proposition~\ref{PIN} the
group $G$ is in the class [{\em IN}].  Iwasawa's Theorem gives us a
compact, normal subgroup $N$ of $G$ such that $G/N$ is in  [{\em
    SIN}].  The orbits of the normal subgroup $N$ are the classes of a
$G$-invariant equivalence relation $\sim$ on $\Omega$
and because $N$ is compact
these classes are all finite. The group $H=G/N$ certainly
is in [{\em SIN}], but it is not certain that $H$ acts faithfully on
$\Omega'=\Omega/\sim$ so we can not apply Proposition~\ref{PSIN} directly.
Note that if $\alpha\in\Omega'$ then $H_\alpha$ is an open subgroup of
$H$.  Since $H$ is in  [{\em SIN}] there is a compact invariant
neighbourhood neighbourhood $K$ contained in $H_\alpha$.  As in the
proof of Proposition~\ref{PSIN} we conclude that $K$ is contained in
the kernel of the action of $H$ on $\Omega'$.  Thus the kernel $N'$ of the
action of $H$ on $\Omega'$ is an open subgroup of $H$.  The group
$H/N'$ can be regarded as a permutation group on $\Omega'$.  From this we
conclude that the permutation topology on $H/N'$ is discrete, which
implies that the stabilizer in $H/N'$ of a point $\alpha\in \Omega'$
must be finite.

The proof of the other direction is left to the reader.  \end{proof}

\bigskip
Schlichting's Theorem implies the following general result about
totally disconnected, locally compact groups.

\begin{corollary}  Let $G$ be a totally disconnected locally compact group.

(i)  The group $G$ has a compact open normal subgroup if and only if
  there is a compact open subgroup $U$ and a number $m_U$ such that
$|U:U\cap gUg^{-1}|\leq m_U$ for all $g\in G$.

(ii) If there is such a number $m_U$ for one compact open subgroup
  $U$ then there is a number $m_V$ for any compact open subgroup
  $V$ such that  $|V:V\cap gVg^{-1}|\leq m_V$ for all $g\in G$.
\end{corollary}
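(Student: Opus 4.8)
The plan is to reduce part (i) to Schlichting's Theorem (Theorem~\ref{TSchlichting}) by realising $G$ as a transitive permutation group, and then to deduce part (ii) from part (i) by a short computation with subgroup indices, exploiting the fact that the hypothesis of (ii) is really an intrinsic property of $G$ rather than a property of the particular subgroup chosen.

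For the substantive direction of (i), I would take $\Omega = G/U$ with the left coset action and base point $\alpha_0 = U$. Then $G_{\alpha_0} = U$, and for $\beta = gU$ the stabiliser is $G_\beta = gUg^{-1}$, so the suborbit $G_{\alpha_0}\beta$ has size $|U : U \cap gUg^{-1}|$. By transitivity every suborbit $G_\alpha\beta$ is a $G$-translate of one based at $\alpha_0$, hence has the same size as some $|U : U \cap gUg^{-1}|$; thus the hypothesis $|U : U \cap gUg^{-1}| \le m_U$ says precisely that all suborbits are bounded by $m_U$. Schlichting's Theorem then yields a $G$-invariant equivalence relation $\sim$ on $\Omega$ with finite classes whose quotient action has finite point stabilisers. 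Under the correspondence between $G$-invariant equivalence relations on $G/U$ and intermediate subgroups, $\sim$ arises from a subgroup $V$ with $U \le V \le G$; finiteness of the classes means $|V : U| < \infty$, so $V$ is a finite union of cosets of the compact open subgroup $U$ and is therefore itself compact and open. The action of $G$ on $\Omega/\sim \cong G/V$ has kernel $N = \bigcap_{g\in G} gVg^{-1}$, a normal subgroup of $G$ that is closed (an intersection of conjugates of the closed subgroup $V$) and compact (contained in $V$). The finiteness of the point stabilisers of the induced permutation group is exactly $|V : N| < \infty$; since $V$ is open and $N$ is closed of finite index in $V$, its complement in $V$ is a finite union of closed cosets and hence $N$ is open. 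Thus $N$ is a compact open normal subgroup of $G$. The converse is immediate: if $N$ is a compact open normal subgroup, take $U = N$, so that $gUg^{-1} = U$ for all $g$ and $m_U = 1$ works.

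For (ii), the key observation is that, by (i), the existence of a compact open $U$ with a uniform bound $m_U$ is equivalent to $G$ having a compact open normal subgroup $N$; this depends only on $G$, so I may assume such an $N$ is given and must bound $|V : V \cap gVg^{-1}|$ for an arbitrary compact open $V$ and all $g$. Put $c = |V : V \cap N|$ and $d = |N : V \cap N|$, both finite because $N$ and $V$ are each compact and open. Since $N$ is normal, conjugation by $g$ is an automorphism of $N$ carrying $V \cap N$ to $g(V \cap N)g^{-1} = gVg^{-1} \cap N$, a subgroup of $N$ again of index $d$. The elementary index inequality $|N : A \cap B| \le |N : A|\,|N : B|$ applied to $A = V \cap N$ and $B = gVg^{-1} \cap N$ bounds the index of $L = V \cap gVg^{-1} \cap N$ in $N$ by $d^2$; dividing by $|N : V \cap N| = d$ gives $|V \cap N : L| \le d$, and multiplying by $c = |V : V \cap N|$ yields $|V : L| \le cd$. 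As $L \le V \cap gVg^{-1} \le V$, we conclude $|V : V \cap gVg^{-1}| \le cd =: m_V$, uniformly in $g$.

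I expect the main obstacle to lie in (i), specifically in correctly interpreting and applying the conclusion of Schlichting's Theorem. The statement that the quotient action has finite point stabilisers must be read as a statement about the permutation group induced on $\Omega/\sim$, that is, about $V$ modulo the kernel $N$ of that action; the real content is then that this kernel is not merely compact and normal but \emph{open}, which is what upgrades it to a compact open normal subgroup. Verifying the openness of $N$, together with keeping straight the subgroup/block correspondence, is where the care is needed. By contrast, once (i) is in hand, (ii) is a routine manipulation of indices in which the only idea is to recognise that the hypothesis is independent of the chosen subgroup.
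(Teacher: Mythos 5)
Your proposal is correct. Part (i) follows the paper's own route almost exactly: pass to the coset action on $G/U$, observe that the hypothesis bounds all suborbits, invoke Schlichting's Theorem, and upgrade the resulting kernel to a compact open normal subgroup. The only cosmetic difference is that you extract the intermediate subgroup $V$ with $U\le V\le G$ from the block correspondence and deduce compactness and openness of $N=\bigcap_g gVg^{-1}$ from $N$ being closed of finite index in the compact open $V$, whereas the paper argues openness of the kernel directly and gets compactness from finiteness of $N\alpha$ via Lemma~\ref{LCompact}(ii); these are interchangeable. Part (ii) is where you genuinely diverge. The paper re-enters the permutation-group framework: it lets $N$ act on $\Omega=G/V$, takes the orbit relation $\sim$ with classes of size $k$, bounds suborbits on $\Omega/\sim$ by some $l$, and chains the inequalities $|G_\alpha\beta|\le|G_{\tilde\alpha}\beta|\le k|G_{\tilde\alpha}\tilde\beta|\le kl$. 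You instead give a purely group-theoretic index computation: with $c=|V:V\cap N|$ and $d=|N:V\cap N|$, normality of $N$ makes $gVg^{-1}\cap N$ again of index $d$ in $N$, and the Poincar\'e inequality $|N:A\cap B|\le|N:A|\,|N:B|$ yields $|V:V\cap gVg^{-1}|\le cd$. Your computation is correct (and in fact produces essentially the same numerical bound, since the paper's $k$ is your $d$ and its $l$ is at most your $c$); what it buys is independence from the suborbit machinery, making (ii) a self-contained exercise about a compact open normal subgroup, at the cost of not illustrating the paper's theme that such statements are naturally read off from the permutation topology.
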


\begin{proof}  (i) If $G$ contains a compact open normal subgroup $N$, then
 we can take $U=N$ and
$m_N=1$.

Conversely, assume that $U$ is a compact open subgroup and there is a number
$m_U$ such that $|U:U\cap gUg^{-1}|\leq m_U$ for all $g \in G$.
Put $\Omega=G/U$.  Take a point $\alpha$ in $\Omega$
such that $U=G_\alpha$.  Note that if $g\in G$ and $\beta=g\alpha$
then $|G_\alpha\beta|=|G_\alpha:G_\alpha\cap G_\beta|=|U:U\cap
gUg^{-1}|\leq m_U$.  Thus there is a finite upper bound on the sizes of
suborbits and we can apply Schlichting's Theorem, which
    provides  us with a
$G$-invariant equivalence relation $\sim$ on $\Omega$ with finite classes
such that the stabilizer of a $\sim$-class acts like a finite group
on $\Omega'=\Omega/\sim$.  This in turn implies that $N$, the kernel
of the action of $G$ on $\Omega'$, is a open normal subgroup of $G$.
If $\alpha\in \Omega$ then $N\alpha$ is contained in the $\sim$-class
of $\alpha$ and thus $N\alpha$ is finite.  Therefore $N$ is also
compact.

(ii)  From the proof of statement (i) we get the existence of a compact open normal
subgroup $N$.  Consider the action of $G$ on the set $\Omega=G/V$.  The
orbits of $N$ on $\Omega$ are all finite and give us the equivalence
classes of a $G$-invariant equivalence relation $\sim$.  Let $k$ be the
number of element in a $\sim$-class.  The group $G$
acts on $\Omega'=\Omega/\sim$ and the stabilizers of points in
$\Omega'$ act like finite groups on $\Omega'$.  Thus the sizes of
suborbits in the action of $G$ on $\Omega'$ are bounded above by some
number $l$.  Let $\tilde{\alpha}$
denote the $\sim$-class of an element $\alpha\in\Omega$.  For
elements $\alpha$ and $\beta$ in $\Omega$ we see that
$|G_\alpha\beta|\leq |G_{\tilde{\alpha}}\beta|\leq
k|G_{\tilde{\alpha}}\tilde{\beta}|=kl$.   From this it follows that
 $|V:V\cap gVg^{-1}|\leq kl$.  \end{proof}

\subsection{The theorems of Trofimov}\label{STrofimov}
As already mentioned, the automorphism group of a locally finite, connected
graph with the permutation topology is locally compact and totally
disconnected.  In this section we will discuss three theorems of
Trofimov (see \cite{Trofimov1984},
\cite{Trofimov1985}, \cite{Trofimov1987}).
The conclusions in them all resemble the conclusion in Schlichting's
Theorem, but none of the theorems is proved by referring to
Schlichting's Theorem.
We start with the earliest of these three theorems.  First, we
explain the terminology used.

An automorphism $g$ of a connected graph
$\Gamma$ 
is said to be {\em bounded} if there is a constant $c$ such that
$d_\Gamma(\alpha,g\alpha)\leq c$ for all vertices $\alpha$ in
$\Gamma$.

\begin{theorem}{\rm (\cite{Trofimov1984})}\label{TTrofimovBounded}
Let $\Gamma$ be a locally finite, transitive graph and $B(\Gamma)$ be the
subgroup of
bounded automorphisms.  The following assertions are equivalent:

(i) The subgroup $B(\Gamma)$ is transitive.

(ii) There is an equivalence relation $\sim$ with finite equivalence
classes on the vertex set of $\Gamma$
such that $B(\Gamma)$ acts on $\Gamma$ like a finitely generated free abelian
group.
\end{theorem}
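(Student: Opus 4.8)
The plan is to prove the two implications separately, with essentially all of the work concentrated in (i)$\Rightarrow$(ii). The reverse implication (ii)$\Rightarrow$(i) I would dispatch in one line: it is immediate from the description in (ii), since a finitely generated free abelian group acting transitively on $\Gamma/{\sim}$ together with finite classes on which $B(\Gamma)$ acts transitively forces transitivity of $B(\Gamma)$ on the whole vertex set.

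For (i)$\Rightarrow$(ii) I would first record the structural features of $B=B(\Gamma)$. Since an isometry $h$ satisfies $d(\alpha,hgh^{-1}\alpha)=d(h^{-1}\alpha,gh^{-1}\alpha)$, conjugation preserves displacement bounds; hence $B$ is a normal subgroup of $G=\autga$, and for every $c$ the set $B_c=\{g\in B:d(\alpha,g\alpha)\le c\text{ for all }\alpha\}$ is invariant under conjugation by $G$. Because $\Gamma$ is locally finite, $B_c\alpha\subseteq\{\beta:d(\alpha,\beta)\le c\}$ is finite, so by Lemma~\ref{LCompact}(ii) each $B_c$ is relatively compact and $B=\bigcup_c B_c$. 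Moreover an element of $B_\alpha$ preserves the finite spheres about $\alpha$, so all suborbits of $B$ are finite.

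The heart of the argument, and the step I expect to be the main obstacle, is to show that $B$ lies in the class [\emph{IN}]; by Proposition~\ref{PIN} this is equivalent to a uniform bound on the sizes of the suborbits of $B$. What makes this delicate is that the two natural families of subsets pull in opposite directions: the conjugation-invariant compact sets $\overline{B_c}$ are not neighbourhoods of the identity (an element of $B_{(\Phi)}$ fixes $\Phi$ but may have arbitrarily large displacement, since the elements of $B$ are bounded only individually, with no common bound), while the basic neighbourhoods $B_{(\Phi)}$ of the identity are neither conjugation-invariant nor relatively compact. I would therefore attack the bound on suborbits directly, using vertex-transitivity together with the control that boundedness of displacements gives on how far the stabilizer $B_\alpha$ can carry a point; in effect one must show that transitivity of $B$ forces $\Gamma$ to have polynomial growth, and it is here that Trofimov's combinatorial analysis is unavoidable.

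Once [\emph{IN}] is secured, the endgame runs through the machinery already assembled. Iwasawa's theorem yields a compact normal subgroup $N$ of $B$ with $B/N$ in [\emph{SIN}]; as $N$ is compact its orbits are finite and define a $G$-invariant equivalence relation $\sim$ with finite classes. Passing to the faithful action on $\Omega'=\Gamma/{\sim}$ exactly as in the proof of Theorem~\ref{TSchlichting} (factoring out the open kernel), Proposition~\ref{PSIN} shows that the group $Q$ induced by $B$ on $\Omega'$ is discrete, with finite point stabilizers, and transitive on the connected locally finite graph $\Omega'$, whence $Q$ is finitely generated. Every element of $Q$ is a bounded automorphism of $\Omega'$, and since the automorphisms of $\Omega'$ with a fixed displacement bound form a relatively compact set, which in the discrete group $Q$ is finite, each conjugacy class of $Q$ is finite; that is, $Q$ is a finitely generated FC-group. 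The classical structure theorem for such groups provides a finite (torsion) normal subgroup $T$ with $Q/T$ free abelian of finite rank, and collapsing the finite $T$-orbits refines $\sim$ to a finite-class equivalence relation on which $B(\Gamma)$ acts as the finitely generated free abelian group $Q/T$, which is (ii). A technical point I would handle in passing is that $B$ need not be closed; one circumvents this by carrying out the [\emph{IN}]/Iwasawa step for the closure $\overline{B}$ inside the totally disconnected locally compact group $G=\autga$, where $\overline{B}$ is again transitive and normal, and recovering the statement for $B(\Gamma)$ from the density of its image in the discrete quotient $Q$.
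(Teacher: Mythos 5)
Your reduction has a genuine gap at exactly the point you flag yourself: you never prove that $B(\Gamma)$ is in the class [{\em IN}], i.e.\ that transitivity of $B(\Gamma)$ forces a \emph{uniform} bound on its suborbit lengths. You write that at this step ``Trofimov's combinatorial analysis is unavoidable,'' which concedes the central difficulty rather than resolving it; as it stands the argument proves only that \emph{if} the suborbits of $B(\Gamma)$ are uniformly bounded, then (ii) follows. The diagnosis you give of why the two families of sets ``pull in opposite directions'' is accurate, but the right conclusion to draw from it is that [{\em IN}] plus Iwasawa is the wrong engine here, not that one must fall back on combinatorics.

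The route the paper points to (via Lemma~\ref{LWoessBounded} and \cite{Moller1998}) avoids any uniformity requirement. By Lemma~\ref{LWoessBounded} each bounded automorphism is an FC$^-$-element, so every conjugacy class of $B(\Gamma)$ is \emph{individually} relatively compact --- which is all the hypothesis one needs, since the input is now the structure theorem for compactly generated, locally compact [{\em FC}]$^-$-groups (Grosser--Moskowitz \cite{GrosserMoskowitz1971}, Wu--Yu \cite{WuYu1972}) rather than Iwasawa's theorem for [{\em IN}]-groups: a compactly generated, totally disconnected, locally compact group with relatively compact conjugacy classes has a compact open normal subgroup $K$ with $G/K$ finitely generated free abelian up to finite torsion. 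Your worry about $B(\Gamma)$ not being closed also evaporates, since for a transitive, connected, locally finite graph the subgroup of bounded automorphisms \emph{is} closed (Trofimov \cite{Trofimov1985a}, Woess \cite{Woess1992}, quoted in the paper); combined with Lemma~\ref{Ltransitive} this makes $B(\Gamma)$ a compactly generated t.d.l.c.\ [{\em FC}]$^-$-group, and your endgame (finite $N$-orbits giving $\sim$, discrete quotient a finitely generated FC-group, torsion a finite normal subgroup with free abelian quotient) then goes through verbatim. In short: replace the [{\em IN}]/Iwasawa step, which you cannot complete, by the FC$^-$ structure theory, and the ``unavoidable'' combinatorial analysis disappears.
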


In a connected graph $\Gamma$  we define {\em the ball of
radius} $n$ {\em with
center} in a vertex $\alpha$ as the set $B_n(\alpha)=\{\beta\in
  V\Gamma\mid d(\alpha, \beta)\leq n\}$.
The bounded
automorphisms of $\Gamma$ are related to topological properties of $\autga$ via the
following result of Woess.   With a later application in mind,
Woess' result is stated for metric spaces rather than
just graphs.  A (closed) {\em ball of radius $r$ with center }
$\alpha$ in a metric space $X$ 
as the set  $B_r(\alpha)=\{\beta\in
  X\mid d_X(\alpha, \beta)\leq r\}$.
An isometry $g$ of a metric space $X$ 
is said to be {\em bounded} if there is a constant $c$ such that
$d_X(\alpha,g\alpha)\leq c$ for all points $\alpha$ in
$X$.
Recall also that an element $g$ in a
topological group $G$ is called an FC\/$^-$-element if the conjugacy
class of $g$ has compact closure.

\begin{lemma}
{\rm (Cf. \cite[Lemma 4]{Woess1992})}\label{LWoessBounded}
Suppose $G$ is a topological group acting transitively
by isometries on a metric space $X$.  Assume furthermore that the
stabilizer in $G$ of a point in $X$ is a compact open subgroup and
that for every value of $n$ the ball
$B_n(\alpha)$ is finite.
An element $g\in G$  is bounded if and only if $g$ is an
FC\/$^-$-element of $G$.
\end{lemma}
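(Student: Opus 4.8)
The claim is an equivalence: an element $g$ in the topological group $G$ acting transitively by isometries on the metric space $X$ is \emph{bounded} (meaning $d_X(\alpha,g\alpha)\leq c$ for some constant $c$ and all $\alpha$) if and only if $g$ is an FC\/$^-$-element (meaning its conjugacy class has compact closure). The plan is to prove the two implications separately, translating the geometric boundedness condition into the topological compactness condition by exploiting the hypotheses that point stabilizers are compact open and that balls $B_n(\alpha)$ are finite. The key bridge is Lemma~\ref{LCompact}(ii), which says that a subset $A\subseteq G$ is relatively compact if and only if $A\alpha$ is finite for one (equivalently every) point $\alpha$. So I will aim to show that $g$ is bounded precisely when its conjugacy class $\{hgh^{-1}\mid h\in G\}$ has finite orbit on a fixed base point.

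\textbf{The forward direction ($g$ bounded $\Rightarrow$ FC\/$^-$).} Suppose $g$ is bounded with constant $c$, so $d_X(\alpha,g\alpha)\leq c$ for all $\alpha$. Fix a base point $\alpha_0$. I want to show the conjugacy class $C=\{hgh^{-1}\mid h\in G\}$ is relatively compact, and by Lemma~\ref{LCompact}(ii) it suffices to show $C\alpha_0$ is finite. For any $h\in G$, since $G$ acts by isometries, the conjugate $hgh^{-1}$ is also bounded by the \emph{same} constant $c$: indeed $d_X(\beta,hgh^{-1}\beta)=d_X(h^{-1}\beta,gh^{-1}\beta)\leq c$. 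Applying this at $\beta=\alpha_0$ gives $d_X(\alpha_0,(hgh^{-1})\alpha_0)\leq c$, so every point of $C\alpha_0$ lies in the ball $B_c(\alpha_0)$. By hypothesis this ball is finite, hence $C\alpha_0$ is finite, and Lemma~\ref{LCompact}(ii) yields that $C$ is relatively compact. Thus $g$ is an FC\/$^-$-element.

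\textbf{The reverse direction (FC\/$^-$ $\Rightarrow$ $g$ bounded).} Suppose $C$ has compact closure. By Lemma~\ref{LCompact}(ii), $C\alpha_0$ is finite, so it is contained in some ball $B_c(\alpha_0)$; that is, $d_X(\alpha_0,(hgh^{-1})\alpha_0)\leq c$ for all $h\in G$. Now I must convert this bound at the single point $\alpha_0$ into a uniform bound at \emph{every} point. Given an arbitrary point $\alpha\in X$, transitivity provides $h\in G$ with $h\alpha_0=\alpha$. Then, using that $h$ is an isometry,
\[
d_X(\alpha,g\alpha)=d_X(h\alpha_0,gh\alpha_0)=d_X(\alpha_0,h^{-1}gh\,\alpha_0)\leq c,
\]
where the last inequality holds because $h^{-1}gh$ is a conjugate of $g$ and so its displacement of $\alpha_0$ is at most $c$. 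Since $\alpha$ was arbitrary, $g$ is bounded with constant $c$.

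\textbf{Main obstacle.} The genuinely substantive point, and the one I expect to require the most care, is the reverse direction's passage from a bound at one base point to a uniform bound at all points. The trick is that transitivity lets one move any point back to $\alpha_0$ by an isometry $h$, and that the displacement of $\alpha_0$ by the conjugate $h^{-1}gh$ is exactly the displacement of $\alpha$ by $g$; this is why controlling the \emph{whole conjugacy class} at a single point is equivalent to controlling $g$ everywhere. The remaining steps are routine once one notes that isometries preserve displacement under conjugation and that Lemma~\ref{LCompact}(ii) is precisely the dictionary between finite orbits and relative compactness. The finiteness of balls $B_n(\alpha)$ is what guarantees the compactness/finiteness translation works in both directions.
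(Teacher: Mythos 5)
Your proof is correct and follows essentially the same route as the paper's: both directions hinge on the identity $d_X(\alpha, hgh^{-1}\alpha)=d_X(h^{-1}\alpha, gh^{-1}\alpha)$, the finiteness of balls to get a finite orbit of the conjugacy class, and Lemma~\ref{LCompact}(ii) as the dictionary between finite orbits and relative compactness. No substantive differences from the paper's argument.
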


\begin{proof}  Suppose $g\in G$ acts as a bounded isometry on $X$.
Find a number $M$ such that $d(g\alpha,\alpha)\leq M$ for every
$\alpha\in X$.
For $h\in G$, write $g^h=hgh^{-1}$.  Set $g^G=\{g^h\mid h\in G\}$.
It is clear that
$d(g^h\alpha ,\alpha)=d(gh^{-1}\alpha , h^{-1}\alpha)\leq M$
for every $\alpha\in X$.
   We see that the set
$g^G\alpha$ is finite and by Lemma~\ref{LCompact}(ii)
the conjugacy class $g^G$ has compact
closure.

Conversely, suppose that the conjugacy class
$g^G$ has compact closure.  Then, for every $\alpha\in X$
the set $g^G\alpha$ is finite.  Take a number $M$
such that $d(g^h\alpha,\alpha)\leq M$
for every $h\in G$.  Take some $\beta\in X$.
Choose $h\in G$ so that $\beta =h^{-1}\alpha$.  Then
$d(g\beta,\beta)=d(gh^{-1}\alpha, h^{-1}\alpha)=
d(g^h\alpha,\alpha)\leq M$.  So $g$ acts on $\Gamma$ as a
bounded automorphism.    \end{proof}

\medskip

This connection with topological notions
can be used to give a short proof of Theorem~\ref{TTrofimovBounded},
where only elementary results from the
theory of topological groups are used, see \cite{Moller1998}.

\bigskip

A locally finite graph $\Gamma$ is said to have {\em polynomial
  growth} if the number of vertices of $\Gamma$ in 
$B_n(\alpha)$ is bounded above by a polynomial in $n$.
It is easy to see that this property does not depend on the choice of
the vertex $\alpha$.
A finitely generated group $G$ is said to have polynomial growth if
its Cayley graph with respect to a finite generating set
has polynomial growth (the choice of
generating sets is immaterial, since having polynomial growth is a
quasi-isometry invariant).

The second theorem of Trofimov related to Schlichting's Theorem
is the following:

\begin{theorem} \label{TTrofimovPoly}
{\rm (\cite[Theorem~2]{Trofimov1985})}
Suppose $\Gamma$ is a connected, locally finite graph with polynomial
growth, and $G$ is a group that acts transitively on $\Gamma$.
Then there is a $G$-invariant equivalence relation
$\sim$ with finite classes on the vertex set of $\Gamma$
such that the quotient of $G$ by the kernel of the induced
action on $\Gamma/\sim$ is a finitely generated, virtually
nilpotent group with finite stabilizers for vertices of
 $\Gamma/\sim$.

\end{theorem}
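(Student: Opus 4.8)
The plan is to pass to the topological group $G=\autga$ (replacing $G$ by its closure in $\autga$, so that we may assume $G$ is closed) and to reduce the statement to Gromov's theorem on finitely generated groups of polynomial growth. First I would record the basic topological data: by Lemma~\ref{LCompact}, with the permutation topology $G$ is a locally compact, totally disconnected group whose vertex stabilizer $G_\alpha$ is compact and open. Since $\Gamma$ is connected and locally finite and $G$ is transitive, $G$ is compactly generated: fixing $\alpha$ and choosing, for each neighbour $\beta$ of $\alpha$, an element carrying $\alpha$ to $\beta$, these finitely many elements together with $G_\alpha$ generate $G$.

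Next I would transfer the growth hypothesis from the graph to the group. The orbit map $g\mapsto g\alpha$ sends $G$, equipped with the word metric of the compact generating set above, to $V\Gamma$ with its graph metric, and because $G_\alpha$ is compact this map is a quasi-isometry. As polynomial growth is a quasi-isometry invariant, $G$ is a compactly generated, locally compact group of polynomial growth.

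The heart of the argument --- and the step I expect to be the main obstacle --- is a structural statement for such $G$. Here I would invoke the locally compact version of Gromov's theorem, due to Losert: a compactly generated locally compact group of polynomial growth has a compact normal subgroup $K$ with $G/K$ a Lie group. Because $G$ is totally disconnected, the image of a compact open subgroup is compact and open in $G/K$, so the identity component of $G/K$ is a compact connected Lie group; enlarging $K$ to its preimage yields a compact normal subgroup $N$ (compact, being an extension of a compact group by a compact one) with $G/N$ a totally disconnected Lie group, that is, discrete. Then $G/N$ is finitely generated (the image of a compact generating set of $G$ is finite), discrete, and of polynomial growth, so by Gromov's theorem it is virtually nilpotent. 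All the genuine difficulty is concentrated in these two cited inputs; none of the elementary tools developed earlier (Schlichting's Theorem~\ref{TSchlichting}, or the bounded-automorphism results Theorem~\ref{TTrofimovBounded} and Lemma~\ref{LWoessBounded}) seems strong enough to replace them, although Theorem~\ref{TTrofimovBounded} recovers the free-abelian case.

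Finally I would assemble the conclusion. The orbits of the compact subgroup $N$ are the classes of a $G$-invariant equivalence relation $\sim$ on $V\Gamma$ (orbits of a normal subgroup are blocks of imprimitivity), and by Lemma~\ref{LCompact}(ii) each orbit $N\beta$ is finite since $N$ is relatively compact; thus $\sim$ has finite classes. Let $N'$ be the kernel of the action of $G$ on $\Gamma/\sim$. As $N\subseteq N'$, the group $G/N'$ is a quotient of $G/N$, hence finitely generated and virtually nilpotent, and it acts faithfully on $\Gamma/\sim$. For a class $\tilde\alpha$ the setwise stabilizer $G_{\{\tilde\alpha\}}$ contains the open subgroup $N$ and permutes the finite set $\tilde\alpha$, so it is compact and open; its image in the discrete group $G/N'$ is therefore finite. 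Hence the vertex stabilizers of the induced action on $\Gamma/\sim$ are finite, which is exactly the assertion.
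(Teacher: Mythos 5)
The paper does not actually contain a proof of Theorem~\ref{TTrofimovPoly}: it only records that Trofimov's original argument relies on Gromov's theorem and that Woess gave a short proof using Losert's locally compact version of Gromov's theorem. Your argument is a correct reconstruction of precisely that Woess--Losert route (pass to the closure of $G$ in $\autga$, transfer polynomial growth to the group as in Theorem~\ref{TopoPolynomial}, use Losert plus van Dantzig to produce a compact open normal subgroup with discrete, finitely generated, virtually nilpotent quotient, and take $\sim$ to be its orbit relation), so it follows the approach the paper itself points to.
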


It should be noted that Trofimov proves an even stronger result
\cite[Theorem~1]{Trofimov1985}, since
he shows that it is possible to find an equivalence relation
$\sim$ as described in Theorem~\ref{TTrofimovPoly} such that
the stabilizer of a vertex in $\aut(\Gamma/\sim)$ is
finite.

The theorem of Trofimov can be seen as a graph theoretical version of
Gromov's celebrated theorem characterizing finitely generated groups
with polynomial growth, see \cite{Gromov1981}.  Indeed, Trofimov
uses Gromov's Theorem in his proof.
A version of
Gromov's theorem for topological groups has been proved by Losert
in \cite{Losert1987}.
Woess in \cite{Woess1992} used Losert's version of Gromov's Theorem
from \cite{Losert1987}
to give a short proof of Theorem~\ref{TTrofimovPoly}.

We will be returning to
polynomial growth and Trofimov's result in Section~\ref{SPolynomial}.

\bigskip

There is a third theorem of Trofimov's with a similar feel to it as the
two theorems stated above.  This theorem involves the concept of
an $o$-automorphisms of a graph.  An automorphism $g$ of a connected graph
$\Gamma$ is called an {\em $o$-automorphism} if
$$\max\{d(\beta, g\beta)\mid \beta\in V\Gamma, d(\alpha,\beta)\leq n\}=o(n),$$
where $\alpha$ is a fixed vertex.  It is easy to show that this property
does not depend on the choice of the vertex $\alpha$.  It is also easy to
prove that the $o$-automorphisms form a normal subgroup of $\autga$.

\begin{theorem} \label{TTrofimovoauto}
{\rm (\cite[Corollary~1]{Trofimov1987})}
Suppose $\Gamma$ is a connected, locally finite graph
and $G$ is a group that acts transitively on $\Gamma$.  Then
the following are equivalent:

(i) $G\leq o(\autga)$

(ii)  There is a $G$-invariant equivalence relation $\sim$ on the vertex
set of $\Gamma$ such that the equivalence classes of $\sim$ are
finite and
if $K$ denotes the kernel of the action of $G$ on the equivalence
classes then $G/K$ is a finitely generated nilpotent group acting
regularly on $\Gamma/\sim$.
\end{theorem}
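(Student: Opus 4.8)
The plan is to establish the two implications by entirely different routes: (ii)$\Rightarrow$(i) is a direct computation in the quotient, while (i)$\Rightarrow$(ii) is reduced, via the polynomial growth theorem~\ref{TTrofimovPoly}, to questions about finitely generated virtually nilpotent groups. Throughout I will use that, when $\sim$ is a $G$-invariant equivalence relation with finite classes and $G$ is transitive, the classes are blocks of a fixed finite size and fixed finite diameter in $\Gamma$, so that the quotient map $\pi\colon\Gamma\to\Gamma/\sim$ is a quasi-isometry; in particular the property of being an $o$-automorphism passes between $\Gamma$ and $\Gamma/\sim$ in both directions, via the inequalities $d_{\Gamma/\sim}(\pi x,\pi y)\leq d(x,y)\leq C\,d_{\Gamma/\sim}(\pi x,\pi y)+C$.

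For (ii)$\Rightarrow$(i), I would argue as follows. Writing $\Gamma'=\Gamma/\sim$ and $H=G/K$, the regularity of the $H$-action identifies $\Gamma'$ with a Cayley graph of the finitely generated nilpotent group $H$ with respect to a finite generating set, the action of $\bar g\in H$ being left multiplication. The one nontrivial input is the algebraic fact that in a finitely generated nilpotent group one has $|x^{-1}wx|=o(|x|)$ for each fixed $w$: since $x^{-1}wx=w\cdot[w,x]$ with $[w,x]$ in the commutator subgroup, and the terms of the lower central series are at least quadratically distorted, the displacement $d(x,\bar gx)=|x^{-1}\bar gx|$ grows sublinearly in $d(e,x)$. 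Thus every element of $H$ is an $o$-automorphism of $\Gamma'$, and transporting this through $\pi$ (noting that elements of $K$ move each vertex within a set of bounded diameter and so are bounded) shows that every $g\in G$ is an $o$-automorphism of $\Gamma$.

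For (i)$\Rightarrow$(ii) the first and hardest task is to prove that $\Gamma$ has polynomial growth. Here I would fix $\alpha$, choose for each neighbour $\beta$ of $\alpha$ an element $g_\beta\in G$ with $g_\beta\alpha=\beta$, and let $\Lambda\leq G$ be the subgroup generated by the finite set $F=\{g_\beta^{\pm 1}\}$. A path-lifting argument shows $\Lambda$ is already transitive, and the orbit map $\Lambda\to V\Gamma$, $\lambda\mapsto\lambda\alpha$, is $1$-Lipschitz and carries the word-ball of radius $n$ onto $B_n(\alpha)$, so $|B_n(\alpha)|\leq|B_n^{\Lambda}(e)|$. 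Because $F$ is finite and each $g_\beta$ is an $o$-automorphism, there is a single sublinear function $\phi(n)=o(n)$ bounding the displacement of \emph{every} generator on \emph{every} ball $B_n(\alpha)$. The crux is to convert this uniform sublinear displacement into polynomial growth of $\Lambda$ (hence of $\Gamma$); this is a Gromov-type rigidity statement --- it is exactly what rules out a transitive group of $o$-automorphisms on a tree --- and I expect it to be the main obstacle of the whole proof.

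Granting polynomial growth, I would apply Theorem~\ref{TTrofimovPoly} to the transitive action of $G$, obtaining a $G$-invariant equivalence relation $\sim_1$ with finite classes such that $H=G/K_1$ (with $K_1$ the kernel on $\Gamma_1=\Gamma/\sim_1$) is finitely generated, virtually nilpotent, and acts on $\Gamma_1$ with finite vertex-stabilizers; by the quasi-isometry remark, $H\leq o(\aut(\Gamma_1))$. It then remains to make the stabilizers trivial and to replace ``virtually nilpotent'' by ``nilpotent''. I would do this by letting $L$ be the normal subgroup of $H$ generated by the (finite, mutually conjugate) vertex-stabilizers and setting $\approx$ to be the relation given by the $L$-orbits on $\Gamma_1$; since $L$ contains every point-stabilizer, $H/L$ acts freely, hence regularly, on $\Gamma_1/\approx$, and composing $\sim_1$ with $\approx$ produces the required $G$-invariant finite-class relation on $\Gamma$ with quotient group $H/L$. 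Two points genuinely use hypothesis (i): that $L$ has \emph{finite} orbits (so that $\approx$ has finite classes), and that $H/L$ is \emph{nilpotent} rather than merely virtually nilpotent. Both rest on the same mechanism, namely that the $o$-condition forces the torsion to act boundedly and ``centrally''; the guiding forbidden example is the infinite dihedral group, whose reflection $s$ inverts the translation $t$ and so has $|t^{-n}st^n|\approx 2n$, i.e.\ linear (not sublinear) displacement on its own Cayley graph. Lemma~\ref{LWoessBounded}, identifying bounded automorphisms with FC$^-$-elements, is the natural tool for turning this heuristic into a proof.
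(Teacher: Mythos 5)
First, a point of comparison: the paper does not actually prove Theorem~\ref{TTrofimovoauto}. It cites Trofimov's original article, remarks immediately afterwards that Trofimov's proofs ``are long and difficult'', and even records as an open wish that a topological interpretation of $o$-automorphisms might someday yield a shorter proof. So there is no in-paper argument to measure you against; the only question is whether your outline closes the theorem, and it does not. Your direction (ii)$\Rightarrow$(i) is essentially sound: the quasi-isometry transfer across a finite-class quotient is routine, and the key estimate $|x^{-1}wx|=o(|x|)$ in a finitely generated nilpotent group is true (for class $c$ one gets $O(|x|^{1-1/c})$ via the Hall--Petrescu identity and the degree-$i$ distortion of $\gamma_i$), although your one-line appeal to ``quadratic distortion'' only covers class $2$ and would need to be expanded.

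The direction (i)$\Rightarrow$(ii) contains two genuine gaps, both of which you flag and neither of which you fill. (a) The step from ``a transitive group of $o$-automorphisms'' to ``$\Gamma$ has polynomial growth'' is not a technicality to be deferred: it is the core of the theorem. A uniform sublinear displacement bound for the finite generating set of your transitive subgroup $\Lambda$ does not by itself yield polynomial growth, and nothing in the paper supplies a soft ``Gromov-type rigidity statement'' that would; this is exactly the part of Trofimov's argument that makes his proof long. (b) Even granting polynomial growth and Theorem~\ref{TTrofimovPoly}, the passage from ``virtually nilpotent with finite stabilizers'' to ``nilpotent acting regularly'' is where the $o$-hypothesis must be used a second time, and your device --- quotient by the normal closure $L$ of the vertex stabilizers --- is precisely the step that fails without further input: in the infinite dihedral group the normal closure of a point stabilizer is an infinite subgroup with infinite orbits, so your two claims (that $L$ has finite orbits and that $H/L$ is nilpotent rather than merely virtually nilpotent) are the entire content still to be proved. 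Lemma~\ref{LWoessBounded} identifies bounded automorphisms with FC$^-$-elements but does not by itself show that the stabilizers generate a group of bounded automorphisms, nor that the quotient carries no dihedral-type torsion. As it stands, the proposal is an accurate map of where the difficulties lie, not a proof.
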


Trofimov's proofs of these three theorems are long and
difficult.  The proofs mentioned above of the first two theorems,
are
short, but admittedly, in the proof of  Theorem~\ref{TTrofimovPoly}
the results from the theory of topological groups
used are highly non-trivial.  It would be interesting to find
a topological interpretation of the concept
of an $o$-automorphism.   Possibly that could lead to a shorter proof of
Theorem~\ref{TTrofimovoauto}.

\section{The scale function and tidy subgroups}\label{STidyScale}

\indent

The theory of locally compact groups is the part of the theory of
topological groups that has widest appeal and most applications in
other branches of mathematics.  When
looking at locally compact groups there are the connected
groups on one end of the spectrum
and the totally disconnected groups on the other
end.

The fundamental result in the theory of locally compact totally
disconnected groups is the theorem of van Dantzig \cite{Dantzig1936} that
such a group must always contain a compact open subgroup.
A big step towards a general theory was taken in
the paper \cite{Willis1994} by Willis.
The fundamental concepts of Willis's theory are
the scale function and tidy subgroups.

\begin{definition}\label{DTidyscale}
Let $G$ be a locally compact totally
disconnected group and $x$ an
element in $G$. For a compact open subgroup $U$ in $G$ define
$$U_+=\bigcap_{i=0}^\infty x^iUx^{-i}\qquad\mbox{and}\qquad
      U_-=\bigcap_{i=0}^\infty x^{-i}Ux^{i}.$$
Say $U$ is {\em tidy} for $g$ if\\
{(TA)}  $U=U_+U_-=U_-U_+$\\
and \\
{(TB)} $U_{++}=\bigcup_{i=0}^\infty x^iU_+x^{-i}$ and
$U_{--}=\bigcup_{i=0}^\infty x^{-i}U_- x^{i}$ are both closed in $G$.
\par
  Let $G$ be a locally compact totally
disconnected group.
The {\em scale function} on $G$ is defined as
 $$\s(x)=\min\{|U:U\cap x^{-1}Ux|: U \mbox{\rm\ a compact open subgroup of }
G\}.$$
\end{definition}

The connection between the scale function and tidy subgroups is
described in the following theorem due to Willis.

\begin{theorem}
\label{TScaletidy}
{\rm (\cite[Theorem 3.1]{Willis2001a})}
Let $G$ be a totally disconnected, locally compact group and $g\in G$.
Then $\s(g)=|U:U\cap g^{-1}Ug|$ if and only if $U$ is tidy for $g$.
\end{theorem}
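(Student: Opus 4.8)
The plan is to establish both implications simultaneously through Willis's \emph{tidying procedure}, which converts an arbitrary compact open subgroup into a tidy one, together with two structural facts about tidy subgroups. The conceptual engine throughout is that every index $|U:U\cap g^{-1}Ug|$ is a positive integer, so any non-increasing sequence of such indices must stabilise; each tidiness condition will be read off from such a stabilisation. I would isolate two target statements: (T1) every compact open $U$ admits a tidy $U^\sharp$ with $|U^\sharp:U^\sharp\cap g^{-1}U^\sharp g|\leq |U:U\cap g^{-1}Ug|$; and (T2) every tidy $U$ satisfies $|U:U\cap g^{-1}Ug|=|gU_+g^{-1}:U_+|=\s(g)$. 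Granting these, the theorem follows: (T2) gives the direction ``tidy $\Rightarrow$ index $=\s(g)$'', while for the converse one applies (T1) to a minimiser and argues that the procedure cannot have strictly dropped the index at any step, which will force tidiness.

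First I would record the index arithmetic. From the definitions one has $gU_+g^{-1}\supseteq U_+$ and $g^{-1}U_-g\supseteq U_-$, so $U_+\subseteq gU_+g^{-1}\subseteq g^2U_+g^{-2}\subseteq\cdots$ is an ascending chain with union $U_{++}$, and dually for $U_{--}$. A direct computation using (TA) (and the fact that $U_+\cap U_-$ is $g$-invariant) shows $U\cap g^{-1}Ug=(g^{-1}U_+g)U_-$, whence for a tidy-above $U$
$$|U:U\cap g^{-1}Ug|=|U_+:g^{-1}U_+g|=|gU_+g^{-1}:U_+|,$$
the one-step expansion factor of that chain. Iterating, $|U:U\cap g^{-n}Ug^n|=|gU_+g^{-1}:U_+|^{n}$ is exactly multiplicative in $n$, whereas for an arbitrary compact open $V$ one has only submultiplicativity $|V:V\cap g^{-n}Vg^n|\leq|V:V\cap g^{-1}Vg|^{n}$. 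Comparing the two for large $n$, using that $U$ and $V$ are commensurable so that $|U:U\cap g^{-n}Ug^n|$ and $|V:V\cap g^{-n}Vg^n|$ differ by a factor bounded independently of $n$, and extracting $n$-th roots, yields $|gU_+g^{-1}:U_+|\leq|V:V\cap g^{-1}Vg|$; this is the minimality asserted in (T2).

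Stage one of (T1), tidy above, is where the stabilisation principle is used most directly. Starting from $U$, I form the finite intersections of the conjugates $g^iUg^{-i}$ and show that a naturally associated sequence of indices is non-increasing, hence eventually constant; at the first point of repetition the resulting subgroup satisfies (TA), and the passage does not raise $|U:U\cap g^{-1}Ug|$. Proving the monotonicity of that sequence is the first genuine piece of work: it is a counting argument tracking how conjugation by $g$ acts on the successive quotients of the chain of intersections.

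Stage two, tidy below, is the main obstacle. Even with (TA) secured, $U_{++}$ and $U_{--}$ need not be closed, and the defect is controlled by the compact piece $U_{++}\cap U_{--}$; I would enlarge $U$ by an explicit correction built from this piece to a subgroup $V$ that still satisfies (TA) but now has $V_{++}$ and $V_{--}$ closed, and then verify that the index is \emph{unchanged}. This is delicate precisely because (TB) is a topological condition with no manifest index interpretation, so linking closedness to index behaviour is the hard part of the whole proof. Once (T1) and (T2) are in hand, the converse direction is completed by observing that if $U$ already realises $\s(g)$ then neither stage can strictly decrease the index; non-decrease at stage one forces (TA), and minimality likewise forces (TB), so $U$ was tidy to begin with.
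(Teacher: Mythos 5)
Your plan follows Willis's original algebraic route to \cite[Theorem 3.1]{Willis2001a} rather than the route taken in this paper, which translates everything into the digraph $\Gamma_U$ on $G/U$: there (TA) becomes high arc transitivity of the $G$-action, (TB) becomes the statement that the subgraph spanned by $\des(\alpha_0)$ is a tree, and the index $|U:U\cap gUg^{-1}|$ is the out-valency, so the non-increase of the index along the tidying procedure is read off from the geometry. (The paper, being a survey, sketches only the direction ``minimizer $\Rightarrow$ tidy'' in this language and cites \cite{Willis2001a} for the rest.) Your index arithmetic is correct --- indeed $g^{-1}U_+g=U_+\cap g^{-1}Ug$, so under (TA) one gets $|U:U\cap g^{-1}Ug|=|gU_+g^{-1}:U_+|$ --- and your (T2), playing exact multiplicativity of $|U:U\cap g^{-n}Ug^n|$ for tidy $U$ against submultiplicativity for arbitrary $V$ via commensurability and $n$-th roots, is a complete outline of the direction ``tidy $\Rightarrow$ index $=\s(g)$''; it is essentially the limit formula $\s(x)=\lim_{n}|V:V\cap x^{-n}Vx^n|^{1/n}$ that the paper records separately, and it is a direction the paper's sketch does not address at all.

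There remain two genuine gaps. First, stage two of (T1) is only named, not carried out: constructing the correcting subgroup from $U_{++}\cap U_{--}$ and verifying that it restores closedness of $U_{++}$ without raising the index is the bulk of Willis's proof, and nothing in your sketch substitutes for it. Second, and more seriously for the logic of the equivalence, the converse step does not close. Knowing that neither stage of the procedure increases the index shows only that \emph{some} tidy subgroup attains $\s(g)$, not that a minimizing $U$ is itself tidy; worse, your claim that stage two leaves the index \emph{unchanged} is in direct tension with your final sentence, where minimality is supposed to ``force (TB)'' --- if stage two never changes the index, minimality carries no information about (TB). What is needed is either a strictness statement (failure of (TA) or of (TB) forces $|U:U\cap g^{-1}Ug|>\s(g)$) or a detour through the multiplicative characterization of tidiness: for a minimizer, $\s(g)^n=\s(g^n)\leq|U:U\cap g^{-n}Ug^n|\leq|U:U\cap g^{-1}Ug|^n=\s(g)^n$ forces exact multiplicativity for all $n$, which characterizes tidy subgroups. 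Either way an idea beyond monotonicity is required; this is exactly the content separating the 2001 characterization from the 1994 existence theorem, and the paper's own one-paragraph sketch elides the same point.
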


{\em Remark.}   Instead of stating our results for totally disconnected, locally compact groups, we could phrase our results for  locally compact groups, that contain a compact, open subgroup.

\medskip

Now on to something completely different.

\subsection{Highly arc transitive digraphs}\label{Shat}
\begin{definition}
  A digraph $\Gamma$ is called $s$-{\em arc transitive} if the
  automorphism group acts transitively on the set of  $s$-arcs.   If $\Gamma$ is $s$-arc transitive for all numbers $s\geq 0$
  then $\Gamma$ is said to be {\em highly arc transitive}.
\end{definition}

We also say that a group $G\leq \autga$ acts {\em highly arc transitively}
  on $\Gamma$ if $G$ acts transitively on the $s$-arcs in $\Gamma$ for
  all $s$.

  The definition of highly arc transitive digraphs occurs first in the
  paper  \cite{CPW1993} by Cameron, Praeger and
  Wormald.  Similar conditions, both for directed and undirected
  graphs,  have been studied by various authors in various contexts.

Let us start by looking at several examples.

\bigskip

{\em Examples.}  (i)  Let $\Gamma$ be a directed tree with constant
in- and out-valencies.  Clearly $\Gamma$ is highly arc transitive.

(ii)  Let $\Gamma$ be a digraph with the set $\Q$ of rational numbers
as a vertex set and $(\alpha, \beta)$ an edge in $\Gamma$ if and only
if $\alpha >\beta$.  Again it is clear that $\Gamma$ is a highly arc
transitive digraph.

(iii)  (Cf.~\cite[Example 1]{Moller2002})
Let $T_1$ denote the regular directed tree in which every
vertex has in-valency 1 and out-valency $q$.  Let $L=\ldots,
\alpha_{-1},\alpha_0,\alpha_1,\alpha_2,\ldots$ be a directed line in $T_1$.
Define
$$H=\{h\in\aut(T_1)\mid \mbox{there is a number }i \mbox{ such that }
h\alpha_i=\alpha_i\}.$$  If $h\in H$ and $h$ fixes some $\alpha_i$ then $h$
also fixes all vertices $\alpha_j$ with $j<i$.  One can also see that the
orbits of $H$ are infinite and each orbit contains precisely one vertex
from $L$.  The orbits of $H$ are called {\em horocycles}.  The
horocycles could also be defined without reference to
the automorphism group.  Then we could define two vertices $\alpha$ and
$\beta$ to be in the same {\em horocycle} if there is a number $n$
such that the unique path in $\Gamma$ from $\alpha$ to $\beta$
starts by going backwards along
$n$ arcs and then going forward along $n$ arcs.
Let $C_i$
denote the horocycle containing $\alpha_i$.

For each $i\in\Z$ take $r-1$ copies $S^1_i, \ldots, S^{r-1}_i$ of $T_1$
and let $\psi^j_i:S^j_i\rightarrow T_1$ be an isomorphism.   The preimage
of the horocycle $C_i$ is a horocycle $B^j_i$ in $S^j_i$.  When restricted,
$\psi^j_i$ defines an isomorphism between the digraphs spanned by
$\des(B^j_i)$ and $\des(C_i)$.  Use
this partial isomorphism to identify the vertices in $\des(B^j_i)$ with the vertices in
 $\des(C_i)$.
 Do this for every $i$ and you get a new digraph $T_2$.  The
 digraph $T_2$ is far from being a tree, but if  $\alpha$ is a vertex in
 $T_2$ then the digraph spanned by
 $\des(\alpha)$ is a rooted infinite directed $q$-ary tree.  The
 vertices in $T_2$ that did belong to $T_1$ now all have out-valency
 equal to $q$,
 and in-valency equal to $r$.
 Look at the part of $S^j_i$ that did not get identified with vertices
 in $T_1$.  This part is a union of horocycles, at each horocycle in it we
 glue $r-1$ new copies of $T_1$ in the same fashion.  Do this for each $i$   and
 each horocycle in $S^j_i$, not belonging to $T_1$, and
 get a digraph $T_3$.  Continuing in the same fashion we construct a
 sequence $T_1\subseteq T_2\subseteq T_3\subseteq\ldots$ of digraphs.  In
 the end we get a digraph $DL(q,r)=\bigcup T_i$.  In this digraph every
 vertex has
 in-valency equal to $r$ and out-valency equal to $q$.
If $\alpha$ is a vertex in
$DL(q,r)$, then the subdigraph spanned by $\des(\alpha)$
 is an infinite rooted directed $q$-ary tree and the subgraph spanned
 by $\anc(\alpha)$ is a rooted tree, such that all edges are directed
 towards the root and the in-valency of every vertex is $r$ and the
 out-valency is $1$.
 Clearly $DL(q,r)$ is highly arc transitive.

\medskip

The digraphs $DL(q,r)$ are a directed versions of the Diestel-Leader
graphs (defined in \cite{DiestelLeader2001}) that have been studied by
various authors.  Woess \cite{Woess1992} asked if every locally finite
transitive graph is quasi-isometric to some Cayley graph.
It was conjectured by Diestel and Leader  that if
$q\neq r$ then the graph $DL(q,r)$ is
not quasi-isometric to any Cayley graph.
  This  conjecture  was proved  by Eskin,
Fisher and Whyte in \cite{EskinFisherWhyte2005}.

An optimist would hope to find a general classification of
locally finite,
highly arc transitive graphs, but
it seems very implausible that any such
classifications is possible.
But, there is a particular class of highly arc transitive digraphs
where one can give a precise description of their
structure. Surprisingly enough this particular class can be used to
probe the secrets of Willis' theory.

First, we state two simple lemmata from the paper \cite{CPW1993}
by Cameron, Praeger and
Wormald.   We prove the second one, because it is natural to apply the
permutation topology on \autga\ in the proof.

\begin{lemma}\label{LCPW}
{\rm (\cite[Proposition~3.10]{CPW1993})}  Let $\Gamma$ be a connected,
highly arc transitive digraph with finite out-valency.  Suppose
$\Gamma$ is not a directed cycle.  If $\alpha$ and $\beta$ are vertices
in $\Gamma$ and there is a directed path of length $n$ from $\alpha$
to $\beta$, then every directed path from from $\alpha$ to $\beta$ has
length $n$.  Furthermore, $\Gamma$ has no directed cycles.
\end{lemma}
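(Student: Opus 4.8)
The plan is to establish the two assertions in the logically convenient order: I would prove the \emph{Furthermore} (no directed cycles) first, and then read off the equal-length statement as an easy consequence. Throughout, write $G=\autga$ and recall that high arc transitivity makes $G$ transitive on vertices and on edges, so the out-valency is a constant $d\ge 1$; moreover, for each $s$ the stabilizer $G_\alpha$ is transitive on the $s$-arcs issuing from $\alpha$ (an automorphism carrying one such arc to another must fix their common initial vertex $\alpha$). Writing $R_s(\alpha)$ for the set of terminal vertices of $s$-arcs from $\alpha$, this gives the pivotal fact: $R_s(\alpha)$ is a single $G_\alpha$-orbit, so two of the sets $R_s(\alpha)$ either coincide or are disjoint.

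First I would rule out directed cycles. Suppose one exists and let $k$ be the least length of a directed cycle, say $c_0\to c_1\to\cdots\to c_{k-1}\to c_0$. Then $(c_0,\dots,c_{k-1})$ is a $(k-1)$-arc carrying the back-edge $(c_{k-1},c_0)$; mapping the $1$-arc $(c_{k-1},c_0)$ onto an arbitrary $1$-arc $(\beta,\alpha)$ by an automorphism shows that every in-neighbour $\beta$ of $\alpha$ is the terminal vertex of a $(k-1)$-arc from $\alpha$, i.e.\ the in-neighbours of $\alpha$ lie in $R_{k-1}(\alpha)$, a set of size at most $d^{k-1}$. Hence the in-valency is finite, $\Gamma$ is locally finite, and by the discussion following Lemma~\ref{LCompact} the group $\autga$ is locally compact and totally disconnected with $G_\alpha$ compact; this is the natural point at which the permutation topology enters. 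The same closing argument, applied to longer arcs, forces $\des(\alpha)=\anc(\alpha)$, so $\Gamma$ is strongly connected and equals $\des(\alpha)$. \textbf{The hard part will be the endgame:} deducing from strong connectivity, finite out-valency, and the compactness of $G_\alpha$ that $\Gamma$ must then be a single directed cycle, contradicting the hypothesis. I expect to finish this by using finite out-valency together with a K\"onig/compactness argument (a ray extracted from $\des(\alpha)$) in the infinite case and a direct check in the finite case; it is precisely here that the assumption that $\Gamma$ is \emph{not} a directed cycle is consumed.

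Granting that $\Gamma$ has no directed cycle, the equal-length claim becomes clean. With no directed cycles every directed walk automatically has distinct vertices, so walks and arcs coincide and $R_{s+1}(\alpha)$ is exactly the union of the out-neighbourhoods of the vertices of $R_s(\alpha)$. If some pair $\alpha,\beta$ admitted arcs of lengths $m<n$, then $\beta\in R_m(\alpha)\cap R_n(\alpha)$, whence $R_m(\alpha)=R_n(\alpha)$; applying the recursion repeatedly gives $R_{m+j}(\alpha)=R_{n+j}(\alpha)$ for all $j\ge 0$, so the sequence $(R_s(\alpha))$ is eventually periodic and $\des(\alpha)=\bigcup_s R_s(\alpha)$ is a finite union of finite sets. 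But $\des(\alpha)$ is closed under out-neighbours and every one of its vertices has out-valency $d\ge 1$, so following out-edges inside this finite set must repeat a vertex and produce a directed cycle, contradicting the previous step. Therefore all directed paths from $\alpha$ to $\beta$ share the same length, completing the proof.
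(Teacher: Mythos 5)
The paper itself does not prove this lemma; it is quoted from Cameron, Praeger and Wormald \cite[Proposition~3.10]{CPW1993}, so your argument has to stand on its own. Your second half does: granting that $\Gamma$ has no directed cycles, the observation that each $R_s(\alpha)$ is a single $G_\alpha$-orbit, so that arcs of lengths $m<n$ from $\alpha$ to $\beta$ force $R_m(\alpha)=R_n(\alpha)$, eventual periodicity of the sequence $(R_s(\alpha))$, finiteness of $\des(\alpha)$, and hence a directed cycle, is correct and complete. The preliminary reductions in your first half (finite in-valency, strong connectivity, every $(k-1)$-arc closing up into a $k$-cycle when $k$ is the minimal cycle length) are also sound.

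The problem is that the entire content of the lemma sits in the step you label ``the hard part'' and never carry out: that a connected, highly arc transitive digraph of finite out-valency containing a directed cycle must itself be a directed cycle. Announcing that you expect to finish by a K\"onig/compactness argument in the infinite case and a direct check in the finite case is not a proof, and neither half of that plan works as described. In the finite case the claim is false for the statement as literally written: the complete digraph on three vertices (all six ordered pairs as edges) is connected, has finite out-valency, is genuinely $s$-arc transitive for $s\leq 2$ and vacuously so for $s\geq 3$, contains directed cycles, and is not a directed cycle; it even has directed paths of lengths $1$ and $2$ between the same pair of vertices. So the finite case cannot be disposed of by a ``direct check''; the lemma tacitly assumes $\Gamma$ is infinite (Cameron, Praeger and Wormald work with infinite digraphs, and the present paper later invokes the lemma only for a ``connected infinite and locally finite'' digraph in the proof that $P(G)$ is closed). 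In the infinite case you give no indication of how a ray in $\des(\alpha)$ interacts with the minimal cycle length to produce a contradiction, and the facts available at that point -- strong connectivity, finite valencies, compactness of $G_\alpha$ -- do not obviously yield one. As it stands the proposal reduces the lemma to its hardest special case rather than proving it.
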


\begin{lemma}\label{LTransitveLines}
Let $\Gamma$ be a  locally finite, highly arc transitive digraph.
Take two directed lines
$L_1=\ldots, \alpha_{-1}, \alpha_0, \alpha_1, \alpha_2\ldots$
and $L_2=\ldots, \beta_{-1}, \beta_0, \beta_1, \beta_2\ldots$  in $\Gamma$ then there is a an automorphism $g$ of
$\Gamma$ such that $g\alpha_i=\beta_i$ for all $i$.
\end{lemma}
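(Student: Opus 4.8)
The plan is to build the desired automorphism as a limit of automorphisms that match longer and longer finite pieces of the two lines, extracting the limit via the compactness properties of $\autga$ in the permutation topology; this is precisely the point at which, as advertised, the permutation topology earns its keep. First I would truncate: for each $n\geq 0$ the segments $\alpha_{-n},\ldots,\alpha_n$ and $\beta_{-n},\ldots,\beta_n$ are genuine $2n$-arcs (the vertices of a directed line being distinct), so since $\Gamma$ is highly arc transitive, $\autga$ is transitive on $2n$-arcs and there is an automorphism $g_n$ with $g_n\alpha_i=\beta_i$ for all $-n\leq i\leq n$. In particular $g_n\alpha_0=\beta_0$ for every $n$.

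Next I would set up the machinery of Lemma~\ref{LCompact} for the group $G=\autga$ acting on $\Omega=V\Gamma$. The action is transitive (high arc transitivity includes $0$-arc, that is vertex, transitivity), it is faithful (an automorphism fixing every vertex is the identity), and $G$ is closed in the permutation topology, being the automorphism group of the edge relation. Since $\Gamma$ is connected and locally finite, each suborbit $G_\alpha\beta$ lies in the finite sphere of radius $d(\alpha,\beta)$ about $\alpha$ in the underlying graph, so all suborbits are finite. With the hypotheses of Lemma~\ref{LCompact} in force, I would then set $A=\{g_n\mid n\geq 0\}$ and observe the crucial fact that $A\alpha_0=\{\beta_0\}$ is finite; by the final clause of Lemma~\ref{LCompact} the set $A\gamma$ is finite for \emph{every} vertex $\gamma$, and by part (ii) the closure $\overline{A}$ is compact in $G$. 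A sequence in a compact space has a cluster point, so $(g_n)$ has a cluster point $g\in\overline{A}\subseteq\autga$; alternatively, $V\Gamma$ is countable, the permutation topology is metrizable, and one may pass to a convergent subsequence.

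It then remains to verify that this $g$ does the job. Fixing an index $i$, consider the basic open neighbourhood $\{x\in G\mid x\alpha_i=g\alpha_i\}=gG_{\alpha_i}$ of $g$. Because $g$ is a cluster point of $(g_n)$, this neighbourhood contains $g_n$ for some $n\geq|i|$, and for such $n$ one has $g\alpha_i=g_n\alpha_i=\beta_i$. As $i$ was arbitrary, $g\alpha_i=\beta_i$ for all $i\in\Z$, which is exactly what is claimed.

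The only genuinely delicate step is the extraction of the limit, and it rests entirely on recognizing that $A$ is relatively compact — a conclusion drawn from the single equality $A\alpha_0=\{\beta_0\}$ together with Lemma~\ref{LCompact}. The accompanying verification that $\autga$ satisfies the standing hypotheses of that lemma (closedness, and finiteness of suborbits) is routine but indispensable: local finiteness is what confines each suborbit to a finite sphere, and without it the suborbits could be infinite, $\overline{A}$ would fail to be compact, and the whole argument would collapse.
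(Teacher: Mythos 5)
Your proof is correct and follows essentially the same route as the paper: approximate by automorphisms $g_n$ carrying longer and longer central segments of $L_1$ to $L_2$ (using high arc transitivity), then extract a limit in $\aut(\Gamma)$ via compactness in the permutation topology, the paper doing this by noting the sequence lies in the compact coset $g_1G_{\alpha_0}$ while you invoke Lemma~\ref{LCompact}(ii) on $A=\{g_n\}$ with $A\alpha_0=\{\beta_0\}$. Your write-up merely makes explicit the hypotheses (closedness, finiteness of suborbits) and the cluster-point verification that the paper leaves implicit.
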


\begin{proof}  Write $G=\autga$ and note that $G$ is locally compact.
Using the property that $\Gamma$ is highly arc transitive,
we can find an element $g_i\in \autga$ such that $g_i\alpha_j=\beta_j$
for all $j\in\{-i,\ldots,i\}$.  The sequence $(g_i)_{i\in {\bf N}}$ is contained in
  the set $g_1G_{\alpha_0}$, which is compact in the permutation
  topology on $\autga$.  Hence this sequence has a convergent
  subsequence that converges to an element $g$ in $\autga$ which has
  the desired property.  \end{proof}

\begin{proposition}  {\rm (\cite[Lemma~3]{Moller2002a})}
Let $\Gamma$ be a locally finite, highly arc transitive digraph and
$L$ a directed line in $\Gamma$.  Then the subdigraph $\Gamma_L$
spanned by
$\des(L)$, is highly arc transitive and has more than one end.
\end{proposition}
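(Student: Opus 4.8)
The plan is to prove the two assertions separately. For high arc transitivity the essential point is that an automorphism $g$ of $\Gamma$ satisfies $g(\des(L'))=\des(gL')$ for every directed line $L'$, because automorphisms carry arcs to arcs; hence any $g$ mapping $L$ to a directed line $M$ with $\des(M)=\des(L)$ restricts to an automorphism of $\Gamma_L$. So, given two $s$-arcs $a=(a_0,\dots,a_s)$ and $b=(b_0,\dots,b_s)$ of $\Gamma_L$, I would complete each to a directed line whose descendant closure is \emph{exactly} $\des(L)$ and then apply Lemma~\ref{LTransitveLines}. Since $a_0\in\des(L)$ there is a directed path from some $\alpha_{-m}$ to $a_0$; I splice together the backward tail $\dots,\alpha_{-m-1},\alpha_{-m}$ of $L$, this path, the arc $a$, and an arbitrary forward prolongation, indexing the resulting line $M_a$ so that $a_j$ occupies position $j$. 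Because $M_a$ contains a backward tail of $L$ we get $\des(M_a)\supseteq\des(L)$, while every vertex of $M_a$ lying in $\des(L)$ gives $\des(M_a)\subseteq\des(L)$; thus $\des(M_a)=\des(L)$, and likewise for $M_b$. Then Lemma~\ref{LTransitveLines} supplies $g$ with $gM_a=M_b$ position by position, so $ga_j=b_j$ for all $j$ while $g(\des(L))=\des(M_b)=\des(L)$; its restriction is the desired automorphism of the induced digraph $\Gamma_L$.

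Two preliminaries legitimise the splicing. Since $\Gamma$ carries a directed line it is infinite and is not a directed cycle, so Lemma~\ref{LCPW} applies: $\Gamma$ has no directed cycles and all directed paths between two given vertices have equal length. Absence of directed cycles forces every directed walk to have distinct vertices, so the spliced bi-infinite walk really is a directed line; equality of path lengths lets me define a level function $\ell\colon\des(L)\to\Z$ with $\ell(\alpha_i)=i$ and $\ell(\gamma)=\ell(\beta)+1$ on every edge $(\beta,\gamma)$. Finally, vertex transitivity (already a consequence of $0$-arc transitivity) together with the existence of $L$ makes every in- and out-valency at least $1$, so lines can always be prolonged in both directions. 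With these in hand the construction above goes through for every $s$, proving that $\Gamma_L$ is highly arc transitive.

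For the end statement the plan is to exhibit a finite vertex set whose removal leaves at least two infinite components. I would first record the structural facts. If $(u,v)$ is an edge with $u\in\des(\alpha_0)$ then $v\in\des(u)\subseteq\des(\alpha_0)$, so no edge leaves $\des(\alpha_0)$ in the forward direction; consequently every edge joining $\des(\alpha_0)$ to its complement in $\des(L)$ points \emph{into} $\des(\alpha_0)$. Moreover $\Gamma_L=\bigcup_{m\ge 0}\langle\des(\alpha_{-m})\rangle$ is an increasing union, since any edge of $\Gamma_L$ already lies inside $\langle\des(\alpha_{-m})\rangle$ as soon as its tail does. Granting that each descendant subdigraph $\langle\des(\alpha)\rangle$ is a tree, this increasing union of nested subtrees is itself a tree; deleting the single vertex $\alpha_0$, whose degree is finite and at least $d^{+}+1\ge 2$, then separates the component containing $\des(\alpha_1)$ and the ray $\alpha_1,\alpha_2,\dots$ from the component containing the ray $\alpha_{-1},\alpha_{-2},\dots$. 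Both are infinite, so $\Gamma_L$ has more than one end.

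The hard part is exactly the step I granted: that $\langle\des(\alpha)\rangle$ is a tree, equivalently that $\Gamma$ contains no \emph{diamond}, i.e.\ no two internally disjoint directed paths with the same pair of endpoints. Lemma~\ref{LCPW} already rules out directed cycles and fixes path lengths, but excluding diamonds is where the full strength of high arc transitivity must be used, through the reachability relation of Cameron--Praeger--Wormald; this is the combinatorial heart of the proof. In the guiding cases it is immediate: for directed trees and for the digraphs $DL(q,r)$ the descendant subdigraph is a tree by construction, so the finite separator $\{\alpha_0\}$ works at once. Should tree-likeness resist proof in the required generality, the fallback is to show directly that the inward-pointing boundary edges of $\des(\alpha_0)$ can be replaced by a finite separating set, which again reduces to controlling how the in-neighbours of descendants are distributed across the levels of $\ell$.
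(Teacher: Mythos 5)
Your first paragraph is correct and is essentially the paper's own argument: both proofs complete the two $s$-arcs to directed lines sharing a backward tail of $L$, observe that any such line has descendant set exactly $\des(L)$, and then invoke Lemma~\ref{LTransitveLines}; the restriction of the resulting automorphism to $\des(L)$ is the required element of $\aut(\Gamma_L)$. Your supporting remarks (the spliced walk has distinct vertices because Lemma~\ref{LCPW} excludes directed cycles; the level function $\ell$ is well defined) are sound.

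The argument for the second assertion has a genuine gap. The step you ``grant'' --- that the subdigraph spanned by $\des(\alpha)$ is a tree --- is simply false for general locally finite highly arc transitive digraphs, so no amount of work with the reachability relation will establish it. Take $V\Gamma=\Z\times\{0,1\}$ with an edge from $(n,i)$ to $(n+1,j)$ for all $i,j$: the group generated by the shift and by the transpositions swapping $(m,0)$ with $(m,1)$ (one $m$ at a time) acts transitively on $s$-arcs for every $s$, the digraph is locally finite, $\des(L)$ is the whole vertex set for the obvious line $L$, and $\des((0,0))$ contains the diamond $(0,0)\to(1,0)\to(2,0)$, $(0,0)\to(1,1)\to(2,0)$. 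This is consistent with the rest of the paper: Theorem~\ref{THATtree} only asserts that such a $\Gamma$ maps onto a tree with \emph{finite fibres}, and in Section~\ref{STidy} the property that $\des(\alpha_0)$ spans a tree is exactly the extra condition (TB), which can fail. So deleting the single vertex $\alpha_0$ need not disconnect $\Gamma_L$ (it does not in the example above), and your main route collapses; your ``fallback'' is the right idea but is precisely where the work lies, and you have not carried it out.

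For the record, here is what the paper actually does, and it uses nothing beyond what you already have. Pick any $\beta'\in\des(L)$; since $\inn_{\Gamma_L}(\beta')$ is finite and the sets $\des(\alpha_i)$ increase as $i$ decreases, there is an $i$ with $\beta'$ and all of $\inn_{\Gamma_L}(\beta')$ contained in $\des(\alpha_i)$; let $k$ be the common length of all directed paths from $\alpha_i$ to $\beta'$ (Lemma~\ref{LCPW}). Using the $k$-arc transitivity of $\Gamma_L$ \emph{just proved in the first part}, move the pair $(\alpha_i,\beta')$ onto $(\alpha_0,\beta)$ for an arbitrary $\beta\in\des_k(\alpha_0)$; this shows that every in-neighbour in $\Gamma_L$ of a vertex in $\des_l(\alpha_0)$ with $l\ge k$ lies in $\des_{l-1}(\alpha_0)$. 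Hence the only edges of $\Gamma_L$ joining $A=\bigcup_{l\ge k}\des_l(\alpha_0)$ to its complement run from $\des_{k-1}(\alpha_0)$ to $\des_k(\alpha_0)$; by local finiteness this is a finite edge cut, and it separates the two infinite sets $\{\alpha_k,\alpha_{k+1},\ldots\}$ and $\{\ldots,\alpha_{-1},\alpha_0,\ldots,\alpha_{k-1}\}$. The point is not that vertices have unique predecessors, only that sufficiently deep inside $\des(\alpha_0)$ all predecessors lie one level up and still inside $\des(\alpha_0)$ --- and that is what local finiteness together with the homogeneity from the first part buys you.
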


\begin{proof}  Write $L=\ldots, \alpha_{-1}, \alpha_0, \alpha_1,
\alpha_2\ldots$.  Consider $s$-arcs $\beta_0,\ldots, \beta_s$ and
$\gamma_0,\ldots, \gamma_s$ in $\Gamma_L$.
The vertices $\beta_0$ and $\gamma_0$
will have a common ancestor $\alpha_{i_0}$ on the line $L$.   Now we can
extend the $s$-arcs to infinite lines $L_\beta=\ldots, \beta_{-1},
\beta_0, \beta_1, \beta_2\ldots$ and $L_\gamma=\ldots, \gamma_{-1},
\gamma_0, \gamma_1, \gamma_2\ldots$ that both contain the directed ray
$\ldots, \alpha_{i_0-2},\alpha_{i_0-1}, \alpha_{i_0}$.
Then we can find an element
$g\in G$ such that $g\beta_i=\gamma_i$ for all $i$ and because $g$
maps the ray $\ldots, \alpha_{i_0-1}, \alpha_{i_0}$ into $L$ we can
see that $g(\des(L))=\des(L)$, i.e. the subdigraph $\Gamma_L$ is
invariant under $g$.  Whereupon we conclude that $\Gamma_L$ is highly
arc transitive.

Let $\beta'$ be a vertex in $\Gamma_L$.  Since $\inn_{\Gamma_L}(\beta')$ is
finite, there
clearly is a number $i$ such that $\inn_{\Gamma_L}(\beta')
\subseteq\des(\alpha_i)$.  Let $k$ be the length of a directed path
from $\alpha_i$ to $\beta'$ (by Lemma~\ref{LCPW}
all directed paths from $\alpha_i$ to $\beta'$ have the same length).
  Making use
of arc transitivity we conclude that
if $\beta\in\des_k(\alpha_0)$, then there is an element
$g\in\aut(\Gamma_L)$ such that
$g(\alpha_i)=\alpha_0$ and $g(\beta')=\beta$.
Therefore we see that if  $(\gamma,\beta)$ is an
arc in $\Gamma_L$ (i.e.\ $\gamma\in\inn_F(\beta)$)
then $\gamma\in \des(\alpha_0)$.  More precisely,
$\gamma\in \des_{k-1}({\alpha_0})$.  This is so because, if
$\alpha_0,\gamma_1,\ldots,\gamma_l,\gamma$
is a directed path from $\alpha_0$ to
$\gamma$ then $\alpha_0,\gamma_1,\ldots,\gamma_l,\gamma,\beta$ is a
directed path from $\alpha_0$ to $\alpha$ and thus has length $k$.

Set $A=\bigcup_{i\geq k}\des_i(\alpha_0)$ and $A^*=VF\setminus A$.  Suppose
$(\gamma,\beta)$ is an arc from $A^*$ to $A$.
Now $\beta\in\des_l(\alpha_0)$ for some
$l\geq k$.  Then $\gamma\in \des_{l-1}(\alpha_0)$, by  the choice of $k$.
Obviously $l=k$ and $(\gamma,\beta)$
is an arrow from $\des_{k-1}(\alpha_0)$ to $\des_{k}(\alpha_0)$.

{\em A priori}, there is also the possibility that some
arc $(\beta,\gamma)$ in
$F$ goes from $A$ to $A^*$.  But on closer look, this is impossible,
because then $\beta$ would be in $\des_l(\alpha_0)$ for some
$l\geq k$ and thus $\gamma\in\des_{l+1}(\alpha_0)\subseteq A$, and
therefore $\gamma\in A$, contradicting the assumption that $\gamma\in
A^*$.

The only arcs
between $A$ and $A^*$ are going from  $\des_{k-1}(\alpha_0)$ to
$\des_{k}(\alpha_0)$.   The set of such arcs is clearly finite (because
$\des_{k-1}(\alpha_0)$ is finite and the
out-valency of vertices in $\Gamma$ is
finite), and by removing them,
we split $\Gamma$ up into components.  The two sets
$\{\ldots,\alpha_0,\alpha_1,\ldots,\alpha_{k-1}\}$ and $\{\alpha_k,\alpha_{k+1},\ldots\}$ will
belong to different components, so we have at least two infinite components.
Hence $\Gamma$ has more than
one end.  \end{proof}  

\bigskip

The structure of digraphs like $\Gamma_L$ in the above
proposition is described in the following theorem.

\begin{theorem}
{\rm (\cite[Theorem 1]{Moller2002a})}\label{THATtree}
Let $\Gamma$ be a locally finite, highly arc
transitive digraph. Suppose that there
is a line $L= \ldots, \alpha_{-1}, \alpha_0, \alpha_1,\ldots$
such that $V\Gamma={\des}(L)$.

Then there exists a surjective homomorphism
$\phi:\Gamma\rightarrow T$ where $T$
is a directed tree
with in-valency 1 and finite out-valency.
The automorphism group of $\Gamma$ has a natural action on $T$
as a group of automorphisms such that
$\phi(g\alpha)=g\phi(\alpha)$ for every
$g\in \autga$
and every vertex $\alpha$ in $\Gamma$.
This action of $\autga$ on $T$ is highly arc
transitive. Furthermore, the
fibers  $\phi^{-1}(\alpha)$, $\alpha\in VT$,
are finite and all have the same number
of elements.
\end{theorem}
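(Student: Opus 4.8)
The plan is to realise $T$ as the quotient of $\Gamma$ by an explicit $\autga$-invariant equivalence relation on the vertex set, and to read off all four assertions from properties of this relation.

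First I would record the rigidity furnished by Lemma~\ref{LCPW}. Since $\Gamma$ is not a directed cycle, it has no directed cycles and any two directed paths between the same pair of vertices have equal length. This lets me define a level function $\ell\colon V\Gamma\to\Z$ by $\ell(\alpha_i)=i$ on $L$ and $\ell(\beta)=i+k$ whenever $\beta\in\des(\alpha_i)$ and some (hence every) arc from $\alpha_i$ to $\beta$ has length $k$; the hypothesis $V\Gamma=\des(L)$ guarantees that every vertex lies in some $\des(\alpha_i)$, and Lemma~\ref{LCPW} guarantees that $\ell$ is well defined. Every arc then raises the level by exactly $1$, so $\Gamma$ is graded; moreover every vertex has at least one in-neighbour, $\autga$ is vertex-transitive (transitivity on $0$-arcs), and, because the sets $\des(\alpha_i)$ increase to $V\Gamma$ as $i\to-\infty$, any two vertices have a common ancestor on $L$.

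Next I would define, on each level set, the relation $\beta\sim\gamma$ to mean that $\beta$ and $\gamma$ have a common descendant, set $T=\Gamma/\!\sim$ with an arc $(\,[\beta],[\delta]\,)$ whenever some representatives satisfy $\beta'\to\delta'$ in $\Gamma$, and let $\phi$ be the quotient map. The relation is visibly reflexive, symmetric and $\autga$-invariant. \emph{The hard part is to show that $\sim$ is transitive and that its classes are finite.} For transitivity I would exploit high arc transitivity in the form that, for each vertex $w$, the stabiliser $\autga_w$ acts transitively on the $p$-arcs ending at $w$ and on the $p$-arcs issuing from $w$ (the automorphism carrying one such arc to another fixes the common end); pushing common descendants deeper so that they sit on a common level, this homogeneity together with the grading is what forces a common descendant of $\beta$ and $\epsilon$ out of common descendants of the pairs $\beta,\gamma$ and $\gamma,\epsilon$. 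Finiteness I would obtain from the finite-cut phenomenon used in the previous proposition: the members of a class all feed, through finitely many arcs, into a single descendant cone, and finite in-valency then bounds $\anc_p$ of a fixed deep common descendant. This step is where both the finiteness of the out-valency and the hypothesis $V\Gamma=\des(L)$ are indispensable.

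Granting that $\sim$ is a finite-class equivalence relation, the tree structure follows quickly. In-valency $1$ is the cleanest point: if $\beta\to\delta$ and $\beta^\ast\to\delta^\ast$ with $\delta\sim\delta^\ast$, I pick a common descendant $\eta$ of $\delta,\delta^\ast$; then $\eta\in\des(\beta)\cap\des(\beta^\ast)$, so $\beta\sim\beta^\ast$, i.e.\ all in-neighbours of a class lie in a single class and each class has a unique parent. Finite out-valency is inherited from $\Gamma$, and the grading shows that $T$ is acyclic; a connected, graded, acyclic digraph with in-valency $1$ is a directed tree, so $T$ is of the required type, and the $\autga$-invariance of $\sim$ supplies the action of $\autga$ on $T$ with $\phi(g\alpha)=g\phi(\alpha)$. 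Finally, the fibres $\phi^{-1}(t)$ are exactly the $\sim$-classes, hence finite; since they form a system of blocks for the vertex-transitive group $\autga$, they all have the same cardinality. For high arc transitivity of the induced action I would lift an $s$-arc of $T$ to an $s$-arc of $\Gamma$, extending one representative at a time and using the $\autga_w$-homogeneity above to move to an out-neighbour lying in the prescribed class, then apply high arc transitivity of $\Gamma$ and project back down by $\phi$. This completes the proof.
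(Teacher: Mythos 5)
The survey states this theorem without proof (it is quoted from M\"oller, \emph{Discrete Math.}\ \textbf{247} (2002), Theorem~1), so your proposal can only be judged against that source; its architecture --- grade $\Gamma$ by a level function via Lemma~\ref{LCPW}, quotient by an $\autga$-invariant relation built from common descendants, deduce in-valency~$1$ of the quotient from ``a common descendant of two children is a common descendant of their parents'', and get equal fibre sizes because the classes are blocks of imprimitivity --- is the right one, and those parts are correct. One remark: high arc transitivity of the action on $T$ is cheaper than your lifting argument. In a tree with in-valency $1$ every vertex is the terminal vertex of exactly one $s$-arc, so transitivity on $VT$ (immediate from transitivity on $V\Gamma$) already gives transitivity on $s$-arcs; your lifting step would itself need the unproved fact that a vertex equivalent to the tail of an arc has an out-neighbour in the class of the head.

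The genuine gap is exactly the step you flag as hard and then do not carry out: transitivity of ``$\beta\sim\gamma$ iff $\beta$ and $\gamma$ have a common descendant''. The sentence ``this homogeneity together with the grading is what forces a common descendant of $\beta$ and $\epsilon$'' restates the goal rather than proving it, and the obvious attempts fail. If $\gamma$ has a common descendant $\delta$ with $\beta$ and $\delta'$ with $\epsilon$, then $\delta,\delta'\in\des(\gamma)$, but two descendants of $\gamma$ need not themselves have a common descendant (in a directed tree with in-valency $1$, vertices in distinct branches below $\gamma$ never do), so one cannot concatenate; transporting $\delta$ to $\delta'$ by an element of $\autga_\gamma$ moves $\beta$ as well; and although the cut structure from the preceding proposition does show that the level-$0$ ancestor set $\anc(\delta)\cap\ell^{-1}(0)$ is constant along arcs lying above level $k$, hence on connected components of the subdigraph induced above level $k$, the vertices $\delta$ and $\delta'$ need not lie in the same such component. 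This is precisely where high arc transitivity, local finiteness and the hypothesis $V\Gamma=\des(L)$ must be combined; it is the substance of the theorem. The standard repair is to use a relation that is transitive by construction, namely $u\equiv v$ iff $\des_n(u)=\des_n(v)$ for some (hence every larger) $n$, and then to prove that a common descendant forces eventual equality of descendant sets. Alternatively one may pass to the transitive closure of your relation --- your in-valency-$1$ argument survives this --- but then finiteness of its classes becomes the open point, and it no longer follows from your observation that $\{v: v\sim u\}$ is contained in the finite set $\bigcup_{\beta\in\des_k(u)}\anc_k(\beta)$, since that bound controls only single $\sim$-steps, not chains. Either way a substantive argument is missing, and without it the quotient $T$ is not even well defined.
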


Let $\alpha$ be a vertex in a highly arc transitive digraph $\Gamma$
and denote with $c_k$ the number of vertices in $\des_k(\alpha)$.
Cameron, Praeger and Wormald in \cite[Definition~3.5]{CPW1993}
define the {\em out-spread} of a vertex
in $\Gamma$ as $\limsup_{k\rightarrow\infty} c_k^{1/k}$.   One can
define the {\em in-spread} of a highly arc transitive digraph in
a similar way.   Theorem~\ref{THATtree} implies the following

\begin{theorem}{\rm (\cite[Theorem~2]{Moller2002a})}   The out-spread of
  a locally finite, highly arc transitive digraph is an integer.
\end{theorem}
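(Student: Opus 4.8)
The plan is to use Theorem~\ref{THATtree} to replace $\Gamma$ by a digraph carried by a directed tree, and then to squeeze the counting function $c_k$ between $d^k$ and $fd^k$, where $d$ is the (integer) out-valency of that tree.

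First I would reduce to the hypotheses of Theorem~\ref{THATtree}. Since $\Gamma$ is highly arc transitive it is vertex transitive, so $c_k$ is independent of $\alpha$ and the out-spread is an invariant of $\Gamma$. If $\Gamma$ is a directed cycle the out-spread is at most $1$; otherwise Lemma~\ref{LCPW} tells us $\Gamma$ has no directed cycles, and as every vertex has in- and out-valency at least $1$ we may extend $\alpha$ to a directed line $L$. Replacing $\Gamma$ by the subdigraph $\Gamma_L$ spanned by $\des(L)$, the Proposition preceding Theorem~\ref{THATtree} guarantees that $\Gamma_L$ is again locally finite and highly arc transitive with $V\Gamma_L=\des(L)$. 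The key observation is that for $\alpha\in L$ the out-neighbours of any descendant of $\alpha$ are again descendants of $\alpha$, so $\des_k^{\Gamma_L}(\alpha)=\des_k^{\Gamma}(\alpha)$ for every $k$ and the out-spread is unchanged; hence I may assume $V\Gamma=\des(L)$ and apply Theorem~\ref{THATtree}.

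Write $G=\autga$ and let $\phi\colon\Gamma\to T$ be the equivariant surjection of Theorem~\ref{THATtree}, with $T$ a directed tree of in-valency $1$ and constant out-valency $d$, all fibres having the same finite size $f$; let $q$ and $r$ be the constant out- and in-valencies of $\Gamma$. Two local facts drive the argument. The first is a transitivity statement: if $g\beta=\beta'$ with $\phi(\beta)=\phi(\beta')=w$ then $gw=\phi(g\beta)=\phi(\beta')=w$, so $G_w$ is transitive on the fibre $\phi^{-1}(w)$, and since $G$ is arc transitive on $T$ the group $G_w$ is also transitive on the $d$ children of $w$. The second, which I expect to be the main obstacle, is the numerical identity
\[ q=dr. \]
I would prove it by a double count along the fibration: because $T$ has in-valency $1$, every in-neighbour of a vertex of $\phi^{-1}(w')$ lies in the fibre over the parent $w$ of $w'$, so the arcs from $\phi^{-1}(w)$ to $\phi^{-1}(w')$ number $fr$ on the one hand; on the other hand $G_w$ spreads the $fq$ arcs leaving $\phi^{-1}(w)$ evenly over the $d$ children of $w$, giving $fq/d$. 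This is the step where the global homogeneity must be turned into an exact balance.

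With $q=dr$ in hand the two bounds follow quickly. For the lower bound, note that $\des_{k+1}(\alpha)$ is exactly the set of out-neighbours of $\des_k(\alpha)$: an out-arc extends a length-$k$ directed path, and by Lemma~\ref{LCPW} all directed paths from $\alpha$ to a given vertex have equal length, so nothing is counted at the wrong level. Writing $S=\des_k(\alpha)$, there are $q|S|$ arcs leaving $S$, and each of their heads absorbs at most $r$ of them, whence the out-neighbourhood has at least $(q/r)|S|=d|S|$ elements; thus $c_{k+1}\ge d\,c_k$ and $c_k\ge d^k$. For the upper bound, $\phi$ sends a length-$k$ directed path to a length-$k$ directed path in $T$, so $\phi(\des_k(\alpha))\subseteq\des_k^{T}(\phi(\alpha))$, a set of $d^k$ vertices; since the fibres have size $f$ this gives $c_k\le fd^k$. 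Combining, $d^k\le c_k\le fd^k$, so $c_k^{1/k}\to d$ and the out-spread equals the integer $d$.
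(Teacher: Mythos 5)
Your argument is correct and follows exactly the route the paper indicates: reduce to the case $V\Gamma=\des(L)$ via the proposition on $\Gamma_L$, apply Theorem~\ref{THATtree}, and squeeze $c_k$ between $d^k$ and $fd^k$ so that the out-spread equals the integer out-valency $d$ of the tree $T$. The identity $q=dr$ and the resulting expansion bound $c_{k+1}\ge d\,c_k$ are a clean way to obtain the lower bound (one could alternatively get $c_k\ge d^k$ from surjectivity of $\phi$ on descendant sets), and the remaining steps check out.
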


The in-spread can be used to characterize the highly arc
transitive digraphs treated in Theorem~\ref{THATtree}.

\begin{theorem}{\rm (\cite[Theorem~2.6]{MMMSTZ2005})}   Let $\Gamma$ be a
  locally finite, highly arc transitive digraph.  The in-spread of
  $\Gamma$ is 1 if and only if there is a line $L$ in $\Gamma$ such
  that $\des(L)=V\Gamma$.
\end{theorem}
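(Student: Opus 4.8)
The plan is to prove the two implications separately; throughout I assume $\Gamma$ is connected and infinite (for a finite $\Gamma$, e.g.\ a directed cycle, there is no line and the statement is to be read vacuously), so that by Lemma~\ref{LCPW} the digraph has no directed cycles and any two directed paths with the same endpoints have the same length. Write $G=\autga$, fix a base vertex $\alpha_0$, and set $a_s=|\anc_s(\alpha_0)|$; by vertex transitivity this is independent of $\alpha_0$, and the in-spread of $\Gamma$ is $\limsup_{s\to\infty}a_s^{1/s}$. Let $d^+$ and $d^-$ denote the constant out- and in-valencies. A convenient reformulation of the right-hand condition comes first: along a reverse ray $\alpha_0,\alpha_{-1},\alpha_{-2},\dots$ (each $\alpha_{-i-1}$ an in-neighbour of $\alpha_{-i}$) the sets $\des(\alpha_{-i})$ increase, so $\des(L)=V\Gamma$ for a line $L$ through $\alpha_0$ is equivalent to the existence of a reverse ray $R$ from $\alpha_0$ that meets $\anc(\beta)$ for every vertex $\beta$.

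The engine for both directions is an exact recursion. Since $G$ is highly arc transitive, $G_{\alpha_0}$ acts transitively on the $s$-arcs ending at $\alpha_0$, hence (via the surjection to their sources) transitively on $\anc_s(\alpha_0)$; consequently every $u\in\anc_s(\alpha_0)$ has the \emph{same} number $\delta_s$, with $1\le\delta_s\le d^+$, of out-neighbours lying in $\anc_{s-1}(\alpha_0)$. Counting the arcs between the two levels from each side (every vertex of $\anc_{s-1}(\alpha_0)$ receives all $d^-$ of its in-arcs from $\anc_s(\alpha_0)$) gives $d^-a_{s-1}=\delta_s a_s$, so $a_s=\prod_{j=1}^s (d^-/\delta_j)$. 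Moreover the $\delta_s$ are \emph{non-decreasing}: for $u\in\anc_1(\alpha_0)$ a vertex-transitivity automorphism $g$ with $g(u)=\alpha_0$ carries the ancestor-cone of $u$ onto that of $\alpha_0$, so its branching invariants equal the $\delta_j$, while $\anc_{j-1}(u)\subseteq\anc_j(\alpha_0)$ forces $\delta_j\le\delta_{j+1}$. As $\delta_s$ is a bounded, non-decreasing sequence of integers it is eventually a constant $\delta_\infty$, whence $a_s^{1/s}\to d^-/\delta_\infty$ and the in-spread equals the rational number $d^-/\delta_\infty$. In particular the in-spread is $1$ exactly when $\delta_\infty=d^-$, and in that case $a_s$ is eventually constant, so $\sup_s a_s<\infty$.

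For the implication ``$\des(L)=V\Gamma\Rightarrow$ in-spread $1$'' one can argue more cheaply with Theorem~\ref{THATtree}: it supplies a homomorphism $\phi\colon\Gamma\to T$ onto a directed tree $T$ of in-valency $1$ with finite, uniformly sized fibres, say of size $f$. Because $\phi$ preserves arcs and $T$ has in-valency $1$, the unique reverse path in $T$ shows that $\phi$ maps $\anc_s(\alpha_0)$ into a single fibre; hence $a_s\le f$ for all $s$, and since $1\le a_s\le f$ we get $a_s^{1/s}\to1$.

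The substantial direction is the converse. Assuming the in-spread is $1$, the computation above yields $\sup_s a_s<\infty$, so the finitely branching tree of reverse-ray choices is infinite and König's Lemma produces a reverse ray $R$ from $\alpha_0$; it remains to upgrade $R$ to one meeting $\anc(\beta)$ for every $\beta$. Two facts are needed: that every vertex $\beta$ shares an ancestor with $\alpha_0$ (so $\anc(\beta)$ meets the cone $\anc(\alpha_0)$ through which $R$ runs), and that the bounded-width cone $\anc(\alpha_0)$ has a single upward end (so any two of its vertices have a common ancestor and every reverse ray through it is cofinal). Granting these, if $\gamma\in\anc_s(\alpha_0)\cap\anc(\beta)$ then a common ancestor $\eta$ of $\gamma$ and of the level-$s$ vertex of $R$ lies on a tail of $R$ and satisfies $\eta\in\anc(\gamma)\subseteq\anc(\beta)$, giving $\beta\in\des(R)$; extending $R$ downward by any descendant ray then produces the required $L$. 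I expect the one-endedness of the ancestor cone (equivalently the common-ancestor property) to be the main obstacle: bounded width alone does not preclude several parallel, never-merging reverse rays, so high arc transitivity must be invoked once more to exclude this. The natural tool is Lemma~\ref{LTransitveLines}, completing two disjoint reverse rays to directed lines, mapping one onto the other by an automorphism, and deriving a contradiction with $\sup_s a_s<\infty$ in the spirit of the end-splitting argument in the Proposition preceding Theorem~\ref{THATtree}; pinning down this confluence rigorously is the one place where a short, clean argument is not immediate.
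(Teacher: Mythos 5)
The paper gives no proof of this statement (it is quoted from the reference \cite{MMMSTZ2005}), so your argument has to stand on its own. Most of it does: the ``engine'' is correct --- transitivity of $G_{\alpha_0}$ on $\anc_s(\alpha_0)$, the double count $d^-a_{s-1}=\delta_s a_s$, the monotonicity of $\delta_s$, and hence the identification of the in-spread with $d^-/\delta_\infty$ and the conclusion that in-spread $1$ forces $a_s$ to be eventually constant, say equal to $N$ --- and the implication $\des(L)=V\Gamma\Rightarrow$ in-spread $1$ via Theorem~\ref{THATtree} is fine. But the converse is not complete, and you say so yourself. The confluence of the ancestor cone (that any two ancestors of $\alpha_0$ have a common ancestor, and that a reverse ray is cofinal in $\anc(\alpha_0)$) is precisely the substance of that direction, and gesturing at Lemma~\ref{LTransitveLines} and an unspecified contradiction ``in the spirit of'' an earlier argument does not establish it. Your ``fact (a)'' --- that every vertex shares an ancestor with $\alpha_0$ --- is likewise asserted as needed but never proved, and the obvious induction along an undirected path from $\alpha_0$ itself breaks at an edge $(\beta,\gamma)$ pointing towards the path unless one already has confluence at $\gamma$.

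The gap is real but fillable with exactly the quantities you have already computed, and more cheaply than you anticipate. For $u\in\anc_s(\alpha_0)$ and any $t$ one has $\anc_t(u)\subseteq\anc_{s+t}(\alpha_0)$ (concatenate paths; Lemma~\ref{LCPW} keeps the levels well defined). By vertex transitivity $|\anc_t(u)|=a_t=N$ for all large $t$, while $|\anc_{s+t}(\alpha_0)|=N$ as well, and an inclusion of finite sets of equal cardinality is an equality; hence $\anc_t(u)=\anc_{s+t}(\alpha_0)$ for all large $t$. This single observation gives everything at once: any two vertices of $\anc(\alpha_0)$ have a common ancestor; every reverse ray $r_0=\alpha_0,r_1,r_2,\dots$ is cofinal, since for $\gamma\in\anc_j(\alpha_0)$ the vertex $r_{j+t}$ lies in $\anc_{j+t}(\alpha_0)=\anc_t(\gamma)$ for large $t$; the induction for fact (a) then closes (in the case of an edge $(\beta,\gamma)$, the vertex $\beta$ and the inductively found common ancestor both lie in the cone over $\gamma$, so they share an ancestor, which lies in $\anc(\alpha_0)$); and consequently any line whose backward half is such a ray satisfies $\des(L)=V\Gamma$. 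Without this step, or an equivalent confluence statement, your proof of the converse does not go through.
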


\subsection{Tidy subgroups and highly arc transitive digraphs}\label{STidy}
Now we turn our attention back to totally disconnected, locally compact
groups.  The following notation will be used extensively in what
follows.  Let $G$ be  a totally disconnected, locally compact
group and $x$ a fixed element in $G$.
Take a compact open sub\-group $U$.
We set $\Omega=G/U$ and let $\alpha_0$ denote the point
in $\Omega$ that has $U$ as stabilizer.  Then define a digraph
$\Gamma=\Gamma_U$ that has $\Omega$ as a vertex set and edge set
$G(\alpha_0, x\alpha_0)$ -- the $G$-orbit of the ordered pair
$(\alpha_0, x\alpha_0)$.   Note that $\Gamma$ need not be connected.
For an integer $i$ set
$\alpha_i=x^i\alpha_0$.  The vertices $\alpha_i$ form a line $L$ in
$\Gamma$.  Observe that $x^iUx^{-i}$ is the stabilizer of $\alpha_i$
in $G$ and $U_+$ is the stabilizer of the
vertices $\alpha_0, \alpha_1, \ldots$ and $U_-$ is the stabilizer
of the vertices $\alpha_0, \alpha_{-1}, \ldots$.

\begin{proposition}\label{PTAHAT}
{\rm (Cf.~\cite[Theorem~2.1]{Moller2002})}
The subgroup $U$ satisfies condition (TA) in Definition~\ref{DTidyscale}
if and only if $G$ acts highly arc transitively on the
digraph $\Gamma$.
\end{proposition}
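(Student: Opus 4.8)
The plan is to prove the two implications separately, translating the combinatorics of $s$-arcs into statements about the stabiliser chain $G_{\alpha_i}=x^iUx^{-i}$, keeping in mind that $U_+$ is the pointwise stabiliser of the forward ray $\alpha_0,\alpha_1,\dots$ and $U_-$ that of the backward ray $\alpha_0,\alpha_{-1},\dots$. Two elementary facts are used throughout: first, for subgroups $H\le K$ acting on a set and a point $v$, one has $Hv=Kv$ if and only if $K=H(K\cap G_v)$; second, the two halves of (TA) in Definition~\ref{DTidyscale} are equivalent, since inverting $U=U_+U_-$ and using that $U_\pm$ are subgroups gives $U=U_-U_+$. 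Note also that $\Gamma$ is locally finite, as the out-valency $|U:U\cap xUx^{-1}|$ is finite for the compact open $U$.

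For the direction (TA)$\Rightarrow$ highly arc transitive, I would argue by induction on $s$ that $G$ is transitive on $s$-arcs, the cases $s\le 1$ following from vertex- and arc-transitivity. For the inductive step, an $s$-arc truncates to an $(s-1)$-arc, so by the inductive hypothesis it suffices to show that the pointwise stabiliser $W=\bigcap_{i=0}^{s-1}x^iUx^{-i}$ of the standard arc $\alpha_0,\dots,\alpha_{s-1}$ acts transitively on the out-neighbours $G_{\alpha_{s-1}}\alpha_s$ of $\alpha_{s-1}$. By the orbit fact this is the factorisation $G_{\alpha_{s-1}}=W\,(G_{\alpha_{s-1}}\cap G_{\alpha_s})$; conjugating by $x^{-(s-1)}$ turns it into $U=V\,(U\cap xUx^{-1})$ with $V=\bigcap_{j=0}^{s-1}x^{-j}Ux^{j}$. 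Here the hypothesis enters: since $U_-\subseteq V$ and $U_+\subseteq U\cap xUx^{-1}$, the decomposition $U=U_-U_+$ forces $U\subseteq V(U\cap xUx^{-1})\subseteq U$, completing the step.

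For the converse I would use Lemma~\ref{LTransitveLines}. Given $u\in U=G_{\alpha_0}$, I want $w\in U_+$ with $wu\in U_-$, for then $u=w^{-1}(wu)\in U_+U_-$. The images $\alpha_0,u\alpha_{-1},u\alpha_{-2},\dots$ form, together with the forward ray of $L$, a second directed line $L'$ agreeing with $L$ on that forward ray; Lemma~\ref{LCPW} (no directed cycles, as $\Gamma$ is highly arc transitive and not a directed cycle) guarantees that $L'$ really has distinct vertices. Applying Lemma~\ref{LTransitveLines} to $L'$ and $L$ produces an automorphism $g$ fixing $\alpha_i$ for all $i\ge 0$ and sending $u\alpha_{-j}\mapsto\alpha_{-j}$; then $g\in U_+$ and $gu\in U_-$, giving $U=U_+U_-$ and hence (TA).

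The main obstacle I anticipate is the index bookkeeping in the first implication: one must conjugate the stabiliser intersections by exactly the right power of $x$ so that the geometric condition ``the stabiliser of an initial arc is transitive on the next out-neighbours'' becomes precisely $U=U_-U_+$, and verify the inclusions $U_-\subseteq V$ and $U_+\subseteq U\cap xUx^{-1}$ with the correct ranges. In the converse the only delicate point is checking that the candidate line $L'$ is genuine, which is exactly where the absence of directed cycles from Lemma~\ref{LCPW} is needed before Lemma~\ref{LTransitveLines} can be invoked; the degenerate situation in which $\Gamma$ is a directed cycle should be excluded or dealt with separately.
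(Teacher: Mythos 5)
Your proposal is correct and follows essentially the same route as the paper: your inductive step for (TA)$\Rightarrow$HAT is the paper's observation that the stabiliser of the backward arc $\alpha_0,\alpha_{-1},\dots,\alpha_{-(s-1)}$ acts transitively on $\out(\alpha_0)$, merely re-encoded as the coset factorisation $U=V(U\cap xUx^{-1})$, and your converse is the paper's line-surgery argument via Lemma~\ref{LTransitveLines}, mirrored from the forward to the backward ray. The one detail to make explicit is that the automorphism supplied by Lemma~\ref{LTransitveLines} must be produced inside $G$ (otherwise ``$g\in U_+$'' is meaningless); this works because $G$ itself acts highly arc transitively and has closed image in $\Sym(\Omega)$, exactly as the paper remarks, and, as you correctly flag, both arguments tacitly assume the $\langle x\rangle$-orbit of $\alpha_0$ is infinite so that $L$ is a genuine line.
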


\begin{proof}
Let us start by looking at what happens when the digraph $\Gamma$ is
highly arc transitive.
Let $g\in U=G_{\alpha_0}$.  Since $G$ is assumed to act highly arc
transitively on $\Gamma$ and $G$ is a closed in the permutation
topology we deduce from Lemma~\ref{LTransitveLines} that $G$ acts
transitively on the set of lines in $\Gamma$.  For $i\geq 1$ we set
$\beta_i=g\alpha_i$ and let $L_1$ denote the line $\ldots,
\alpha_{-1}, \alpha_0, \beta_1, \beta_2, \ldots$.
We find an element $g_-$ that moves the line $L$ to
the line $L_1$ such that $g_-\alpha_i=\alpha_i$ for $i\leq 0$ and
$g_-\alpha_i=\beta_i$ for $i\geq 1$.  Note that $g_-\in U_-$.
Set $g_+=g_-^{-1}g$ and note that
$g_+$ fixes all the vertices $\alpha_0, \alpha_1, \ldots$ and thus
$g_+\in U_+$.  Therefore $g\in U_-U_+$.  From this we
deduce that $U$ satisfies condition TA.

Conversely, assume that $U$ satisfies condition TA.  Take a vertex $\beta$ in
$\out(\alpha_0)$.  Then there must be an element $g\in U$ such that
$g\alpha_1=\beta$.  Write $g=g_-g_+\in U_-U_+$ and we see that
$g_-\alpha_1=\beta$.  Thus $U_-$ acts transitively on
$\out(\alpha_0)$.
Now we use induction over $s$ to show that $G$
acts transitively on the set of $s$-arcs.  Suppose we are given an
 $(s+1)$-arc $\beta_0, \ldots, \beta_s, \beta_{s+1}$.  Use the induction
hypothesis to find an element $h\in G$ such that $h\alpha_0=\beta_s,
h\alpha_{-1}=\beta_{s-1},\ldots, h\alpha_{-s}=\beta_0$.  Then by the above,
$hU_+h^{-1}$ acts transitively on $\out(h\alpha_0)=\out(\beta_s)$.  We
pick an element $h'$ from $hU_+h^{-1}$ such that
$h'(h\alpha_1)=\beta_{s+1}$.  Now we have found an element $h'h$ that
moves the $(s+1)$-arc  $\alpha_0, \ldots, \alpha_s, \alpha_{s+1}$
to the $(s+1)$-arc $\beta_0, \ldots, \beta_s, \beta_{s+1}$ and can
conclude that $G$ acts transitively on the $(s+1)$-arcs in $\Gamma$ and
also that $G$ acts highly arc transitively on $\Gamma$.
\end{proof}

\bigskip

Condition (TB) can also be translated in to a condition about the graph
$\Gamma$ defined at the start of the section.
We use the following lemma.

\begin{lemma}{\rm (\cite[Lemma 3]{Willis1994})}
Let $G$ be a totally disconnected,
  locally compact group and $x\in G$.  Suppose that $U$ is a compact,
  open subgroup of $G$ that satisfies condition (TA).  Then

(a) $U_{++}$ is closed if and only if $U_{++}\cap U=U_+$.

(b) $U_{++}$ is closed if and only if $U_{--}$ is closed.
\end{lemma}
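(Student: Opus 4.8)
The plan is to work throughout with the digraph $\Gamma=\Gamma_U$ and the line $L$ with $\alpha_i=x^i\alpha_0$, recalling that $x^iU_+x^{-i}$ is the stabilizer of the forward ray $\{\alpha_j:j\ge i\}$, so that $U_{++}=\bigcup_{i\ge 0}x^iU_+x^{-i}$ is an ascending union of compact subgroups normalized by $x$, and $U_+\subseteq U_{++}\cap U$ always. Since $U$ satisfies (TA) (Definition~\ref{DTidyscale}), Proposition~\ref{PTAHAT} makes $G$ act highly arc transitively on $\Gamma$, so Lemmas~\ref{LCPW} and~\ref{LTransitveLines} are available. The single topological fact I would isolate first is the equivalence ``$U_{++}$ is closed $\iff$ $U_+$ is open in $U_{++}$'': if $U_+$ is open in $U_{++}$ then $U_{++}$ is locally compact in the subspace topology, hence closed, being a locally compact subgroup of the Hausdorff group $G$; conversely a closed $U_{++}$ is locally compact, hence Baire, so one of the closed subgroups $x^iU_+x^{-i}$ has nonempty interior and is therefore open in $U_{++}$, and conjugating by a power of $x$ (a self-homeomorphism of $U_{++}$) makes $U_+$ itself open.

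For part (a) the direction $(\Leftarrow)$ is then immediate: if $U_{++}\cap U=U_+$ then, as $U$ is open in $G$, $U_+=U\cap U_{++}$ is open in $U_{++}$, so $U_{++}$ is closed. For $(\Rightarrow)$ I would argue the contrapositive. If $U_{++}\cap U\neq U_+$, pick $g\in (U_{++}\cap U)\setminus U_+$; then $g$ fixes $\alpha_0$ and some forward tail $\{\alpha_k:k\ge n\}$ but moves some $\alpha_j$, so $g$ carries the directed path $\alpha_0\to\cdots\to\alpha_n$ to a \emph{different} directed path $\alpha_0\to\cdots\to\alpha_n$ of the same length (Lemma~\ref{LCPW}), i.e.\ $\Gamma$ has a forward ``rerouting''. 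Using high arc transitivity this rerouting can be translated to sit around $\alpha_{tn}$ for every $t$, and via Lemma~\ref{LTransitveLines} realized by automorphisms fixing everything outside a bounded window. The plan is then to compose infinitely many such reroutings, placed in disjoint windows marching off to infinity, into a single element $g^{\infty}$: each partial composition fixes a forward tail and so lies in $U_{++}$, the composition converges in the permutation topology because every vertex lies in only finitely many windows, yet $g^{\infty}$ fixes no forward tail, so $g^{\infty}\in\overline{U_{++}}\setminus U_{++}$ and $U_{++}$ is not closed. The hard part will be exactly this construction: choosing the windows and verifying that the infinite product, together with its inverse, converges to a genuine automorphism; this convergence bookkeeping is the main obstacle.

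For part (b) I would reduce to part (a) by the symmetry $x\leftrightarrow x^{-1}$, which interchanges $U_+$ with $U_-$ and $U_{++}$ with $U_{--}$ and leaves (TA) intact; part (a) applied to $x^{-1}$ gives ``$U_{--}$ closed $\iff U_{--}\cap U=U_-$'', so it suffices to prove $U_{++}\cap U=U_+\iff U_{--}\cap U=U_-$. A short computation with $U_+=\bigcap_{k\ge0}x^k(U_{++}\cap U)x^{-k}$ shows $U_{++}\cap U=U_+$ is equivalent to $U_{++}\cap U\subseteq G_{\alpha_1}$, and conjugating this by $x^k$ yields: every element of $U_{++}$ fixing $\alpha_k$ also fixes $\alpha_{k+1}$. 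Now for $(\Rightarrow)$ (the reverse being symmetric) take $h\in U_{--}\cap U$ and factor $h=ba$ with $b\in U_-,\ a\in U_+$ using (TA) in the form $U=U_-U_+$; since $b$ fixes the whole backward ray, it suffices to show $a$ fixes $\alpha_{-1}$, and $a=b^{-1}h\in U_+\cap U_{--}\subseteq U_{++}$ fixes some $\alpha_{-m}$. Applying the conjugated hypothesis repeatedly, $a$ fixes $\alpha_{-m}$, hence $\alpha_{-m+1}$, hence $\ldots$, hence $\alpha_{-1}$, so $h\alpha_{-1}=\alpha_{-1}$. Thus $U_{--}\cap U\subseteq G_{\alpha_{-1}}$, which by the $x^{-1}$-version of the same equivalence is exactly $U_{--}\cap U=U_-$, completing the reduction; this part I expect to go through cleanly once the equivalence is in hand.
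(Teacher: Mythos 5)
The paper states this lemma without proof (it is quoted from Willis), so your argument has to stand on its own. Most of it does: the topological equivalence ``$U_{++}$ is closed $\iff$ $U_+$ is open in $U_{++}$'' is correct (both the locally-compact-subgroups-are-closed direction and the Baire argument, using that conjugation by $x^{\pm1}$ preserves $U_{++}$), the $(\Leftarrow)$ direction of (a) follows, and part (b) is a clean and correct reduction: the identity $U_+=\bigcap_{k\ge 0}x^k(U_{++}\cap U)x^{-k}$, the reformulation $U_{++}\cap U=U_+\iff U_{++}\cap G_{\alpha_k}\subseteq G_{\alpha_{k+1}}$ for all $k\in\Z$, and the factorization $h=ba$ with $b\in U_-$, $a\in U_+\cap U_{--}$ all check out.

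The genuine gap is in (a)$(\Rightarrow)$. Your ``reroutings fixing everything outside a bounded window'' are manufactured from high arc transitivity and Lemma~\ref{LTransitveLines}, which produce elements of $\aut(\Gamma)$, not of $G$: here $G$ is an abstract totally disconnected group acting (possibly non-faithfully) on $\Gamma=\Gamma_U$, and its image need not be all of, or even closed in, $\aut(\Gamma)$. Moreover, even if the partial products lay in $G$, convergence in the permutation topology of the action on $G/U$ is in general strictly weaker than convergence in $G$, so your limit $g^{\infty}$ is not shown to lie in $G$, let alone in the closure of $U_{++}$ computed inside $G$ --- and that is the closure that matters for the statement. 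So as written the contrapositive does not go through.

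The repair is easier than the construction you were planning, and no infinite product is needed. Given $g\in(U_{++}\cap U)\setminus U_+$, use (TA) to write $g=g_+g_-$ and replace $g$ by $g_-=g_+^{-1}g$, which lies in $(U_{++}\cap U_-)\setminus U_+$: it fixes the whole backward ray $\alpha_0,\alpha_{-1},\dots$ and a forward tail $\alpha_N,\alpha_{N+1},\dots$, yet moves some $\alpha_j$ with $0<j<N$. The conjugates $x^{sN}g_-x^{-sN}$ are honest elements of $G$; each fixes $\alpha_0$ and the tail from $\alpha_{(s+1)N}$, so lies in $U_{++}\cap U$, but moves $\alpha_{sN+j}$ and hence lies outside $x^nU_+x^{-n}$ for every $n\le sN+j$. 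Thus the increasing union $U_{++}\cap U=\bigcup_n\bigl(x^nU_+x^{-n}\cap U\bigr)$ of compact subgroups never stabilizes; by Baire category a compact group cannot be a strictly increasing countable union of closed subgroups, so $U_{++}\cap U$ is not compact, and therefore $U_{++}$ is not closed (otherwise $U_{++}\cap U$ would be a closed subset of the compact group $U$). This is exactly your rerouting idea, but packaged entirely inside $G$, with (TA) used to force the translated reroutings to remain in $U$.
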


In our setting $U_{++}$ is the set of all elements $g$ in $G$ such that
there exists a number $k$ such that $g$ fixes $\alpha_k,
\alpha_{k+1},\ldots$.  The condition that $U_{++}\cap U=U_+$ says that
an element in $G$ that fixes $\alpha_0$ and also $\alpha_k,
\alpha_{k+1}, \ldots$ for some $k\geq 0$ must also fix $\alpha_1,
\ldots, \alpha_{k-1}$.  If we assume that $G$ acts highly arc
transitively on $\Gamma$ then this implies that $\alpha_0, \ldots,
\alpha_k$ is the unique path in $\Gamma$ from $\alpha_0$ to
$\alpha_k$ and we conclude that the subgraph spanned by
$\des(\alpha_0)$ is a tree.

On the other hand, if the subgraph spanned
by $\des(\alpha_0)$ is a tree then clearly a group element that fixes
$\alpha_0$ and $\alpha_k$ must fix $\alpha_1,
\ldots, \alpha_{k-1}$ since these vertices lie on the directed
path from $\alpha_0$ to $\alpha_k$.  Hence  $U_{++}\cap U=U_+$.
Thus we have shown the following result.

\begin{proposition}
Suppose $U$ satisfies condition (TA).  Then $U$ satisfies condition (TB)
if and only if the subgraph spanned by $\des(\alpha_0)$ is a tree.
\end{proposition}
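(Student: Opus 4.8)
The plan is to strip off the two abstract reductions first and then convert the remaining equality of subgroups into a statement about directed paths in $\Gamma$, which I settle by two transitivity arguments. Since $U$ satisfies (TA), Proposition~\ref{PTAHAT} tells me that $G$ acts highly arc transitively on $\Gamma$, a fact I will use throughout. By the foregoing lemma (Lemma~3 of \cite{Willis1994}), under (TA) condition (TB)---that both $U_{++}$ and $U_{--}$ be closed---collapses, via its parts (a) and (b), to the single condition $U_{++}\cap U=U_+$. So it suffices to prove that $U_{++}\cap U=U_+$ if and only if the subgraph spanned by $\des(\alpha_0)$ is a tree.

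Next I would record what the two sides mean. As already noted, $U_{++}$ is exactly the set of $g\in G$ fixing some tail $\alpha_k,\alpha_{k+1},\dots$; intersecting with $U=G_{\alpha_0}$, the equality $U_{++}\cap U=U_+$ says precisely that any $g$ fixing $\alpha_0$ together with some tail must already fix the intermediate vertices $\alpha_1,\dots,\alpha_{k-1}$. On the geometric side I first reduce tree-ness to uniqueness of the directed path $\alpha_0,\dots,\alpha_k$ for every $k$. By Lemma~\ref{LCPW} all directed paths between two vertices have equal length and there are no directed cycles, so $\des(\alpha_0)$ carries a well-defined level function in which every edge raises the level by exactly one. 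High arc transitivity then lets $G_{\alpha_0}$ carry $\alpha_k$ to an arbitrary $\beta\in\des_k(\alpha_0)$, so uniqueness of the path to each $\alpha_k$ forces uniqueness of the path to every descendant; and an undirected cycle would produce a level-maximal vertex with two in-neighbours, hence two distinct directed paths from $\alpha_0$.

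For the two implications I would argue as follows. Assuming $\des(\alpha_0)$ is a tree, any $g\in U_{++}\cap U$ fixes $\alpha_0$ and the tail from some $\alpha_k$, hence preserves the unique directed path $\alpha_0,\dots,\alpha_k$ and so fixes $\alpha_1,\dots,\alpha_{k-1}$; together with the tail this gives $g\in U_+$, whence $U_{++}\cap U=U_+$. Conversely I argue contrapositively: if $\des(\alpha_0)$ is not a tree there is a second directed path $\alpha_0,\gamma_1,\dots,\gamma_{k-1},\alpha_k$ (of the same length $k$ by Lemma~\ref{LCPW}) with some $\gamma_i\neq\alpha_i$; splicing it into the line gives $L'=\dots,\alpha_{-1},\alpha_0,\gamma_1,\dots,\gamma_{k-1},\alpha_k,\alpha_{k+1},\dots$, which the level grading shows to be a genuine directed line. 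Lemma~\ref{LTransitveLines} then yields $g\in\autga$ with $g\alpha_i=\alpha_i$ for $i\le 0$ and $i\ge k$ and $g\alpha_i=\gamma_i$ for $1\le i\le k-1$; this $g$ lies in $U_{++}\cap U$ but not in $U_+$, so $U_{++}\cap U\neq U_+$.

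The main obstacle is the converse direction, and specifically the passage from ``unique directed path to each line vertex $\alpha_k$'' to a genuine tree. The real content is the grading consequence of Lemma~\ref{LCPW}---that each edge increases the level by one, so an undirected cycle forces a vertex with two distinct in-neighbours and hence two directed paths from the root---and the accompanying check that the spliced sequence $L'$ consists of distinct vertices and is therefore a directed line. Everything else is a direct translation. It is natural to carry out all of these transitivity arguments inside the connected component of $\alpha_0$, where Lemmas~\ref{LCPW} and~\ref{LTransitveLines} apply even though $\Gamma$ itself need not be connected.
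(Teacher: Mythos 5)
Your overall route coincides with the paper's: reduce (TB) to the single condition $U_{++}\cap U=U_+$ via Lemma~3 of \cite{Willis1994}, read $U_{++}\cap U$ as the set of elements of $G$ fixing $\alpha_0$ together with some forward tail, and then show that the resulting ``must fix the intermediate vertices'' condition is equivalent to $\des(\alpha_0)$ being a tree. Your forward implication and your reduction of tree-ness to uniqueness of the directed paths $\alpha_0,\dots,\alpha_k$ (via the level grading from Lemma~\ref{LCPW} and transitivity of $G_{\alpha_0}$ on $\des_k(\alpha_0)$) are correct and in fact supply details the paper only gestures at.

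There is, however, one localized flaw in the converse direction. The element you extract from Lemma~\ref{LTransitveLines} lives in $\aut(\Gamma)$, but the conclusion you need --- ``$g$ lies in $U_{++}\cap U$ but not in $U_+$'' --- only makes sense for an element of $G$ (equivalently, of the image of $G$ in $\Sym(\Omega)$), since $U_+$, $U_{++}$ and $U$ are subgroups of $G$; in this setting $G$ is generally a proper subgroup of the full automorphism group of $\Gamma_U$, so an automorphism carrying $L$ to $L'$ need not come from $G$. The repair is routine but must be said: rerun the compactness argument in the proof of Lemma~\ref{LTransitveLines} inside $G$ itself. Choose $g_i\in G$ mapping longer and longer initial segments of $L$ to $L'$ (possible because $G$, not just $\aut(\Gamma)$, acts highly arc transitively by Proposition~\ref{PTAHAT}); all $g_i$ lie in $U=G_{\alpha_0}$, whose image in $\Sym(\Omega)$ is compact and hence closed, so a convergent subsequence has its limit in the image of $U$, and any preimage in $U$ is the desired element of $U_{++}\cap U\setminus U_+$ (the kernel of the action is contained in $U_+$, so non-membership in $U_+$ is detected on $\Omega$). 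This is exactly the pattern the paper already uses in the proof of Proposition~\ref{PTAHAT}, where Lemma~\ref{LTransitveLines} is applied to $G$ on the grounds that $G$ is closed in the permutation topology. With that substitution your proof is complete and matches the paper's.
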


Putting these observation together as a theorem we get.

\begin{theorem}
 {\rm (Cf.~\cite[Theorem~3.4]{Moller2002})}
Let $G$ be a totally disconnected, locally compact group and $U$ a
compact, open subgroup.  Let $x$ be an element in $G$ and define a
graph $\Gamma$ such that the vertex set is $G/U$ and the edge set is
$G(\alpha_0, x\alpha_0)$ where $\alpha_0$ is the vertex in $\Gamma$ such
that $U=G_\alpha$.  Suppose furthermore that the orbit of $\alpha_0$ under $x$ is infinite.  Then $U$ is tidy for $x$ if and only if $G$ acts
highly arc transitively on $\Gamma$ and the subgraph spanned by
$\des(\alpha_0)$ is a tree.
\end{theorem}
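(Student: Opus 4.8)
The plan is to assemble the two propositions that immediately precede this theorem together with the definition of tidiness, so that the equivalence splits cleanly into its (TA) and (TB) halves. Recall from Definition~\ref{DTidyscale} that $U$ is tidy for $x$ exactly when it satisfies both (TA) and (TB). The standing setup of this subsection identifies the vertices $\alpha_i = x^i\alpha_0$ with a line $L$ in $\Gamma$, and the extra hypothesis that the $x$-orbit of $\alpha_0$ is infinite is precisely what makes these $\alpha_i$ pairwise distinct, so that $L$ is a genuine directed line and not a finite cycle. This is the hypothesis that licenses the use of Proposition~\ref{PTAHAT} and of the proposition just above, both of which are phrased in terms of $L$ and of $\des(\alpha_0)$.

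For the forward implication I would start from the assumption that $U$ is tidy for $x$, so that (TA) and (TB) both hold. Condition (TA) feeds directly into Proposition~\ref{PTAHAT} to give that $G$ acts highly arc transitively on $\Gamma$. Then, since (TA) is available, the preceding proposition becomes applicable and says that (TB) is equivalent to the subgraph spanned by $\des(\alpha_0)$ being a tree; invoking (TB) therefore yields the tree conclusion.

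For the reverse implication I would assume that $G$ acts highly arc transitively on $\Gamma$ and that the subgraph spanned by $\des(\alpha_0)$ is a tree. Proposition~\ref{PTAHAT} converts high arc transitivity back into condition (TA), and with (TA) in hand the preceding proposition converts the tree hypothesis into condition (TB). Together these give both (TA) and (TB), that is, $U$ is tidy for $x$.

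I do not expect a genuine obstacle in the logical combination itself; the one point that needs care is the role of the infinite-orbit hypothesis. It enters no computation directly, but it guarantees that $L$ is a line and in particular that $\Gamma$ is not a directed cycle, which is what keeps the descendant structure appearing in Lemma~\ref{LCPW} and in the two cited propositions meaningful. Once that is in place, the theorem is a direct restatement of the two preceding results, with no further estimates required.
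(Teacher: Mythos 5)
Your proposal is correct and is exactly the paper's argument: the paper states this theorem with the words ``Putting these observations together as a theorem we get,'' i.e.\ it is obtained precisely by combining Proposition~\ref{PTAHAT} (condition (TA) is equivalent to the highly arc transitive action) with the immediately preceding proposition (given (TA), condition (TB) is equivalent to $\des(\alpha_0)$ spanning a tree). Your remark on the role of the infinite-orbit hypothesis --- ensuring the $\alpha_i$ are distinct so that $L$ is a genuine line --- is a sensible addition that the paper leaves implicit.
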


\subsection{Using the connection}
In this section $G$ denotes a totally disconnected, locally compact group.
From the definition of tidy subgroup it is far from obvious that there
always is a compact, open subgroup of $G$ that is tidy for a given
element $x$ in $G$.  Our first task is thus to construct
a compact, open subgroup $U$ that is tidy for $x$.

First, the case where $x$ is periodic (i.e.~the subgroup generated
by $x$ is relatively compact).  Let $U$ be a compact, open
subgroup.  Put $\Omega=G/U$. Let $\alpha$ be a point in $\Omega$ such
that $G_\alpha=U$.   Define $A$ as the closure (in the given topology
on $G$)  of the subgroup generated by $x$.  By assumption $A$ is
compact.  Since the permutation topology induced by the action of $G$
on $\Omega$ is contained in the original topology on $G$ we conclude
that $A$ is also compact in the permutation topology.  Hence,
by Lemma~\ref{LCompact}(ii), all the
orbits of the subgroup generated
by $x$ are finite.  So there is a number $N$ such that
$x^N\alpha=\alpha$ and, therefore,  $x^N\in G_\alpha=U$.
The subgroup $U\cap xUx^{-1}\cap \cdots\cap x^{N-1}Ux^{-(N-1)}$ is compact and
open and normalized by $x$ and thus tidy for $x$.  Hence we will
assume in what follows that $x$ is not periodic.

Let $V$ be some compact, open subgroup of $G$.  Construct a graph
$\Gamma=\Gamma_V$ as done at the start of the last section.  From the proof of
Proposition~\ref{PTAHAT} we see that $G$ acts highly arc transitively
on $\Gamma$ if and only if $V_-$ acts transitively on $\out(\alpha_0)$.
Look at the group $V_n=\bigcap_{i=0}^{n}x^{-i}Vx^i=G_{\alpha_0,
  \alpha_{-1},\ldots, \alpha_{-n}}$.  We claim that there is a number $n$
 such that $V_n\alpha_1=V_-\alpha_1$.  Otherwise one could find an
element $g_i\in V_i$ for each $i$ such that
$g_i\alpha_1\nin V_-\alpha_1$.  The
sequence $(g_i)_{i\in{\bf N}}$
has a convergent subsequence converging to an element
$g$ and clearly this element is in $V_-$, but $g\alpha_1\nin
V_-\alpha_1$, so we have reached a contradiction.  Now set $W=V_n$.
Note that $W_+=V_+$.  We can use a similar argument as
in the first part of Proposition~\ref{PTAHAT} to show that $W$
satisfies condition (TA).   Using this compact, open subgroup $W$ to get
a compact, open subgroup that also satisfies condition (TB) is more
involved, and the details will be left out.  By finding a compact, open
subgroup $W$ satisfying (TA),
we have ensured that $G$ acts highly arc transitively on the digraph
$\Gamma_W$.  What is missing is condition (TB), which would mean that the
subgraph spanned by the descendants of a vertex is a tree.  To achieve
that, Theorem~\ref{THATtree} is used to produce a highly arc
transitive digraph, wherein the graph spanned by the descendants of a
vertex is a tree.  This will then prove the following theorem of Willis.

\begin{theorem}{\rm (\cite[Theorem 1]{Willis1994},
see also \cite[Theorem 4.1]{Moller2002})}
Let $G$ be a totally disconnected, locally compact group and $x$ an
element of $G$.  Then there is a compact, open subgroup $U$ of $G$ that
is tidy for $x$.
\end{theorem}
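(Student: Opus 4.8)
The plan is to split on whether $x$ is periodic. If the closure of $\langle x\rangle$ is compact then, as shown above, all orbits of $\langle x\rangle$ on $\Omega=G/V$ are finite, so some power $x^N\in V$ and the finite intersection $V\cap xVx^{-1}\cap\cdots\cap x^{N-1}Vx^{-(N-1)}$ is a compact open subgroup normalized by $x$, hence tidy. The real content is the non-periodic case, where I would first produce a compact open $W$ making $\Gamma_W$ highly arc transitive (so that (TA) holds by Proposition~\ref{PTAHAT}) and then refine $W$ to a compact open $U$ whose digraph in addition has tree-like descendant sets (so that (TB) holds as well).

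For the (TA) step I would start from any compact open $V$, form $\Gamma_V$ with its $x$-line $L=\ldots,\alpha_{-1},\alpha_0,\alpha_1,\ldots$, and set $V_n=\bigcap_{i=0}^n x^{-i}Vx^i=G_{\alpha_0,\alpha_{-1},\ldots,\alpha_{-n}}$. A compactness argument in the permutation topology --- any sequence $g_i\in V_i$ has a subsequence converging to some $g\in V_-$ --- yields an $n$ with $V_n\alpha_1=V_-\alpha_1$; then $W=V_n$ satisfies (TA) by the argument in the first half of Proposition~\ref{PTAHAT}, so $G$ acts highly arc transitively on $\Gamma_W$. I would then pass to the subdigraph $\Gamma_L$ spanned by $\des(L)$, which is locally finite, highly arc transitive, and satisfies $V\Gamma_L=\des(L)$ by the proposition on descendant subdigraphs.

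Next I would apply Theorem~\ref{THATtree} to $\Gamma_L$ to obtain a surjection $\phi\colon\Gamma_L\to T$ onto a directed tree $T$ of in-valency $1$ and finite out-valency, with finite fibers of equal size. Because for $\alpha\in\des(L)$ the descendants computed in $\Gamma_L$ and in $\Gamma_W$ coincide, the fibers of $\phi$ are governed by the intrinsic, automorphism-invariant relation ``$\des(\alpha)=\des(\beta)$'', which extends to a $G$-invariant equivalence $\sim$ with finite classes on all of $\Omega$. Taking $U$ to be the stabilizer in $G$ of the $\sim$-class of $\alpha_0$, one sees that $W\leq U$ with finite index, so $U$ is compact and open. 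Identifying $G/U$ with $\Omega/\sim$, the orbital digraph $\Gamma_U$ is the quotient of $\Gamma_W$ by $\sim$ and is isomorphic to $T$; the group $G$ acts highly arc transitively on it, so $U$ satisfies (TA) by Proposition~\ref{PTAHAT}, while the tree structure of $T$ makes the subgraph spanned by $\des(\alpha_0)$ a tree, so $U$ satisfies (TB) by the proposition characterizing (TB) via the tree condition. Hence $U$ is tidy for $x$.

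The hard part --- precisely the step deferred in the discussion above --- is the passage from (TA) to (TB). Beyond invoking Theorem~\ref{THATtree}, the delicate point is to verify that the fiber partition it supplies assembles into a genuinely $G$-invariant equivalence with finite classes, so that its class-stabilizer really is a compact open subgroup of $G$ and the induced quotient digraph is exactly the orbital digraph $\Gamma_U$. Establishing this interplay between the combinatorial tree quotient and the group --- finiteness of the classes, $G$-invariance of the descendant relation, and the identification $\Gamma_U\cong T$ --- is where the work concentrates; once it is secured, conditions (TA) and (TB) are read off immediately from high arc transitivity and the tree structure.
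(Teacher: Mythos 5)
Your proposal is correct and follows essentially the same route as the paper: the periodic case is settled by intersecting finitely many conjugates of a compact open subgroup, condition (TA) is achieved by the compactness argument yielding $W=V_n$ with $V_n\alpha_1=V_-\alpha_1$, and condition (TB) is obtained by feeding the descendant digraph of the line into Theorem~\ref{THATtree} --- precisely the outline the paper gives, which likewise defers the details of the (TA)-to-(TB) step. Your elaboration of that step (quotienting by the $G$-invariant relation $\des(\alpha)=\des(\beta)$ and taking the setwise stabilizer of the class of $\alpha_0$) is the right mechanism, and the finiteness of the classes on all of $\Omega$, which you assert but do not check, does hold: by Lemma~\ref{LCPW} two vertices with equal descendant sets have equal out-neighbour sets, so each class lies in the finite in-neighbourhood of any common out-neighbour.
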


Now we have ensured that there is something to talk about.
We next use digraphs to deduce further facts about tidy subgroups
and the scale function.
First, we use Lemma~\ref{LModular} to deduce
the following.

\begin{theorem}{\rm (\cite[Corollary 1]{Willis1994})}\label{Tscale-modular}
Let $G$ be a totally disconnected, locally compact group.  Denote by
$\Delta$ the modular function on $G$ and by ${\bf s}$ the scale
function on $G$.  Then, for every $x\in G$,
$$\Delta(x)=\frac{{\bf s}(x)}{{\bf s}(x^{-1})}.$$
\end{theorem}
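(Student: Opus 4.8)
The plan is to produce a single compact open subgroup $U$ that is tidy simultaneously for $x$ and for $x^{-1}$, read off $\s(x)$ and $\s(x^{-1})$ as group indices via Theorem~\ref{TScaletidy}, and then recognize the quotient of those indices as the value of the modular function by means of Lemma~\ref{LModular}.

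First I would invoke the existence theorem for tidy subgroups proved just above to obtain a compact open subgroup $U$ of $G$ that is tidy for $x$. The first key observation is that such a $U$ is then automatically tidy for $x^{-1}$ as well. The reason is that passing from $x$ to $x^{-1}$ in Definition~\ref{DTidyscale} merely interchanges $U_+$ and $U_-$ (and correspondingly $U_{++}$ and $U_{--}$): one has $\bigcap_{i=0}^\infty (x^{-1})^iU(x^{-1})^{-i}=U_-$ and $\bigcap_{i=0}^\infty (x^{-1})^{-i}U(x^{-1})^{i}=U_+$. Both condition (TA), namely $U=U_+U_-=U_-U_+$, and condition (TB), the closedness of $U_{++}$ and $U_{--}$, are symmetric under this interchange, so tidiness for $x$ is equivalent to tidiness for $x^{-1}$. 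I expect this symmetry to be the only point requiring genuine care; everything after it is substitution.

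With this $U$ fixed, Theorem~\ref{TScaletidy} gives $\s(x)=|U:U\cap x^{-1}Ux|$, and applying the same theorem to $x^{-1}$ (using that $U$ is tidy for $x^{-1}$) gives $\s(x^{-1})=|U:U\cap xUx^{-1}|$. To identify the ratio of these two indices with $\Delta(x)$, I would set $\Omega=G/U$ and consider the natural transitive action of $G$ on $\Omega$. Since $G$ is totally disconnected and locally compact, hence Hausdorff, and since the point stabilizers are conjugates of the compact open subgroup $U$, the remark following Lemma~\ref{LModular} applies and the index formula stated there is valid for this action.

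Finally, letting $\alpha$ denote the point of $\Omega$ with $G_\alpha=U$ and taking $h=x$ in Lemma~\ref{LModular}, I obtain
$$\Delta(x)=\frac{|G_\alpha:G_\alpha\cap x^{-1}G_\alpha x|}{|G_\alpha:G_\alpha\cap xG_\alpha x^{-1}|}=\frac{|U:U\cap x^{-1}Ux|}{|U:U\cap xUx^{-1}|}=\frac{\s(x)}{\s(x^{-1})},$$
which is precisely the asserted identity.
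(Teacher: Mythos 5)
Your argument is correct, but it follows a genuinely different (and heavier) route than the paper. You first invoke the existence of a subgroup $U$ tidy for $x$, observe that tidiness for $x$ and for $x^{-1}$ are equivalent because replacing $x$ by $x^{-1}$ merely swaps $U_+$ with $U_-$ and $U_{++}$ with $U_{--}$ (this symmetry check is right), and then read off both $\s(x)=|U:U\cap x^{-1}Ux|$ and $\s(x^{-1})=|U:U\cap xUx^{-1}|$ from Theorem~\ref{TScaletidy} before applying the Remark following Lemma~\ref{LModular}. This is essentially Willis's original derivation: it realizes both scale values on a single subgroup, but at the cost of two substantial inputs --- the existence theorem for tidy subgroups and the ``tidy implies minimizing'' half of Theorem~\ref{TScaletidy}. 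The paper avoids tidiness altogether: it takes two possibly different compact open subgroups $U_1$ and $U_2$ attaining the minima that define $\s(x)$ and $\s(x^{-1})$ respectively, notes that by the Remark after Lemma~\ref{LModular} the ratio $|V:V\cap x^{-1}Vx|/|V:V\cap xVx^{-1}|$ equals $\Delta(x)$ for \emph{every} compact open subgroup $V$, and squeezes: evaluating at $U_1$ gives $\Delta(x)\leq \s(x)/\s(x^{-1})$, while evaluating at $U_2$ gives the reverse inequality. That argument uses only the definition of the scale as a minimum, and it yields Corollary~\ref{Cequal} (simultaneous attainment of the two minima by one subgroup) as a byproduct rather than as a consequence of the structure theory. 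Your version buys a cleaner single-subgroup picture once the tidy machinery is granted; the paper's buys logical independence from that machinery.
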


\begin{proof}  Let $U_1$ and $U_2$ be compact open subgroups of $G$ such
that
$$|U_1:U_1\cap x^{-1}U_1x|={\bf s}(x)\qquad\mbox{and}
\qquad |U_2:U_2\cap xU_2x^{-1}|={\bf s}(x^{-1}).$$
Note that
$$|U_1:U_1\cap xU_1x^{-1}|\geq {\bf s}(x^{-1})
\qquad\mbox{and}\qquad
|U_2:U_2\cap x^{-1}U_2x|\geq {\bf s}(x).$$
Now we use the Remark following Lemma~\ref{LModular} and get
$$\frac{{\bf s}(x)}{{\bf s}(x^{-1})}\geq
\frac{|U_1:U_1\cap x^{-1}U_1x|}{|U_1:U_1\cap xU_1x^{-1}|}
=\Delta(x)=\frac{|U_2:U_2\cap x^{-1}U_2x|}{|U_2:U_2\cap xU_2x^{-1}|}
\geq \frac{{\bf s}(x)}{{\bf s}(x^{-1})}.$$
Hence $\Delta(x)={\bf s}(x)/{\bf s}(x^{-1}).$   \end{proof}

\medskip

This also implies the following corollary.

\begin{corollary}{\rm (\cite[Corollary 3.11]{Willis2001a})}
\label{Cequal}
Let $x$ be an element of a totally disconnected, locally compact group $G$,
and  $U$  a compact, open subgroup of $G$.
Then $|U:U\cap x^{-1}Ux|={\bf s}(x)$ if and
only if $|U:U\cap xUx^{-1}|={\bf s}(x^{-1})$.
\end{corollary}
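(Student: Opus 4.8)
The plan is to deduce the corollary directly from Theorem~\ref{Tscale-modular} together with the Remark following Lemma~\ref{LModular}. The decisive point is that the index formula for the modular function holds for \emph{every} compact, open subgroup, and not merely for one chosen to realise the scale. I would apply the Remark to the transitive action of $G$ on $\Omega=G/U$: taking $\alpha$ to be the point with stabiliser $U$ and $h=x$, the stabilisers of points are the conjugates $gUg^{-1}$, which are compact and open, so the hypotheses are satisfied and the Remark yields
$$\Delta(x)=\frac{|U:U\cap x^{-1}Ux|}{|U:U\cap xUx^{-1}|}$$
for the given subgroup $U$.

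Next I would invoke Theorem~\ref{Tscale-modular}, which asserts $\Delta(x)={\bf s}(x)/{\bf s}(x^{-1})$. Equating the two expressions for $\Delta(x)$ gives the single ratio identity
$$\frac{|U:U\cap x^{-1}Ux|}{|U:U\cap xUx^{-1}|}=\frac{{\bf s}(x)}{{\bf s}(x^{-1})},$$
in which every quantity is a positive integer. From the definition of the scale function, $U$ is one of the competitors in each of the two minima, so $|U:U\cap x^{-1}Ux|\geq{\bf s}(x)$ and $|U:U\cap xUx^{-1}|\geq{\bf s}(x^{-1})$; thus the corollary is precisely the assertion that the two numerators attain their respective minima simultaneously.

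The conclusion is then a one-line cancellation. If $|U:U\cap x^{-1}Ux|={\bf s}(x)$, substituting into the ratio identity and cancelling the common factor ${\bf s}(x)$ forces $|U:U\cap xUx^{-1}|={\bf s}(x^{-1})$; the reverse implication is symmetric, cancelling ${\bf s}(x^{-1})$ instead. I do not expect any genuine obstacle here, since the algebra is immediate once the two inputs are in place. The only step that deserves a moment's care --- and the one I would flag --- is confirming that the Remark after Lemma~\ref{LModular} is legitimately applied to an \emph{arbitrary} compact open $U$, i.e.\ that the action of $G$ on $G/U$ meets its hypotheses (transitivity together with compact open point stabilisers), so that the modular-function formula is available for precisely this $U$ rather than only for a subgroup tailored to the scale.
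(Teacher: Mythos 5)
Your proposal is correct and is exactly the argument the paper intends: the corollary is stated as an immediate consequence of Theorem~\ref{Tscale-modular}, and your derivation --- applying the Remark after Lemma~\ref{LModular} to the action of $G$ on $G/U$ to get $\Delta(x)=|U:U\cap x^{-1}Ux|/|U:U\cap xUx^{-1}|$ for the given $U$, equating this with ${\bf s}(x)/{\bf s}(x^{-1})$, and cancelling --- fills in precisely the details the paper leaves implicit. Your check that the Remark applies to an arbitrary compact open $U$ (transitive action on $G/U$ with compact open stabilizers) is the right point to verify, and it goes through.
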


Tidy subgroups are related to the scale function as described in
Theorem~\ref{TScaletidy}.  The proof of  Theorem~\ref{TScaletidy} is
involved, and we will only have a look at the proof that compact, open
subgroup $U$ such that ${\bf s}(x)=|U:U\cap x^{-1}Ux|$ must be tidy.

Consider a compact, open subgroup $U$, a fixed element $x\in G$, and the
digraph $\Gamma$ defined above.  By the above, when trying to minimize
$|U:U\cap x^{-1}Ux|$ in order to find $\s(x)$, we could equally try to
minimize $|U:U\cap xUx^{-1}|$.  The latter index is just the
out-valency in the digraph $\Gamma_U$.  When constructing a tidy
subgroup for $x$, we start with an arbitrary compact, open subgroup $V$
and next find a compact, open subgroup $V$ satisfying (TA). The
out-valency in $\Gamma_V$ is at most the out-valency of $\Gamma_U$.
In the second step, we ensure that condition (TB) is satisfied, and
in the process the out-valency does not increase.  Thus we can be sure
that if $U$ minimizes $|U:U\cap xUx^{-1}|$ then $U$ must be tidy for
$x$.

\bigskip

Again we look at a compact open subgroup $U$ and the graph $\Gamma$ as
above.  Note
that $U\alpha_n=|U:U\cap x^nUx^{-n}|$.  If $\Gamma$ is highly arc
transitive, then this is precisely the number $b_n$
of vertices $\beta$ such that
there is a directed path of length $n$ from $\alpha_0$ to $\beta$.
The out-valency $d_+$ of $\Gamma$ is equal to $|U:U\cap xUx^{-1}|$.
The subgraph spanned by $\des(\alpha_0)$ is a tree if and only if
 $b_n=d_+^n=|U:U\cap xUx^{-1}|^n$ for all natural numbers $n$.  But $U$ is tidy
if an only if $\Gamma$ is highly arc transitive and the subgraph
spanned by $\des(\alpha_0)$ is a tree. Thus we derive
the following result.

\begin{theorem}
 {\rm (\cite[Corollary~3.5]{Moller2002})}
Let $G$ be a totally disconnected, locally compact group and $x$ an
element in $G$.  Then a compact, open subgroup $U$ is tidy for $x$ if
and only if
$$|U:U\cap x^nUx^{-n}|=|U:U\cap xUx^{-1}|^n$$
for all $n\geq 1$.
\end{theorem}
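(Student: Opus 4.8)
The plan is to translate the stated index identity into geometric properties of the digraph $\Gamma=\Gamma_U$ introduced at the start of Section~\ref{STidy}, and then to invoke the characterization proved there, which (when the orbit of $\alpha_0$ under $x$ is infinite) identifies tidiness of $U$ with the conjunction of two conditions: that $G$ act highly arc transitively on $\Gamma$, and that the subgraph spanned by $\des(\alpha_0)$ be a tree. First I would record the dictionary. Since $G_{\alpha_n}=x^nUx^{-n}$, the orbit--stabilizer relation gives $|U\alpha_n|=|U:U\cap x^nUx^{-n}|$; write $b_n$ for this number. Because $G$ is transitive on $\Omega=G/U$, the out-valency of $\Gamma$ is constant and equals $b_1=|U:U\cap xUx^{-1}|=:d_+$, a finite number. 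The identity to be proved is therefore precisely $b_n=d_+^{\,n}$ for every $n\geq 1$.

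I would dispose of the periodic case first. If $\langle x\rangle$ is relatively compact then every point $\alpha_n$ lies in the finite orbit $\langle x\rangle\alpha_0$, so the sequence $(b_n)$ is bounded. A tidy $U$ is then normalized by $x$, because ${\bf s}(x)={\bf s}(x^{-1})=1$ forces $U=x^{-1}Ux$ via Theorem~\ref{TScaletidy} and Corollary~\ref{Cequal}, whence $b_n=1=1^{\,n}$. Conversely, a bounded sequence can equal $d_+^{\,n}$ only if $d_+=1$, and then $b_n=1$ says $U\leq x^nUx^{-n}$ for all $n$; as $\Delta(x)=1$ these conjugates all carry the same Haar measure, so the descending chain is stationary, $U$ is normalized by $x$, and such a $U$ is at once tidy (conditions (TA) and (TB) are immediate). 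From here on I assume the orbit of $\alpha_0$ under $x$ is infinite, so that the characterization of Section~\ref{STidy} applies.

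In this case I would prove that $b_n=d_+^{\,n}$ for all $n$ if and only if $\Gamma$ is highly arc transitive and $\des(\alpha_0)$ spans a tree, and then let the characterization convert the right-hand side into tidiness. The inequality $b_n\leq d_+^{\,n}$ holds unconditionally: each $u\in U$ fixes $\alpha_0$ and sends the reference path $\alpha_0,\dots,\alpha_n$ to a directed walk of length $n$ issuing from $\alpha_0$, so $U\alpha_n$ injects into the set of endpoints of such walks, and there are exactly $d_+^{\,n}$ walks because the out-valency is constantly $d_+$. For the forward implication, high arc transitivity together with Lemma~\ref{LCPW} (which rules out directed cycles and makes all directed paths between two vertices have equal length) shows that every length-$n$ walk is a path, that all of these are realized as images $u\alpha_n$ with $u\in U$, and finally the tree property makes distinct paths terminate at distinct vertices; hence $b_n=d_+^{\,n}$.

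The essential step, and the one I expect to be the main obstacle, is the converse: recovering both geometric conditions from the single numerical equality. When $b_n=d_+^{\,n}$, the chain $|U\alpha_n|\leq E_n\leq W_n=d_+^{\,n}$, in which $E_n$ and $W_n$ count the endpoints and the length-$n$ directed walks from $\alpha_0$ respectively, collapses to a chain of equalities. This yields two facts simultaneously: $U\alpha_n$ is \emph{exactly} the set of endpoints of length-$n$ directed walks from $\alpha_0$, and the endpoint map on walks is injective, so each reachable vertex is the terminus of a \emph{unique} directed walk from $\alpha_0$. Uniqueness directly gives that $\des(\alpha_0)$ carries a unique directed path from $\alpha_0$ to each of its vertices and is therefore a tree. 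Uniqueness also promotes endpoint-transitivity to arc-transitivity: given an $n$-arc $\alpha_0,\gamma_1,\dots,\gamma_n$, choose $u\in U$ with $u\alpha_n=\gamma_n$; since $u$ fixes $\alpha_0$ it carries $\alpha_0,\dots,\alpha_n$ to a walk ending at $\gamma_n$, which by uniqueness must be $\alpha_0,\gamma_1,\dots,\gamma_n$ vertex by vertex. Composing two such elements shows that $G$ is transitive on $n$-arcs for every $n$, that is, $\Gamma$ is highly arc transitive. With both conditions established, the characterization of Section~\ref{STidy} gives that $U$ is tidy, completing the equivalence.
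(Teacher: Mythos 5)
Your proposal is correct and follows essentially the same route as the paper: translate the index $|U:U\cap x^nUx^{-n}|$ into the orbit size $|U\alpha_n|$ in the digraph $\Gamma_U$, compare it with the count $d_+^{\,n}$ of directed walks issuing from $\alpha_0$, and invoke the characterization of tidiness as high arc transitivity together with the tree property of the subgraph spanned by $\des(\alpha_0)$. You are in fact somewhat more careful than the paper's own derivation, which omits the periodic case and does not spell out how the converse direction recovers high arc transitivity (and hence, via Lemma~\ref{LCPW}, the tree property) from the numerical identity alone.
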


\begin{corollary}
Let $G$ be a totally disconnected locally compact group and $x$ an
element in $G$. Then $\s(x^n)=\s(x)^n$.
\end{corollary}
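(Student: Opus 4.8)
The plan is to fix a single compact open subgroup $U$ that is tidy for $x$ — such a subgroup exists by Willis's theorem stated just above — and to show that this one $U$ computes both sides of the desired identity. The strategy rests on two observations: that $U$ remains tidy for every power $x^n$, and that for a tidy subgroup the scale is read off as an index that is genuinely multiplicative in $n$.

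First I would prove that if $U$ is tidy for $x$ then $U$ is tidy for $x^n$ for every $n\geq 1$. For this I would appeal to the characterization of tidiness proved immediately above, namely that a compact open subgroup $V$ is tidy for $y$ precisely when $|V:V\cap y^mVy^{-m}|=|V:V\cap yVy^{-1}|^m$ for all $m\geq 1$. Writing $f(m)=|U:U\cap x^mUx^{-m}|$, tidiness of $U$ for $x$ says $f(m)=f(1)^m$. Since $|U:U\cap x^{nm}Ux^{-nm}|=f(nm)$, I obtain $f(nm)=f(1)^{nm}=(f(1)^n)^m=f(n)^m$, which is exactly the characterizing identity for the element $x^n$; hence $U$ is tidy for $x^n$.

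Next I would compute the two scales from this common $U$. By Theorem~\ref{TScaletidy} applied to $x$, tidiness gives $\s(x)=|U:U\cap x^{-1}Ux|$, and applied to $x^n$ (legitimate since $U$ is tidy for $x^n$ by the previous step) it gives $\s(x^n)=|U:U\cap x^{-n}Ux^n|$. It then remains to identify $|U:U\cap x^{-n}Ux^n|$ with $|U:U\cap x^{-1}Ux|^n$. This is just the characterizing identity read in the opposite direction: applying the preceding theorem to $x^{-1}$ in place of $x$ turns it into $|U:U\cap x^{-m}Ux^m|=|U:U\cap x^{-1}Ux|^m$, and since $U$ is tidy for $x$ if and only if it is tidy for $x^{-1}$ (immediate from the definition, as passing to $x^{-1}$ merely interchanges $U_+$ with $U_-$ and $U_{++}$ with $U_{--}$), this identity is available. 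Chaining the three equalities yields $\s(x^n)=\s(x)^n$.

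I expect the main obstacle to be bookkeeping rather than any deep difficulty: one must keep track of the fact that the characterization theorem is phrased with the \emph{out}-index $|U:U\cap xUx^{-1}|$ whereas the scale in Theorem~\ref{TScaletidy} is an \emph{in}-index $|U:U\cap x^{-1}Ux|$, so the passage to $x^{-1}$ — together with the symmetry of tidiness under inversion — is the step that must be spelled out carefully. (An alternative that sidesteps some of this is to invoke Corollary~\ref{Cequal} to match the two directions, but applying the characterization theorem to $x^{-1}$ seems the most direct route.)
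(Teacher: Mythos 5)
Your argument is correct and follows essentially the same route as the paper: fix a $U$ tidy for $x$, note it is tidy for $x^n$, and chain $\s(x^n)=|U:U\cap x^{-n}Ux^{n}|=|U:U\cap x^{-1}Ux|^n=\s(x)^n$. The paper merely asserts the tidiness of $U$ for $x^n$ as ``easy to check'' and leaves the in-/out-index bookkeeping implicit, whereas you supply both details via the characterization $|U:U\cap x^mUx^{-m}|=|U:U\cap xUx^{-1}|^m$ and the symmetry of tidiness under $x\mapsto x^{-1}$.
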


\begin{proof}  Let $U$ be a compact, open subgroup of $G$ that is tidy
for $x$.
It is easy to check that if a subgroup $U$ is tidy for
$x$ then $U$ is also tidy for $x^n$ for every integer $n$.
Hence
$$\s(x^n)=|U:U\cap x^{-n}Ux^{n}|=|U:U\cap x^{-1}Ux|^n=\s(x)^n. \qedhere$$
\end{proof}

\bigskip

If $\Gamma$ is highly arc transitive, the index $|U:U\cap x^{-n}Ux^{n}|$
is the number of vertices $\beta$ such that
$\alpha_0$ is in $\des_n(\beta)$. This observation suggests that we compare
the scale function and the in-spread of the associated digraph $\Gamma$. The following
theorem describes their relationship.

\begin{theorem} {\rm (\cite[Theorem~7.7]{Moller2002})}
Let $G$ be a totally disconnected, locally compact group and $x$ an
element of $G$.  If $V$ is some compact, open subgroup of $G$, then
$$\s(x)=\lim_{n\rightarrow\infty}|V:V\cap x^{-n}Vx^n|^{1/n}.$$
\end{theorem}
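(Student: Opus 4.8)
The plan is to squeeze the index $|V:V\cap x^{-n}Vx^n|$ between $\s(x)^n$ and $C\,\s(x)^n$ for some constant $C$ that does not depend on $n$; taking $n$-th roots and letting $n\to\infty$ then pins the limit to $\s(x)$ by the squeeze theorem. Throughout I would write $a_n(H)=|H:H\cap x^{-n}Hx^n|$ for a compact open subgroup $H$, so that the quantity in the statement is $a_n(V)^{1/n}$; each such index is finite since $H\cap x^{-n}Hx^n$ is open in the compact group $H$. The lower bound is immediate: by definition of the scale, $\s(x^n)=\min_H a_n(H)\le a_n(V)$, and since $\s(x^n)=\s(x)^n$ (the corollary proved above), we get $a_n(V)\ge\s(x)^n$, whence $a_n(V)^{1/n}\ge\s(x)$ for all $n$.

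To obtain a matching upper bound I would first record the value of $a_n$ on an optimal subgroup. By Willis' theorem there is a compact open subgroup $U$ that is tidy for $x$; as noted above, $U$ is then tidy for $x^n$ as well, so Theorem~\ref{TScaletidy} applied to $x^n$ gives $a_n(U)=|U:U\cap x^{-n}Ux^n|=\s(x^n)=\s(x)^n$. Thus the tidy subgroup realizes the limit exactly, and the entire difficulty is to transport this estimate to an arbitrary $V$.

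This transport is the main obstacle, and I would handle it by a commensurability comparison. Since $U$ and $V$ are both compact open they are commensurable; put $W=U\cap V$, a compact open subgroup of finite index $r=[V:W]$ in $V$ and $s=[U:W]$ in $U$. The elementary lemma I would prove is that whenever $W\le V$ has finite index $r$, then $a_n(V)\le r\,a_n(W)$ for every $n$. This follows from the identity $a_n(V)=r\,a_n(W)/c$, where $c=[V\cap x^{-n}Vx^n:W\cap x^{-n}Wx^n]\ge 1$, obtained by computing the index $[V:W\cap x^{-n}Wx^n]$ in two ways; moreover $c\le r^2$, as one sees by splitting the chain $W\cap x^{-n}Wx^n\le W\cap x^{-n}Vx^n\le V\cap x^{-n}Vx^n$ and bounding each step by $[V:W]=r$, and this also gives the reverse estimate $a_n(W)\le r\,a_n(V)$. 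The point that needs care is that $a_n(V)$ and $a_n(U)$ are orbit lengths in the two \emph{different} coset spaces $G/V$ and $G/U$, so they cannot be compared by any naive inclusion of orbits; routing the comparison through the common subgroup $W$ by pure index arithmetic is exactly what keeps the multiplicative error constant in $n$.

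Applying the lemma once to $W\le V$ and once to $W\le U$ then gives $a_n(V)\le r\,a_n(W)\le rs\,a_n(U)=rs\,\s(x)^n$. Together with the lower bound this yields the sandwich $\s(x)^n\le a_n(V)\le rs\,\s(x)^n$, so $\s(x)\le a_n(V)^{1/n}\le(rs)^{1/n}\s(x)$; since $(rs)^{1/n}\to1$, the squeeze theorem forces $\lim_{n\to\infty}a_n(V)^{1/n}=\s(x)$. Note that existence of the limit comes for free from the squeeze, so no separate appeal to submultiplicativity is needed, although one could instead observe that $a_{m+n}(V)\le a_m(V)a_n(V)$ via the double-coset inclusion $Vx^{-(m+n)}V\subseteq Vx^{-m}V\cdot Vx^{-n}V$ and invoke Fekete's lemma. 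Finally, the case where $x$ generates a relatively compact subgroup requires no separate treatment: there $\s(x)=1$, a tidy $U$ may be taken normalized by $x$ so that $a_n(U)=1$, and the same sandwich applies.
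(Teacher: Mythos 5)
Your argument is correct, and every step checks out: the lower bound $a_n(V)\ge \s(x)^n$ follows from the definition of the scale together with the corollary $\s(x^n)=\s(x)^n$; the identity $r\,a_n(W)=a_n(V)\,c$ with $1\le c=[V\cap x^{-n}Vx^n:W\cap x^{-n}Wx^n]\le r^2$ is valid (all the intermediate subgroups are open in a compact group, hence of finite index, and the two bounds on $c$ via the chain through $W\cap x^{-n}Vx^n$ are the standard estimate $[A:A\cap C]\le[D:C]$); and the transport through $W=U\cap V$ gives a constant $rs$ independent of $n$, so the squeeze closes the proof. However, this is a genuinely different route from the one the paper has in mind. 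The paper states the theorem without proof and frames it through the digraph $\Gamma_V$ with vertex set $G/V$ and edge set $G(\alpha_0,x\alpha_0)$: the index $|V:V\cap x^{-n}Vx^n|$ counts the $n$-th ancestors of $\alpha_0$, so the limit in question is the in-spread of $\Gamma_V$, and the cited proof (\cite[Theorem~7.7]{Moller2002}, via Theorem~\ref{THATtree} and the integrality of the spread) extracts the value $\s(x)$ from the tree structure of the descendant digraph of a tidy subgroup. Your proof replaces all of that graph theory with commensurability index arithmetic, using only two inputs already established in the text (existence of tidy subgroups and that they realize $\s(x^n)=\s(x)^n$). What you lose is the structural interpretation — the identification of the limit with the in-spread, which is what the paper is really advertising and what feeds into the companion result bounding $|V:V\cap x^{-n}Vx^n|$ when $\s(x)=1$; what you gain is a short, self-contained, purely group-theoretic argument that is essentially the comparison lemma of Baumgartner and Willis. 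Your closing remarks (submultiplicativity, the periodic case) are superfluous but harmless.
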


For a different formulation and a proof see
\cite[Lemma~4]{BaumgartnerWillis2006}.
This line of thought also gives us information about the case
$\s(x)=1$.

\begin{theorem}
{\rm (\cite[Corollary~7.8]{Moller2002})}
Let $G$ be a totally disconnected, locally compact group and $x$ an
element of $G$ such that $\s(x)=1$.
If $V$ is some compact, open subgroup of $G$, then there is a constant
$C$ such that
$|V:V\cap x^{-n}Vx^n|\leq C$ for all $n\geq 0$.
\end{theorem}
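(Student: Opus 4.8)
The plan is to first establish the statement for a subgroup that is tidy for $x$, where the relevant index is exactly computable, and then to transfer the bound to an arbitrary compact, open $V$ by a commensurability argument. The point worth emphasising at the outset is that the limit formula $\s(x)=\lim_{n\rightarrow\infty}|V:V\cap x^{-n}Vx^n|^{1/n}$ from the preceding theorem only gives $|V:V\cap x^{-n}Vx^n|^{1/n}\rightarrow 1$, i.e.\ subexponential growth; boundedness has to come from the finer structure supplied by a tidy subgroup.

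First I would invoke the existence theorem for tidy subgroups (the theorem of Willis stated above) to choose a compact, open subgroup $U$ that is tidy for $x$. By Theorem~\ref{TScaletidy} this gives $|U:U\cap x^{-1}Ux|=\s(x)=1$, so that $U=U\cap x^{-1}Ux$, i.e.\ $U\subseteq x^{-1}Ux$, equivalently $xUx^{-1}\subseteq U$. Conjugating repeatedly produces the decreasing chain $U\supseteq xUx^{-1}\supseteq x^2Ux^{-2}\supseteq\cdots$, so $x^nUx^{-n}\subseteq U$ for every $n\geq 0$, which is the same as $U\subseteq x^{-n}Ux^n$. Hence $U\cap x^{-n}Ux^n=U$ and $|U:U\cap x^{-n}Ux^n|=1$ for all $n\geq 0$. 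In the language of Section~\ref{STidy} this merely records that the in-valency $|U:U\cap x^{-1}Ux|$ of the associated digraph equals $\s(x)=1$: every vertex then has a unique ancestor at each in-distance, so the whole in-ball of $\alpha_0$ is a single directed ray.

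The remaining task is to compare $V$ with $U$. Both are compact and open, hence commensurable; set $W=U\cap V$ and $a=|V:W|$, $b=|U:W|$, both finite. I would establish the elementary index comparison that, for every $g\in G$,
$$|V:V\cap gVg^{-1}|\ \leq\ ab\,|U:U\cap gUg^{-1}|,$$
with the constant $ab$ independent of $g$. This is proved in two steps through $W$. From $W\leq V$ one gets $|V:V\cap gVg^{-1}|\leq a\,|W:W\cap gWg^{-1}|$ by computing $|V:W\cap gWg^{-1}|$ in two ways. From $W\leq U$ one gets $|W:W\cap gWg^{-1}|\leq b\,|U:U\cap gUg^{-1}|$ using second-isomorphism-type estimates of the form $|W:W\cap A|\leq|U:A|$ with $A=U\cap gUg^{-1}$, together with a conjugated comparison of $W$ inside $U$.

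Applying this with $g=x^{-n}$ and inserting the value from the tidy step yields $|V:V\cap x^{-n}Vx^n|\leq ab\,|U:U\cap x^{-n}Ux^n|=ab$ for every $n\geq 0$, so the theorem holds with $C=ab$. The conceptual work is done in the tidy step, where Theorem~\ref{TScaletidy} forces the index to be exactly $1$. The main obstacle is the transfer step: one must check that the comparison between $V$ and $U$ can be carried out with a multiplicative constant that does \emph{not} depend on $n$, since it is precisely the uniformity in $n$ that upgrades the exact equality for $U$ into a uniform bound for $V$.
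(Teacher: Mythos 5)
Your proof is correct, and it is worth noting that the paper itself states this theorem without proof, merely gesturing at the graph-theoretic ``line of thought'' of the preceding paragraphs (ancestors in the digraph $\Gamma_U$, the in-spread, and the tree structure on $\des(\alpha_0)$ for a tidy subgroup). Both of your steps check out. For the tidy step: Theorem~\ref{TScaletidy} gives $|U:U\cap x^{-1}Ux|=\s(x)=1$, hence $xUx^{-1}\subseteq U$, hence $x^nUx^{-n}\subseteq U$ and $|U:U\cap x^{-n}Ux^n|=1$ for all $n\geq 0$; this is exactly the algebraic shadow of the paper's observation that for a tidy subgroup the relevant count of ancestors is the $n$-th power of the in-valency, which here equals $1$. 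For the transfer step: with $W=U\cap V$, $a=|V:W|$, $b=|U:W|$, the chain $|V:V\cap gVg^{-1}|\leq|V:W\cap gWg^{-1}|=a\,|W:W\cap gWg^{-1}|$ and $|W:W\cap gWg^{-1}|=|W:W\cap gUg^{-1}|\cdot|W\cap gUg^{-1}:W\cap gWg^{-1}|\leq|U:U\cap gUg^{-1}|\cdot|gUg^{-1}:gWg^{-1}|=b\,|U:U\cap gUg^{-1}|$ (both factors bounded by the injectivity of the natural coset maps) gives the uniform constant $C=ab$, independent of $g$ and hence of $n$. Your argument is purely group-theoretic where the paper's intended one is phrased in terms of digraphs, but the two are essentially the same computation; what your write-up adds, and rightly emphasises, is the explicit commensurability transfer from the tidy $U$ to an arbitrary $V$, which the paper leaves entirely implicit, and the correct observation that the limit formula $\s(x)=\lim_n|V:V\cap x^{-n}Vx^n|^{1/n}$ alone would only yield subexponential growth rather than boundedness.
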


These two results can also be formulated as results about
permutation groups.

\begin{theorem}\label{TExponential}
Let $G$ be a group acting transitively on a set $\Omega$.  Assume that
all sub\-orbits of $G$ are finite.  Let $x$ be an element in $G$ and
$\alpha_0$ a point in $\Omega$.  Set $\alpha_i=x^i\alpha_0$.  Then
either there is a constant $C$ such that $|G_{\alpha_0}\alpha_n|\leq
C$ for all $n$ or the numbers $|G_{\alpha_0}\alpha_n|$ grow
exponentially with $n$ and
$\lim_{n\rightarrow\infty}|G_{\alpha_0}\alpha_n|^{1/n}=s$
for some integer $s$.
\end{theorem}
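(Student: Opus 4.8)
The plan is to reinterpret the suborbit sizes $|G_{\alpha_0}\alpha_n|$ as indices of the form occurring in the two preceding scale--function theorems, and then read off the dichotomy directly from those results. Since a suborbit size depends only on the image of $G$ under the homomorphism $G\rightarrow\Sym(\Omega)$, I may first replace $G$ by this image and assume $G\leq\Sym(\Omega)$. The hypotheses do \emph{not} include that $G$ is closed, so the first genuine step is to pass to the closure $\overline{G}$ of $G$ in $\Sym(\Omega)$ with respect to the permutation topology.

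The key fact I would use is the standard observation that $\overline{G}$ has exactly the same orbits on $\Omega\times\Omega$ as $G$: given $g\in\overline{G}$ and a pair $(\alpha_0,\alpha_n)$, there is $h\in G$ agreeing with $g$ on $\alpha_0$ and $\alpha_n$, so $g$ and $h$ carry the pair to the same place. Consequently $|G_{\alpha_0}\alpha_n|=|\overline{G}_{\alpha_0}\alpha_n|$ for every $n$, and $\overline{G}$ is again transitive with all suborbits finite. By Lemma~\ref{LCompact}, $\overline{G}$ is then a totally disconnected, locally compact group in which $U:=\overline{G}_{\alpha_0}$ is a compact open subgroup; since $\overline{G}_{\alpha_n}=x^{n}Ux^{-n}$, this yields the identity $|G_{\alpha_0}\alpha_n|=|U:U\cap x^{n}Ux^{-n}|$.

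Next I would match these indices to \cite[Theorem~7.7]{Moller2002}, which, applied with $x^{-1}$ in place of $x$ (so that $y^{-n}Uy^{n}=x^{n}Ux^{-n}$ for $y=x^{-1}$), gives $\lim_{n\rightarrow\infty}|U:U\cap x^{n}Ux^{-n}|^{1/n}=\s(x^{-1})$. Put $s=\s(x^{-1})$. This is a positive integer: by Definition~\ref{DTidyscale}, $\s(x^{-1})$ is the minimum of the indices $|U:U\cap xUx^{-1}|$, each of which is finite because $U\cap xUx^{-1}$ is an open subgroup of the compact group $U$. Hence $\lim_{n\rightarrow\infty}|G_{\alpha_0}\alpha_n|^{1/n}=s\in\N$.

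Finally, the dichotomy falls out. If $s=1$, then \cite[Corollary~7.8]{Moller2002}, again applied to $x^{-1}$, supplies a constant $C$ with $|U:U\cap x^{n}Ux^{-n}|\leq C$, that is $|G_{\alpha_0}\alpha_n|\leq C$ for all $n\geq 0$; this is the bounded alternative. If instead $s\geq 2$, then $\lim_{n\rightarrow\infty}|G_{\alpha_0}\alpha_n|^{1/n}=s>1$ forces $(s-\varepsilon)^{n}\leq|G_{\alpha_0}\alpha_n|\leq(s+\varepsilon)^{n}$ for all large $n$ and small $\varepsilon>0$, genuine exponential growth with base $s$. The two cases are mutually exclusive, since boundedness would force the limit to equal $1$. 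The only point that needs care --- and the step I expect to be the main obstacle --- is the reduction to the closed group $\overline{G}$, precisely the verification that passing to the closure leaves every suborbit size unchanged; once that is secured, the theorem is simply a transcription of the two cited scale--function results.
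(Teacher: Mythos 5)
Your proof is correct and is essentially the argument the paper intends: the theorem is presented there as a permutation-group reformulation of \cite[Theorem~7.7]{Moller2002} and \cite[Corollary~7.8]{Moller2002}, and your reduction --- pass to the image in $\Sym(\Omega)$, take the closure (using that closures preserve orbitals and hence suborbit sizes), identify $|G_{\alpha_0}\alpha_n|$ with $|U:U\cap x^nUx^{-n}|$, and apply the two cited results to $x^{-1}$ --- is exactly the translation required. The one step you flag as delicate, that passing to $\overline{G}$ leaves suborbit sizes unchanged, is handled correctly by your orbital argument.
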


{\em Remark.}  
In
\cite{Trofimov2007}  Trofimov studies {\em
  generalized $x$-tracks}, which
are similar to  the directed line $...,
\alpha_{-1}, \alpha_0, \alpha_1, \alpha_2, ...$ that is fundamental to
the graph-theoretical interpretation of Willis' theory in
\cite{Moller2002}.  Theorem~\ref{TExponential} is 
clearly related to \cite[Theorem~4.1, part 3]{Trofimov2007}.

\bigskip

The final illustration of the uses of graphs in Willis' structure
theory is a proof of the
following theorem.

\begin{theorem} {\rm (\cite[Theorem 2]{Willis1995})}
Let $G$ be a totally disconnected, locally compact group.
The set $P(G)$ of periodic elements in $G$ is closed.
(An element $x\in G$ is {\em periodic} if and only if $\overline{\langle
  x\rangle}$ is compact.)
\end{theorem}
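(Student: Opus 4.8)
The plan is to prove the equivalent statement that the complement $G\setminus P(G)$ is open, using the graph $\Gamma_U$ attached to a tidy subgroup. First I would record the permutation-theoretic meaning of periodicity. Fix any compact open subgroup $U$ (van Dantzig), put $\Omega=G/U$ and let $\alpha_0$ be the point with stabiliser $U$. Since the topology on $\langle x\rangle$ induced from $G$ refines the permutation topology, Lemma~\ref{LCompact}(ii) shows that $\overline{\langle x\rangle}$ is compact exactly when the orbit $\{x^n\alpha_0\mid n\in\Z\}$ is finite; an elementary coset computation (if $x^N\in U$ then $\overline{\langle x\rangle}\subseteq\bigcup_{r=0}^{N-1}Ux^r$ is relatively compact) shows this is in turn equivalent to finiteness of the forward orbit $\{x^n\alpha_0\mid n\ge 0\}$. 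Thus $x$ is \emph{non}-periodic precisely when its forward orbit on $\Omega$ is infinite.

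Next I would fix a non-periodic $x$ and invoke Willis' theorem to choose a compact open subgroup $U$ that is tidy for $x$. With the notation above, set $\alpha_i=x^i\alpha_0$ and let $\Gamma=\Gamma_U$ be the digraph with edge set $G(\alpha_0,x\alpha_0)$. Because $x$ is non-periodic the orbit of $\alpha_0$ is infinite, so the graph-theoretic reading of tidiness applies: condition (TA) makes $G$ act highly arc transitively on $\Gamma$ (Proposition~\ref{PTAHAT}), and condition (TB) makes the subdigraph spanned by $\des(\alpha_0)$ a directed tree rooted at $\alpha_0$ with edges pointing away from the root (the proposition identifying (TB) with the tree property). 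My goal is then to show that the open neighbourhood $xU$ of $x$ lies entirely in $G\setminus P(G)$.

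The key point is that every $y\in xU$ acts on $\Gamma$ as an automorphism sending $\alpha_0$ to $\alpha_1$: writing $y=xu$ with $u\in U=G_{\alpha_0}$ gives $y\alpha_0=xu\alpha_0=x\alpha_0=\alpha_1\in\out(\alpha_0)=\des_1(\alpha_0)$. I would then prove $y^n\alpha_0\in\des_n(\alpha_0)$ for all $n\ge 0$ by induction inside the tree $\des(\alpha_0)$: applying the automorphism $y$ to a directed path of length $n$ from $\alpha_0$ to $y^n\alpha_0$ yields a directed path of length $n$ from $\alpha_1$ to $y^{n+1}\alpha_0$, and since in the tree rooted at $\alpha_0$ the unique directed path to any vertex below $\alpha_1$ passes through the edge $(\alpha_0,\alpha_1)$, the vertex $y^{n+1}\alpha_0$ sits at depth exactly $n+1$. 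In a tree the sets $\des_n(\alpha_0)$ are pairwise disjoint, so the vertices $y^n\alpha_0$ are all distinct, the forward orbit of $\alpha_0$ under $y$ is infinite, and by the first paragraph $y$ is non-periodic. Hence $xU\cap P(G)=\emptyset$, giving an open neighbourhood of $x$ avoiding $P(G)$, so $P(G)$ is closed.

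The substantive part of the argument is obtaining the conclusion uniformly over the whole coset $xU$, rather than merely for $x$ itself; this uniformity is exactly what the tree structure furnished by tidiness (condition (TB)) delivers, because it forces the depth of $y^n\alpha_0$ to increase by one at each step no matter how $y\in xU$ permutes the branches of the tree. The two places needing care are the equivalence between periodicity and finiteness of the forward orbit, and the inductive depth computation; both are routine given Lemma~\ref{LCompact}, Willis' existence theorem for tidy subgroups, and the characterisations of (TA) and (TB) in terms of $\Gamma$. I would also note in passing that the same picture shows $P(G)$ is open as well, but only closedness is claimed here.
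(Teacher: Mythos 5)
Your proof is correct and takes essentially the same route as the paper's: fix a non-periodic $x$, choose $U$ tidy for $x$, form the digraph $\Gamma_U$, note that every $y\in xU$ sends $\alpha_0$ to $\alpha_1$, and conclude that the forward orbit of $\alpha_0$ under $y$ is infinite, so $xU$ misses $P(G)$. The only (minor) divergence is the final mechanism: the paper turns a periodic $g\in xU$ into a directed cycle $\alpha_0, g\alpha_0,\ldots,g^n\alpha_0=\alpha_0$ and contradicts Lemma~\ref{LCPW} (so only the high arc transitivity coming from (TA) is really used), whereas you track the depth of $y^n\alpha_0$ in the tree spanned by $\des(\alpha_0)$ furnished by (TB); both work.
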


\begin{proof}  The trick is to use the fact that
that a connected infinite and locally finite highly arc
    transitive digraph has no directed cycles, see Lemma~\ref{LCPW}.

    Suppose $x$ is not periodic, but is in the closure of $P(G)$.
Let $U$ be a compact, open subgroup of $G$, that is tidy for
    $x$.  Define a digraph $\Gamma$ as at the start of
 Section~\ref{STidy}.  If $x$ is not periodic, then the orbit of
    $\alpha_0$ under $x$ is infinite, and the connected
    component of $\Gamma$ that contains $\alpha_0$ is infinite.  (It
    must contain the line $\ldots,\alpha_{-1}, \alpha_0, \alpha_1,
    \alpha_2,\ldots$.)
    The set $xU$ is an open neighbourhood of $x$, and must therefore
    contain some periodic element $g$.  The fact that $g\in xU=xG_{\alpha_0}$
implies
$g\alpha_0=x\alpha_0=\alpha_1$.  The element $g$ is periodic, hence
the orbit of $\alpha_0$ under $g$ is finite, and therefore there is an
integer $n$ such that $g^n(\alpha_0)=\alpha_0$.  The sequence
$\alpha_0, \alpha_1=g\alpha_0, \beta_2=g^2\alpha_0,
\ldots, \beta_n=g^n\alpha_0=\alpha_0$
is a directed cycle in
$\Gamma$.  This contradicts Lemma~\ref{LCPW}  mentioned above.
Hence we
conclude that it is impossible that the closure of $P(G)$ contains any
elements that are not periodic.  Thus $P(G)$ is closed.  \end{proof}

\section{Rough Cayley graphs}\label{SRough}

Most of the material in this section is taken from a paper by Kr\"on
 and M\"oller \cite{KronMoller2008}.   Let $G$ be a compactly
 generated, totally disconnected, locally compact group.  In
 \cite{KronMoller2008} the authors construct a locally finite,
 connected graph with a transitive $G$-action, whose vertex
 stabilizers are compact, open subgroups of $G$. This graph is called
 a {\em rough Cayley graph} of $G$. As demonstrated in
 \cite{KronMoller2008}, and summarized in this section, 
a rough Cayley graph can be used to study compactly generated, 
locally compact groups in a similar way as an ordinary Cayley 
graph is used to study a finitely generated group.
In this article, we illustrate this approach, by using rough 
Cayley graphs to generalize the concept of ends of groups and 
to study compactly generated, locally compact groups of 
polynomial growth. 

Below, we explain how to construct a rough Cayley graph and 
it is also shown that any two rough Cayley graphs for a given group
are quasi-isometric.
The applications of the rough Cayley graph to the theory of ends 
of groups and to groups of polynomial growth are only sketched; 
details can be found in \cite{KronMoller2008}, 
where tools from \cite{Dunwoody1982} and \cite{DicksDunwoody1989} are 
used extensively.

\subsection{Definition of a rough Cayley graph}
\label{SDefRough}
\begin{definition}{\rm (\cite[Definition~2.1]{KronMoller2008})}
Let $G$ be a topological group.  A connected graph $\Gamma$ is said to be a
{\em rough Cayley graph} of $G$ if $G$ acts transitively on $\Gamma$ and
the stabilizers of vertices are compact, open subgroups of $G$.
\end{definition}

In this section we show that if $G$ is a compactly
generated locally compact group that contains a compact open subgroup
then $G$ has a locally finite rough Cayley graph and any two rough
Cayley graphs are
quasi-isometric to each other.  The approach here is different from
the approach in \cite{KronMoller2008}.

Let $G$ be a compactly generated topological group.  For a compact
generating set $S$ we form the Cayley graph   $\Gamma=\Cay(G, S)$ of $G$ with
respect to $S$.  The vertex set of $\Gamma$ is equal to $G$ and thus
carries a topology.  The compactness of $S$ and the continuity of
multiplication in $G$ implies that if $A$ is a relatively compact set
of vertices in $\Gamma$ then the neighbourhood of $A$ in $\Gamma$ is
contained in the set $A\cdot S$ and is thus relatively compact also.
This can be  used 
to prove that a set $A$ of vertices in $\Gamma$ is relatively compact
if and only if it has finite diameter in the graph metric on $\Gamma$,
see \cite[2.3 Heine-Borel-Eigenschaft]{Abels1974}.

\begin{definition}  \label{Dorbit}
Let $G$ be a group acting transitively on a connected
  graph $\Gamma$.  Suppose $U$ is a subgroup of $G$ that contains the
  stabilizer of some vertex $\alpha$.  The orbit $U\alpha$ is a block
  of imprimitivity.  Let $\Gamma_U$ denote the quotient graph
  with respect to the $G$-congruence whose classes are the translates
  under $G$ of the set $U\alpha$.
\end{definition}

\begin{lemma} \label{Llocallyfinite}
Let $G$ be a compactly generated topological group.  Suppose $S$
is a compact generating set and
$U$ is a compact open subgroup of $G$.
Then the graph
$\Cay(G,S)_U$ is locally finite.
\end{lemma}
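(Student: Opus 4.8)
The plan is to identify the quotient graph explicitly, reduce to bounding the valency of a single vertex by transitivity, and then convert a compactness statement into a finiteness statement using that $U$ is open. First I would unwind Definition~\ref{Dorbit} in the present setting. Here $G$ acts on $\Cay(G,S)$ by left multiplication, the vertex $e$ has trivial stabilizer, and the block $U\alpha$ is just the subgroup $U$ regarded as a set of vertices. Its $G$-translates are the left cosets $gU$, so the vertex set of $\Cay(G,S)_U$ is $G/U$, and two distinct cosets are adjacent exactly when $\Cay(G,S)$ contains an edge joining a vertex of one coset to a vertex of the other. Since the left regular action is transitive on $\Cay(G,S)$, it permutes these cosets transitively and acts as automorphisms of the quotient; hence it suffices to bound the valency of the single vertex represented by the coset $U$.

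Next I would describe the neighbours of $U$ concretely. In $\Cay(G,S)$ the neighbours of a vertex $g$ are the elements of $g(S\cup S^{-1})$, so the vertices adjacent to some vertex of the block $U$ form the set $N=U(S\cup S^{-1})$. Consequently a coset $gU\neq U$ is a neighbour of $U$ in $\Cay(G,S)_U$ if and only if $gU\cap N\neq\emptyset$. The crucial observation is that $N$ is compact: $S$ is compact by hypothesis, $S^{-1}$ is compact because inversion is a homeomorphism of $G$, the union $S\cup S^{-1}$ is compact, and $N$ is the image of the compact set $U\times(S\cup S^{-1})$ under the continuous multiplication map.

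Finally I would exploit that $U$ is open. The left cosets of $U$ are pairwise disjoint open subsets of $G$, and those that meet $N$ form an open cover of the compact set $N$; because the cosets are disjoint, each one meeting $N$ must appear in any finite subcover, so only finitely many cosets $gU$ satisfy $gU\cap N\neq\emptyset$. Therefore the vertex $U$ has finite valency, and by transitivity so does every vertex, which shows $\Cay(G,S)_U$ is locally finite. The only point demanding care, and the nearest thing to an obstacle, is this last bookkeeping step: recognising that the edges leaving a block are controlled by the single compact set $U(S\cup S^{-1})$ and that the openness of $U$ is exactly what turns its compactness into finiteness of the number of cosets it meets. Everything else is a routine unwinding of the definitions.
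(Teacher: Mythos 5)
Your proof is correct and follows essentially the same route as the paper: the paper's one\-/line argument is that the neighbourhood of a coset of $U$ in $\Cay(G,S)$ is (relatively) compact and hence covered by finitely many cosets of $U$, which is exactly your observation that $N=U(S\cup S^{-1})$ is compact and meets only finitely many of the disjoint open cosets $gU$. You have merely supplied the routine details (transitivity reducing to one block, and the compactness\-/plus\-/openness bookkeeping) that the paper leaves implicit.
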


\noindent
\begin{proof}  The neighbourhood of a coset $gU$ in $\Gamma$ is compact
and can thus be covered by finitely many right cosets of $U$.  Whence
$\Gamma_U$ is locally finite.  \end{proof}

\medskip

Lemma~\ref{Llocallyfinite} shows that every compactly generated group $G$
that has a compact, open subgroup $U$, has a locally finite rough Cayley
graph, namely, $\Cay(G,S)_U$.

The proof of a result of Sabidussi \cite[Theorem 2]{Sabidussi1964}, restated
below in our own terminology, can be used to show that every locally finite
rough Cayley graph for a compactly generated group with a compact, open
subgroup can be obtained in this fashion.

\begin{theorem}{\rm (Cf.~\cite[Theorem 2]{Sabidussi1964})}
Let $\Gamma$ be a connected graph and $G$ a transitive subgroup of
$\aut(\Gamma)$.  Then there is a set $S$ of generators of $G$ such
that $\Gamma\cong \Cay(G, S)_U$ where $U$ is the stabilizer in $G$
of some vertex in $\Gamma$.
\end{theorem}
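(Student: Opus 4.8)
The plan is to build the generating set $S$ directly from the adjacency structure of $\Gamma$ at a single vertex, and then identify $\Gamma$ with the quotient of $\Cay(G,S)$ under left cosets of a point stabilizer. Fix a vertex $\alpha$ of $\Gamma$ and put $U=G_\alpha$. I would take
$$S=\{g\in G\mid g\alpha=\alpha\ \text{ or }\ g\alpha\ \text{is adjacent to}\ \alpha\},$$
so that $S=U\cup S_0$, where $S_0$ is the set of elements carrying $\alpha$ to one of its neighbours. Two elementary features of this set do the bookkeeping. First, $S$ is symmetric, since applying $g^{-1}$ turns the edge $\{\alpha,g\alpha\}$ into $\{g^{-1}\alpha,\alpha\}$. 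Second, $S$ is a union of double cosets, $USU=S$: left-multiplying by an element of $U=G_\alpha$ fixes $\alpha$ and hence permutes its neighbours, so $US_0U=S_0$, whence $USU=U\cup S_0=S$.

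Next I would set up the candidate isomorphism. The left regular action of $G$ on $\Cay(G,S)$ is free, so every subgroup contains the (trivial) stabilizer of the identity vertex; applying Definition~\ref{Dorbit} to this action, the vertices of $\Cay(G,S)_U$ are exactly the left cosets $gU$. Define $\phi\colon gU\mapsto g\alpha$, which is well defined and bijective because $G$ acts transitively with $U=G_\alpha$. The edge comparison is then a short calculation. Distinct cosets $g_1U,g_2U$ are joined in $\Cay(G,S)_U$ precisely when some edge of $\Cay(G,S)$ runs between the two blocks, i.e.\ when $g_1^{-1}g_2\in USU=S$ while $g_1^{-1}g_2\notin U$, that is, when $g_1^{-1}g_2\in S_0$. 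Translating by $g_1^{-1}$ shows that $g_1\alpha$ and $g_2\alpha$ are adjacent in $\Gamma$ under exactly the same condition $g_1^{-1}g_2\in S_0$. Hence $\phi$ both preserves and reflects adjacency and is therefore a graph isomorphism.

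The step that needs the most care — and the reason for folding $U$ into $S$ — is showing that $S$ generates $G$. Let $H=\langle S\rangle$. A connectivity argument gives $H\alpha=V\Gamma$: the orbit $H\alpha$ contains $\alpha$ and is closed under passing to neighbours, since if $\beta=h\alpha$ and $\gamma$ is adjacent to $\beta$, then $h^{-1}\gamma$ is adjacent to $\alpha$, so some $g\in S_0\subseteq H$ sends $\alpha$ to $h^{-1}\gamma$ and $\gamma=hg\alpha\in H\alpha$; connectedness of $\Gamma$ then forces $H\alpha=V\Gamma$. The subtlety is that transitivity of $H$ only yields $G=HU$, not $G=H$, because the geometric generators $S_0$ need not exhaust the stabilizer $U$. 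Including all of $U$ in $S$ repairs this, giving $U\subseteq H$ and hence $H=G$, while leaving the quotient graph unchanged: the extra generators lie in $U$ and therefore join each coset only to itself, contributing only loops that the simple quotient graph discards. This observation, turning the purely geometric generating set into an honest generating set of $G$, is the main point of the proof.
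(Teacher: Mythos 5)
Your proof is correct and uses exactly the generating set the paper identifies in the paragraph following the theorem, namely $S=U\cup\{h\in G\mid \alpha \text{ and } h\alpha \text{ are adjacent}\}$; the paper itself only sketches the argument by deferring to Sabidussi's original proof, and your write-up supplies the details (symmetry of $S$, the double-coset identity $USU=S$, generation via connectedness plus $U\subseteq S$, and the coset-to-orbit isomorphism) along the same lines. No gaps.
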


In our setting we are thinking of a topological group $G$ acting on a
locally finite  rough Cayley graph $\Gamma$ such that stabilizers of vertices are compact open subgroups but the action is  not necessarily
faithfully.  Take a
vertex $\alpha$ in $\Gamma$ and let $U$ denote the stabilizer in $G$
of $\alpha$.  Looking at Sabidussi's proof of his theorem we deduce that if
$S$ is the union of $U$ and all elements $h$ in $G$ such that
$\alpha$ and $h\alpha$ are adjacent in $\Gamma$
then $\Gamma\cong \Cay(G, S)_U$.
Note also that the orbit $S\alpha$ consists precisely of
$\alpha$ and  all
its neighbours.  The graph $\Gamma$ is by assumption locally finite
so $S$ is a finite
union of cosets of $U$ and thus compact.

The influential concept of
quasi-isometry was introduced by Gromov \cite{Gromov1986} and has been
widely used since.   

\begin{definition}
Two metric spaces $(X, d_X)$ and $(Y, d_Y)$ are said to be {\em
  quasi-isometric} if there is a map $\varphi:X\rightarrow Y$ and 
constants $a\geq 1$ and $b\geq 0$ such that for all points
  $x_1$ and $x_2$ in $X$
\[a^{-1}d_X(x_1, x_2)-a^{-1}b\leq d_Y(\varphi(x_1), \varphi(x_2))
\leq ad_X(x_1, x_2)+ab,\]
and for all points $y\in Y$ we have
\[d_Y(y, \varphi(X))\leq b.\]
A map $\varphi$ between two metric spaces satisfying the above
conditions is called a {\em quasi-isometry}.
\end{definition}

Two connected graphs $X$ and $Y$ are called quasi-isometric if $(VX,
d_X)$ and $(VY, d_Y)$ are quasi-isometric. 
Being quasi-isometric is an equivalence relation on the class of
metric spaces. 

\begin{theorem}\label{Tquasi}
Let $G$ be a compactly generated group.  Assume that $G$ admits a rough Cayley graph.
All rough Cayley graphs for $G$ are quasi-isometric.
\end{theorem}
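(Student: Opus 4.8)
The plan is to exploit that being quasi-isometric is an equivalence relation (as just noted), so it suffices to fix one reference metric space and show that every rough Cayley graph is quasi-isometric to it. I fix a compact symmetric generating set $S$ of $G$ with $e\in S$ and take as reference the group $G$ itself equipped with the word metric $d_S$, where $d_S(g,h)$ is the least $n$ with $g^{-1}h\in S^n$; this is exactly the graph metric of $\Cay(G,S)$, which is connected because $S$ generates. Given an arbitrary rough Cayley graph $\Gamma$, I choose a base vertex $\alpha$, write $U=G_\alpha$ (a compact, open subgroup), and consider the orbit map $\pi\colon G\to V\Gamma$, $\pi(g)=g\alpha$. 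I claim $\pi$ is a quasi-isometry from $(G,d_S)$ to $(V\Gamma,d_\Gamma)$; transitivity of the relation then proves the theorem.

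Two of the three requirements are quick. Coarse density is free, since $\pi$ is onto because $G$ is transitive. For the upper Lipschitz bound I use that $V\Gamma$ may be identified $G$-equivariantly with the discrete coset space $G/U$ (discrete since $U$ is open); as $S$ is compact and $g\mapsto g\alpha$ is continuous into this discrete space, the set $S\alpha$ is finite, hence has finite diameter $C_1=\diam_\Gamma(S\alpha)$ in the connected graph $\Gamma$. Since $G$ acts on $\Gamma$ by isometries, writing $g$ as a product of $d_S(e,g)$ elements of $S$ and applying the triangle inequality factor by factor gives $d_\Gamma(\alpha,g\alpha)\le C_1\,d_S(e,g)$, and by left-invariance $d_\Gamma(\pi g,\pi h)\le C_1\,d_S(g,h)$.

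The lower bound is the heart of the matter. Here I invoke the Heine--Borel property for $\Cay(G,S)$ (Abels) recalled above: a subset of $G$ is relatively compact if and only if it has finite $d_S$-diameter. Applied to the compact subgroup $U$, this gives that $U$ has finite $d_S$-diameter, say $|u|_S\le\delta$ for all $u\in U$. Let $S_\Gamma=\{g\in G:g\alpha\in B_1(\alpha)\}=\pi^{-1}(B_1(\alpha))$ be the generating set furnished by Sabidussi's theorem, so that $\Gamma\cong\Cay(G,S_\Gamma)_U$. The key point to establish is that $S_\Gamma\subseteq S^{D}$ for some constant $D$; granting this, a shortest $\Gamma$-path $\alpha=\beta_0,\dots,\beta_n=k\alpha$ lifts step by step to $t_0=e$, $t_i=t_{i-1}s_i$ with $s_i\in S_\Gamma$ and $t_i\alpha=\beta_i$, whence $k\in t_nU\subseteq S_\Gamma^{\,n}U$ and therefore $d_S(e,k)\le Dn+\delta=D\,d_\Gamma(\alpha,k\alpha)+\delta$. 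Left-invariance then yields the matching lower bound and completes the proof that $\pi$ is a quasi-isometry.

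I expect the main obstacle to be precisely the inclusion $S_\Gamma\subseteq S^{D}$, that is, the relative compactness of the connecting set $S_\Gamma$. By the Heine--Borel property this is equivalent to $S_\Gamma$ having finite $d_S$-diameter, and since $\pi$ is proper on finite sets (each fibre $\pi^{-1}(v)$ is a coset of the compact group $U$), $S_\Gamma=\pi^{-1}(B_1(\alpha))$ is relatively compact exactly when the ball $B_1(\alpha)$ is finite, i.e.\ when $\Gamma$ is locally finite. Thus this step is where both hypotheses, compact generation of $G$ and compact-openness of the vertex stabilisers, together with local finiteness of the graphs under consideration, are genuinely used; the remaining estimates are then formal consequences of the isometric action and of the left-invariance of $d_S$.
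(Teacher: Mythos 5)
Your argument is correct, and it reaches the conclusion by a different decomposition than the paper's, although the underlying inputs are the same. The paper first uses Sabidussi's theorem to present an arbitrary rough Cayley graph as a quotient $\Cay(G,S_\Gamma)_{U}$ for a compact generating set $S_\Gamma$ adapted to $\Gamma$, then proves Lemma~\ref{Lquasi} in two parts --- Cayley graphs on two compact generating sets are quasi-isometric, and contracting the cosets of a compact subgroup contained in the generating set is a quasi-isometry --- and finally chains $\Gamma_1\sim\Cay(G,S_1)\sim\Cay(G,S_2)\sim\Gamma_2$. You instead fix one reference $\Cay(G,S)$ and check directly that the orbit map $g\mapsto g\alpha$ is a quasi-isometry onto any rough Cayley graph; this merges Sabidussi's realization and both halves of Lemma~\ref{Lquasi} into a single computation, with identical ingredients (compactness of $S$ makes $S\alpha$ finite, giving the upper Lipschitz bound; Abels' Heine--Borel property turns relative compactness of $\pi^{-1}(B_1(\alpha))$ into a word-length bound, giving the lower one). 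What your version buys is transparency about where each hypothesis enters: you make explicit that local finiteness of $\Gamma$ is what yields relative compactness of $S_\Gamma$, an assumption the theorem's statement leaves implicit but which the paper also relies on (it uses local finiteness, via Sabidussi, to conclude that $S_\Gamma$ is a finite union of cosets of $U$ and hence compact). Two small points to tidy: for the direction of the Heine--Borel property that you actually invoke (relatively compact implies bounded word length) one should first enlarge $S$ to contain a compact open subgroup, exactly as the paper does in the proof of Lemma~\ref{Lquasi}(ii), so that the sets $S^{n}$ contain an increasing open cover of $G$; and your appeal to Sabidussi's theorem is decorative, since you only ever use the set $S_\Gamma=\pi^{-1}(B_1(\alpha))$, which you define and control directly without it.
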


The first step in the proof of Theorem \ref{Tquasi} is the following
Lemma. The proof
of the Lemma depends on the {\em Heine-Borel Eigenschaft},
\cite[2.3 Heine-Borel-Eigenschaft]{Abels1974}, mentioned in the
paragraph preceding Definition~\ref{Dorbit}.

\begin{lemma} \label{Lquasi}
(i) Let $G$ be a compactly generated topological group.  Suppose $S_1$ and
$S_2$ are compact generating sets for $G$.  Then the Cayley-graphs
$\Cay(G, S_1)$ and $\Cay(G, S_2)$ are quasi-isometric.

(ii) Suppose $S$ is a
compact generating set and
$U$ is a compact subgroup of $G$.
Then the graphs
$\Cay(G, S)$ and $\Cay(G, S)_U$ are quasi-isometric.
\end{lemma}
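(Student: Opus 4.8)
The plan is to use the same map in each case---the identity of $G$ in (i) and the coset projection in (ii)---and to reduce both parts to the \emph{Heine-Borel-Eigenschaft} of \cite[2.3 Heine-Borel-Eigenschaft]{Abels1974} quoted before Definition~\ref{Dorbit}: in the Cayley graph of a compactly generated group with respect to a compact generating set, a set of vertices is relatively compact if and only if it has finite diameter in the graph metric. This is the device that converts the compactness hypotheses on $S_2$ and on $U$ into the uniform length bounds that a combinatorial comparison of metrics requires; in the classical finitely generated discrete case these bounds are automatic, and here they are exactly what must be supplied.

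For part (i), write $d_1$ and $d_2$ for the (left-invariant) graph metrics of $\Cay(G,S_1)$ and $\Cay(G,S_2)$. Since $S_2$ is compact, so is $S_2\cup S_2^{-1}$, and the Heine-Borel property applied in the connected graph $\Cay(G,S_1)$ places this set inside a $d_1$-ball about $e$ of some finite radius $C$, i.e.\ $d_1(e,s)\le C$ for every $s\in S_2\cup S_2^{-1}$. Given $g,h\in G$, a geodesic in $\Cay(G,S_2)$ realises each of its $d_2(g,h)$ steps as right multiplication by an element of $S_2\cup S_2^{-1}$; replacing each step by a $d_1$-geodesic of length at most $C$ and invoking left-invariance gives $d_1(g,h)\le C\,d_2(g,h)$. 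Interchanging the roles of $S_1$ and $S_2$ produces a constant $C'$ with $d_2(g,h)\le C'\,d_1(g,h)$. Thus the identity map is bi-Lipschitz, and taking $a=\max(C,C')$, $b=0$ verifies the definition of a quasi-isometry, surjectivity being immediate.

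For part (ii), let $\pi\colon G\to G/U$ be the projection $g\mapsto gU$, which is exactly the vertex map onto $\Gamma_U=\Cay(G,S)_U$ of Definition~\ref{Dorbit}; write $d$ for the metric on $\Cay(G,S)$ and $\bar d$ for that on $\Gamma_U$. As $U$ is compact, Heine-Borel gives $D:=\diam(U)<\infty$ (diameter in $d$), so any two elements of a common coset lie at $d$-distance at most $D$. Projecting a $d$-geodesic from $g$ to $h$ shows $\bar d(\pi g,\pi h)\le d(g,h)$, since consecutive vertices map to adjacent or equal cosets. For the reverse inequality I would lift a shortest coset-path $gU=C_0,\dots,C_n=hU$ in $\Gamma_U$: adjacency of $C_i$ and $C_{i+1}$ supplies adjacent representatives, moving inside each coset costs at most $D$ and each inter-coset edge costs $1$, so that $d(g,h)\le (D+1)\,\bar d(\pi g,\pi h)+D$. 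Together these bounds give $\frac{1}{D+1}\bigl(d(g,h)-D\bigr)\le \bar d(\pi g,\pi h)\le d(g,h)$, which is the quasi-isometry inequality with $a=D+1$ and $b=D$; since $\pi$ is onto, the image-density condition holds trivially.

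The only genuinely topological input is the Heine-Borel-Eigenschaft, and that is the step I expect to carry the weight: once compactness has been traded for a finite diameter bound, both parts become routine left-invariant metric comparisons, with the coset-lifting in (ii) the only place needing care in the bookkeeping of within-coset versus inter-coset moves.
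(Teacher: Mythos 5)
Your proof is correct and follows essentially the same route as the paper: both parts rest on the Heine--Borel-Eigenschaft converting compactness of $S_2$ (resp.\ $U$) into a finite diameter bound in the relevant word metric, with the identity map and the coset projection serving as the quasi-isometries. The only difference is cosmetic: in part (ii) the paper first enlarges $S$ to $S\cup U$ via part (i) so that the cosets have diameter $1$ before contracting them, whereas you carry the general diameter bound $D$ through an explicit lift-and-project computation --- which in fact supplies the bookkeeping that the paper dismisses with ``clearly this operation preserves quasi-isometry.''
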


\noindent
\begin{proof}  (i)  There is a constant $C$, such that the elements in $S_1$ can be
expressed as words of length $\leq C$ in the elements in $S_2$
and {\em vice versa}. For each element of $S_1$ respectively $S_2$
fix a word in $S_2$ respectively $S_1$ with this property.
Using these correspondences, words in $S_1$ and $S_2$ may be
expressed as words in the other set, that are at most $C$ times longer.
This shows that the identity map on $G$ extends to quasi-isometries
$\operatorname{Cay}(G, S_1)\to \operatorname{Cay}(G, S_2)$ and
$\operatorname{Cay}(G, S_2)\to \operatorname{Cay}(G, S_1)$
that are inverse to each other.

(ii) By part (i)  we know that $\Cay(G, S)$ and
$\Cay(G, S\cup U)$ are quasi-isometric.  Thus we may assume that $S$
contains $U$.  Under this assumption, each of the right cosets of $U$ has therefore diameter 1.
The quotient graph $\Cay(G, S)_U$ is obtained by contracting each of these
cosets.  Clearly this operation preserves quasi-isometry and the claim follows.   \end{proof}

\medskip

\begin{proof} (Theorem~\ref{Tquasi})  Suppose $\Gamma_1$ and
$\Gamma_2$ are rough Cayley graphs of $G$.  Then we can find compact
generating sets $S_1$ and $S_2$ and compact, open subgroups $U_1$ and
$U_2$ such that $\Gamma_1=\Cay(G, S_1)_{U_1}$ and
$\Gamma_2=\Cay(G, S_2)_{U_2}$.  By part~(ii) of Lemma~\ref{Lquasi}, $\Gamma_1$ is quasi-isometric to
	$\Cay(G, S_1)$, which in turn is quasi-isometric to  $\Cay(G,
	S_2)$ by part~(i) of Lemma~\ref{Lquasi}, which is again
	quasi-isometric to $\Gamma_2=\Cay(G, S_2)_{U_2}$ 
by part~(ii) of the same result.
\end{proof}

Suppose $G$ is a locally compact group with a compact, open subgroup.
If the group is compactly generated, Lemma \ref{Llocallyfinite} gives
the existence of a locally finite rough Cayley graph. Conversely,
the existence of a rough Cayley graph implies that $G$ is compactly
generated.

\begin{proposition} {\rm (\cite[Corollary~1]{Moller2003})}
\label{Pcompactgenerated}
Suppose that $G$ is a locally compact topological group and that
$G$ acts transitively on a connected, locally finite graph such that
the stabilizers of vertices in $G$ are compact, open subgroups of $G$.
Then $G$ is compactly generated.
\end{proposition}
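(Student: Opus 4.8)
The plan is to exhibit an explicit compact set that generates $G$ algebraically, thereby proving compact generation directly. Fix a vertex $\alpha$ of the graph $\Gamma$ and set $U=G_\alpha$, which by hypothesis is a compact, open subgroup. Let $N$ denote the (finite, by local finiteness) set of neighbours of $\alpha$, and use transitivity to pick, for each $\beta\in N$, an element $g_\beta\in G$ with $g_\beta\alpha=\beta$. I would then take as candidate generating set
$$S=U\cup\bigcup_{\beta\in N}g_\beta U,$$
that is, $U$ together with one coset of $U$ for each neighbouring vertex. The first routine step is to check that $S$ is compact: $U$ is compact by assumption, $N$ is finite, each $g_\beta U$ is a continuous translate of $U$ and hence compact, and a finite union of compact sets is compact.

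The substance of the proof is to show that $H:=\langle S\rangle$ is all of $G$. Since $U\subseteq S$, the subgroup $H$ already contains the full stabilizer $G_\alpha=U$; the remaining task is to show that the orbit $H\alpha$ is the entire vertex set $V\Gamma$. Here I would invoke connectedness: it suffices to prove that $H\alpha$ is closed under passing to neighbours, for then, since $\alpha\in H\alpha$ and $\Gamma$ is connected, an induction on $d(\alpha,\gamma)$ forces $H\alpha=V\Gamma$. To establish closure under neighbours, suppose $\gamma\in H\alpha$, say $\gamma=h\alpha$ with $h\in H$, and let $\delta$ be adjacent to $\gamma$. Because $h^{-1}$ is a graph automorphism it preserves adjacency, so $h^{-1}\delta$ is adjacent to $h^{-1}\gamma=\alpha$, i.e.\ $h^{-1}\delta\in N$. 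Thus $h^{-1}\delta=g_\beta\alpha$ for the appropriate $\beta$, whence $\delta=hg_\beta\alpha$ with $hg_\beta\in H$, so $\delta\in H\alpha$ as required.

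Finally, with $H\alpha=V\Gamma$ and $U\subseteq H$ in hand, I would conclude $H=G$ as follows: given any $g\in G$, the equality $H\alpha=V\Gamma$ gives some $h\in H$ with $h\alpha=g\alpha$, so $h^{-1}g\in G_\alpha=U\subseteq H$, and therefore $g\in H$. Hence $G=\langle S\rangle$ is generated by the compact set $S$, which is precisely the statement that $G$ is compactly generated. This is in effect the generating set furnished by the Sabidussi-type theorem quoted above, the only new ingredient being that local finiteness makes it compact.

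The step I expect to require the most care is the claim $\langle S\rangle=G$, rather than merely some open or dense subgroup. This is exactly where connectedness of $\Gamma$ is indispensable, and where one must combine the orbit-closure argument with the inclusion $U\subseteq\langle S\rangle$; the compactness of $S$ and local finiteness, by contrast, are used only in the soft bookkeeping of the first paragraph.
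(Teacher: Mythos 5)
Your proof is correct and is essentially the paper's own argument: the paper isolates the orbit-closure-under-neighbours induction as Lemma~\ref{Ltransitive} (producing a finitely generated transitive subgroup generated by elements $h_i$ with $h_i\alpha=\beta_i$) and then notes that adjoining the compact open stabilizer $G_\alpha$ yields a compact generating set, which is exactly your set $S$. The only difference is organizational — you fold the lemma into the main proof rather than stating it separately.
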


The proof of the proposition is based on the Lemma below.

\begin{lemma} \label{Ltransitive}
Let $G$ be a group acting transitively on a connected, locally finite
graph $\Gamma$.  Then $G$ has a finitely generated, transitive subgroup.
\end{lemma}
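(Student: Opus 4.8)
The plan is to build the desired subgroup explicitly from a single base vertex, using local finiteness to keep the generating set finite and connectivity to propagate transitivity across all of $\Gamma$. First I would fix a vertex $\alpha$ of $\Gamma$. Since $\Gamma$ is locally finite, $\alpha$ has only finitely many neighbours, say $\beta_1,\ldots,\beta_d$. Because $G$ acts transitively, for each $i$ I can choose an element $g_i\in G$ with $g_i\alpha=\beta_i$. Let $H=\langle g_1,\ldots,g_d\rangle$; this is a finitely generated subgroup of $G$, and it remains only to show that $H$ acts transitively on $\Gamma$, which amounts to showing that the orbit $H\alpha$ is all of $V\Gamma$.

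The key step is to show that $H\alpha$ is closed under passing to neighbours. Suppose $\gamma\in H\alpha$, say $\gamma=h\alpha$ with $h\in H$, and let $\delta$ be any neighbour of $\gamma$. Since $H$ acts by graph automorphisms, $h^{-1}$ carries the edge $\{\gamma,\delta\}$ to an edge incident with $\alpha$, so $h^{-1}\delta$ is a neighbour of $\alpha$ and hence equals some $\beta_i$. Then $\delta=h\beta_i=hg_i\alpha$, and since $hg_i\in H$ we obtain $\delta\in H\alpha$. Thus every neighbour of a vertex in $H\alpha$ again lies in $H\alpha$.

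Finally I would invoke connectivity. Given an arbitrary vertex $\gamma$, choose a walk $\alpha=\gamma_0,\gamma_1,\ldots,\gamma_n=\gamma$ in $\Gamma$. Since $\gamma_0=\alpha\in H\alpha$ and each $\gamma_{j+1}$ is a neighbour of $\gamma_j$, the closure property just established gives $\gamma_j\in H\alpha$ for all $j$ by induction on $j$, so in particular $\gamma\in H\alpha$. Hence $H\alpha=V\Gamma$, which means $H$ is transitive, as required.

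As for the main obstacle: the argument is short, and the only place where anything substantive occurs is the closure step, where one must use that the elements of $G$ act as automorphisms (so that $h^{-1}$ sends neighbours of $\gamma$ to neighbours of $\alpha$) together with the fact that the chosen generators already provide, for each neighbour of $\alpha$, a group element moving $\alpha$ to it. Local finiteness is exactly what guarantees finitely many generators, and connectivity is exactly what upgrades ``closed under neighbours'' to ``equal to the whole vertex set''. I expect no real difficulty beyond keeping careful track of which automorphism moves which vertex.
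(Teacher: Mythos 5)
Your proof is correct and follows essentially the same route as the paper: fix a base vertex, take one group element per neighbour as generators, show the orbit of the base vertex is closed under passing to neighbours, and conclude by connectivity. The only cosmetic difference is that the paper enumerates the neighbours of $h\alpha$ as $hh_ih^{-1}(h\alpha)$, whereas you pull a given neighbour back by $h^{-1}$; these are the same observation.
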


\noindent
\begin{proof} Fix a reference vertex $\alpha$.  Denote the neighbours
of $\alpha$ by $\beta_1, \ldots, \beta_n$.  Choose elements $h_1,
\ldots, h_n$ such that $h_i\alpha=\beta_i$.  We claim that
$H=\langle h_1, \ldots, h_n\rangle$ is transitive on the vertices of
$\Gamma$.   Note that all the vertices in $\Gamma$ that are adjacent
to the vertex $\alpha$ are in the $H$-orbit of $\alpha$.  Suppose that
$\beta=h\alpha$ for some $h\in H$.   Then $hh_1h^{-1}\beta, \ldots,
hh_nh^{-1}\beta$ is an enumeration of all the neighbours of $\beta$.
Whence the neighbours of $\beta$ are also contained in the $H$-orbit
of $\alpha$.  Since our graph is assumed to be connected, we conclude
that $H$ acts transitively on the vertices.      \end{proof}

Proposition~\ref{Pcompactgenerated}
follows from Lemma \ref{Ltransitive}, because the union of the
stabilizer of a vertex with a finite generating set for a transitive subgroup
forms a compact generating set
for $G$.

The following theorem concludes our basic considerations of rough Cayley graphs.
Its first part is well known.

\begin{theorem} \label{TCocompact}
{\rm (\cite[Corollary~2.11]{KronMoller2008})}
Let $G$ be a compactly generated topological group  that
has a compact open subgroup.  Assume that $H$ is a cocompact closed
subgroup of $G$.  Then $H$ is compactly generated and if $\Gamma_G$ is a
rough Cayley graph for $G$ and $\Gamma_H$ is a rough Cayley graph for $H$
then $\Gamma_G$ and $\Gamma_H$ are quasi-isometric.
\end{theorem}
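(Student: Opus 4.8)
The plan is to manufacture, out of a fixed locally finite rough Cayley graph $\Gamma_G$ of $G$, a graph on which $H$ acts transitively and which is manifestly quasi-isometric to $\Gamma_G$; this single graph will simultaneously witness that $H$ is compactly generated and that $H$ admits a rough Cayley graph quasi-isometric to $\Gamma_G$, and the general statement about arbitrary $\Gamma_G$ and $\Gamma_H$ then follows from Theorem~\ref{Tquasi}. To begin, I would note that $H$, being a closed subgroup of the totally disconnected, locally compact group $G$, is itself totally disconnected and locally compact, and that if $U=G_\alpha$ is the (compact, open) stabiliser of a vertex $\alpha$ of a locally finite rough Cayley graph $\Gamma_G$ (which exists by Lemma~\ref{Llocallyfinite}), then $U\cap H$ is a compact open subgroup of $H$. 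Identify $V\Gamma_G$ with $G/U$ in the usual way.

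The key step is to translate cocompactness of $H$ into the statement that $H$ has only finitely many orbits on $V\Gamma_G$. Since $U$ is open, the sets $gU$ are open in $G$, so their images under the (open) quotient map $G\to H\backslash G$ form an open cover of the compact space $H\backslash G$; extracting a finite subcover yields finitely many double cosets $Hg_1U,\dots,Hg_nU$ covering $G$, which is exactly the assertion that the $H$-orbits $Hg_1\alpha,\dots,Hg_n\alpha$ exhaust $V\Gamma_G$. (This is the content of Lemma~\ref{Lcocompact}, read off directly.) A short computation using that $H$ acts by isometries of $\Gamma_G$ shows moreover that each orbit sits at a \emph{constant} distance from the base orbit $H\alpha$: for $h'\in H$ one has $d(h'g_i\alpha, H\alpha)=d(g_i\alpha, H\alpha)$. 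Hence $H\alpha$ is coarsely dense in $\Gamma_G$, with density constant $R_0=\max_i d(g_i\alpha,H\alpha)$.

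Next I would build the orbit graph $\Sigma$ whose vertex set is the single orbit $H\alpha$ and in which two distinct vertices are joined precisely when their distance in $\Gamma_G$ is at most $2R_0+1$. Because $H$ preserves the metric of $\Gamma_G$ and is transitive on $H\alpha$, it acts transitively on $\Sigma$ by automorphisms, with vertex stabiliser $U\cap H$; local finiteness of $\Gamma_G$ (finite balls) makes $\Sigma$ locally finite, and the coarse density of $H\alpha$ lets one replace any $\Gamma_G$-path by a nearby sequence of $\Sigma$-vertices whose consecutive members are $\Sigma$-adjacent, so $\Sigma$ is connected. Thus $\Sigma$ is a locally finite rough Cayley graph for $H$, and Proposition~\ref{Pcompactgenerated} immediately gives that $H$ is compactly generated. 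The same estimates show that the inclusion $H\alpha\hookrightarrow V\Gamma_G$ is a quasi-isometry: each $\Sigma$-edge is a $\Gamma_G$-path of length at most $2R_0+1$, giving one of the two comparison inequalities, while projecting a geodesic in $\Gamma_G$ onto nearby points of $H\alpha$ gives the other, and coarse density supplies the required surjectivity up to bounded error.

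Finally, since $\Sigma$ is quasi-isometric to $\Gamma_G$ and is a rough Cayley graph of $H$, Theorem~\ref{Tquasi}---applied once to $H$ and once to $G$---shows that any rough Cayley graph $\Gamma_H$ of $H$ is quasi-isometric to $\Sigma$, hence to $\Gamma_G$, hence to any rough Cayley graph of $G$. I expect the main obstacle to be the honest verification of the orbit-graph construction: establishing the uniform coarse density (the constant-distance computation) and then the two-sided distance comparison that upgrades the inclusion $H\alpha\hookrightarrow V\Gamma_G$ to a genuine quasi-isometry, rather than the soft structural facts, which follow directly from the cited results.
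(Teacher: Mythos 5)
Your proposal is correct and follows essentially the same route as the paper: both arguments use Lemma~\ref{Lcocompact} to get finitely many $H$-orbits, deduce that $H\alpha$ is coarsely dense with some constant $k$, build the graph on $H\alpha$ with adjacency given by $\Gamma_G$-distance at most $2k+1$, verify connectedness by projecting paths onto nearby orbit points, and invoke Proposition~\ref{Pcompactgenerated} and Theorem~\ref{Tquasi} to finish. The only cosmetic difference is that the paper first thickens the whole graph to $\Gamma'$ and then restricts to the subgraph spanned by $H\alpha$, whereas you define the orbit graph directly.
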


\begin{proof}  Let $X$ be a locally finite rough Cayley graph for $G$.  By
Lemma~\ref{Lcocompact} we know $H$ acts with finitely many orbits on $X$.
Choose a vertex $\alpha$ in $\Gamma$.
Then there is a number $k$ such that every vertex in $\Gamma$ is in
distance at most $k$ from the orbit $H\alpha$.   Now form the graph
$\Gamma'$ which has the same vertex set as $\Gamma$ but two
vertices being
adjacent if and only if their distance in $\Gamma$ is at most $2k+1$.
Note that $\Gamma'$ is also
locally finite.  Consider the subgraph
$\Delta$ of $\Gamma'$
spanned by the vertices in $H\alpha$.  Suppose $\alpha$ and $\beta$
are vertices in $\Delta$ and that $\alpha=\alpha_0,
\alpha_1, \ldots, \alpha_n=\beta$ is a path in
$\Gamma$ (and thus also a path in $\Gamma'$) from $\alpha$
to $\beta$.  For each $\alpha_i$
choose a vertex $\beta_i$ in $H\alpha$ such that
$d_\Gamma(\alpha_i, \beta_i)\leq k$.
Then $d(\beta_i, \beta_{i+1})\leq 2k+1$ so either
$\beta_i=\beta_{i+1}$
or $\beta_i$ and
$\beta_{i+1}$ are adjacent in $\Gamma'$.  Whereupon we conclude that
$\Delta$ is
connected.  The action of $H$ on the connected, locally finite graph $\Delta$ 
is transitive with compact, open vertex-stabilizers. Hence $\Delta$ 
is a rough Cayley graph for $H$ and $H$ is compactly generated 
by Proposition~\ref{Pcompactgenerated}.
  From the above it is clear that $\Delta$ is
quasi-isometric to $\Gamma$.  The second part of the theorem
follows by Theorem~\ref{Tquasi}.  \end{proof}

{\em Remark.}  For the rest of Section~\ref{SRough} we will focus on compactly generated, totally disconnected, locally compact groups.
Corresponding results hold for compactly generated, locally compact groups that contain a compact, open subgroup.

\subsection{Application to FC$^-$-groups}
As mentioned in Section~\ref{STrofimov}, an element $g$ of a topological
group $G$ is called a FC$^-$-element if its conjugacy class in $G$
has compact closure.  It is an easy exercise to show that the
FC$^-$-elements of $G$ form a normal subgroup $B(G)$ of $G$. If $G=B(G)$
then $G$ itself is called a FC$^-$-group.  These concepts have been
extensively studied, see for example the paper
\cite{GrosserMoskowitz1971} by Grosser and Moskowitz and various
papers by Wu and his collaborators, e.g. \cite{WuYu1972} and
\cite{Wu1991}.

A basic question about the subgroup $B(G)$ is whether it is closed or
not.  This question was discussed by Tits in \cite{Tits1964}, where it
proved that $B(G)$ is closed if $G$ is a connected locally compact
group.   But, Tits also gives an example of a locally compact, totally
disconnected group where $B(G)$ is not closed.    Below is another
example of such a group described by using graphs.

\medskip

{\em Example.}  Let $\Gamma$ denote a directed tree such that each
vertex has in-valency 1 and out-valency 2.  Choose a directed line
$\ldots, \alpha_{-1}, \alpha_0, \alpha_1, \alpha_2, \ldots$ in $\Gamma$.
We say that vertices $\alpha$
and $\beta$ are in the same {\em horocycle} if there is a number $n$
such that the unique path in $\Gamma$ from $\alpha$ to $\beta$
starts by going backwards along
$n$ arcs and then going forward along $n$ arcs.
(This concept is
also discussed in Section~\ref{Shat}.) Membership in the same
horocycle is an equivalence  
relation on vertices. We denote by $C_i$ the equivalence
class of $\alpha_i$.
Define $H$ as the subgroup of $\autga$ that stabilizes $C_0$.  Let $G$
denote the permutation group that $H$ induces on $C_0$ and endow $G$
with the permutation topology arising from that action.  We think of
$C_0$ as a metric space, the metric being the restriction of the graph
metric on $\Gamma$.  If $g$ is an
element of $G$ that fixes all but finitely many vertices in $C_0$ then
by Lemma~\ref{LWoessBounded} the element $g$ is an FC$^-$-element of
$G$.  It is also clear that if $g$ and $h$ are elements of $G$, both
with finite support, then there is a conjugate of $h$ whose support is
disjoint from the support of $g$.  Let $g_i$ be an element if $G$ with
finite support such that there is a vertex $\alpha\in C_0$ such that
$d(\alpha, g_i \alpha)=2i$.  (We could define $g_i$ explicitly by
defining $\beta_{i+1}$ as the vertex such that $(\alpha_{-i-2},
\beta_{i+1})$ is an arc in $\Gamma$ and $\beta_{i+1}\neq \alpha_{-i-1}$
and then let $g$ transpose the two outward directed arcs from
$\beta_{i+1}$.)   We may assume that $\supp g_i\cap \supp g_j=\emptyset$ if
$i\neq j$.  All the $g_i$'s are contained in $B(G)$ and $h_i=g_1\ldots
g_1$ is also in $B(G)$.   Then
we define $g$ as the limit of the sequence of the $h_i$'s ($g\alpha=h_i\alpha$
if $\alpha$ is in $\supp h_i$ and $g\alpha=\alpha$ if $\alpha$ is not
in the support of any of the $h_i$'s).  Clearly $g$ is not a bounded
isometry of $C_0$ and thus $g$ is not in $B(G)$.  Hence $B(G)$ is not closed.
Indeed, one can easily show that $B(G)$ is dense in $G$.

\bigskip

The construction of a rough Cayley graph can be used in the study
of FC$^-$-elements in groups.  Trofimov in \cite{Trofimov1985a} proved
the following.

\begin{theorem}{\rm (\cite{Trofimov1985a}, cf. \cite[Theorem~3]{Woess1992})}
Let $\Gamma$ be a vertex transitive, connected, locally finite graph.  The
subgroup of bounded automorphisms in $\autga$ is closed in $\autga$.
\end{theorem}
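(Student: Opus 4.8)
The plan is to first translate everything into the language of topological groups via Lemma~\ref{LWoessBounded}. Writing $G=\autga$ (a locally compact, totally disconnected group, with compact open vertex stabilizers since $\Gamma$ is connected and locally finite), that lemma identifies the bounded automorphisms with the FC$^{-}$-elements of $G$, so the subgroup in question is exactly $B(G)=\{g\in G:\overline{g^{G}}\text{ is compact}\}$. For $g\in G$ set $D(g)=\sup_{\beta\in V\Gamma}d(\beta,g\beta)\in\{0,1,2,\dots\}\cup\{\infty\}$. A routine check (using that $g$ is an isometry and that $G$ is vertex transitive) shows that $D$ is subadditive, $D(g^{-1})=D(g)$, and $D$ is \emph{conjugation invariant}; moreover $B(G)=\{D<\infty\}=\bigcup_{c}B_{c}$ where $B_{c}=\{g:D(g)\le c\}$. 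Each $B_{c}$ is closed in the permutation topology, because $B_{c}=\bigcap_{\beta}\{g:g\beta\in B_{c}(\beta)\}$ and each set $\{g:g\beta\in B_{c}(\beta)\}$ is a finite union of basic clopen sets. Thus $B(G)$ is an $F_{\sigma}$ subgroup, and the whole problem is to show that the increasing union does not ``escape'' its closure.

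The key reduction I would carry out is to the following uniform statement, which I will call $(\star)$: \emph{there is a finite set $\Phi\subseteq V\Gamma$ and a constant $c'$ such that every bounded automorphism fixing $\Phi$ pointwise has $D\le c'$.} Granting $(\star)$, closedness follows cleanly. Take $g\in\overline{B(G)}$. Approximating $g$ on the finite set $\Phi$ by an element of $B(G)$, I get $b_{0}\in B(G)$ with $b_{0}|_{\Phi}=g|_{\Phi}$, so $v:=b_{0}^{-1}g$ lies in $\overline{B(G)}$ and fixes $\Phi$ pointwise. For an arbitrary vertex $\beta$, approximate $v$ on the finite set $\Phi\cup\{\beta\}$ by some $b_{\beta}\in B(G)$; then $b_{\beta}$ also fixes $\Phi$, so $(\star)$ gives $d(\beta,v\beta)=d(\beta,b_{\beta}\beta)\le D(b_{\beta})\le c'$. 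As $\beta$ was arbitrary, $D(v)\le c'$, whence $D(g)\le D(b_{0})+D(v)<\infty$ by subadditivity. Therefore $\overline{B(G)}=B(G)$.

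The main obstacle is $(\star)$ itself, and this is where the graph structure must genuinely be used: a priori a bounded automorphism could fix a huge ball $B_{n}(\alpha)$ and still displace some far-away vertex by an arbitrarily large amount, which is precisely what happens in the non-full example of the horocycle group discussed earlier, so faithfulness/fullness of $\autga$ is essential here. To prove $(\star)$ I would invoke the structure theory of bounded automorphisms. Since $D(g)\le c$ forces $\max\{d(\beta,g\beta):\beta\in B_{n}(\alpha)\}\le c=o(n)$, every bounded automorphism is in particular an $o$-automorphism, and Trofimov's results (Theorem~\ref{TTrofimovBounded}, together with its refinement) show that a transitive group of such automorphisms is, modulo a $G$-invariant equivalence relation $\sim$ with finite classes, a finitely generated free abelian group acting with finite stabilizers on $\Gamma/\sim$. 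The intended mechanism is then that fixing a finite transversal $\Phi$ for $\sim$ kills the ``translation part'' of a bounded automorphism, so that what remains displaces each vertex only within bounded $\sim$-classes, giving the uniform bound $c'$. The delicate point to be handled carefully is that $B(\Gamma)$ need not itself act transitively on $\Gamma$, so Theorem~\ref{TTrofimovBounded} cannot be quoted verbatim; one must instead extract the relevant invariant equivalence relation from the action of the normal subgroup $B(\Gamma)\trianglelefteq\autga$ on a single $\autga$-orbit, and verify that the displacement estimate survives this restriction. This structural input is the hard core of the argument; the topological reduction above is comparatively soft.
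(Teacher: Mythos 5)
First, a point of reference: the paper itself offers no proof of this theorem --- it is quoted from Trofimov \cite{Trofimov1985a} and Woess \cite{Woess1992} and then used as a black box to deduce the closedness of the group of FC$^-$-elements. So your proposal can only be measured against its own internal logic, and there it has a genuine gap. The soft half is fine: the verification that each $B_c$ is closed, that $D$ is subadditive and conjugation-invariant, and the deduction of closedness from your statement $(\star)$ are all correct. But be aware that this reduction is essentially a reformulation rather than progress: conversely, if $B(\autga)$ is closed then $B(\autga)\cap G_{(\Phi)}$ is a compact subgroup contained in $\bigcup_c B_c$, and a Baire category argument places it inside a single $B_{c'}$, so $(\star)$ is \emph{equivalent} to the theorem modulo soft topology. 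All of the content therefore sits in $(\star)$, and that is exactly where the argument is missing.

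The route you propose to $(\star)$ does not go through as described. Theorem~\ref{TTrofimovBounded} and Theorem~\ref{TTrofimovoauto} both require the relevant group ($B(\Gamma)$, or a transitive group of $o$-automorphisms) to act \emph{transitively} on $V\Gamma$, and $B(\Gamma)$ need not be transitive. Your suggested repair --- extracting an invariant equivalence relation from the action of the normal subgroup $B(\Gamma)$ on its orbits --- runs into precisely the obstruction illustrated by the horocycle example in the paper: a $B(\Gamma)$-orbit equipped with the induced metric is in general not realizable as a connected, locally finite graph (in the horocycle case no finite distance threshold even yields a connected graph on the orbit), and on such metric spaces the conclusion genuinely fails, with bounded isometries of arbitrarily large displacement fixing any prescribed finite set. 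So no version of Trofimov's structure theory applies to the restricted action, and the ``delicate point'' you flag is not a technicality but the entire difficulty. There is also a circularity risk worth naming: the short topological proofs of Theorem~\ref{TTrofimovBounded} referred to in Section~\ref{STrofimov} (via \cite{Moller1998}) take as input the fact that $\overline{B(\Gamma)}$ still consists of bounded automorphisms --- i.e.\ the very closedness statement you are trying to prove --- so invoking that theorem here is safe only if you fall back on Trofimov's original, long combinatorial argument. What is needed, and what your proposal does not supply, is a direct graph-theoretic proof of the uniform displacement bound $(\star)$ (equivalently, of the single-vertex control: a bounded automorphism moving a base vertex a distance at most $m$ has displacement bounded by a function of $m$), using connectedness, local finiteness and vertex-transitivity of $\Gamma$ itself.
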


Using a rough Cayley graph for the group and this theorem allows us
to prove the following result.

\begin{theorem}{\rm (\cite[Theorem~2]{Moller2003})}  Let $G$ be a
  compactly generated totally disconnected locally compact group.
  Then the subgroup of FC$^-$-elements is closed in $G$.
\end{theorem}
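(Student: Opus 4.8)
The plan is to transfer the statement to the automorphism group of a locally finite graph, where the analogous fact is exactly Trofimov's theorem stated just above. First I would produce the graph: since $G$ is totally disconnected and locally compact, van Dantzig's theorem \cite{Dantzig1936} gives a compact open subgroup $U$, and since $G$ is compactly generated, Lemma~\ref{Llocallyfinite} then furnishes a locally finite rough Cayley graph $\Gamma$ on which $G$ acts transitively with compact open vertex stabilizers. Let $\rho\colon G\to\aut(\Gamma)$ denote the associated action homomorphism, where $\aut(\Gamma)$ carries the permutation topology.

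Next I would check that $\rho$ is continuous. Because $\rho$ is a homomorphism it suffices to verify continuity at the identity: a basic identity neighbourhood in $\aut(\Gamma)$ is the pointwise stabilizer $\aut(\Gamma)_{(\Phi)}$ of a finite vertex set $\Phi$, and $\rho^{-1}(\aut(\Gamma)_{(\Phi)})=G_{(\Phi)}=\bigcap_{\alpha\in\Phi}G_\alpha$ is a finite intersection of open subgroups, hence open. Faithfulness of the action plays no role here, so the possible non-triviality of $\ker\rho$ causes no difficulty.

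The heart of the argument is the identification $B(G)=\rho^{-1}(B(\Gamma))$, where $B(\Gamma)$ is the group of bounded automorphisms of $\Gamma$. Viewing $V\Gamma$ as a metric space under the graph metric, the $G$-action is by isometries, is transitive, has compact open point stabilizers, and has finite balls (as $\Gamma$ is connected and locally finite), so Lemma~\ref{LWoessBounded} applies and tells us that $g\in G$ is an FC$^-$-element if and only if $g$ acts as a bounded isometry of $V\Gamma$. Since being a bounded isometry depends only on the image $\rho(g)$ and says precisely that $\rho(g)\in B(\Gamma)$, we obtain $B(G)=\rho^{-1}(B(\Gamma))$.

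Finally, $\Gamma$ is connected, locally finite, and vertex transitive (as $\aut(\Gamma)\supseteq\rho(G)$ already acts transitively), so Trofimov's theorem stated above shows that $B(\Gamma)$ is closed in $\aut(\Gamma)$; by continuity of $\rho$ the preimage $B(G)=\rho^{-1}(B(\Gamma))$ is then closed in $G$, as required. The expected main obstacle is entirely absorbed into the closedness of $B(\Gamma)$, i.e.\ into Trofimov's theorem, which we are free to invoke; the remaining steps, namely the continuity of $\rho$ and the FC$^-$/bounded dictionary, are routine given Lemma~\ref{LWoessBounded}. Were that input not available, the genuine difficulty would be to prove directly that the bounded automorphisms of a locally finite vertex-transitive graph form a closed subgroup of $\aut(\Gamma)$.
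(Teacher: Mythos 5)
Your proof is correct and follows essentially the same route as the paper's: construct a locally finite rough Cayley graph, use Lemma~\ref{LWoessBounded} to identify the FC$^-$-elements of $G$ with the preimage under the (continuous) action homomorphism of the bounded automorphisms, and invoke Trofimov's theorem. The only cosmetic difference is that the paper's sketch also records that the kernel of this homomorphism is compact and its image closed in $\autga$, which, as you observe, is not needed for this particular deduction once continuity is in hand.
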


The proof is simple.  One constructs a locally finite rough Cayley
graph for $G$.  The action of $G$ on $\Gamma$ gives a continuous
homomorphism $G\rightarrow\autga$.  It is easy to show that the kernel
of this homomorphism is compact and the image is closed in $\autga$ (see
\cite[Corollary~1]{Moller2003}).  The subgroup of bounded automorphism
in $\autga$ is
closed by the theorem of Trofimov above,
and the subgroup of FC$^-$-elements is closed in $G$, because it
is the preimage of the subgroup of bounded automorphisms in
$\autga$.

\subsection{Rough ends}
The space of ends of a connected, locally finite graph $\Gamma$ is the 
boundary of a certain compactification of $V\Gamma$, where we think of
$V\Gamma$ as having the discrete topology. One can 
think of different ends as representing the "different ways of 
going to infinity" in $\Gamma$.  Ends of graphs can both be defined by
using topological concepts and by purely graph theoretical means.
The graph theoretical method extends to graphs that are not locally
finite, but then the ends do not give a compactification of the vertex
set like in the locally finite case.

Recall that a {\em ray} in a graph is a sequence $\alpha_0, \alpha_1,
\ldots$ of
distinct vertices such that $\alpha_i$ is adjacent to $\alpha_{i+1}$
for all $i$.
The graph theoretic approach is to define the
ends of a graph $\Gamma$ as equivalence classes
of rays.  Two rays are equivalent
(i.e.~are in the same {\em end}) if
there is the third ray that intersects both infinitely often.  This
definition goes back to the paper \cite{Halin1964} by Halin.

Ends can also be defined with reference to connected components when
finite sets of edges are removed from the graph.  For a finite set
$\Phi$ of edges define ${\cal C}_\Phi$ as the set of connected
components of $\Gamma\setminus \Phi$.
Suppose $\Phi_1$ and
$\Phi_2$ are two finite sets of edges such that
$\Phi_1\subseteq\Phi_2$.
There is a natural map ${\cal C}_{\Phi_2}\rightarrow {\cal
  C}_{\Phi_1}$ that takes a component $c$ of $\Gamma\setminus \Phi_2$ to
the component of $\Gamma\setminus \Phi_1$ that contains $c$.  The collection of all the sets ${\cal C}_\Phi$ where
$\Phi$ ranges over all finite sets of edges in $\Gamma$, together
with the connecting natural maps forms an
inverse system.  The space of ends $\Omega\Gamma$ of $\Gamma$
(with the natural topology of the
inverse limit) is then defined as the inverse
limit of this system.  One could also look at the components of
$\Gamma\setminus \Phi$ where $\Phi$ is a finite set of vertices, but
when the graph is locally finite the inverse limits are homeomorphic.
 This approach goes back to the thesis of Freudenthal in 1931.  In later works
Freudenthal and Hopf built up a theory of ends of spaces, see
\cite{Freudenthal1942}, \cite{Freudenthal1945} and \cite{Hopf1944}.

The graph theoretic approach to ends also leads to a natural definition of a topology.  The {\em co-boundary} of a set $c\subseteq V\Gamma$ is the set of all edges in
 $\Gamma$ such that one of its end vertices is in $c$ and the other is in
 $V\Gamma\setminus c$.  Denote the co-boundary of $c$ with $\delta c$.
Suppose $|\delta c|<\infty$.
One sees that if $c$ contains all but finitely many vertices
 from a ray $R$ then $c$ also contains also all but
finitely many vertices from
 any ray in the same end as $R$.  Thus we can say that the end
 belongs to $c$.  Define $\Omega c$ as the set of ends that belong to $c$.
The topology on the
 space of ends has as a basis the sets $\Omega c$ where $c\subset
 V\Gamma$ and $|\delta c|<\infty$.
The graph $\Gamma\setminus\delta c$ is not connected.  If
 $\omega_1$ and $\omega_2$ are two ends of $\Gamma$ and $\omega_1$ belongs
 to $c$ and $\omega_2$ belongs to $V\Gamma \setminus c$ then we can say that
 $\delta c$ separates $\omega_1$ and $\omega_2$.

It is easy to show that a quasi-isometry between two locally finite
connected graphs induces a homeomorphism between the end spaces
of the graphs.  In particular the number of ends is a quasi-isometry
invariant.

For a detailed introduction to ends of graphs the reader can
consult \cite{Moller1995} or \cite{DiestelKuhn2003}.

\subsubsection{Stallings' Theorem}
For a finitely generated group the number of ends is defined as the
number of ends of a Cayley graph of the group with respect to a finite
generating set. This is well defined, because the number of ends of a space is
invariant under quasi-isometry, as noted above.  It can be shown
that a finitely generated group has  $0$, $1$, $2$ or $\infty$ many ends.

Let $G$ be a compactly generated, totally disconnected, locally
compact group. Since $G$ admits a rough Cayley graph $\Gamma$,
which is unique up to quasi-isometry, we may define the
\emph{space of rough ends} of $G$ as the space of ends of $\Gamma$.
The cardinality of the space of rough ends of $G$ will be called the
\emph{number of rough ends} of $G$. These definitions obviously
extend the traditional concepts for finitely generated groups.
Because of Lemma~\ref{Ltransitive}, every compactly generated, totally
disconnected, locally compact group has  $0$, $1$, $2$ or
$\infty$~many ends.

A compactly generated, totally disconnected, locally compact group
has $0$~rough ends, if and only if it is compact (in particular, a finitely
generated group has $0$~ends if and only if it is finite).

Finitely generated groups with precisely two ends are characterized by
the following
result, which is a conjunction of results of Hopf and C.~T.~C.~Wall.

\begin{theorem}\label{TTwoEnds}
{\rm (\cite[Satz 5]{Hopf1944} and \cite[Lemma~4.1]{Wall1967})}
Let $G$ be a finitely generated group.  Then the following
are equivalent:

(i)  $G$ has precisely two ends;

(ii)  $G$ has an infinite cyclic subgroup of finite index;

(iii)  $G$ has a finite normal subgroup $N$ such that $G/N$ is
either isomorphic to the infinite cyclic group or to the infinite
dihedral group.
\end{theorem}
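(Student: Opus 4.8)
The plan is to prove the cycle by treating (ii)$\Rightarrow$(i) and the algebraic equivalence (ii)$\Leftrightarrow$(iii) as routine, and locating the real content in (i)$\Rightarrow$(ii).

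First I would dispatch (ii)$\Rightarrow$(i) by quasi-isometry invariance: a finite-index subgroup, equipped with the word metric of a finite generating set of $G$, is quasi-isometric to $G$, so if $G$ contains an infinite cyclic subgroup of finite index then $G$ is quasi-isometric to $\Z$. Since $\Z$ has two ends and the number of ends of a connected, locally finite graph is a quasi-isometry invariant, the Cayley graph of $G$ has two ends. For (ii)$\Leftrightarrow$(iii), in the direction (iii)$\Rightarrow$(ii) I would note that if $G/N\cong\Z$ with $N$ finite then the extension splits (as $\Z$ is free), giving an infinite cyclic subgroup of index $|N|$; and if $G/N\cong D_\infty$, the preimage of the index-two infinite cyclic subgroup of $D_\infty$ reduces to the previous case and has index two in $G$. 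For (ii)$\Rightarrow$(iii) I would pass to the normal core $M$ of the given cyclic subgroup (normal, of finite index, still infinite cyclic), use the conjugation map $G\to\operatorname{Aut}(M)\cong\Z/2\Z$ to see that $C_G(M)$ has index at most two and contains $M$ centrally, and then take $N$ to be the maximal finite normal subgroup of $G$. This $N$ is finite because any finite normal subgroup meets the torsion-free group $M$ trivially and hence injects into the finite quotient $G/M$; the quotient $G/N$ is then virtually $\Z$ with no nontrivial finite normal subgroup, and such a group must be $\Z$ (when the action on $M$ is trivial) or $D_\infty$ (otherwise). This final dichotomy is standard structure theory for virtually cyclic groups, which I would only sketch.

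The substantive step is (i)$\Rightarrow$(ii). Let $\Gamma$ be a Cayley graph of $G$ for a finite generating set, with ends $\omega_1,\omega_2$. Because $\Gamma$ has exactly two ends, I would first fix a finite set of vertices $D$ with $\Gamma\setminus D$ having exactly two infinite components $\Gamma_1,\Gamma_2$ (absorbing any finite components into $D$), so that $\Gamma_i$ carries the single end $\omega_i$. The action of $G$ permutes $\{\omega_1,\omega_2\}$, giving a homomorphism $G\to\operatorname{Sym}\{\omega_1,\omega_2\}\cong\Z/2\Z$; passing to the kernel $H$ (of index at most two) ensures that any element we produce preserves the orientation of the line. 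The goal is then a \emph{translating} element $t\in H$, meaning that $t$ fixes both ends and carries $\Gamma_2$ strictly inside itself, so that the cuts $\ldots,t^{-1}D,D,tD,\ldots$ are disjoint and march towards $\omega_1$ in one direction and towards $\omega_2$ in the other; in particular such a $t$ has infinite order.

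The hard part will be producing this translating element. The natural attempt is to use vertex-transitivity to push $D$ arbitrarily deep into $\Gamma_1$, and the delicate point—the main obstacle—is to do so by an \emph{end-preserving} element, i.e.\ to control which $H$-orbits accumulate at $\omega_1$ rather than at $\omega_2$. Conceptually this is exactly the assertion that the two-ended vertex-transitive graph $\Gamma$ is quasi-isometric to $\Z$: such a quasi-isometry furnishes a proper, coarsely $H$-equivariant height function $V\Gamma\to\Z$ with finite level sets, and its translation numbers define a surjection $H\to\Z$ whose kernel (elements with bounded orbits) is finite, directly yielding a finite-index infinite cyclic subgroup. Granting a translation $t$, the endgame is clean: the cuts $D$ and $tD$ both separate $\omega_1$ from $\omega_2$, so removing $D\cup tD$ leaves two infinite pieces and one finite ``strip'' $R$ between them (finite by local finiteness and two-endedness); the nested translates $t^nD$ are linearly ordered along the axis of $t$, so the strips $t^nR$ tile the vertex set and a finite set $\overline{R}$ is a fundamental domain for $\langle t\rangle$. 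Since $G$ acts freely on $V\Gamma$ by left translation, the tiling $V\Gamma=\bigsqcup_{n\in\Z}t^n\overline{R}$ gives $[G:\langle t\rangle]=|\overline{R}|<\infty$, so $\langle t\rangle$ is infinite cyclic of finite index in $G$, which is (ii).
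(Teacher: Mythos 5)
The paper does not actually prove Theorem~\ref{TTwoEnds}; it is stated as a known result with citations to Hopf and Wall, so there is no internal proof to compare yours against. Judged on its own terms, your treatment of (ii)$\Rightarrow$(i) and of the algebraic equivalence (ii)$\Leftrightarrow$(iii) is sound (modulo the classification of virtually cyclic groups with no nontrivial finite normal subgroup, which is legitimately citable to Wall), and your endgame for (i)$\Rightarrow$(ii) --- nested cuts $t^nD$, finite strips tiling $V\Gamma$, free action giving $[G:\langle t\rangle]<\infty$ --- is the right argument.

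The genuine gap is exactly where you flag it: producing the translating element $t$. You replace that step with the assertion that a two-ended vertex-transitive graph is quasi-isometric to $\Z$ --- which is essentially equivalent to the statement being proved --- and then claim that the ``translation numbers'' of a coarse height function $V\Gamma\to\Z$ define a surjective homomorphism $H\to\Z$ with finite kernel. That step fails as stated: for a mere quasi-isometry $f$, the displacement $f(hv)-f(v)$ is well defined only up to bounded error, and $h\mapsto\lim_n\bigl(f(h^nv)-f(v)\bigr)/n$ is \emph{a priori} only a homogeneous quasimorphism, so additivity and surjectivity onto $\Z$ are not free. The irony is that the obstacle you worry about dissolves easily and makes the detour unnecessary: $H$ has index at most $2$ in $G$, hence at most two orbits on $V\Gamma$, so at least one $H$-orbit meets the infinite component $\Gamma_1$ in an infinite set; choosing $h_n\in H$ carrying a fixed base vertex deeper and deeper into $\Gamma_1$ gives, for large $n$, $h_nD\subseteq\Gamma_1$ with $d(h_nD,D)$ large. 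Since $h_n$ fixes both ends, the connected set $D\cup\Gamma_2$ lies in the component of $\Gamma\setminus h_nD$ to which $\omega_2$ belongs, which forces $h_n\Gamma_1\subseteq\Gamma_1\setminus D$; this is your translating element, and if you want the homomorphism $H\to\Z$ it should be read off from the resulting nested family of cuts rather than from a quasi-isometry. With that insertion the proof closes.
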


For compactly generated, totally disconnected, locally compact group we
have the following analogue.

\begin{theorem}\label{TStructertree_two} {\rm
    (Cf. \cite{MollerSeifter1998})}
Let $G$ be a compactly generated, totally disconnected, locally compact
group.  Suppose that the space of rough ends has precisely two
points. Then $G$ has a compact, open, normal subgroup $N$ such that
$G/N$ is either isomorphic to the infinite cyclic group or to the infinite
dihedral group.
\end{theorem}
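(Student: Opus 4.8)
The plan is to reduce the statement to the classical result for finitely generated groups, Theorem~\ref{TTwoEnds}, by factoring out a compact open normal subgroup. First I would fix a locally finite rough Cayley graph $\Gamma$ for $G$, which exists by Lemma~\ref{Llocallyfinite}; then $G$ acts transitively on $\Gamma$ with compact open vertex stabilizers, and by definition the two rough ends of $G$ are exactly the two ends of $\Gamma$.

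The crucial reduction is to produce a compact, open, normal subgroup $M$ of $G$ such that $G/M$ is a finitely generated discrete group. Since $\Gamma$ is a two-ended, connected, locally finite, vertex-transitive graph, it has linear --- in particular polynomial --- growth (it is quasi-isometric to the double ray $\Z$). Hence Theorem~\ref{TTrofimovPoly} applies and gives a $G$-invariant equivalence relation $\sim$ with finite classes on $V\Gamma$ such that, writing $M$ for the kernel of the induced action of $G$ on $\Gamma/\!\sim$, the quotient $G/M$ is finitely generated with finite vertex stabilizers on $\Gamma/\!\sim$. I would then verify that $M$ has the required properties: it is normal as a kernel; since every $M$-orbit is contained in a finite $\sim$-class, Lemma~\ref{LCompact}(ii) shows $M$ is relatively compact, and being closed it is compact; and since the stabilizer in $G$ of a vertex of $\Gamma/\!\sim$ is open while its image in $G/M$ is finite, $M$ has finite index in an open subgroup and is therefore open. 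Consequently $G/M$ is discrete and finitely generated.

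Next I would transport the end condition to $G/M$. Because $\sim$ has finite classes, the quotient graph $\Gamma/\!\sim$ is connected, locally finite, and quasi-isometric to $\Gamma$. The finitely generated group $G/M$ acts transitively on $\Gamma/\!\sim$ with finite vertex stabilizers, so $G/M$ is quasi-isometric to $\Gamma/\!\sim$, and hence to $\Gamma$. As the number of ends is a quasi-isometry invariant, the finitely generated group $G/M$ has exactly two ends, and Theorem~\ref{TTwoEnds} yields a finite normal subgroup $\bar N$ of $G/M$ with $(G/M)/\bar N$ isomorphic to $\Z$ or to the infinite dihedral group.

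Finally I would pull this back. Let $N$ be the preimage of $\bar N$ under the quotient map $G\to G/M$. Then $N$ is normal in $G$, it contains $M$ and is therefore open, and it is a finite union of cosets of the compact group $M$ (as $N/M=\bar N$ is finite) and hence compact. The third isomorphism theorem gives $G/N\cong (G/M)/\bar N$, which is $\Z$ or infinite dihedral, as required. The main obstacle is the middle step: I must be sure that $\Gamma$ really has polynomial growth (the standard fact that a two-ended vertex-transitive locally finite graph is quasi-isometric to $\Z$) and, above all, that the kernel $M$ furnished by Trofimov's theorem is simultaneously compact and open; once $M$ is in hand the remaining steps are formal. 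An alternative way to obtain such an $M$ is to bound the sizes of the suborbits $G_\alpha\beta$ using the two-ended structure of $\Gamma$ and then invoke the corollary to Schlichting's Theorem, which characterizes the existence of a compact open normal subgroup by exactly such a bound.
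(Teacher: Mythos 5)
The paper does not actually prove Theorem~\ref{TStructertree_two}; it only cites \cite{MollerSeifter1998}, so there is no in-text argument to compare against. Your proposal is correct, but it takes a much heavier route than the one the cited sources use: you reduce to the discrete case by invoking Trofimov's polynomial-growth theorem (Theorem~\ref{TTrofimovPoly}), which itself rests on Gromov's theorem, whereas the standard proofs exploit the two-ended structure directly (via structure trees, or via your own alternative: a two-ended transitive locally finite graph is a ``strip'', so the suborbits $G_\alpha\beta$ are uniformly bounded, and the Corollary to Schlichting's Theorem hands you a compact open normal subgroup at once, after which Hopf--Wall finishes). Your chain of deductions does go through: the two facts you flag are both true --- a two-ended connected locally finite transitive graph has linear growth, and the kernel $M$ of the action on $\Gamma/\!\sim$ is compact and open --- but you should make the openness argument explicit: $M$ is closed (it is an intersection of setwise stabilizers of finite vertex sets, which are open, hence closed, subgroups), and a \emph{closed} subgroup of finite index in an open subgroup is open, since its complement there is a finite union of closed cosets; finite index alone would not suffice. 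With that point spelled out, and a reference supplied for the linear-growth fact, the argument is complete; what the Schlichting route buys you is independence from Gromov's theorem, while your route has the virtue of using only results already quoted in the paper.
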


Finitely generated groups with more than one end are described in a 
famous result of Stallings from 1968.

\begin{theorem}{\rm \cite{Stallings1968}}
Suppose $G$ is a finitely generated group with more than one end.
Then $G$ can be written as a non-trivial free product with amalgamation
$A*_CB$ (with $A\neq C\neq
B$)  where $C$ is finite, or $G$ can be written as a non-trivial
HNN-extension $A*_Cx$  where $C$ is finite.
\end{theorem}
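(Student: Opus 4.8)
The plan is to recast the theorem as a statement about an action of $G$ on a tree and then read off the algebraic decomposition from Bass--Serre theory. First I would fix a finite generating set $S$ and work with the Cayley graph $\Gamma=\Cay(G,S)$, which is connected and locally finite and on which $G$ acts freely and transitively by left multiplication. The hypothesis that $G$ has more than one end means, in the language developed above, that there is a finite set $\Phi$ of edges such that $\Gamma\setminus\Phi$ has at least two infinite components; taking $c$ to be the union of one such component, we obtain a set $c\subseteq V\Gamma$ with finite co-boundary $\delta c$ for which both $c$ and $V\Gamma\setminus c$ are infinite and contain tails of rays lying in distinct ends.

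The heart of the argument is to promote this single cut to a $G$-invariant \emph{nested} family and then to build a tree from it. Here I would invoke Dunwoody's theory of structure trees \cite{Dunwoody1982} (see also \cite{DicksDunwoody1989}): starting from the $G$-translates of $c$ and using the finiteness of $\delta c$ together with the local finiteness of $\Gamma$, one extracts a $G$-invariant set $E$ of cuts that is nested, in the sense that any two members $d,d'$ satisfy one of the inclusions $d\subseteq d'$, $d\subseteq V\Gamma\setminus d'$, $V\Gamma\setminus d\subseteq d'$, or $V\Gamma\setminus d\subseteq V\Gamma\setminus d'$. From a nested $G$-invariant cut system one constructs a structure tree $T$ whose edges are the cuts in $E$ (paired with their complements) and whose vertices are the classes determined by the nestedness order; since $E$ is $G$-invariant, $G$ acts on $T$ by automorphisms without inverting edges.

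Having the tree, the conclusion is formal. An edge of $T$ corresponds to a cut $d$ with $|\delta d|<\infty$, and its stabilizer in $G$ preserves the finite edge set $\delta d$; because $G$ acts freely on the vertices of $\Gamma$, the kernel of the resulting map to $\Sym(\delta d)$ is trivial, so every edge stabilizer is finite. The action has no global fixed vertex, since the cut $c$ separates two distinct ends and any element of $G$ carrying one of these ends to the other moves the corresponding edge. Passing to a minimal $G$-invariant subtree, the action is then without fixed points, without inversions, and with finite edge stabilizers; by the fundamental theorem of Bass--Serre theory this exhibits $G$ as a non-trivial splitting over a finite subgroup, that is, as an amalgam $A*_CB$ (an edge of the quotient graph of groups with distinct endpoints) or an HNN-extension $A*_Cx$ (a loop), with $C$ finite. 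The non-triviality ($A\neq C\neq B$, respectively that the HNN-extension is non-degenerate) reflects that both sides of the cut are infinite while the edge group is finite, so neither vertex group can coincide with $C$.

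The main obstacle is the middle step: producing the $G$-invariant nested cut system. This is precisely Dunwoody's combinatorial accessibility result, and it is where all the genuine difficulty lies, since the passage from an arbitrary finite cut to a tree-like invariant family requires a delicate analysis of how the translates of $c$ overlap. By contrast, once a tree with finite edge stabilizers is in hand, the extraction of the amalgam or HNN-decomposition is a routine application of the structure theory of groups acting on trees.
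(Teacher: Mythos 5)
The paper offers no proof of this statement at all: it is quoted directly from \cite{Stallings1968}, and Stallings' own argument runs along quite different lines, through the algebraic theory of ends and his ``bipolar structures''. What you sketch is instead Dunwoody's later proof via structure trees, and that is in fact exactly the machinery this survey invokes for the topological analogue (Theorem~\ref{TStructertree_infinite}, after Abels \cite{Abels1974}, reproved in \cite{KronMoller2008} using \cite{Dunwoody1982} and \cite{DicksDunwoody1989} plus Bass--Serre theory). So your route is legitimate and well aligned with the paper's point of view; what it buys over the original is uniformity (no separate reduction to the torsion-free or finitely presented case) and the fact that it transfers essentially verbatim to compactly generated totally disconnected locally compact groups, which is the whole theme of Section~4. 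Your reduction of the hypothesis to a cut $c$ with $|\delta c|<\infty$ and both sides infinite, the appeal to Dunwoody's nested $G$-invariant cut system, and the finiteness of edge stabilizers (via the faithful action of the cut stabilizer on the finite co-boundary, using freeness of the action on $V\Gamma$) are all correct.

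There is, however, one step that fails as written: your justification that $G$ fixes no vertex of the structure tree $T$, namely that ``any element of $G$ carrying one of these ends to the other moves the corresponding edge.'' No such element need exist --- $G$ need not act transitively, or even non-trivially, on the pair of ends separated by $c$ --- and even if some edge of $T$ is moved, that does not by itself exclude a global fixed vertex elsewhere in $T$. Since non-triviality of the splitting ($A\neq C\neq B$) is exactly the statement that $G$ stabilizes no vertex of $T$, this is the point where the theorem could silently degenerate to $G=A$ or $G=B$. The standard repair is to exhibit a hyperbolic element of the action on $T$: because $d$ may be taken to be an infinite connected component of $\Gamma\setminus\delta d$ with $\delta d$ finite, transitivity of $G$ on the infinite set $V\Gamma$ produces a $g\in G$ carrying $\delta d$ deep inside $d$, so that (after possibly replacing $d$ by $d^{*}$ or $g$ by $g^{-1}$) one gets a properly nested chain $\cdots\subsetneq g^{2}d\subsetneq gd\subsetneq d$; such a $g$ translates along an axis in $T$ and therefore fixes no vertex, so neither does $G$. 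With that substitution your argument is the correct (Dunwoody) proof.
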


The converse also holds, if $A*_CB$ (with $A\neq C\neq
B$)  where $C$ is finite, or $G$ can be written as a non-trivial
HNN-extension $A*_Cx$  where $C$ is finite, then $G$
has more than one end.

In 1974 Abels \cite{Abels1974} proved an analogue of Stallings' Theorem for
topological groups.  Abels uses similar ideas in his proof as used by
Stallings.  Essentially the same result as in \cite{Abels1974} is proved in
\cite{KronMoller2008}, using Dunwoody's theory of structure trees (see
\cite{Dunwoody1982} and \cite{DicksDunwoody1989}) and the Bass-Serre
theory of group actions on trees.

\begin{theorem}\label{TStructertree_infinite}
{\rm (Cf.~\cite{Abels1974})}
Let $G$ be a compactly generated, totally disconnected, locally compact
group.  Suppose that some (equivalently, any) rough Cayley graph of $G$
has infinitely many ends.   Then  $G=A*_C B$ (with $A\neq C\neq
B$) or $G=A*_C x$, where
$A$ and $B$ are open, compactly generated subgroups of $G$, and
$C$ is a compact, open subgroup.
\end{theorem}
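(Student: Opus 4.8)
The plan is to realise $G$ as a group acting on a tree and then read off the decomposition from Bass--Serre theory. First I would invoke Lemma~\ref{Llocallyfinite} to fix a locally finite rough Cayley graph $\Gamma$ on which $G$ acts transitively with compact, open vertex stabilizers. Since the number of ends is a quasi-isometry invariant and all rough Cayley graphs of $G$ are mutually quasi-isometric by Theorem~\ref{Tquasi}, the hypothesis that \emph{some} rough Cayley graph has infinitely many ends passes to $\Gamma$; in particular $\Gamma$ has more than one end, so there is a cut, i.e.\ a set $c\subseteq V\Gamma$ with $|\delta c|<\infty$ and with both $c$ and $V\Gamma\setminus c$ infinite.

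The essential input is Dunwoody's theory of structure trees. Using the machinery of \cite{Dunwoody1982} and \cite{DicksDunwoody1989}, I would produce a single cut whose $G$-orbit, together with the orbit of its complement, is \emph{nested}, and then form the associated structure tree $T$: its edges are the cuts of this $G$-invariant nested family, $G$ acts on $T$ by automorphisms, and after passing to the barycentric subdivision if necessary to remove inversions the action has a single orbit of edges. The finiteness properties needed later are then transparent. The stabilizer in $G$ of an edge of $T$ permutes the finite edge set $\delta c$, so a finite-index subgroup of it fixes pointwise the finite set of vertices incident to $\delta c$ and therefore lies inside a vertex stabilizer $G_v$ of $\Gamma$; hence the edge stabilizer is compact, and it is open because it contains the pointwise stabilizer of a suitable finite vertex set. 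Moreover, each vertex of $\Gamma$ maps canonically and equivariantly to a vertex of $T$, so a compact open vertex stabilizer of $\Gamma$ is contained in a vertex stabilizer of $T$; thus vertex stabilizers of $T$ are open as well.

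With a single edge orbit in hand, Bass--Serre theory yields the decomposition at once: if the two endpoints of an edge lie in different $G$-orbits then $G=A*_C B$ with $A,B$ the vertex stabilizers and $C$ the edge stabilizer, and if they lie in the same orbit then $G=A*_C x$ is an HNN-extension. By the previous paragraph $C$ is compact and open. To see that $A$, and likewise $B$, is open and compactly generated, I would let $A$ act on the connected, locally finite subgraph of $\Gamma$ attached to its vertex of $T$ (equivalently, on a rough Cayley graph for $A$): the action is transitive with compact, open vertex stabilizers, so Proposition~\ref{Pcompactgenerated} gives that $A$ is compactly generated, while openness follows as above. Finally, the non-triviality assertions---$A\neq C\neq B$ in the amalgam case and a genuine HNN-extension otherwise---come from minimality of the $G$-action on $T$: choosing the defining cut so that ends of $\Gamma$ lie on both sides forces each vertex group to move vertices across $\delta c$ that the edge group cannot, so the inclusions $C<A$ and $C<B$ are proper. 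The hypothesis of infinitely many ends is precisely what places us in this regime rather than the degenerate two-ended case treated in Theorem~\ref{TStructertree_two}.

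I expect the main obstacle to be the construction of the single $G$-invariant nested cut and the verification that the resulting structure tree carries the stated finiteness properties; this is exactly the content of Dunwoody's accessibility machinery, which I would import wholesale rather than reprove. A secondary technical point is checking that the vertex groups act cocompactly on connected, locally finite graphs, so that Proposition~\ref{Pcompactgenerated} applies: this is the topological analogue of the classical fact that, in a finite graph of groups whose edge groups are compact, the vertex groups inherit compact generation from the ambient group.
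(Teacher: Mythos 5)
Your proposal is correct and follows essentially the route the paper itself indicates: the paper offers no proof here but attributes the result to Abels and to \cite{KronMoller2008}, where it is obtained exactly as you describe, via Dunwoody's structure-tree machinery applied to a locally finite rough Cayley graph followed by Bass--Serre theory. The only point to tighten is that the vertex group $A$ acts on its block of $\Gamma$ with finitely many orbits rather than transitively (and the block may need extra edges to be made connected), so one applies the cocompact version of the compact-generation argument rather than Proposition~\ref{Pcompactgenerated} verbatim.
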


That it is possible to write $G$ as either a free product with
amalgamation $G=A*_C B$ or an HNN-extension $G=A*_C x$ is often expressed by
saying that $G$ splits over $C$. Using this expression, Stallings
theorem becomes the statement that a finitely generated group with
infinitely many ends splits over a finite subgroup, and
Theorem~\ref{TStructertree_infinite} above becomes the statement that
a compactly generated, totally disconnected, locally compact group with
more than one end splits over a compact, open subgroup.
If $G$ is  totally disconnected, locally compact group with closed,
cocompact subgroup $H$, then a rough Cayley graphs for $G$
and a rough Cayley graph for  
$H$ are quasi-isometric by Theorem~\ref{TCocompact}. In particular,
$G$ and $H$ have the same number of rough ends. Hence,
Theorem~\ref{TStructertree_infinite} has the following corollary.

\begin{corollary}{\rm (\cite[Corollary~3.22]{KronMoller2008})}
Let $G$ be a totally disconnected, locally compact group and $H$ a
closed, cocompact subgroup.  Then $G$ splits over a compact, open
subgroup if and only if $H$ splits over a compact, open subgroup.
\end{corollary}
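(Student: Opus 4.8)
The plan is to reduce both occurrences of ``splits over a compact, open subgroup'' to a single invariant shared by $G$ and $H$ --- the number of rough ends --- and then to exploit the fact that $G$ and $H$ have quasi-isometric rough Cayley graphs. The crucial input is the equivalence, valid for a compactly generated totally disconnected locally compact group, between splitting over a compact open subgroup and having more than one rough end. This equivalence is precisely what Theorem~\ref{TStructertree_infinite} (infinitely many ends) and Theorem~\ref{TStructertree_two} (two ends) supply, together with their elementary converses coming from the Bass--Serre theory of the splitting.

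First I would settle the role of compact generation. By van Dantzig's theorem $G$ has a compact open subgroup, and since $H$ is closed in $G$ it is again totally disconnected and locally compact, so $H$ also has a compact open subgroup (for instance $U\cap H$ for a compact open $U\leq G$). A splitting $A*_C B$ or $A*_C x$ with $A,B$ open and compactly generated forces the ambient group to be generated by a compact set (compact generating sets of $A$ and $B$ together with $C$, and the stable letter in the HNN case), so splitting implies compact generation. Next I would show that $G$ is compactly generated if and only if $H$ is: one direction is Theorem~\ref{TCocompact}, and the converse is easy, since if $H=\langle S\rangle$ with $S$ compact and $K\subseteq G$ is compact with $KH=G$ (such $K$ exists because $G/H$ is compact and $G$ is locally compact), then $G=KH\subseteq\langle S\cup K\rangle$. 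Consequently, if neither $G$ nor $H$ is compactly generated then neither splits, and the asserted equivalence holds vacuously.

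It remains to treat the case where both $G$ and $H$ are compactly generated. Then by Theorem~\ref{TCocompact} a rough Cayley graph $\Gamma_G$ of $G$ and a rough Cayley graph $\Gamma_H$ of $H$ are quasi-isometric, and since the number of ends of a connected locally finite graph is a quasi-isometry invariant, $G$ and $H$ have the same number of rough ends. Applying the equivalence ``splits over a compact open subgroup $\Longleftrightarrow$ more than one rough end'' to each group then gives the chain: $G$ splits $\iff$ $G$ has more than one rough end $\iff$ $H$ has more than one rough end $\iff$ $H$ splits, which is the desired conclusion.

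I expect the main obstacle to be the converse half of the end-counting equivalence, namely that a splitting forces more than one rough end. In the amalgam and HNN cases this is the converse of Theorem~\ref{TStructertree_infinite}, which, as in the finitely generated situation, should follow from the structure tree of the splitting having more than one end. For the two-ended case one must recognize the extensions $1\to N\to G\to \Z\to 1$ and $1\to N\to G\to D_\infty\to 1$ furnished by Theorem~\ref{TStructertree_two} as genuine splittings $G=N*_N x$ and $G=A*_N B$ over the compact open subgroup $N$. Verifying that these structural descriptions match the definition of splitting, and that each produces a rough Cayley graph with more than one end, is where care is needed; by contrast, the quasi-isometry and compact-generation steps are comparatively routine.
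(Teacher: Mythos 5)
Your proposal follows the paper's own argument: the paper derives this corollary precisely from the fact (Theorem~\ref{TCocompact}) that rough Cayley graphs of $G$ and of $H$ are quasi-isometric, hence that $G$ and $H$ have the same number of rough ends, combined with the equivalence between splitting over a compact, open subgroup and having more than one rough end (Theorems~\ref{TStructertree_two} and~\ref{TStructertree_infinite} together with their converses). Your extra care about compact generation and about recognizing the two-ended case as a genuine splitting only makes explicit details the paper leaves implicit, so the route is essentially the same.
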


\subsubsection{Free subgroups}
A well known theorem of Gromov (also proved by Woess
\cite{Woess1989a}) says that a finitely generated group is
quasi-isometric to a tree if and only if it has a finitely generated
free subgroup of finite index.

The next theorem provides an analogue of this result for
compactly generated, totally disconnected, locally compact groups.

 \begin{theorem}{\rm (\cite[Theorem~3.28]{KronMoller2008})}
\label{TQuasiTree}
 Let $G$ be a compactly generated, totally disconnected,
  locally compact group.

(i)
Some (hence, every) rough Cayley graph of $G$ is
quasi-isometric to a tree if and only if $G$ has an expression as a
fundamental group of a finite graph of groups such that all the vertex and edge
groups are compact open subgroups of $G$.

(ii)
Assume also that the group $G$ is unimodular.  Then
some (hence, every) rough Cayley graph of $G$ is
quasi-isometric to some tree if and only if $G$ has a finitely
generated free subgroup that is cocompact and discrete.
\end{theorem}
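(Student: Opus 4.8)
The plan is to prove both equivalences by manufacturing a cocompact action of $G$ on a locally finite tree $T$ with compact open vertex stabilizers and then reading off the algebraic structure via Bass--Serre theory. The hypothesis that a rough Cayley graph is quasi-isometric to a tree is exactly what lets one build such an action in the forward directions, while the two backward directions will be routine applications of results already in hand. The qualifier ``some (hence every)'' is justified at once by Theorem~\ref{Tquasi}: any two rough Cayley graphs of $G$ are quasi-isometric, and being quasi-isometric to a tree is preserved under quasi-isometry, since quasi-isometry is an equivalence relation.

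I would dispose of the backward directions first. For~(i), if $G=\pi_1(\mathcal{A},X)$ is the fundamental group of a finite graph of groups with compact open vertex and edge groups, then $G$ acts without inversions on its Bass--Serre tree $T$; vertex and edge stabilizers are conjugates of the (compact open) vertex and edge groups, the indices $|G_v:G_e|$ are finite because the edge groups are open in the vertex groups, so $T$ is locally finite, and the action is cocompact because $X$ is finite. The thickening construction used in the proof of Theorem~\ref{TCocompact} (a Milnor--\v{S}varc-type argument resting on Abels' Heine--Borel property) then shows a rough Cayley graph of $G$ is quasi-isometric to $T$, a tree. For the backward direction of~(ii), a finitely generated free subgroup $F$ that is discrete and cocompact has trivial vertex stabilizers, so a rough Cayley graph of $F$ is an ordinary Cayley graph of the free group $F$, which is a tree; since $F$ is closed and cocompact in $G$, Theorem~\ref{TCocompact} makes the rough Cayley graphs of $G$ and of $F$ quasi-isometric, so that of $G$ is quasi-isometric to a tree.

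For the forward direction of~(i), suppose a rough Cayley graph $\Gamma$ of $G$ is quasi-isometric to a tree. If $\Gamma$ has $0$ or $2$ ends the conclusion is immediate: $G$ is compact, giving a single compact open vertex group; or Theorem~\ref{TStructertree_two} exhibits $G$ as an HNN-extension, respectively an amalgam, over a compact open subgroup. In the remaining case $\Gamma$ has infinitely many ends, and I would apply the Stallings-type splitting of Theorem~\ref{TStructertree_infinite} to write $G$ as an amalgam or HNN-extension over a compact open subgroup, and then iterate. The role of the hypothesis is that a graph quasi-isometric to a tree is \emph{accessible}: by Dunwoody's structure tree theory, as used in~\cite{KronMoller2008} via~\cite{Dunwoody1982} and~\cite{DicksDunwoody1989}, the repeated splitting over compact open subgroups terminates after finitely many steps and the resulting vertex groups do not split further, while quasi-isometry to a tree forces each such vertex group to have at most one end and in fact to be quasi-isometric to a point, hence compact and open. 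Assembling this finite splitting into a finite graph of groups with compact open vertex and edge groups gives the decomposition; I expect this to be the main obstacle, since it is where the purely geometric hypothesis must be converted, through accessibility and the structure tree machinery, into a genuine terminating decomposition with compact vertex groups.

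For the forward direction of~(ii), I would feed the decomposition from~(i) back into Bass--Serre theory: $G$ acts cocompactly on its locally finite Bass--Serre tree $T$ with compact open stabilizers. Since $G$ is now assumed unimodular, tree-lattice theory (Bass--Kulkarni, Bass--Lubotzky~\cite{BassLubotzky2001}) supplies a discrete cocompact subgroup $\Lambda\le G$. Discreteness together with compactness of the stabilizers $G_v$ forces $\Lambda\cap G_v$ to be finite, so $\Lambda$ acts on $T$ cocompactly with finite stabilizers; hence $\Lambda$ is the fundamental group of a finite graph of finite groups, and is therefore finitely generated and virtually free. Passing to a free subgroup $F\le\Lambda$ of finite index, $F$ is finitely generated, free, discrete, and cocompact in $G$ (finite index in the cocompact $\Lambda$), which is the desired subgroup. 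Here the delicate point is the existence of the cocompact lattice $\Lambda$, which genuinely requires unimodularity; once $\Lambda$ is produced, its virtual freeness and the passage to a free finite-index subgroup are standard.
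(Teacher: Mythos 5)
Your overall strategy is sound and, for part~(ii), coincides with the paper's: the existence of the discrete cocompact subgroup really is obtained from unimodularity via the tree-lattice theory of Bass and Kulkarni \cite{BassKulkarni1990}, exactly as you propose, and your backward directions via Bass--Serre theory and Theorem~\ref{TCocompact} are the intended routine arguments. Where you diverge from the paper is in the forward direction of part~(i). The paper does not iterate Stallings-type splittings and then appeal to accessibility to terminate; instead it goes through the characterization that a transitive, locally finite graph is quasi-isometric to a tree if and only if it has no \emph{thick ends} (\cite{Woess1989a}, \cite[Theorem~5.5]{KronMoller2008a}), and uses this to produce in one step a \emph{locally finite} structure tree on which the automorphism group of the rough Cayley graph acts; the finite graph of groups with compact open vertex and edge groups is then read off directly from that action. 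Your route buys a more modular argument (accessibility is handled once and for all by quasi-isometry invariance, \cite[Theorem~0.4]{PapasogluWhyte2002}), but it shifts the real work to the final claim that the terminal vertex groups are compact. That claim is the thin spot in your write-up: you assert that quasi-isometry to a tree forces each terminal vertex group to be ``quasi-isometric to a point,'' but a terminal vertex group is only known to have at most one rough end, and you still must rule out the one-ended case. Doing so requires knowing that the vertex groups of a splitting over compact open subgroups are quasi-isometrically embedded in $G$, and that a transitive, locally finite, one-ended graph cannot be quasi-isometric to a (one-ended) tree, i.e.\ to a ray --- which is precisely the ``no thick ends'' content that the paper's chosen lemma supplies. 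If you fill in that step explicitly, your argument closes; as written it silently assumes the hardest part of the paper's own proof.
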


The proof of Theorem \ref{TQuasiTree} uses information about ends,
quasi-isometries and structure trees from \cite{Woess1989a}, \cite{Kron2001a},
\cite{KronMoller2008a}, \cite{Moller1992a} and
\cite{ThomassenWoess1993}.  The essential result used about graphs
quasi-isometric to trees is that a transitive, locally finite, graph is
quasi-isometric to a tree if and only if it has no {\em thick ends}
(cf.~\cite{Woess1989a} and \cite[Theorem~5.5]{KronMoller2008a}).  This property can then be
used to show that for a
locally finite, transitive graph that is quasi-isometric to a tree
one can find a locally finite structure tree on which the automorphism
group of the original graph acts.

For the proof of existence of a cocompact, discrete, finitely
generated, free subgroup in part~(\romannumeral2) of
Theorem~\ref{TQuasiTree} the theory of tree lattices in~\cite{BassKulkarni1990} is used.

\begin{corollary} {\rm (\cite[Corollary~3.29]{KronMoller2008})}
Let $G$ be a totally disconnected,
  locally compact group.  If $G$ has a cocompact, finitely generated,
  free, discrete subgroup, then $G$ splits over some compact, open
  subgroup and $G$ can be written as $G=A*_C B$ (with $A\neq C\neq
B$) or $G=A*_C x$  where
$A, B$ and $C$ are compact, open subgroups of $G$.
\end{corollary}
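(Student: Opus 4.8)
The plan is to route the argument through the equivalence, established earlier, between a rough Cayley graph being quasi-isometric to a tree and the existence of a graph-of-groups decomposition with compact open pieces. Write $H$ for the given cocompact, finitely generated, free, discrete subgroup. If $H$ is trivial then $G$ is compact and there is nothing to split, so I assume $H\neq 1$. Since $H$ is discrete its trivial subgroup is compact and open, so an ordinary Cayley graph of $H$ with respect to a free generating set is a rough Cayley graph for $H$; because $H$ is free this Cayley graph is a tree. As $H$ is a closed cocompact subgroup of $G$, Theorem~\ref{TCocompact} shows that a rough Cayley graph of $G$ is quasi-isometric to one of $H$, hence quasi-isometric to a tree. (Alternatively, a discrete cocompact subgroup is a uniform lattice, so $G$ is unimodular and one may instead quote Theorem~\ref{TQuasiTree}(ii) directly.)

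Next I would feed this into Theorem~\ref{TQuasiTree}(i): a rough Cayley graph of $G$ being quasi-isometric to a tree yields an expression of $G$ as the fundamental group of a finite graph of groups $(\mathcal{G},Y)$ in which every vertex group and every edge group is a compact open subgroup of $G$. Equivalently, $G$ acts cocompactly on the associated Bass--Serre tree $T$ with compact open vertex and edge stabilizers. Since $H\neq 1$, the group $G$ is non-compact, so $T$ is infinite and the quotient graph $Y$ has at least one edge.

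The splitting is then extracted in the standard Bass--Serre fashion. I choose an edge $\bar{e}$ of $Y$; its edge group $C$ is compact and open. If deleting $\bar{e}$ disconnects $Y$ into $Y_1$ and $Y_2$, then collapsing each piece gives $G=A*_C B$ with $A$ and $B$ the fundamental groups of the two subgraphs of groups; if $\bar{e}$ is non-separating one obtains instead an HNN extension $G=A*_C x$. In either case $G$ splits over the compact open subgroup $C$, which is the geometric heart of the statement.

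The main obstacle is the remaining requirement that the \emph{factors} $A$ and $B$, and not merely the edge group $C$, be compact open; a priori cutting a single edge yields factors that are only open and compactly generated, exactly as in Theorem~\ref{TStructertree_infinite}. To force compactness I would pass to a reduced graph of groups and argue by the number of rough ends, which by Lemma~\ref{Ltransitive} and Proposition~\ref{Pcompactgenerated} is $0$, $2$, or $\infty$; here it is $2$ or $\infty$ since $G$ is non-compact. The two-ended case is handled cleanly by Theorem~\ref{TStructertree_two}, which supplies a compact open normal subgroup $N$ with $G/N$ isomorphic to the infinite cyclic group or to the infinite dihedral group; lifting the evident splittings of those quotients realises $G$ as $N*_N x$ or as $A*_N B$ with $A,B,N$ all compact open. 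The infinitely-ended case is the delicate one: this is where unimodularity and the cocompact free subgroup must be used most carefully, combining Theorem~\ref{TStructertree_infinite} with the tree-lattice analysis indicated after Theorem~\ref{TQuasiTree} to arrange a reduced decomposition whose surviving vertex factors are compact. Verifying that this reduction can always be carried out so that both factors come out compact open is the step I expect to demand the most work.
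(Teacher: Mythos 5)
Your reduction of the first assertion is the intended one and is correct: a Cayley graph of $H$ with respect to a free basis is a tree and, since $H$ is discrete, it is a rough Cayley graph of $H$; Theorem~\ref{TCocompact} then makes every rough Cayley graph of $G$ quasi-isometric to a tree, and Theorem~\ref{TQuasiTree}(i) expresses $G$ as the fundamental group of a finite graph of groups with compact, open vertex and edge groups, whence cutting along one edge of the Bass--Serre tree splits $G$ over a compact, open subgroup $C$. (The paper gives no proof of this corollary --- it is quoted from \cite{KronMoller2008} --- so this derivation from Theorem~\ref{TQuasiTree} is the only comparison available, and you have reproduced it.) Your handling of the two-ended case via Theorem~\ref{TStructertree_two} is also fine, as is the exclusion of the degenerate case $H=1$, where $G$ is compact and admits no non-trivial splitting.

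The step you flag as ``the delicate one'' is, however, not merely delicate: it cannot be carried out, because the conclusion as printed is too strong. Take $G=H=F_2$, the discrete free group of rank $2$; the hypotheses hold with $H=G$, and for a discrete group ``compact, open subgroup'' means ``finite subgroup''. If $F_2=A*_CB$ with $A$ and $B$ finite, its abelianization would be a quotient of the finite group $A^{\rm ab}\times B^{\rm ab}$; if $F_2=A*_Cx$ with $A$ finite, its abelianization would have free rank $1$. Either way this contradicts $F_2^{\rm ab}\cong\Z^2$. So in the infinitely-ended case no reduction of the graph of groups will make both factors compact, and the conclusion must be read as in Theorem~\ref{TStructertree_infinite} (and as in \cite[Corollary~3.29]{KronMoller2008} itself): $A$ and $B$ are open and compactly generated, while only $C$ is compact and open. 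With that correction, your argument up to and including the edge-cutting step is already a complete proof, and the final paragraph of your proposal --- the attempt to compactify the factors --- should simply be deleted rather than completed.
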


The above Corollary implies a special case of a result
of Mosher, Sageev and Whyte
\cite[Theorem~9]{MosherSageevWhyte2003}.

\begin{corollary} {\rm (\cite[Corollary~3.30]{KronMoller2008})}
 Let $G$ be a  totally disconnected,
  locally compact group.  If $G$ has a cocompact, finitely generated,
  free, discrete subgroup, then $G$ has an action on a locally finite
  tree, such that $G$ fixes neither an edge nor a vertex.
\end{corollary}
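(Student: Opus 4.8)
The plan is to combine the preceding corollary with Bass--Serre theory. That corollary, applied to our hypothesis, tells us that $G$ splits over a compact, open subgroup: either $G=A*_C B$ with $A\neq C\neq B$, or $G=A*_C x$, where $A$, $B$ and $C$ are compact, open subgroups of $G$. To such a splitting Bass--Serre theory attaches a tree $T$ on which $G$ acts without inversion. In the amalgamated-product case $T$ has vertex set $G/A\sqcup G/B$ and edge set $G/C$, the edge $gC$ joining $gA$ to $gB$; in the HNN case $T$ has vertex set $G/A$ and edge set $G/C$. This $T$ is the tree I would exhibit.

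First I would check local finiteness. The edges of $T$ incident to the vertex $gA$ are precisely the cosets $gaC$ with $a\in A$, and $gaC=ga'C$ exactly when $aC=a'C$; hence their number equals the index $|A:C|$, and likewise a $B$-vertex carries $|B:C|$ edges (in the HNN case the two indices that arise are $|A:C|$ and $|A:C'|$, where $C'\leq A$ is the second associated subgroup). The decisive point is that $C$ is an open subgroup of the compact group $A$, so that $A$ is covered by finitely many cosets of $C$ and $|A:C|<\infty$; the same reasoning applies to $B$ and to $C'$, which is again compact and open in $A$, being of the form $xCx^{-1}\cap A$. Therefore every vertex of $T$ has finite valency and $T$ is locally finite.

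Next I would show that $G$ fixes neither a vertex nor an edge. A globally fixed vertex would force $G$ to coincide with a vertex stabiliser, that is, with a conjugate of $A$ or of $B$; but the splitting is non-trivial ($A\neq C\neq B$ in the amalgamated case, and $x\notin A$ in the HNN case), so $G$ properly contains every such conjugate and no vertex can be fixed. Since the action is without inversion, fixing an edge would entail fixing both of its endpoints, and hence a vertex, which we have just excluded; alternatively, an edge stabiliser is a conjugate of the proper subgroup $C\subsetneq A\subseteq G$, so $G$ cannot equal it. Thus $G$ fixes no edge.

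All the real weight of the argument is carried by the preceding corollary, which produces the splitting; once that is available, the remaining steps are a direct reading-off from Bass--Serre theory. The step I expect to require the most care is local finiteness, and this is exactly where the totally disconnected, locally compact structure is used: it is the compactness of the vertex groups $A$, $B$ together with the openness of the edge group $C$ that makes the branching indices $|A:C|$ and $|B:C|$ finite.
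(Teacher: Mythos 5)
Your proposal is correct and follows exactly the route the paper intends: the paper derives this corollary from the preceding one (the splitting $G=A*_CB$ or $G=A*_Cx$ over compact, open subgroups) via the Bass--Serre tree, leaving the details implicit. Your verification of local finiteness via the finiteness of $|A:C|$ (compactness of $A$ plus openness of $C$) and of the absence of global fixed points from the non-triviality of the splitting supplies precisely those omitted details.
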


Consider a finitely generated group $H$ and a Cayley graph $\Gamma$ of
$H$.  The action of $H$ on the Cayley graph gives an embedding of $H$
as a closed, cocompact subgroup into the totally disconnected, locally
compact group $G=\autga$.
Willis asks in \cite[Section 6]{Willis2004} whether various invariants
of $G$ can be bounded in terms of $H$.  For example, he asks if
it is possible to deduce that there are only finitely many
prime numbers that occur as factors in ${\bf s}(x)$ for $x\in G$
(the scale function ${\bf s}$ is discussed in  Section~\ref{STidyScale}).
This question is motivated by the following result.

\begin{theorem}{\rm (\cite[Theorem 3.4]{Willis2001})}
Let $G$ be a compactly generated, totally disconnected, locally compact
group.  Then there are finitely many primes $p_1, p_2, \ldots, p_t$ such that
the number ${\bf s}_G(x)$ for all elements $x\in G$ can be written in the
form ${\bf s}_G(x)=p_1^{s_1}p_2^{s_2}\cdots  p_t^{s_t}$.
\end{theorem}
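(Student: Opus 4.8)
The plan is to fix, once and for all, a single locally finite rough Cayley graph for $G$ and to read the admissible primes off its valency. The whole point is that although the values $\s(x)$ are themselves unbounded, the primes occurring in them are governed by one fixed finite graph. Since $G$ is totally disconnected and locally compact it has a compact open subgroup by van Dantzig's theorem \cite{Dantzig1936}, and since $G$ is compactly generated Lemma~\ref{Llocallyfinite} furnishes a connected, locally finite graph $\Gamma$ on which $G$ acts transitively with compact open vertex stabilizers. Let $r$ be the (finite, constant) valency of $\Gamma$, fix a base vertex $o$, write $U_0=G_o$, and let $P=\{p_1,\dots,p_t\}$ be the set of primes not exceeding $r$. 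I claim this $P$ works, so it suffices to prove that for every $x\in G$ every prime dividing $\s(x)$ is at most $r$.

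The first, graph-theoretic, step is to bound the suborbit sizes along the orbit of $x$. For $n\geq 0$ put $\beta_n=x^{-n}o$ and
$$b_n=|U_0:U_0\cap x^{-n}U_0x^{n}|=|U_0\beta_n|,$$
the size of the $U_0$-orbit of $\beta_n$ in $\Gamma$. I would choose a geodesic $o=\gamma_0,\gamma_1,\dots,\gamma_d=\beta_n$ in $\Gamma$ and consider the stabilizer chain $U_0=G_{\gamma_0}\geq G_{\gamma_0\gamma_1}\geq\dots\geq G_{\gamma_0\cdots\gamma_d}$. Each successive index $|G_{\gamma_0\cdots\gamma_{i-1}}:G_{\gamma_0\cdots\gamma_i}|$ is the length of the orbit of the neighbour $\gamma_i$ of $\gamma_{i-1}$ under a subgroup of $G_{\gamma_{i-1}}$, which permutes the at most $r$ neighbours of $\gamma_{i-1}$; hence each such index is at most $r$. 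Their product is $|U_0:G_{\gamma_0\cdots\gamma_d}|$, and since $G_{\gamma_0\cdots\gamma_d}\leq U_0\cap G_{\beta_n}$ the orbit length $b_n$ divides this product. Consequently every $b_n$ is a $P$-number.

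It remains to pass from the sequence $(b_n)$ to $\s(x)$ itself. The limit formula $\s(x)=\lim_{n\to\infty}b_n^{1/n}$ of \cite[Theorem~7.7]{Moller2002} already exhibits $\s(x)$ as the growth rate of a sequence of $P$-numbers, but this alone does \emph{not} bound the primes of the limit. The decisive input is that $(b_n)$ is \emph{eventually geometric}: there are an integer $N$ and a constant $c$ with $b_n=c\,\s(x)^{\,n}$ for all $n\geq N$ (see \cite[Lemma~4]{BaumgartnerWillis2006}; compare the tidy-subgroup identity $|U:U\cap x^{n}Ux^{-n}|=|U:U\cap xUx^{-1}|^{n}$ recorded earlier, which is the $c=1$ case for a subgroup tidy for $x$). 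Granting this, for $n\geq N$ one has $\s(x)=b_{n+1}/b_n$, so $\s(x)$ divides $b_{n+1}$; as $b_{n+1}$ is a $P$-number, so is $\s(x)$. Since $P$ depends only on $\Gamma$ and not on $x$, the theorem follows.

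The hard part is precisely the eventual geometricity invoked in the last paragraph, and it is genuinely unavoidable: a limit of $P$-numbers can have any growth rate at all, since the largest $r$-smooth integer below $q^{n}$ has $n$-th root tending to a prime $q>r$. What is needed is that the constant $c$ stabilises, so that it cancels in the ratio $b_{n+1}/b_n$ and leaves the \emph{exact} value $\s(x)$, which then inherits the smoothness of $b_{n+1}$. This exact behaviour is the portion that rests on Willis' tidying theory rather than on the graph estimate above. A more self-contained route I would consider is to run the same stabilizer-chain estimate inside the highly arc transitive digraph attached to a subgroup $U$ tidy for $x$ (Proposition~\ref{PTAHAT} and Theorem~\ref{THATtree}), where $\s(x^{-1})$ appears exactly as the out-valency; but then one must transfer the smoothness from $\Gamma$ to this digraph through the common finite-index subgroup $U\cap U_0$, and carrying that out rigorously is essentially as delicate as establishing eventual geometricity directly.
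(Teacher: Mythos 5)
The survey only quotes this result of Willis \cite{Willis2001} and offers no proof of it, so there is nothing in the paper to compare your argument against; I have therefore judged the proposal on its own terms, and it is correct. The graph-theoretic step is sound: a connected, locally finite rough Cayley graph of constant valency $r$ exists by van Dantzig's theorem and Lemma~\ref{Llocallyfinite}, and refining the stabilizer chain along a geodesic from $o$ to $\beta_n$ shows that $|U_0:G_{\gamma_0\cdots\gamma_d}|$ is a product of integers each at most $r$, of which the suborbit length $b_n=|U_0:U_0\cap G_{\beta_n}|$ is a divisor; hence every $b_n$ is $r$-smooth. You are also right that the limit formula $\s(x)=\lim_n b_n^{1/n}$ cannot close the argument by itself, and you have put your finger on the true crux: one needs the exact eventual relation $b_{n+1}=\s(x)\,b_n$, not just the asymptotic one. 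That relation is indeed what \cite[Lemma~4]{BaumgartnerWillis2006} supplies --- it is precisely the ``different formulation'' of the limit theorem that the survey alludes to after quoting \cite[Theorem~7.7]{Moller2002} --- and granting it, $\s(x)$ divides the $r$-smooth integer $b_{n+1}$, so the theorem holds with $\{p_1,\dots,p_t\}$ the set of primes not exceeding $r$. Two remarks to keep you honest. First, the cited lemma is not a formality: its proof runs through the tidying procedure, so your argument does not circumvent Willis's structure theory; rather it isolates where compact generation enters (as the finite valency of the rough Cayley graph, which makes every suborbit length smooth) and delegates the dynamical regularity of the single sequence $(b_n)$ to the tidy-subgroup machinery. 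Second, what your packaging buys is a clean, explicitly geometric bound --- every prime occurring in a value of the scale function is at most the valency of any rough Cayley graph of $G$ --- which is very much in the spirit of Sections~3 and~4 of this survey, and your closing observation that the smoothness of the limit of $r$-smooth numbers genuinely requires the exact geometric tail (since, e.g., powers of $2$ track $q^n$ to within a bounded factor) is exactly the right sanity check.
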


Baumgartner \cite{Baumgartner2007} has applied the above mentioned result of
Mosher, Sageev and Whyte to the program suggested by Willis.
Baumgartner has also extended the scope of the program, by
considering not only the special type of embedding $G\rightarrow
\autga$, where $\Gamma$ is a Cayley graph of $H$.
A topological group $G$ is called an {\em envelope} of
a group $H$, if $H$ embeds as a closed, cocompact subgroup of $G$.

\begin{theorem}{\rm \cite[Corollary 11]{Baumgartner2007})}
Let $H$ be a virtually free group of rank at least 2.  Then there are
finitely many primes $p_1, p_2, \ldots, p_t$ such that
 for all totally disconnected, locally
compact envelopes $G$ of $H$ all elements $x\in G$
the number ${\bf s}_G(x)$ can be written in the
form ${\bf s}_G(x)=p_1^{s_1}p_2^{s_2}\cdots  p_t^{s_t}$.
\end{theorem}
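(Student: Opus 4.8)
The plan is to reduce the computation of the scale to an action of $G$ on a locally finite tree, read off the prime factors of ${\bf s}_G(x)$ from the local geometry of that tree, and then bound the relevant valency by a constant depending only on $H$. First I would note that every totally disconnected, locally compact envelope $G$ of $H$ is compactly generated: as a closed subgroup of the Hausdorff group $G$ the countable group $H$ is itself locally compact, hence discrete by the Baire category theorem, so $H$ is a cocompact lattice and a compact generating set for $G$ is obtained by adjoining a finite generating set of $H$ to a compact open subgroup. Because $H$ is virtually free of rank at least $2$ it is quasi-isometric to a bushy tree, so any rough Cayley graph of $H$ is quasi-isometric to a tree; by Theorem~\ref{TCocompact} a rough Cayley graph of $G$ is quasi-isometric to one of $H$, hence to a tree. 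Theorem~\ref{TQuasiTree}(i) then exhibits $G$ as the fundamental group of a finite graph of groups whose vertex and edge groups are compact open subgroups of $G$. Passing to the associated Bass--Serre tree $T_G$, the group $G$ acts on $T_G$ cocompactly with compact open vertex and edge stabilizers, and since each edge group is open of finite index in its compact vertex group, $T_G$ is locally finite.

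Next I would compute the scale from this action. An element $x$ fixing a vertex of $T_G$ lies in a compact subgroup, so ${\bf s}_G(x)=1$. If $x$ is hyperbolic, with axis $A$ and translation length $\ell$, then taking the pointwise stabilizer of a fundamental segment of $A$ as the compact open subgroup one checks, exactly as in the digraph interpretation of Willis' theory developed in Section~\ref{STidy}, that ${\bf s}_G(x)$ is the product of the forward-branching numbers of $T_G$ at the $\ell$ successive vertices along one $\langle x\rangle$-period of $A$. Each such factor is at most the valency of $T_G$, so every prime dividing ${\bf s}_G(x)$ is at most the maximal valency $D_G$ of $T_G$.

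It remains to bound $D_G$ by a constant depending only on $H$, and this is the heart of the matter. Since $H$ is discrete and acts cocompactly on $T_G$, its vertex stabilizers $H\cap G_v$ are compact and discrete, hence finite, so $H$ is realised as the fundamental group of a finite graph of \emph{finite} groups with Bass--Serre tree $T_G$. As $H$ contains a free subgroup of index $n=[H:F]$, every finite subgroup of $H$ has order at most $n$, and therefore all the vertex and edge groups occurring have order at most $n$. After reducing the decomposition, its combinatorial complexity is bounded in terms of the rational Euler characteristic of $H$ and of $n$, so the valency $D_G=\max_v\sum_{e\ni v}[H_v:H_e]$ is bounded by a number $D(H)$ depending only on $H$; conceptually this uniformity is exactly the quasi-isometric rigidity of virtually free groups of Mosher, Sageev and Whyte. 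I expect this step to be the main obstacle: handling a single decomposition of one envelope is routine, and the real content is that no envelope can force an arbitrarily large valency, which is what the rigidity of the quasi-isometry type of $H$ guarantees.

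Finally I would take $p_1,\ldots,p_t$ to be the list of all primes not exceeding $D(H)$. By the two preceding steps, for every totally disconnected, locally compact envelope $G$ of $H$ and every $x\in G$ every prime factor of ${\bf s}_G(x)$ is at most $D(H)$, whence ${\bf s}_G(x)=p_1^{s_1}p_2^{s_2}\cdots p_t^{s_t}$ for suitable exponents $s_1,\ldots,s_t$, which is the asserted uniform conclusion.
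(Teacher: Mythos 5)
The paper does not actually prove this theorem; it only cites \cite[Corollary~11]{Baumgartner2007} and records that Baumgartner's proof applies the Mosher--Sageev--Whyte theorem, so your proposal has to stand on its own. Its architecture --- pass to a cocompact $G$-action on a locally finite tree with compact, open stabilizers, observe that every prime factor of ${\bf s}_G(x)$ is bounded by the branching of that tree, and then bound the branching uniformly over all envelopes --- is exactly the architecture the paper attributes to Baumgartner, and your first two stages are essentially sound. (One secondary caveat: the pointwise stabilizer of a finite fundamental segment of the axis need not satisfy condition (TB), so you should take the stabilizer of a half-axis or of a sufficiently long segment before reading off ${\bf s}_G(x)$ as a product of orbit sizes; for a non-tidy $U$ the index $|U:U\cap x^{-1}Ux|$ is only an upper bound for ${\bf s}_G(x)$ and need not be divisible by it, so it gives no control of prime factors.)

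The genuine gap is in the step you yourself single out as the heart of the matter. You bound the complexity of the \emph{reduced} graph-of-finite-groups decomposition of $H$ and conclude that the valency $D_G$ of $T_G$ is bounded. But reducing the decomposition collapses edges of $T_G$ and produces a \emph{different} tree; a bound on the reduced complexity says nothing about the valency of the unreduced tree that Theorem~\ref{TQuasiTree}(i) hands you, and trees of this kind genuinely cannot be bounded in terms of $H$ alone. For example, let $X$ be the finite graph with one central vertex carrying two loops and $k$ pendant edges, and let $T$ be its universal cover: then $F_2=\pi_1(X)$ acts freely and cocompactly on $T$, so $\aut(T)$ is a totally disconnected, locally compact envelope of $F_2$ acting cocompactly with compact open stabilizers on a tree of valency $k+4$. (The scale stays small here because the extra valency sits in pendant directions that no axis uses --- which shows that factoring the problem as ``prime factors are at most $D_G$'' plus ``$D_G\le D(H)$'' is the wrong decomposition.) To close the gap you must produce, uniformly over all envelopes $G$, a cocompact $G$-action with compact, open stabilizers on a tree whose valence is bounded in terms of $H$ only, and compute the scale on \emph{that} tree. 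This is precisely what the quoted result of Mosher, Sageev and Whyte supplies (a cobounded quasi-action on a bounded valence bushy tree is quasi-conjugate to a genuine action on a bounded valence bushy tree, with the new valence controlled by the quasi-action constants, which here depend only on $H$); it is not a conceptual gloss on your Euler-characteristic count but the substitute for it, and even then one must check that the quasi-conjugate action can be taken continuous with compact, open stabilizers before the scale computation applies.
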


The following result comes from a totally different direction, but it also
indicates the possible fruitfulness of the
study of envelopes and embeddings of a finitely
generated group into automorphism groups of its Cayley graphs.

\begin{theorem}{\rm \cite[Theorem~4.1]{MollerSeifter1998})}
Let $N$ be a finitely generated, torsion free, nilpotent group and
$\Gamma$ a Cayley graph of $N$ with respect to some finite generating
set of $N$. Put $G=\autga$.  Then $G$ is discrete in the permutation
topology, and $N$ embeds into $G$ as normal subgroup.
\end{theorem}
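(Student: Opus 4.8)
The plan is to identify $N$ with the normal subgroup $o(\autga)$ of $o$-automorphisms, and to force the equivalence relations supplied by Trofimov's two structure theorems to be trivial by exploiting torsion-freeness. First I would record the setup. The left regular action $L\colon N\to\autga$, $L_a(x)=ax$, is faithful and by graph automorphisms, and under it $N$ acts \emph{regularly} (simply transitively) on $V\Gamma=N$; in particular $N\cap G_\alpha=\{e\}$ for $\alpha=e$, so $N$ is a discrete, cocompact subgroup of $G=\autga$. Since $N$ is finitely generated nilpotent it has polynomial growth, so $\Gamma=\Cay(N,S)$ is a connected, locally finite graph of polynomial growth.

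The heart of the torsion-free input is the following collapsing lemma, which I would isolate once and reuse. Suppose $\sim$ is a $G$-invariant equivalence relation on $V\Gamma$ with finite classes, let $K$ be the kernel of the $G$-action on $\Gamma/\sim$, and suppose the stabilizers in $G/K$ of the vertices of $\Gamma/\sim$ are finite. Then $\sim$ is trivial. Indeed $N\cap K$ is normal in $N$ and its orbits lie inside the finite $\sim$-classes, so $N\cap K$ has finite orbits; as $N$ acts freely this forces $N\cap K$ finite, and torsion-freeness gives $N\cap K=\{e\}$. Hence $N$ embeds in $G/K$ and acts on $\Gamma/\sim$ with finite, therefore (again by torsion-freeness) trivial, point stabilizers, so $N$ acts regularly on $\Gamma/\sim$. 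The quotient map $V\Gamma\to V(\Gamma/\sim)$ is then an $N$-equivariant surjection between two regular $N$-sets, and any such map is a bijection; thus every $\sim$-class is a singleton.

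Discreteness follows immediately: applying Theorem~\ref{TTrofimovPoly} to the transitive action of $G$ on the polynomial-growth graph $\Gamma$ produces exactly such a relation $\sim$ with $G/K$ having finite vertex-stabilizers on $\Gamma/\sim$, so the collapsing lemma gives that $\sim$ is trivial and $K=\{e\}$. Hence $G\cong G/K$ has finite vertex-stabilizers on $\Gamma$, i.e.\ $G$ is discrete in the permutation topology. For normality I would prove $N=o(\autga)$; since $o(\autga)$ is a normal subgroup of $G$, this finishes the proof. The inclusion $N\subseteq o(\autga)$ is the technical crux: for fixed $a\in N$ one has $d(x,L_ax)=|x^{-1}ax|_S$, so I must establish the sublinear conjugation estimate $\max_{|x|_S\le R}|x^{-1}ax|_S=o(R)$. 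This is precisely where nilpotency enters: writing $x^{-1}ax=a[a,x]$ and inducting on the nilpotency class $c$, the image in $N/\gamma_c(N)$ is handled by the inductive hypothesis, while the central correction lies in $\gamma_c(N)$, whose contribution has word length $o(R)$ because the relevant iterated commutators grow sub-maximally in the coordinates of $x$ and $\gamma_c(N)$ is polynomially distorted of degree $c$ in $N$.

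Granting $N\subseteq o(\autga)$, the subgroup $o(\autga)$ contains the transitive group $N$ and so is itself transitive; it trivially satisfies condition~(i) of Theorem~\ref{TTrofimovoauto} (namely $o(\autga)\le o(\autga)$), whence that theorem yields a relation $\sim'$ with finite classes on which $o(\autga)/K'$ acts regularly. The collapsing lemma again forces $\sim'$ trivial and $K'=\{e\}$, so $o(\autga)$ itself acts regularly on $V\Gamma$. Finally, since $N\subseteq o(\autga)$ and both act regularly, comparing orbits at the base vertex $\alpha$ shows every element of $o(\autga)$ agrees with an element of $N$, so $o(\autga)=N$ and $N$ is normal in $G$. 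The main obstacle I anticipate is the sublinear conjugation-distortion estimate establishing $N\subseteq o(\autga)$; the remainder is a clean assembly of Trofimov's theorems with the torsion-free collapsing argument.
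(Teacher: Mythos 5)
Your proposal is correct and follows the route the paper itself indicates: the paper prints no proof, saying only that the theorem ``is proved with the aid of Theorem~\ref{TTrofimovoauto}'', and your argument---showing $N\leq o(\autga)$ via the sublinear conjugation estimate, and collapsing the equivalence relations produced by Trofimov's two theorems using torsion-freeness together with the regularity of the left action---is exactly that strategy. The one step you leave as a sketch, $\max_{|x|_S\le R}|x^{-1}ax|_S=o(R)$, is indeed the technical heart but is correct as outlined (the $\gamma_k/\gamma_{k+1}$-coordinates of $[a,x]$ grow at most like $R^{k-1}$ while $\gamma_k(N)$ is polynomially distorted of degree $k$, giving a bound of order $R^{(c-1)/c}$), so I see no gap.
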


This theorem is proved with the aid of Theorem~\ref{TTrofimovoauto}.

\subsubsection{Accessibility}
\begin{definition}\label{DAccessibility}
A finitely generated group is said to be {\em accessible} if it has an
action on a tree $T$ such that:

{(i)} the number of orbits of $G$ on the edges of $T$ is finite;

{(ii)} the stabilizers of edges in $T$ are finite subgroups of
$G$;

{(iii)} every stabilizer of a vertex in $T$ is a finitely
generated subgroup of $G$ with at most one end.
\end{definition}

The question by C.~T.~C.~Wall, \cite{Wall1971}, of whether or not every
finitely generated group is accessible  motivated several important
developments in combinatorial group theory, among them
Dunwoody's theory of structure trees. Wall's question,  after being
open for a long time, was settled by examples of finitely generated
groups that are not accessible, which were constructed by Dunwoody \cite{Dunwoody1991}.

\begin{definition}{\rm (\cite[p.~249]{ThomassenWoess1993})}
\label{DGraphAccessibility}
Let $\Gamma$ be a connected, locally finite graph.  If there is a
number $k$ such that any two distinct ends can be separated by
removing $k$ or fewer vertices from $\Gamma$ then the graph $\Gamma$
is said to be {\em accessible}.
\end{definition}

Thomassen and Woess
\cite[Theorem~1.1]{ThomassenWoess1993} show that a finitely generated
  group is accessible if and only if every Cayley graph with respect
  to a finite generating set is accessible.

The notion of accessibility can be generalized to compactly generated,
totally disconnected, locally compact groups as follows.

\begin{definition}\label{DTopoAccessibility}
A compactly generated, totally disconnected, locally compact
group is said to be {\em accessible} if it has an
action on a tree $T$ such that:

(i) the number of orbits of $G$ on the edges of $T$ is finite;

(ii) the stabilizers of edges in $T$ are compact, open subgroups of
$G$;

(iii) every stabilizer of a vertex in $T$ is a compactly
generated, open subgroup of $G$ with at most one rough end.

\end{definition}

Then one can link accessibility 
of compactly generated totally disconnected locally
compact groups to rough Cayley graphs in analogues
way as Thomassen and Woess link accessibility of finitely generated
groups to Cayley graphs.

 \begin{theorem}{\rm (\cite[Theorem~3.27]{KronMoller2008})}
\label{TAccessible}
Let $G$ be a compactly generated, totally disconnected, locally compact
group.  Then $G$ is accessible if and only if every rough Cayley graph
of $G$ is accessible.
\end{theorem}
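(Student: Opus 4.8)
The plan is to prove both implications, reducing in each to a single rough Cayley graph by exploiting quasi-isometry invariance. By Theorem~\ref{Tquasi} all rough Cayley graphs of $G$ are quasi-isometric, and graph accessibility (Definition~\ref{DGraphAccessibility}) is a quasi-isometry invariant of connected, locally finite graphs (this is part of the Thomassen--Woess theory underlying the statement). Hence the phrases ``every rough Cayley graph is accessible'' and ``some rough Cayley graph is accessible'' are interchangeable, and it suffices, in each direction, to analyse one conveniently chosen rough Cayley graph $\Gamma$ carrying a transitive $G$-action with compact, open vertex stabilizers.

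For the forward direction, suppose $G$ is accessible, acting on a tree $T$ as in Definition~\ref{DTopoAccessibility}. I would adapt a rough Cayley graph $\Gamma$ to this action and translate each edge of $T$ into a cut of $\Gamma$. Since an edge stabilizer $G_e$ is compact and open and $\Gamma$ is locally finite, the set of vertices of $\Gamma$ needed to realise the splitting across $e$ is finite; as $T$ has only finitely many $G$-orbits of edges, there is a single bound $k$ on the sizes of all these cuts. Two distinct ends of $\Gamma$ determine distinct points or ends of $T$, which are separated by a single edge of $T$, hence by a cut of $\Gamma$ of size at most $k$. Thus $\Gamma$ is accessible, and the reduction above promotes this to all rough Cayley graphs.

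For the converse, suppose a (equivalently, every) rough Cayley graph $\Gamma$ is accessible. Here the plan is to feed the uniform separation bound into Dunwoody's structure tree machinery (\cite{Dunwoody1982}, \cite{DicksDunwoody1989}, as used in \cite{KronMoller2008}) to produce a $G$-tree $T$. The accessibility constant yields a $G$-invariant nested family of finite cuts, and the associated structure tree has finitely many $G$-orbits of edges, its edge stabilizers being setwise stabilizers of finite cuts and therefore compact and open. The vertices of $T$ correspond to the indecomposable pieces of the decomposition; for such a piece I would show that its stabilizer $G_v$ acts transitively with compact, open stabilizers on a connected, locally finite graph quasi-isometric to the piece, so that by Proposition~\ref{Pcompactgenerated} it is compactly generated and open, and that the piece has at most one end. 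This produces exactly the tree action demanded by Definition~\ref{DTopoAccessibility}.

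The hard part will be this converse, and specifically condition~(iii): that every vertex stabilizer is compactly generated with at most one rough end. One must match the ends of the subgraph attached to a tree-vertex with the rough ends of its stabilizer, and use Theorem~\ref{TStructertree_infinite} to argue that a vertex piece with more than one rough end would split again over a compact, open subgroup, contradicting the indecomposability built into the structure tree. Verifying that the construction terminates with single-ended pieces is exactly where the accessibility hypothesis is indispensable, and reconciling the two notions of accessibility at this level of detail is the technical heart of the argument.
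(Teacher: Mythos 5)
The paper does not actually prove Theorem~\ref{TAccessible}; it is quoted from \cite[Theorem~3.27]{KronMoller2008}, and the surrounding text only indicates the intended machinery (Dunwoody's structure trees as in \cite{Dunwoody1982} and \cite{DicksDunwoody1989}, the Thomassen--Woess analysis of accessible graphs, and the quasi-isometry invariance of graph accessibility, which the paper attributes to \cite[Theorem~0.4]{PapasogluWhyte2002} rather than to Thomassen--Woess as you do). Your overall strategy --- reduce to a single rough Cayley graph by quasi-isometry invariance, translate a tree action into uniformly bounded cuts for one direction, and run the structure-tree construction for the other --- is exactly the route these citations point to, so in outline you are on the intended track. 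In the forward direction the one step you should pin down is why each edge $e$ of $T$ yields a \emph{finite} cut of $\Gamma$: the edges of $\Gamma$ crossing between the preimages of the two halves of $T\setminus e$ form finitely many $G_e$-orbits, and since $G_e$ is compact its orbits on $V\Gamma$ are finite by Lemma~\ref{LCompact}(ii); you also need condition~(iii) of Definition~\ref{DTopoAccessibility} to rule out two distinct ends of $\Gamma$ lying over the same vertex of $T$, where no edge of $T$ separates them.

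The genuine gap is in your converse, at the step you yourself flag as the technical heart. Your proposed argument --- that a vertex piece of the structure tree with more than one rough end ``would split again over a compact, open subgroup, contradicting the indecomposability built into the structure tree'' --- does not work as stated, because the vertices of the structure tree associated to cuts of size at most $k$ are not defined by any indecomposability or maximality property; there is nothing to contradict. The existence of a further splitting of a vertex group is not by itself inconsistent with the tree you have built. What is actually needed is the direct statement, coming from the Dunwoody/Thomassen--Woess theory under the hypothesis that a \emph{single} constant $k$ separates \emph{all} pairs of ends, that the structure tree relative to cuts of size at most $k$ captures every end of $\Gamma$, so that each vertex piece has at most one end. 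Separately, identifying the ends of a piece with the rough ends of its stabilizer $G_v$ requires showing that $G_v$ acts on (a connected, locally finite graph quasi-isometric to) that piece transitively, or at least cocompactly, with compact open stabilizers --- i.e.\ that the piece is essentially a rough Cayley graph of $G_v$ in the sense needed to invoke Theorem~\ref{TCocompact} and Proposition~\ref{Pcompactgenerated}. Neither of these verifications is routine, and your sketch currently substitutes an invalid contradiction argument for the first of them.
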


In fact, the group in Theorem \ref{TAccessible} is accessible if and only if
any of its rough Cayley graphs is accessible, because the
property of a graph being accessible is invariant under
quasi-isometries by~\cite[Theorem~0.4]{PapasogluWhyte2002}.  By the same result,
a compactly generated, totally disconnected, locally compact group
with a closed, cocompact, accessible subgroup is itself accessible.
Since every finitely presentable group is accessible by a result
of Dunwoody \cite{Dunwoody1985}, we deduce the following theorem as a corollary.

\begin{theorem}
Let $G$ be a compactly generated, totally disconnected, locally compact
group.  If $G$ has a cocompact, finitely presented subgroup then $G$ is
accessible.
\end{theorem}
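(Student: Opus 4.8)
The plan is to obtain this theorem as a formal consequence of the two ingredients already assembled in the paragraph preceding its statement: Dunwoody's result \cite{Dunwoody1985} that every finitely presentable group is accessible, and the fact that a compactly generated, totally disconnected, locally compact group possessing a closed, cocompact, accessible subgroup is itself accessible. The entire task therefore reduces to verifying that the given subgroup $H$ satisfies the hypotheses needed to feed these two facts together.

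First I would address the topological status of $H$. A finitely presented subgroup is, as an abstract group, discrete, and a discrete subgroup of a locally compact group is automatically closed; thus $H$ is a closed, cocompact subgroup of $G$. Carrying the discrete topology, $H$ is itself a compactly generated (indeed finitely generated), totally disconnected, locally compact group, whose compact open subgroups are precisely its finite subgroups. Taking the trivial subgroup as a compact open subgroup, a rough Cayley graph of $H$ is nothing but an ordinary Cayley graph of $H$ with respect to a finite generating set, and the topological notion of accessibility in Definition~\ref{DTopoAccessibility} collapses onto the classical notion in Definition~\ref{DAccessibility}: ``compact'' becomes ``finite'', openness is automatic, ``compactly generated'' becomes ``finitely generated'', and ``rough ends'' become ordinary ends.

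Next I would invoke Dunwoody's theorem to conclude that the finitely presented group $H$ is accessible, and then transfer this along the cocompact inclusion. By Theorem~\ref{TCocompact}, a rough Cayley graph $\Gamma_H$ of $H$ is quasi-isometric to a rough Cayley graph $\Gamma_G$ of $G$. Accessibility of $H$ together with Theorem~\ref{TAccessible} shows $\Gamma_H$ is accessible; since accessibility of graphs is a quasi-isometry invariant \cite[Theorem~0.4]{PapasogluWhyte2002}, the graph $\Gamma_G$ is accessible as well; and a final application of Theorem~\ref{TAccessible} yields that $G$ is accessible. (One may equivalently cite directly the stated consequence that a closed, cocompact, accessible subgroup forces accessibility of the ambient group, but spelling out the chain through Theorems~\ref{TCocompact} and~\ref{TAccessible} makes the mechanism transparent.)

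The steps are almost all bookkeeping, and no serious computation arises. The one point that genuinely deserves care—and which I expect to be the only real obstacle—is the reconciliation of the two definitions of accessibility for $H$: one must confirm that Dunwoody's classical accessibility of the finitely presented group $H$ truly furnishes accessibility in the sense of Definition~\ref{DTopoAccessibility}, so that Theorem~\ref{TAccessible} may legitimately be applied to $H$ in the first place. Once this identification is secured, the theorem follows formally from the quoted results.
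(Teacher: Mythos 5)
Your overall architecture is exactly the paper's: the theorem is stated there as a corollary of Dunwoody's accessibility of finitely presented groups, Theorem~\ref{TAccessible}, the quasi-isometry invariance of graph accessibility from \cite{PapasogluWhyte2002}, and Theorem~\ref{TCocompact}; the chain you spell out (Dunwoody $\Rightarrow$ $H$ accessible $\Rightarrow$ its Cayley graph accessible $\Rightarrow$ the quasi-isometric rough Cayley graph of $G$ accessible $\Rightarrow$ $G$ accessible) is the intended one, and your care about matching Definition~\ref{DAccessibility} with Definition~\ref{DTopoAccessibility} for a discrete group is well placed.

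There is, however, one step that is simply false as you state it: ``a finitely presented subgroup is, as an abstract group, discrete.'' Discreteness is a property of the embedding, not of the abstract isomorphism type; the finitely presented group $\Z$ embeds densely in the totally disconnected, locally compact group $\Z_p$, so finite presentability gives you neither discreteness nor closedness for free. Since Theorem~\ref{TCocompact} requires $H$ to be \emph{closed} and cocompact, and your identification of a rough Cayley graph of $H$ with an ordinary Cayley graph requires $H$ to be discrete, this is the load-bearing point of the whole deduction and needs a correct justification. The repair is to read the hypothesis as it is used throughout the surrounding text (e.g.\ in Theorem~\ref{TCocompact} and the corollary to Theorem~\ref{TStructertree_infinite}), namely that $H$ is a \emph{closed} cocompact subgroup; closedness cannot be deduced from the algebraic hypotheses, but once it is assumed, discreteness does follow: a closed subgroup of a totally disconnected, locally compact group is itself such a group, hence by van Dantzig's theorem contains a compact open (profinite) subgroup $V$; an infinite profinite group is uncountable, whereas $H$, being finitely generated, is countable, so $V$ is finite and $H$ is discrete. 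With that substitution your argument goes through and coincides with the paper's.
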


\subsubsection{Ends of pairs of groups}
By Stallings' Theorem the ends of a finitely generated group can be
used to detect if the
group splits over a finite subgroup.  The concept of ends of pairs of
groups is an attempt to define a geometric invariant that can be used
to detect splittings of $G$ over subgroups that are not 
finite.  This concept was first introduced in the papers \cite{Houghton1974}
by Houghton and \cite{Scott1977} by Scott.
For a survey of these
and related concepts see \cite{Wall2003}.

\begin{definition}{\rm (Cf.~\cite[Lemma~1.1]{Scott1977})}  Let $G$ be a finitely
  generated group and $C$ a subgroup of $G$.  Let $\Gamma$ be a
  Cayley graph of $G$ with respect to some finite generating set.  The
  {\em number of ends of the pair} $(G,C)$, denoted with $e(G,C)$,  
is defined as the number of ends
  of the quotient graph $C\backslash\Gamma$  (quotient with respect to the
  natural $C$-action on $\Gamma$).  
\end{definition}

It can be shown that the number of ends of a pair of groups does not
depend on the choice of generating set. While a transitive, locally finite
graph has $0$, $1$, $2$ or infinitely many ends, a pair of groups can
have any number of ends, see~\cite[Example~2.1]{Scott1977}.

The following conjecture generalizing Stallings' Theorem
is due to Kropholler, see \cite{NibloSageev2006}.

\begin{conjecture}
Let $G$ be a finitely generated group and $C$ a subgroup of $G$.  If 
$G$ contains a subset $A$ such that 

(i) $A=CA$;

(ii) for every element $g\in G$ the symmetric difference of $a$ and
$Ag$ is contained 
in a finite union of right $C$ cosets;

(iii)  neither $A$ nor $G\setminus A$ is contained in any finite union
of  right $C$ cosets;

(iv) $A=AC$

\noindent
then $G$ splits over a subgroup that is commensurable
with a subgroup of $C$.  (Conditions (i)-(iii) above are equivalent to
$e(G,C)\geq 2$.)
\end{conjecture}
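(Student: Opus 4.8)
Because the final statement is Kropholler's conjecture, which remains open in full generality, what follows is a program rather than a proof; I would model it on the machinery that proves Stallings' Theorem and its topological analogue, Theorem~\ref{TStructertree_infinite}. Conditions (i)--(iv) say precisely that $A$ is a nontrivial \emph{$C$-almost invariant subset} of $G$: it is left and right $C$-invariant ($A=CA=AC$), it is almost fixed by right translation (condition (ii)), and it is genuinely two-sided (condition (iii)), these last three being the content of the stated equivalence with $e(G,C)\geq 2$. The goal is to manufacture from $A$ an action of $G$ on a tree whose edge stabilisers are commensurable with subgroups of $C$, and then to invoke Bass--Serre theory to read off the desired splitting.

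The first step is to reinterpret $A$ geometrically. Fixing a finite generating set and the corresponding Cayley graph $\Gamma$, condition (ii) says that the coboundary $\delta A$ meets only finitely many right $C$-cosets, so its image in the quotient $C\backslash\Gamma$ is a finite edge set, while condition (iii) guarantees that this finite set separates $C\backslash\Gamma$ into two infinite parts. Pulling back, the family $\{\,gA : g\in G\,\}$ together with the complementary sets forms a $G$-invariant collection of \emph{walls} in $\Gamma$, each with finite coboundary. This is the exact analogue of the finite edge cut that, in the case of $C$ finite, feeds Dunwoody's theory of structure trees and yields Theorem~\ref{TStructertree_infinite}.

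The heart of the argument is to pass from this wall structure to a tree. When the walls are \emph{nested}, meaning no two translates of $A$ cross, Dunwoody's construction produces directly a $G$-tree and the program succeeds; in general one would first run Sageev's construction to obtain a CAT(0) cube complex on which $G$ acts, with hyperplanes indexed by the translates of $\delta A$, and then attempt to collapse it to a tree. In either case, an edge (respectively, hyperplane) of the resulting complex is stabilised by the subgroup of $G$ fixing the corresponding translate of $A$ setwise; since $A=CA$, this stabiliser is closely tied to $C$, and establishing that it is commensurable with a subgroup of $C$---neither too small nor too large---is already a point where care is required.

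The hard part---indeed the reason the statement is only a conjecture---is controlling the geometry of the walls. If the translates of $A$ cross one another in an uncontrolled way, Sageev's cube complex is genuinely higher-dimensional and need not collapse to a tree, so there is no \emph{a priori} splitting to extract. The known cases circumvent exactly this: when $C$ is virtually polycyclic one can straighten the walls (the algebraic torus theorem of Dunwoody and Swenson), and when $G$ is finitely presented one has accessibility in the sense of Dunwoody~\cite{Dunwoody1985} to bound the complexity. A complete proof would require a uniform way, valid for an arbitrary subgroup $C$, to reduce the cube complex to a tree or to bound the crossing of almost invariant sets while keeping the edge groups commensurable with subgroups of $C$; supplying such a reduction is precisely the open content of the conjecture.
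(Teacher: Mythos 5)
You have correctly identified that this statement is Kropholler's conjecture, which the paper itself presents as an open problem: there is no proof in the paper to compare against, only the surrounding discussion and the sample special case (Theorem~\ref{Tcommensurator}, the commensurator condition of Dunwoody--Roller, Niblo, and Scott--Swarup). Your decision to offer a program rather than claim a proof is the right one, and the program you sketch is the standard one in the literature: read conditions (i)--(iv) as saying that $A$ is a nontrivial $C$-almost invariant set (your gloss is accurate, modulo the paper's typo of ``$a$'' for ``$A$'' in condition (ii)), translate this into a $G$-invariant family of walls with finite coboundary in the quotient $C\backslash\Gamma$, and try to extract a $G$-tree either via Dunwoody's structure-tree machinery when the translates of $A$ are nested or via Sageev's cube complex in general. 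You also put your finger on exactly the right obstruction: when translates of $A$ cross, the cube complex need not be one-dimensional and no general collapse to a tree is known, which is precisely why the statement remains a conjecture. The verified cases you cite (nestedness, polycyclic $C$, the commensurator hypothesis) match the ones the paper alludes to.

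One small caution if you develop this further: the paper attributes the equivalence with $e(G,C)\geq 2$ to conditions (i)--(iii) only, with (iv) ($A=AC$) being the additional hypothesis that makes the conjecture plausible; almost invariant sets satisfying only (i)--(iii) need not yield splittings even conjecturally, so any argument must genuinely use the right $C$-invariance of $A$. Also, the edge stabilisers produced by Sageev's construction are a priori only the setwise stabilisers of translates of $A$, and showing these are commensurable with subgroups of $C$ (rather than merely containing a finite-index subgroup of $C$) is a separate verification that your sketch flags but does not carry out. Neither point is a defect in your write-up given that the target is open; they are just the places where a purported proof would have to do real work.
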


Here, two subgroups are are said to be \emph{commensurable},
if their intersection has finite index in both subgroups. Furthermore the
\emph{commensurator} of a subgroup $C$ of $G$ is the set
of elements $g\in G$ such that $C$ and $gCg^{-1}$ are commensurable.
The commensurator of a subgroup is itself a subgroup.
Kropholler's conjecture above has been verified under various
additional hypotheses.  A sample result is the following theorem.

\begin{theorem}{\rm (\cite[p.~30]{DunwoodyRoller1993})}\label{Tcommensurator}
 Let $G$ be a finitely generated group and $C$ a finitely generated
  subgroup of $G$.  If $e(G, C)>1$  and
the commensurator of $C$ is the whole group $G$ then $G$
  splits over a subgroup commensurable with $C$.
\end{theorem}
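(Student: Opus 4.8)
The plan is to convert the hypothesis $e(G,C)>1$ into a combinatorial object --- a proper $C$-almost invariant subset of $G$ --- and then feed it into Dunwoody's machinery of structure trees to produce a $G$-tree, from which the desired splitting is read off by Bass--Serre theory. First I would record the two reformulations that are built into the statement. As noted after the conjecture, $e(G,C)\geq 2$ is equivalent to the existence of a subset $A\subseteq G$ with $A=CA$ such that the symmetric difference of $A$ and $Ag$ lies in a finite union of right $C$-cosets for every $g\in G$, while neither $A$ nor $G\setminus A$ is $C$-finite; call such an $A$ a proper $C$-almost invariant set. On the other hand, the hypothesis $\operatorname{Comm}_G(C)=G$ says precisely that $C$ is \emph{commensurated}: for every $g\in G$ the intersection $C\cap gCg^{-1}$ has finite index in both $C$ and $gCg^{-1}$.

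The role of the commensurator hypothesis is to keep the entire $G$-orbit of $A$ inside a single, well-behaved Boolean algebra. Working in the Boolean ring of subsets of $G$ modulo $C$-finite sets, the class of $A$ is $C$-almost invariant; because $gCg^{-1}$ is commensurable with $C$ for every $g$, each translate $gA$ is $gCg^{-1}$-almost invariant and hence, up to a $C$-finite error, again $C$-almost invariant. Thus $G$ acts on the countable Boolean algebra $\mathcal{B}$ generated by the translates of $A$, and every member of $\mathcal{B}$ is $C$-almost invariant with uniformly bounded coboundary over $C$. This uniform control is exactly what is unavailable in the general Kropholler conjecture and is what makes the commensurated case tractable.

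With $\mathcal{B}$ in hand, I would apply Dunwoody's construction: from $A$ one extracts a $G$-invariant \emph{nested} family of cuts (a tree set), whose associated structure tree $T$ carries a $G$-action without inversions and with finitely many edge-orbits. Edges of $T$ correspond to the cuts and vertices to the directions at them, so the stabilizer of an edge is the stabilizer in $G$ of the corresponding $C$-almost invariant set; since that set has finite coboundary over $C$ and $C$ is commensurated, the edge stabilizer is commensurable with $C$. Because $A$ is proper, $T$ is not a point and $G$ fixes neither a vertex nor an edge, so Bass--Serre theory expresses $G$ as $A'\ast_{C'}B'$ or as an HNN-extension $A'\ast_{C'}x$ with $C'$ an edge stabilizer, that is, a subgroup commensurable with $C$. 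This gives the claimed splitting.

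The main obstacle is the nestedness step. The raw orbit $\{gA\}_{g\in G}$ need not be nested: two translates may \emph{cross}, meaning all four corners $A\cap gA$, $A\cap(G\setminus gA)$, $(G\setminus A)\cap gA$, and $(G\setminus A)\cap(G\setminus gA)$ are $C$-infinite. Dunwoody's lemma guarantees that one can nonetheless replace $A$ by a nonempty nested subfamily with finitely many $G$-orbits --- intuitively, by passing to a minimal cut in its commensurability class --- and the bulk of the work is to show that the crossing number is finite and that these minimal cuts assemble into a tree set on which $G$ acts cocompactly. The commensuration of $C$ is used repeatedly here to bound crossing numbers; once the tree set is produced, the passage to the splitting is the standard Dunwoody--Bass--Serre dictionary.
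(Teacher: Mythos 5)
Your overall strategy (almost invariant set $\to$ nested $G$-invariant family $\to$ structure tree $\to$ Bass--Serre splitting) is the right shape, but the step you delegate to ``Dunwoody's lemma'' is precisely where the argument breaks down as written. Dunwoody's nestedness theorem applies to edge-cuts of \emph{finite} coboundary in a (locally finite) graph; a proper $C$-almost invariant subset of $G$ with $C$ infinite has coboundary that is only $C$-finite in the Cayley graph of $G$, and for such sets there is no general nestedness statement --- that is exactly the open difficulty of the Kropholler conjecture, which the surrounding discussion in the paper is at pains to point out. Saying that commensuration is ``used repeatedly to bound crossing numbers'' does not identify the mechanism, and without it your minimal-cut/tree-set step has no support.

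The actual role of the hypothesis $\mathrm{Comm}_G(C)=G$ is to move the whole problem into a setting where finite coboundaries are available. Since $|C:C\cap gCg^{-1}|<\infty$ for every $g\in G$, the action of $G$ on $\Omega=G/C$ has all suborbits finite, so $\Omega$ carries a connected, locally finite, $G$-transitive orbital graph $X$ whose number of ends is $e(G,C)>1$; equivalently, the closure $\hat G$ of the image of $G$ in $\Sym(\Omega)$ is a compactly generated, totally disconnected, locally compact group in which the closure of $C$ is a compact open subgroup, and $X$ is a rough Cayley graph for $\hat G$ with more than one end. At that point Dunwoody's structure-tree machinery applies verbatim to the finite-coboundary cuts of $X$ (this is the content of Theorem~\ref{TStructertree_infinite}), the edge stabilizers are compact open in $\hat G$, and pulling back along $G\rightarrow\hat G$ gives a splitting of $G$ over a subgroup commensurable with $C$. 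This reduction to $G/C$ is the step your proposal is missing; once it is made, your remaining outline (tree set, cocompact action, Bass--Serre) goes through, and the edge-stabilizer commensurability you assert also becomes a genuine consequence of openness rather than an unproved claim about stabilizers of almost invariant subsets of $G$ itself.
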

This result has also been proved in papers
by Niblo \cite[cf. Theorem~B]{Niblo2002} and Scott and Swarup
\cite[Theorem~3.12]{ScottSwarup2000}.

In \cite[Section~3.7]{KronMoller2008} there is further discussion of how
the concepts of ends of pairs of groups, coends and rough ends relate
and how these concepts can be interpreted graph theoretically.
Amongst other things these considerations lead to a prove of
Theorem~\ref{Tcommensurator} above.

\subsection{Polynomial growth}\label{SPolynomial}
Recall from Section~\ref{STrofimov} that
a connected, locally finite graph is said to have {\em polynomial
  growth} if for every vertex the number of vertices in distance less
  than or equal to $n$ grows polynomially with $n$.  A finitely
  generated group is said to have polynomial growth if some (hence,
  every) Cayley graph with respect to a finite generating set has
  polynomial growth.

The concept of polynomial growth can be generalized to compactly
generated, locally compact groups.

\begin{definition}\label{DTopoPolynomial}
Let $G$ be a locally compact group generated by a compact
symmetric neighbourhood of the identity $V$.  Set
$V^n=\{g_1 g_2\cdots g_n\mid g_i\in V\}$.
Let $\mu$ denote a Haar measure on
$G$.  If there are constants $c$ and $d$ such that $\mu(V^n)\leq cn^d$
for all natural numbers $n$, $G$ is said to have {\em polynomial growth}.
\end{definition}

The following theorem characterizes compactly generated,
totally disconnected groups of polynomial growth in terms of
their rough Cayley graphs.

\begin{theorem}{\rm (\cite[Theorem~4.4]{KronMoller2008})}
\label{TopoPolynomial}
Let $G$ be a compactly generated, totally disconnected,
locally compact group and $\Gamma$
some rough Cayley graph for $G$.  Then $\Gamma$ has polynomial growth
if and only if $G$ has polynomial growth (in the sense of
Definition~\ref{DTopoPolynomial}).
\end{theorem}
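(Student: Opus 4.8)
The plan is to compare the two growth functions directly, using one conveniently chosen rough Cayley graph together with a matching generating set, and then to invoke quasi-isometry invariance to remove the dependence on the choices. By Theorem~\ref{Tquasi} all rough Cayley graphs of $G$ are quasi-isometric, and polynomial growth of a connected, locally finite graph is a quasi-isometry invariant; likewise the growth type of $\mu(V^n)$ is independent of the chosen compact symmetric generating neighbourhood $V$, since if $V_1$ and $V_2$ are two such sets then each element of one is a word of bounded length in the other, exactly as in the proof of Lemma~\ref{Lquasi}(i), so $V_1^n\subseteq V_2^{Cn}$ for a constant $C$. Hence it suffices to establish the equivalence for one locally finite rough Cayley graph, which exists by Lemma~\ref{Llocallyfinite}, and for one generating set.

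So I would fix a compact open subgroup $U$, the stabilizer of a vertex $\alpha_0$, and take $\Gamma=\Cay(G,S)_U$ as in Definition~\ref{Dorbit}, where, following Sabidussi's construction (\cite[Theorem~2]{Sabidussi1964}) and the discussion after it, $S$ is the compact symmetric set consisting of $U$ together with all elements carrying $\alpha_0$ to one of its finitely many neighbours. Then $S=USU=S^{-1}$ and $U\subseteq S$, so I may set $V=S$, a compact symmetric open neighbourhood of the identity that generates $G$. Identifying $V\Gamma$ with $G/U$ via $g\alpha_0=gU$, the key combinatorial fact is that, because $V=UVU$, two cosets $gU$ and $hU$ are equal or adjacent in $\Gamma$ precisely when $g^{-1}h\in V$. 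Reading off a geodesic from $\alpha_0=U$ to $gU$ then gives $d_\Gamma(\alpha_0,gU)\le n$ whenever $g\in V^n$, and conversely $g\in V^{n+1}$ whenever $d_\Gamma(\alpha_0,gU)\le n$ (a trailing coset representative is absorbed using $U\subseteq V$). Writing $\widetilde{B}_n\subseteq G$ for the preimage of the graph ball $B_n(\alpha_0)$, this yields the sandwich $V^n\subseteq \widetilde{B}_n\subseteq V^{n+1}$.

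The second ingredient is the measure bookkeeping. Choosing $\mu$ to be a left Haar measure, every left coset has $\mu(gU)=\mu(U)$, and $\widetilde{B}_n$ is a disjoint union of $|B_n(\alpha_0)|$ such cosets, so $\mu(\widetilde{B}_n)=|B_n(\alpha_0)|\,\mu(U)$. Combining this with the sandwich gives, for all $n$, the inequalities $\mu(V^n)\le |B_n(\alpha_0)|\,\mu(U)\le \mu(V^{n+1})$, where all quantities are finite because $V^n$ is compact and $\Gamma$ is locally finite. From this double inequality the equivalence is immediate: if $\mu(V^n)\le cn^d$ then $|B_n(\alpha_0)|\le c(n+1)^d/\mu(U)$ is polynomially bounded, while if $|B_n(\alpha_0)|\le c'n^{d'}$ then $\mu(V^n)\le c'n^{d'}\mu(U)$ is polynomially bounded.

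The steps that demand the most care are the passage between the group and the coset space — arranging $S$ so that the word metric on $G/U$ with respect to $V$ agrees with the graph metric of $\Gamma$ up to a bounded additive error — and the measure identity $\mu(\widetilde{B}_n)=|B_n(\alpha_0)|\,\mu(U)$. This is precisely why I work with a left Haar measure and exploit that $V$ is symmetric, so that left and right Haar measures agree on the symmetric sets $V^n$ and no unimodularity hypothesis is needed. I expect the main obstacle to be exactly this bookkeeping; once the sandwich $V^n\subseteq\widetilde{B}_n\subseteq V^{n+1}$ and the measure identity are in place, the theorem follows with no further input.
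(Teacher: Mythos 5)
The paper itself gives no proof of this theorem --- it is quoted from \cite[Theorem~4.4]{KronMoller2008} --- so there is nothing internal to compare against; judged on its own, your argument is correct and is the natural one. The two pillars both check out: since $S=USU=S^{-1}\supseteq U$, adjacency-or-equality of $gU$ and $hU$ in $\Cay(G,S)_U$ is exactly the condition $g^{-1}h\in S$, which yields the sandwich $V^n\subseteq\widetilde{B}_n\subseteq V^{n+1}$, and the identity $\mu(\widetilde{B}_n)=|B_n(\alpha_0)|\,\mu(U)$ holds because $\widetilde{B}_n$ is a finite disjoint union of left cosets of the compact open subgroup $U$, all of equal finite positive left Haar measure. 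The reduction to this particular $\Gamma$ and this particular $V$ is also legitimate: Theorem~\ref{Tquasi} together with quasi-isometry invariance of polynomial growth for transitive locally finite graphs handles the graph side, and the inclusion $V_1^{\,n}\subseteq V_2^{\,Cn}$ handles the group side, exactly as you say.
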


Viewed in this context, Trofimov's theorem about automorphism groups of
graphs with polynomial growth, Theorem~\ref{TTrofimovPoly}, can now be
seen as a version of Gromov's Theorem for compactly generated, totally
disconnected, locally compact groups.
As already mentioned,  Losert  proved a generalization
of Gromov's Theorem for topological groups in \cite{Losert1987} and
Woess deduced Trofimov's theorem from Lostert's result
in \cite{Woess1992}.
The following theorem \cite[Theorem~4.6]{KronMoller2008} is a
combination of Theorem~\ref{TopoPolynomial} and Trofimov's theorem, but
can also be seen as Corollary to Losert's results.

\begin{theorem} {\rm (Cf.~Trofimov \cite{Trofimov1985} and Losert
    \cite{Losert1987})}
Let $G$ be a compactly generated, totally disconnected, locally compact
group.  Then $G$ has polynomial growth if and only if $G$ has a normal,
compact, open
subgroup $K$ such that $G/K$ is a finitely generated almost nilpotent group.
\end{theorem}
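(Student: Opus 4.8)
The plan is to prove the two implications separately, letting the substance of the forward direction be carried entirely by Trofimov's Theorem~\ref{TTrofimovPoly}, with the bridge to rough Cayley graphs supplied by Theorem~\ref{TopoPolynomial}. Throughout I use that a compactly generated, totally disconnected, locally compact group $G$ contains a compact open subgroup (van Dantzig) and hence, by Lemma~\ref{Llocallyfinite}, admits a locally finite rough Cayley graph $\Gamma$ on which it acts transitively with compact, open vertex stabilizers. Recall also that ``almost nilpotent'' and ``virtually nilpotent'' both mean: possessing a nilpotent subgroup of finite index.

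For the forward direction, suppose $G$ has polynomial growth. By Theorem~\ref{TopoPolynomial} the rough Cayley graph $\Gamma$ then has polynomial growth, so Theorem~\ref{TTrofimovPoly} applies: there is a $G$-invariant equivalence relation $\sim$ with finite classes on $V\Gamma$ such that, writing $K$ for the kernel of the action of $G$ on $\Gamma/\sim$, the quotient $G/K$ is finitely generated and virtually (equivalently, almost) nilpotent, with finite vertex stabilizers on $\Gamma/\sim$. It then only remains to verify that this $K$ is compact, open and normal. Normality is automatic, as $K$ is a kernel. For the rest, fix a $\sim$-class $[\alpha]=\{\alpha^{(1)},\ldots,\alpha^{(m)}\}$ and let $S$ be its setwise stabilizer in $G$, i.e.\ the full preimage in $G$ of the stabilizer of the corresponding vertex of $\Gamma/\sim$. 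The pointwise stabilizer $\bigcap_{i=1}^m G_{\alpha^{(i)}}$ is a finite intersection of compact, open vertex stabilizers, hence compact and open, and the permutation action of $S$ on the finite set $[\alpha]$ exhibits $S$ as a union of at most $m!$ of its cosets; thus $S$ is itself compact and open. Since $G/K$ has finite vertex stabilizers on $\Gamma/\sim$, the subgroup $K$ has finite index in $S$, and being also closed (as an intersection of the closed setwise stabilizers), it is a finite-index closed subgroup of the compact open group $S$, hence compact and open. Taking this $K$ supplies the required subgroup.

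For the converse, suppose $G$ has a compact, open, normal subgroup $K$ with $G/K$ finitely generated and almost nilpotent. Since $K$ is open, $G/K$ is discrete, so the finitely generated almost nilpotent group $G/K$ has polynomial growth by the classical (elementary) direction of Gromov's theorem, due to Wolf and Bass. To transfer this to $G$, normalize the Haar measure $\mu$ so that $\mu(K)=1$ and pick a compact symmetric generating neighbourhood $V$ of the identity; replacing $V$ by $V\cup K$ if needed, we may assume $K\subseteq V$. Write $\pi:G\rightarrow G/K$ for the quotient map. Then $\bar V=\pi(V)$ is a finite symmetric generating set for $G/K$ and $\pi(V^n)=\bar V^n$. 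As $V^n\subseteq\pi^{-1}(\bar V^n)$ and the latter is a union of exactly $|\bar V^n|$ cosets of $K$, we obtain $\mu(V^n)\leq\mu(\pi^{-1}(\bar V^n))=|\bar V^n|$. The right-hand side is bounded by a polynomial in $n$ by the polynomial growth of $G/K$, so $\mu(V^n)$ grows polynomially and $G$ has polynomial growth.

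The genuine difficulty of the theorem is not visible in this argument at all; it is packaged inside Trofimov's Theorem~\ref{TTrofimovPoly}, whose proof rests on Gromov's theorem (or Losert's topological version). Granted that input, the only new point is the topological bookkeeping of the forward direction, namely showing that the kernel $K$ is compact and open. I expect this to be the one place demanding care, and it follows cleanly from the finiteness of the $\sim$-classes together with the compactness and openness of the vertex stabilizers of $\Gamma$.
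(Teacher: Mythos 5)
Your proof is correct and follows exactly the route the paper indicates: the paper offers no detailed argument, stating only that the theorem is a combination of Theorem~\ref{TopoPolynomial} and Trofimov's Theorem~\ref{TTrofimovPoly} (or alternatively a corollary of Losert's results). You have simply supplied the routine details the paper leaves implicit --- the verification that the kernel $K$ is compact and open, and the Wolf--Bass direction together with the Haar-measure bookkeeping for the converse --- and these are all carried out correctly.
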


\frenchspacing


\begin{thebibliography}{99}

\bibitem{Abels1974}  H.\ Abels,
                Specker-Kompaktifizierungen von lokal kompakten
                topologischen Gruppen.
                {\em Math.\ Z.} {\bf 135} (1974), 325--361.

\bibitem{AbertGlasner2008} M.~Abert and Y.~Glasner,
             Generic groups acting on regular trees.
             {\em Trans. Amer. Math. Soc.} {\bf 361} (2009), 3597--3610.

\bibitem{BassKulkarni1990}
                H.~Bass and R.~Kulkarni,
                Uniform tree lattices.
                {\em J. Amer. Math. Soc.} {\bf 3} (1990), 843--902.

\bibitem{BassLubotzky2001} H.~Bass and A.~Lubotzky,
               {\em Tree lattices}.
               Progress in Mathematics, 176. Birkh\"auser Boston,
               Boston, 2001.

\bibitem{Baumgartner2007}  U.~Baumgartner,
                Scales for co-compact embeddings of virtually free
                groups.
                {\em Geom. Dedicata}, {\bf 130}  (2007), 163--175.

\bibitem{BaumgartnerWillis2006}  U.~Baumgartner and G.~Willis,
                The direction of an automorphism of a totally
                disconnected locally compact group.
                {\em  Math. Z.} {\bf 252}  (2006),  393--428.

\bibitem{Bhattacharjee1994}  M.~Bhattacharjee,
                 The probability of generating certain profinite groups by
                 two elements.
                 {\em Israel J. Math.} {\bf 86} (1994),  311--329.

 \bibitem{Bhattacharjee1995}  M.~Bhattacharjee,
               The ubiquity of free subgroups in certain inverse limits
               of groups.
               {\em J. Algebra}  {\bf 172}  (1995), 134--146.

   \bibitem{BMMN1997} M.\ Bhattacharjee, D.\ Macpherson, R.\ G.\
                M{\"o}ller and P.\ M.\ Neumann,
                {\em Notes on Infinite Permutation Groups}.
                Hindustan Book Agency, Delhi, India, 1997.
                (Republished  as Springer Lecture Note
                in Mathematics 1698, Springer, 1998.)

\bibitem{BergmanLenstra1989} G.\ M.\ Bergman and H.\ W.\ Lenstra,
                Subgroups close to normal subgroups.
                {\em J.\ Algebra} {\bf 127} (1989), 80--97.

\bibitem{Cameron1990} P.\ J.\ Cameron,
{\em Oligomorphic permutation groups}.
London Mathematical Society Lecture Note Series 152,
Cambridge University Press, Cambridge, 1990.

\bibitem{Cameron1999} P.\ J.\ Cameron,
{\em Permutation groups}.
London Mathematical Society Student Texts 45,
Cambridge University Press, Cambridge, 1999.

\bibitem{CPW1993}  P.\ J.\ Cameron, C.\ E.\ Praeger and N.\ C.~Wormald,
                Infinite highly arc transitive digraphs and universal
                covering digraphs.
                {\it Combinatorica} {\bf 13} (1993), 377--396.

\bibitem{Dantzig1936} D.\ van Dantzig,
                Zur topologischen Algebra III.  Brouwersche und
                Cantorsche Gruppen.
                {\em Compos.\ Math.} {\bf 3}  (1936), 408--426.

\bibitem{DicksDunwoody1989} W.~Dicks and M.~J. Dunwoody,
                {\em Groups acting on graphs}.
                Cambridge University Press, Cambridge, 1989.

\bibitem{DiestelKuhn2003} R.~Diestel, D.~K\"uhn,
                Graph-theoretical versus topological ends of graphs.
                {\em J. Combin. Theory Ser. B} {\bf 87} (2003),
                197--206.

\bibitem{DiestelLeader2001} R.\ Diestel and I.\ Leader,
                A conjecture concerning a limit of non-Cayley graphs.
                {\em J.~Algebraic Combin.} {\bf 14} (2001), 17--25.

\bibitem{DixonMortimer1996}  J.\ D.\ Dixon and B.\ Mortimer,
                {\em Permutation Groups}.
                Graduate text in mathematics 163, Springer 1996.

\bibitem{Dunwoody1982} M.~J.~Dunwoody,
                Cutting up graphs.
                {\em Combinatorica} {\bf 2} (1982), 15--23.

\bibitem{Dunwoody1985} M.~J.~Dunwoody,
                The accessibility of finitely presented groups.
                {\em Invent. Math.} {\bf 81} (1985), 449--457.

\bibitem{Dunwoody1991} M.~J.~Dunwoody,
                An inaccessible group.  In
                {\em Geometric Group Theory 1991, Vol. 1}
                (ed. by G.~A.~Niblo and M.~A.~Roller),
                75--78,
                L.M.S. Lecture Notes Series 181, Cambridge University
                Press, Cambridge, 1993.

\bibitem{DunwoodyRoller1993} M.\ J.\ Dunwoody and M.\ A.\ Roller
                Splitting groups over polycyclic-by-finite subgroups.
                {\em Bull. London Math. Soc.} {\bf 25} (1993), 29--36.

\bibitem{EskinFisherWhyte2005} A.~Eskin, D.~Fisher, K.~Whyte,
                Quasi-isometries and rigidity of solvable groups.
                {\em Pure Appl.\ Math.\ Q.} {\bf 3} (2007), 927-947.

\bibitem{EskinFisherWhyte2006} A.~Eskin, D.~Fisher, K.~Whyte,
                Coarse differentiation of quasi-isometries I: spaces
                not quasi-isometric to Cayley graphs.
                preprint 2006, {\tt arXiv:math/0607207v2}.

 \bibitem{Evans1997}  D.\ M.\ Evans, An infinite highly arc-transitive
                digraph. {\it Europ.\ J.\ Combin.} {\bf 18} (1997), 281--286.

 \bibitem{Evans2001}  D.\ M.\ Evans, Suborbits in infinite primitive
                permutation groups. {\em Bull. London Math. Soc.}
                {\bf 33} (2001), 583--590.

\bibitem{Figa-TalamancaNebbia1991}A.~Fig{\`a}-Talamanca and
C.~Nebbia, {\em Harmonic analysis and representation theory for groups acting on homogeneous trees}.
London Mathematical Society Lecture Note Series, 162. Cambridge University Press, Cambridge, 1991.

\bibitem{Freudenthal1942} H. Freudenthal,
                Neuaufbau der Endentheorie.
                {\em Ann. of Math. (2)} {\bf 43} (1942), 261--279.

\bibitem{Freudenthal1945} H. Freudenthal,
                \"Uber die Enden diskreter R\"aume und Gruppen.
                {\em Comm. Math. Helv.} {\bf 17} (1945), 1--38.

\bibitem{Gromov1981}
                M.\ Gromov,
                Groups of polynomial growth and expanding maps,
                {\em Publ.\ Math.\ IHES}, {\bf 53} (1981), 53--78.

\bibitem{Gromov1986}
                M.\ Gromov,
                Infinite groups as geometric objects. 
                In {\em Proceedings of the International Congress of
                Mathematicians Vol. 1,}  385--392,
                PWN, Warsaw, 1984. 

\bibitem{GrosserMoskowitz1971}  S.\ Grosser and M.\ Moskowitz,
                Compactness conditions in topological groups.
                {\em J.\ Reine Angew. Math.} {\bf 246} (1971), 1--40.

\bibitem{Halin1964} R.~Halin,
                \"Uber unendliche Wege in Graphen.
                {\em Math. Ann.} {\bf 157} (1964), 125--137.

\bibitem{Hopf1944}  H.~Hopf,
                Enden offener R\"aume und unendliche diskontinuierlich
                Gruppen.
                {\em   Comment. Math. Helv.} {\bf 16}  (1944),
                81--100.

\bibitem{Houghton1974} C.\ H.\ Houghton,
                Ends of locally compact groups and their coset spaces.
                {\em J.\ Austral.\ Math.\ Soc.} {\bf 17} (1974), 274--284.

\bibitem{Ivanov1983} A. A. Ivanov,
                Bounding the diameter of a distance-regular graph.
                {\em Soviet Math. Doklady} {\bf 28} (1983), 149--152.
                (Translation from {\em Dokl. Akad. Nauk SSSR} {\bf  271} (1983),  789--792.)

\bibitem{Iwasawa1951} K.~Iwasawa, Topological groups with
                invariant compact neighborhoods of the identity.
                {\em Ann. of Math. (2)} {\bf  54} (1951),  345--348.

\bibitem{KarrassSolitar1956} A.~Karrass and D.~Solitar,
                Some remarks on the infinite symmetric groups.
                {\em Math. Z.} {\bf 66} (1956), 64--69.

\bibitem{Kron2001a} B.~Kr\"on,
                Quasi-isometries between non-locally-finite graphs
                and structure trees.
                {\em Abh.\ Math.\ Sem.\ Univ.\ Hamburg} {\bf
                71}(2001), 161--180.

\bibitem{KronMoller2008} B.~Kr\"on and R.~G.~M\"oller,
                Analogues of Cayley graphs for topological groups,
                {\em Math. Z.} {\bf 258} (2008), 637-675.

\bibitem{KronMoller2008a} B.~Kr\"on and R.~G.~M\"oller,
                Quasi-isometries between graphs and trees.
                {\em J.~Combin. Theory Ser. B} {\bf 98} (2008), 994--1013.

\bibitem{Losert1987}  V.~Losert,
             On the structure of groups with polynomial
  growth.
                {\em Math. Z.} {\bf 195} (1987), 109--117.

\bibitem{Losert2002} V.~Losert,
           On the structure of groups with polynomial growth II.
           {\em J.~London Math.~Soc. (2)} {\bf 63} (2001), 640--654.

\bibitem{Macpherson1982} H.\ D.\ Macpherson,
                Infinite distance transitive graphs of finite valency.
                {\em Combinatorica} {\bf 2} (1982), 63--69.

\bibitem{MMMSTZ2005}  A.\ Malni\v{c}, D.\ Maru\v{s}i\v{c},
R.~G.~M\"oller, N.\ Seifter, V.\ Trofimov and B.\ Zgrabli\v{c},
Highly arc transitive digraphs: reachability,
  topological groups.  {\it European J. Combin.} {\bf
    26} (2005), 19--28.

\bibitem{Maurer1955}  I.~Maurer,
                 Les groupes de permutations infinies.
                 {\em Gaz. Mat. Fiz. Ser. A.} {\bf 7} (1955), 400--408.

\bibitem{Moller1992a} R.~G.~M\"oller,
                Ends of graphs II.  {\em Math.\ Proc.\ Camb.\ Phil.\ Soc}.
                {\bf 111} (1992), 455--460.

\bibitem{Moller1995} R.~G.~M\"oller,
                Groups acting on locally finite graphs --- a
                survey of the infinitely ended case.
                In {\em Groups'93 Galway/St Andrews, Vol. 2}
                (ed. by C.~M.~Campbell, T.~C.~Hurley, E.~F.~Robertson,
                S.~J.~Tobin and J.~J.~Ward),  426--456,
                L.M.S.~Lecture Notes Series 212,
                Cambridge University Press, Cambridge, 1995.

\bibitem{Moller1998} R.~G.~M\"oller,
                Topological groups, automorphisms of infinite graphs and
                a theorem of Trofimov.
                {\em Discrete Math.} {\bf 178} (1998), 271--275.

\bibitem{Moller2002} R. G. M\"oller,
                Structure theory of totally disconnected locally
                compact groups via graphs and permutations.
                {\em Canad.~J.~Math.} {\bf 54} (2002), 795--827.

\bibitem{Moller2002a} R. G. M\"oller,
                Descendants in highly arc transitive digraphs.
                {\em Discrete Math.} {\bf 247} (2002), 147--157.
                (Erratum. {\em Discrete Math.} {\bf 260} (2003),  321.)

\bibitem{Moller2003} R.\ G.\ M\"oller,
                FC$^-$-elements in totally disconnected groups and
                automorphisms of infinite graphs.
                {\em Math. Scand.} {\bf 92} (2003), 261--268.


\bibitem{MollerSeifter1998} R.~G.~M\"oller and N.~Seifter,
                 Digraphical regular representations of infinite
                 finitely generated groups.
                 {\em European J. Combin.} {\bf 19}  (1998), 597--602.

\bibitem{MosherSageevWhyte2003} L.~Mosher, M.~Sageev, K.~Whyte,
                Quasi-actions on trees. I. Bounded valence.
                {\em Ann. of Math. (2)}, {\bf 158} (2003), 115-164.

\bibitem{Nebbia2000} C.\ Nebbia,
                Minimally almost periodic totally disconnected groups.
                {\em Proc.\ Amer.\ Math.\ Soc.} {\bf 128} (2000), 347--351.

\bibitem{Niblo2002} G.\ A.\ Niblo,
                The singularity obstruction for group splittings.
                {\em Topology Appl.}  {\bf 119} (2002), 17--31.

\bibitem{NibloSageev2006}  G.\ A.\ Niblo and M.~Sageev,
The Kropholler conjecture.  In {\em Guido's book of conjectures}
(ed. by Indira Chatterji), 2006.\\
{\tt http://www.math.ohio-state.edu/$\sim$indira/GMFinal.pdf}.

\bibitem{PapasogluWhyte2002}  P.~Papasoglu, K.~Whyte,
                Quasi-isometries between groups with infinitely many ends.
                {\em Comment. Math. Helv.} {\bf  77} (2002),
                133--144.

\bibitem{Praeger1991} C. E. Praeger,
                On homomorphic images of edge transitive directed graphs.
                {\em Australas. J. Combinatorics} {\bf 3} (1991), 207--210.

\bibitem{Sabidussi1964} G.\ Sabidussi,
                Vertex-transitive graphs.
                {\em Monatsh. Math.} {\bf 68} (1964),  426--438.

\bibitem{Schlichting1979} G.\ Schlichting,
                Polynomidentit\"aten und Permutationsdarstellungen
                lokalkompakter Gruppen.
                {\em Invent.\ Math.} {\bf 55} (1979), 97--106.

\bibitem{Schlichting1980} G.\ Schlichting,
                Operationen mit periodischen Stabilisatoren.
                {\em Arch.\ Math.} {\bf 34} (1980), 97--99.

\bibitem{Scott1977} P.~Scott,
                Ends of pairs of groups.
                {\em J. Pure Appl. Algebra} {\bf  11} (1977/78), 179--198.

\bibitem{ScottSwarup2000} P.~Scott and G.~A.~Swarup,
                Splittings of groups and intersection numbers.
                {\em Geom.~Topol.} {\bf 4} (2000), 179--218.

\bibitem{Stallings1968} J.~R.~Stallings,
                On torsion free groups with infinitely many ends.
                {\em Ann. of Math.} {\bf 88} (1968), 312--334.

\bibitem{Tits1964}  J.\ Tits,
                Automorphismes {\`a} d{\'e}placement born{\'e}
                des groupes  de Lie.
                {\em Topology} {\bf 3} {\em Suppl.\ 1} (1964), 97--107.

\bibitem{ThomassenWoess1993} C.~Thomassen and W.~Woess,
                Vertex--transitive graphs and accessibility.
                {\em J.~Combin. Theory Ser. B} {\bf 58} (1993), 248--268.

\bibitem{Trofimov1984}  V.~I.~Trofimov,
                Automorphisms of graphs and a characterization of lattices.
                {\em Math USSR Izv.} {\bf 22} (1984), 379--392.
                 (Translation from {\em Izv.~Akad.~Nauk SSSR Ser.~Mat.} {\bf 47} (1983),  407--420.)

\bibitem{Trofimov1985}  V.~I.~Trofimov,
                Graphs with polynomial growth.
                {\em Math USSR Sb.} {\bf 51} (1985), 405--417.
(Translation from {\em Mat.~Sb.~(N.S.)} {\bf 123} (1984),  407--421.)

\bibitem{Trofimov1985a}  V. I. Trofimov,
                Automorphism groups of graphs as topological groups.
                {\em Math. Notes} {\bf 38} (1985), 717--720.
 (Translation from {\em Mat.~Zametki} {\bf 38} (1985), 378--385, 476.)

\bibitem{Trofimov1987}  V.\ I.\ Trofimov,
                The action of a group on a graph.
                {\em Math USSR Izv.} {\bf 29} (1987), 429--447.
           (Translation from {\em Izv. Akad. Nauk SSSR Ser. Mat.} {\bf 50} (1986), 1077--1096.)

\bibitem{Trofimov2007}  V.\ I.\ Trofimov,
                Vertex stabilizers of graphs and tracks. I.
                {\em European J. Combin.} {\bf 28}  (2007), 613--640.

\bibitem{Wall1967} C.~T.~C.~Wall,
                Poincar{\'e} complexes. I.
                {\em Ann. of Math.} (2) {\bf 86} (1967), 213--245.

\bibitem{Wall1971} C.~T.~C.~Wall,
                Pairs of relative cohomological dimension one.
                {\em J.~Pure Appl. Algebra} {\bf 1} (1971), 141--154.

\bibitem{Wall2003} C.~T.~C.~Wall,
                The geometry of abstract groups and their splittings.
                {\em Rev. Mat. Complut.}  {\bf 16}  (2003),   5--101.

\bibitem{Willis1994} G.\ Willis,
                The structure of totally disconnected, locally compact
                groups.
                {\em Math. Ann.} {\bf 300} (1994), 341--363.

\bibitem{Willis1995} G.\ Willis,
                Totally disconnected groups and proofs of conjectures
                of Hofmann and Mukherjea.
                {\em Bull.\ Austral.\ Math.\ Soc.} {\bf 51} (1995),
                489--494.

\bibitem{Willis2001}  G.~Willis,
                The number of prime factors of the scale function on a
                compactly generated group is finite.
                {\em Bull. London Math. Soc.} {\bf 33} (2001),
                168--174.

\bibitem{Willis2001a} G.\ Willis,
                Further properties of the scale function on a totally
                disconnected group.
                {\em J.~Algebra} {\bf 237} (2001), 142--164.

\bibitem{Willis2004} G.\ Willis,
                A canonical form for automorphisms of totally
                disconnected locally compact groups.
                in {\em Random walks and geometry}
                (ed. by V.~A.~Kaimanovich
                 in collaboration with K.~Schmidt and W.~Woess),
                295--316, Walter de Gruyter GmbH \& Co. KG,
                Berlin, 2004.

\bibitem{Wolf1968} J.~A.~Wolf,
               Growth of finitely generated solvable groups and
               curvature of Riemannian manifolds.
               {\em J.~Differential Geometry} {\bf 2} (1968),
               421--446.

\bibitem{Woess1989a} W.~Woess,
                Graphs and groups with tree-like properties.
                                {\em J.~Combin. Theory Ser. B} {\bf 47} (1989),
                361--371.

\bibitem{Woess1992} W. Woess,
                Topological groups and infinite graphs.  In
                {\em Directions in Infinite Graph Theory and
                Combinatorics}. (ed. by R. Diestel), Topics in Discrete
                Math. 3, North Holland, Amsterdam 1992.
                (Also in {\em Discrete Math.}  {\bf 95} (1991), 373--384.)

\bibitem{Woess2001} W. Woess,
              {\em Random walks on infinite graphs and
              groups}.  Cambridge Tracts in Mathematics, 138. Cambridge
              University Press, Cambridge, 2000.

\bibitem{WuYu1972} T.\ S.~Wu and Y.\ K.\ Yu,
               Compactness properties of topological groups.
               {\em Michigan Math.\ J.} {\bf 19} (1972), 299--313.

\bibitem{Wu1991}] T.\ S.~Wu,
                On the structure of certain locally compact topological groups.
                {\em Trans.\ Amer.\ Math.\ Soc.}  {\bf 325} (1991),
                413--434.

\end{thebibliography}
\end{document}